\documentclass[11pt]{article} 
\usepackage[top=99pt, bottom=75pt, left=72pt, right=70pt]{geometry}
\usepackage[nottoc]{tocbibind} 
\usepackage[utf8]{inputenc}
\usepackage{amssymb,amsmath,amscd,amsxtra}
\usepackage[all]{xy}
\usepackage{mathtools}
\usepackage{tensor}
\usepackage{faktor}
\usepackage{dsfont}
\usepackage{graphicx}
\usepackage[rgb]{xcolor}
\usepackage{tikz, tikz-cd}
\usetikzlibrary{arrows, matrix, intersections, math, calc, decorations.pathmorphing, decorations.markings, positioning}
\usepackage{enumitem}
\usepackage{hyperref}
\definecolor{R}{RGB}{255, 0, 34}
\definecolor{B}{RGB}{0, 85, 238}
\hypersetup{colorlinks = true,
            linkcolor = R,
            urlcolor  = B,
            citecolor = B,
            anchorcolor = B}
\newtheorem{theorem}{Theorem}[section]
\newtheorem{lemma}[theorem]{Lemma}

\newtheorem{proposition}[theorem]{Proposition}  
\newtheorem{corollary}[theorem]{Corollary} 

\newtheorem{remark}[theorem]{Remark}

\newtheorem{definition}[theorem]{Definition}

\newcommand{\beq}{\begin{equation}}
\newcommand{\eeq}{\end{equation}}
\newcommand{\beqa}{\begin{eqnarray}}
\newcommand{\eeqa}{\end{eqnarray}}
\newcommand{\beaa}{\begin{eqnarray*}}
\newcommand{\ben}{\begin{eqnarray*}}
\newcommand{\eaa}{\end{eqnarray*}}
\newcommand{\een}{\end{eqnarray*}}

\newcommand{\ZZ}{\mathbb{Z}}
\newcommand{\CC}{\mathbb{C}}
\newcommand{\PP}{\mathbb{P}}
\newcommand{\RR}{\mathbb{R}}
\newcommand{\ii}{\mathbf{i}}

\newcommand{\A}{\mathcal{A}}

\renewcommand{\O}{\mathcal{O}}

\newcommand{\M}{\mathcal{M}}

\newcommand*{\qed}{\hfill\ensuremath{\square}}

\def\proof{{\em Proof.\ }}

\DeclareFontFamily{U}{mathc}{}
\DeclareFontShape{U}{mathc}{m}{it}%
{<->s*[1.03] mathc10}{}
\DeclareMathAlphabet{\mathcal}{U}{mathc}{m}{it}

\def\lieg{{\mathfrak{g}}}

\title{Elliptic orbifold lines and integrable hierarchies}
\author{Todor Milanov}

\begin{document} 

\maketitle

\begin{abstract}
In 2003, Givental and the author constructed Hirota quadratic (or bilinear) equations for simple singularities -- a result later utilized by Fan, Jarvis, and Ruan to establish the generalized Witten conjecture. Whether this construction extends to broader classes of singularities has since remained an open question. In this paper, we resolve this question affirmatively for simple elliptic singularities. 
More precisely, using mirror symmetry, we construct Hirota quadratic equations for simple elliptic singularities by showing that the Gromov--Witten invariants of the elliptic orbifold lines satisfy a system of bilinear equations.

\end{abstract}

\setcounter{tocdepth}{2}
\tableofcontents

\section{Introduction}

Introduced by Witten \cite{Wi1991}, Gromov--Witten (GW) invariants
have driven rapid developments across algebraic (or complex) geometry,
symplectic geometry, and integrable systems. The deep connection to
integrable systems originated with Witten's conjecture \cite{Wi1991}
-- proved by Kontsevich \cite{Ko1992} -- stating that the generating
function of the GW invariants of a point is a tau-function of the KdV
hierarchy. Motivated by Witten's proposal, Dubrovin and Zhang
\cite{DZ2001} introduced a systematic formalism for constructing
bi-Hamiltonian integrable hierarchies. Assuming certain technical
conjectures regarding the polynomiality of the Poisson bracket, they
constructed a bi-Hamiltonian hierarchy associated with any semisimple
Frobenius manifold, thereby generalizing Witten's conjecture to target
manifolds with semisimple quantum cohomology. The Dubrovin--Zhang (DZ)
conjectures were subsequently established in \cite{BPS2012} and
\cite{LWZ2025}. Over the past decade, this construction was extended
by Buryak--Posthuma--Shadrin to drop the semisimplicity assumption by replacing GW invariants with partial Cohomological Field Theories (partial CohFTs); see the recent preprint \cite{BLS2026} for a comprehensive account.

The primary motivation for formulating 2D integrable systems in terms of Hirota bilinear equations (HBEs) is to uncover the geometric structure of their solution spaces. By the classical Liouville--Arnold theorem (see \cite[Chapter 10, Section 49]{Ar1989}), the phase space of an integrable system with finitely many degrees of freedom forms a Lagrangian fibration whose regular fibers are Abelian Lie groups. While no direct infinite-dimensional analogue of the Liouville--Arnold theorem exists for 2D field theories, the space of solutions in many key examples is parametrized by an infinite-dimensional homogeneous space. This structure was first discovered by Sato \cite{Sato1981} for the KP hierarchy and subsequently extended by Kac and Wakimoto \cite{KW1987}. Given that the Dubrovin--Zhang (DZ) hierarchy and its generalizations encompass a vast class of integrable systems, analyzing the geometric structure of their solutions is a natural and fundamental problem.

Although the Hamiltonian framework of Dubrovin and Zhang has proven highly successful, constructing HBEs within GW theory remains a major challenge. The principal difficulty lies in establishing a framework independent of GW theory, contrasting with the DZ approach where GW invariants are utilized directly to construct commuting flows. Our approach originates in \cite{Giv2003}, where Givental observed that the vertex operators of the $n$-KdV hierarchy can be expressed in terms of the period integrals for $A_{n-1}$-singularities. In a joint paper with Givental \cite{GM2005}, we extended this construction to all simple singularities. It was a natural next step to test these ideas within the framework of simple elliptic singularities $E_6^{(1,1)}, E_7^{(1,1)}, E_8^{(1,1)}$. In the classification of singularities proposed by Arnold and his school, simple elliptic singularities represent the most immediate generalization of simple singularities. While they have been extensively studied from diverse mathematical perspectives, our initial attempt to construct HBEs in the simple elliptic setting proved unsuccessful, indicating the need for novel conceptual frameworks. Ultimately, developments in mirror symmetry over the past two decades provided the necessary tools to resolve these difficulties. The primary goal of this paper is to describe our construction.

Let us now focus on the case of Frobenius manifolds arising from quantum cohomology. Let $X$ be a complex orbifold whose orbit space is a projective variety. Following Abramovich--Graber--Vistoli \cite{AGV2008} and Chen--Ruan \cite{CR2002}, one can define the orbifold GW invariants of $X$. In particular, the genus-0 GW invariants define the orbifold quantum cohomology. Since our approach relies on the Givental--Teleman higher-genus reconstruction, our primary interest lies in orbifolds $X$ that exhibit semisimple quantum cohomology. While the classification of such targets in higher dimensions remains a wide open problem, the classification in complex dimension 1 is fully established: $X$ must be an orbifold projective line $\mathbb{P}_{a_{1},\dots ,a_{r}}^{1}$, where $a_i \geq 2$ denote the orders of the isotropy groups of the orbifold points. According to Milanov--Ruan \cite{MiR} (see also \cite{IMRS}), simple elliptic singularities are mirror partners -- in the sense of Givental -- to the elliptic orbifold lines $\mathbb{P}_{3,3,3}^{1}$, $\mathbb{P}_{2,4,4}^{1}$, and $\mathbb{P}_{2,3,6}^{1}$. This mirror correspondence yields two crucial consequences. First, it allows us to reformulate our problem from singularity theory into the framework of GW theory. Second, it highlights an intermediate class between simple and simple elliptic singularities: the Fano orbifold lines, i.e., $\mathbb{P}_{a_{1},a_{2},a_{3}}^{1}$ satisfying $\tfrac{1}{a_1}+\tfrac{1}{a_2}+\tfrac{1}{a_3}-1>0$. The GW theory of Fano orbifold lines is structurally simpler because it is closer to the fully understood GW theory of the smooth projective line. This Fano setting \cite{MST2016} is precisely the framework in which most of the techniques for the present paper were originally developed.

Let us comment on three key developments that lead to the results in this paper. The first essential ingredient of our construction is the $\Gamma $-integral structure introduced by Iritani \cite{Iri2009}. Givental's original framework involved the Milnor ring and the vanishing cycles of the singularity. Under mirror symmetry, these are identified with the topological orbifold $K$-ring $K^0(\mathbb{P}^1_{a_1,a_2,a_3})$ and the so-called reflection vectors in quantum cohomology, the latter of which can be determined via the refined Dubrovin conjecture (see \cite{CM2024}). This $K$-theoretic framework successfully resolves several of the complications encountered in singularity theory. The second crucial development is our joint paper with Bojko Bakalov \cite{BM2013}. Although that work was set in the framework of simple singularities, our central observation -- that Givental’s vertex operators define a twisted representation of the lattice vertex algebra associated with the Milnor lattice -- generalizes to GW theory as well. More precisely, we discovered a sequence of products between twisted fields (cf. Section~\ref{sec:Bnp}) that satisfy the Borcherds axioms of a vertex algebra. The bilinear operator in this paper can be interpreted in the following way. Consider the lattice vertex algebra corresponding to $K^0(\mathbb{P}^1_{a_1,a_2,a_3})$. The states of conformal weight $1$ and the Borcherds $0$-product define a Lie algebra known as the Lie algebra of physical states. Our twisted representation provides a realization of this Lie algebra in which the bilinear operator takes the form of a Casimir operator. This representation-theoretic interpretation will be pursued in a separate paper, and we expect our construction to have interesting applications to conformal field theory. Finally, the third important development (see \cite{Mi2019, MilSa}) involves establishing certain analytic properties of the so-called propagators -- functions that arise as a byproduct when composing two vertex operators. We refer to Section~\ref{sec:ppp} for further details. The techniques presented in that section are essential for proving that the generating function of GW invariants satisfies the HBEs. Notably, the original argument in our joint work with Givental \cite{GM2005} relied on a specific result characterizing the kernel of the monodromy representation in vanishing cohomology. While this result is well known for simple singularities, its generalization to other singularity classes remains a major open problem.

The construction presented in this paper naturally applies to simple elliptic singularities, which correspond to the elliptic orbifold lines $\mathbb{P}_{3,3,3}^{1}$, $\mathbb{P}_{2,4,4}^{1}$, and $\mathbb{P}_{2,3,6}^{1}$. With minor modifications, our approach can also be extended to the remaining elliptic orbifold line, $\mathbb{P}_{2,2,2,2}^{1}$, provided that the refined Dubrovin conjecture is established for this case. As a byproduct of our proof, we establish a new property of the Kac–Wakimoto formulation of the KdV hierarchy. Specifically, the bilinear equations of the Kac–Wakimoto hierarchy yield an infinite sequence of bilinear relations corresponding to the higher modes of the Casimir operator (see Proposition~\ref{prop:kdv-kw}). This structural feature, originally observed by Bojko Bakalov, appears to hold generally for all Kac–Wakimoto hierarchies. Finally, we note that our integrable hierarchy does not include dynamics with respect to the variables $q_{k,0,0}$ ($k>1$). These variables, which correspond to descendant insertions of $\phi_{0,0}=1$, are treated as parameters throughout this work. Nevertheless, as a consequence of Proposition~\ref{prop:kdv-kw}, the framework introduced in \cite{Mi} can be leveraged to construct an extension of our HBEs that incorporates these dynamics. We leave the details of this extension to a subsequent paper.

\subsection{Elliptic orbifold lines}\label{sec:eol}
Let $a=(a_1,a_2,a_3)$ be a triple of positive integers satisfying 
$\tfrac{1}{a_1}+\tfrac{1}{a_2}+\tfrac{1}{a_3}=1$. There are only 3 such triples: $(3,3,3)$, $(2,4,4)$, and $(2,3,6)$.  Put  
\ben
G=\{t\in (\CC^*)^3\ |\ t_1^{a_1}=t_2^{a_2}=t_3^{a_3} \}.
\een
The orbifold $\PP^1_a:=\PP^1_{a_1,a_2,a_3}$ is defined to be the orbifold quotient $[Z/G]$ where 
\ben
Z:=\{z_1^{a_1}+z_2^{a_2}+z_3^{a_3}=0\}\subset (\CC\setminus{\{0\}})^3.
\een
The orbit space of $\PP^1_a$ is $Z/G\cong \PP^1$ where the
isomorphism is induced by the map
\ben
Z\to \PP^1,\quad (z_1,z_2,z_3)\mapsto [z_1^{a_1}:z_2^{a_2}].
\een
By definition the inertia orbifold
\ben
I\PP^1_{a}=\bigsqcup_{g\in G} [Z^g/G],\quad Z^g:=\{z\in Z\ |\ g z=z\}.
\een
The connected components of the inertia orbifold are non-empty only if
\ben
g\in G_1\cup G_2\cup G_3,
\een
where $G_i$ is the subgroup of $G$ consisting of $t=(t_1,t_2,t_3)$
such that $t_j=1$ for all $j\neq i$. Note that $G_i$ is a cyclic group
of order $a_i$.
The connected component for $g=1$ is $\PP^1_a$, while for
$g\neq 1$ it is isomorphic to the orbifold point $[{\rm pt}/G_i]$ where $i$ is such
that $g\in G_i$. The connected components for $g\neq 1$ are
known as {\em twisted sectors}. The {\em Chen--Ruan} or {\em orbifold} cohomology of $\PP^1_a$ is by definition
\ben
H:=H(I\PP^1_a,\CC)=
\bigoplus_{*\in \mathbb{Q}}\,
\bigoplus_{g\in G_1\cup G_2\cup G_3} 
H^{*-\iota(g)}(Z^g/G,\CC), 
\een
where $*$ denotes complex degree (i.e. half of the standard
degree) and the shift $\iota(g)$ is defined for all finite order elements
\ben
g=(e^{2\pi\,  \ii \,\alpha_1}, e^{2\pi\,  \ii \,\alpha_2},
e^{2\pi\,  \ii\, \alpha_3})\in G,
\quad
\alpha_i\in \mathbb{Q}\cap [0,1)
\een
by
\ben
\iota(g):=\alpha_1+\alpha_2+\alpha_3.
\een
We fix a basis of $H$ as follows
\ben
\phi_{0,0}=1,\quad \phi_{0,1}=P,
\een
and
\ben
\phi_{i,p}=1\in H^0(Z^{g_{i,p}}/G,\CC),\quad 1\leq i\leq 3,\quad 1\leq
p\leq a_i-1,
\een
where $P\in H^2(\PP^1,\CC)$ is the hyperplane class and $g_{i,p}\in
G_i$ is the element whose $i$th entry is 
$e^{2\pi\ii\, p/a_i}$. Let us define $a_0=1$, then the Chen--Ruan
degree of the above basis is given by
\ben
\operatorname{deg}_{\rm CR}(\phi_{i,p})=p/a_i,\quad 0\leq
i\leq 3.
\een
Let us denote by 
\ben
\mathcal{B}:=\{(0,0),(0,1)\}\sqcup\{(i,p)\, |\, 
1\leq i\leq 3, 1\leq p\leq a_i-1\}
\een
the set of indexes of the above basis. Sometimes, we will also write simply $\phi_{00}$ and $\phi_{01}$ instead of respectively $\phi_{0,0}$ and $\phi_{0,1}$. 

\medskip

The orbifold $K$-ring of $\PP^1_a$ has the following explicit description:
\ben
K^0(\PP^1_a)\cong \ZZ[L_1,L_2,L_3]/\langle L_i^{a_i}-L_j^{a_j}, (L_i-1)(L_j-1)\ |\
1\leq i\neq j\leq 3\rangle.
\een
The generators $L_i$ correspond to the orbifold line bundles
$[(Z\times \CC)/G]$, where $G$ acts on the fiber $\CC$ via the
character
\ben
G\to \CC^*, \quad g=(g_1,g_2,g_3)\mapsto g_i.  
\een
Following Iritani \cite{Iri2009}, we construct an integral structure in $H$, that is, an embedding 
$\Psi: K^0(\PP^1_a)\to H$ of the orbifold $K$-ring as a lattice in $H$. The definition of $\Psi$ will be recalled in Section \ref{sec:iis} (see formula \eqref{psi-map}). We have the following explicit formula:
\beq\label{psi_E}
\Psi(E) = \operatorname{rk}(E)\Big(1 -\log Q\, P\Big) +2\pi\ii
  \operatorname{deg}(E) P +
\sum_{j=1}^3\sum_{p=1}^{a_j-1} \Gamma(1-p/a_j) \chi_{j,p}(E) \phi_{j,p},
\eeq
where $\operatorname{rk}(E)$ is the rank of 
$E$, $\operatorname{deg}(E) :=\int_{|\PP^1_a|} c_1(E)$ is the
degree of $E$, and $\chi_{j,p}:K^{(0)}(\PP^1_a)\to \CC$ are {\em ring} homomorphisms. The rank is straightforward to compute, while the degree and the ring homomorphisms are uniquely determined by the following formulas:
\ben
\operatorname{deg}(L^p_i)=p/a_i,\quad 
\chi_{j,p}(L_i) :=e^{-\frac{2\pi\ii}{a_j} p \delta_{j,i}}
\een
and the properties respectively for $\operatorname{deg}:K^0(\PP^1_a)\to \CC$ to be $\ZZ$-linear and for $\chi_{j,p}:K^0(\PP^1_a)\to \CC$ to be a ring homomorphism. 

The vector space $H$ will be equipped with 3 bilinear pairings that will play very important role in this paper. The first one is the orbifold Poincar\'e pairing:
\beq\label{opp}
(\ ,\ ): H\otimes H\to \CC,\quad 
(\phi_i,\phi_j) = \int_{[I\PP^1_a]} \phi_i\cup \operatorname{inv}^*(\phi_j),
\eeq
where $\operatorname{inv}:I\PP^1_a \to I\PP^1_a$ is the involution that maps $Z^g/G\to Z^{g^{-1}}/G$ via the identity map and 
\ben
[I\PP^1_a]:= \sum_{g\in G_1\cup G_2\cup G_3} 
\frac{1}{|\operatorname{Stab}(Z^g)|}\, [Z^g/G]
\een
is the orbifold fundamental cycle of $I\PP^1_a$, where 
$\operatorname{Stab}(Z^g):=\{t\in G\ |\ t\cdot z=z\ \forall z\in Z^g\}$ and $[Z^g/G]$ is the fundamental cycle of $Z^g/G$ as a variety. Using the above definition, it is straightforward to find 
\ben
(\phi_{i,p},\phi_{j,q})= \frac{1}{a_i}\, \delta_{i,j}\, \delta_{p+q,a_i},\quad 
\forall (i,p), (j,q)\in \mathcal{B},
\een
where recall that $a_0:=1.$ The second pairing is the {\em Euler pairing}
\beq\label{euf}
\langle a,b\rangle := \frac{1}{2\pi} (a,
e^{\pi\ii \theta} b), \quad a,b\in H
\eeq
where 
\ben
\theta: H\to H,\quad 
\theta(\phi_{i,p}) = \Big(\tfrac{1}{2}-\tfrac{p}{a_i}\Big)\,\phi_{i,p}
\een
is the {\em grading operator}. Using the Kawasaki--Riemann--Roch formula, one can prove that the map $\Psi$ intertwines the pairing
$\langle\ ,\ \rangle$ and the Euler pairing on $K^0(X)$, that is,
\beq\label{euler-pairing}
\chi(E_1^\vee\otimes E_2) =
\langle \Psi(E_1),  \Psi(E_2)\rangle .
\eeq
We can visualize the Euler pairing via the diagram on Figure \ref{fig:ep}. The vertices are labeled with $1$, $L:=L_1^{a_1}=L_2^{a_2}=L_3^{a_3}$, $\gamma_{i,p}:=L_i^{-p}-L_i^{-p+1}$ ($1\leq i\leq 3$, $1\leq p\leq a_i-1$). First of all $\langle a, a\rangle=1$ for all vertices $a$. The arrows between two {\em different} vertices $a$ and $b$ are drawn according to the following rules: if $\langle a,b\rangle=i$, then we draw $|i|$ arrows from $a$ to $b$, the arrows are either dashed or solid depending on whether respectively $i>0$ or $i<0$, and no arrows from $a$ to $b$ means that $\langle a,b\rangle=0$. 
\medskip

\begin{figure}
\begin{tikzpicture}[>=Stealth]
\draw[gray,thick,->] (0,0) -- (1.4,0);
\draw[gray,thick,->] (1.5,0) -- (3.4,0);
\draw[gray,thick,->] (3.5,0) -- (5.4,0);
\draw[gray,thick,->] (5.5,0) -- (7.4,0);
\filldraw[black] (1.5,0) circle (4pt); 
\filldraw[black] (3.5,0) circle (4pt); 
\filldraw[black] (5.5,0) circle (4pt); 
\filldraw[black] (7.5,0) circle (4pt); 
\draw[gray,thick] (0,0) -- (0.8,0.908);
\draw[gray,thick,->] (0.88,1) -- (1.4,1.57);
\draw[gray,thick,->] (1.5,1.7) --(2.4,2.72);
\draw[gray,thick,-] (2.5,2.83) -- (3,3.4);
\filldraw[black] (3.125,3.542) circle (1pt);
\filldraw[black] (3.25,3.683) circle (1pt);
\filldraw[black] (3.375,3.825) circle (1pt); 
\draw[gray,thick,->] (3.5,3.967) -- (3.9,4.42);
\filldraw[black] (1.5,1.7) circle (4pt); 
\filldraw[black] (2.5,2.83) circle (4pt); 
\filldraw[black] (4,4.53) circle (4pt); 
\draw (3,1.7) node {$\gamma_{i,1}=L_i^{-1}-1$};
\draw (4.3,2.83) node {$\gamma_{i,2}=L_i^{-2}-L_i^{-1}$};
\draw (6.5,4.53) node {$\gamma_{i,a_i-1}=L_i^{-a_i+1}-L_i^{-a_i+2}$};
\draw[gray,thick,->] (0,0) -- (-1.4,0);
\draw[gray,thick,->] (-1.5,0) -- (-3.4,0);
\draw[gray,thick,->] (-3.5,0) -- (-5.4,0);
\filldraw[black] (-1.5,0) circle (4pt); 
\filldraw[black] (-3.5,0) circle (4pt); 
\filldraw[black] (-5.5,0) circle (4pt); 
\draw[gray, thick,dashed,->] (0.1,0) -- (0.1,1.9);
\draw[gray, thick,dashed,->] (-0.1,0) -- (-0.1,1.9);
\draw[gray,thick,->] (0,2) -- (1.4,1.72);
\draw[gray,thick,->] (0,2) -- (1.49,0.13);
\draw[gray,thick,->] (0,2) -- (-1.49,0.13);
\filldraw[black] (0,2) circle (4pt); 
\filldraw[black] (0,0) circle (4pt); 
\draw (0,-0.5) node {$1$};
\draw (0,2.5) node {$L$};
\end{tikzpicture}
\caption{
Solid arrows are assigned weight $-1$ and dashed arrows -- weight $+1$.
The Euler pairing between two different vertices is the weighted count of arrows between them. }
\label{fig:ep}
\end{figure}

Finally, the third pairing 
\ben
(a|b):=\langle a,b\rangle + \langle b,a\rangle,\quad a,b\in H
\een
is called the {\em intersection pairing}. 
From now on we will use $\Psi$ to identify $H$ and $K^0(\PP^1_a)\otimes
\CC$. The above formulas allow us to say that the
intersection pairing is the symmetrization of the Euler pairing.

\subsection{Gromov--Witten invariants, vertex operators, and elliptic root systems}
\label{sec:GVE}
Recall that the descendant orbifold Gromov--Witten invariants of
$\PP^1_a$ are defined as intersection numbers
\beq\label{GW}
\langle \phi_1\psi_1^{k_1},\dots,\phi_r\psi_r^{k_r}\rangle_{g,r,d} :=
\int_{[\overline{\M}_{g,r}(\PP^1_a,d)]^{\rm virt}}
\operatorname{ev}^*(\phi_1\otimes \cdots\otimes \phi_r) \psi_1^{k_1}\cdots \psi_r^{k_r},
\eeq
where $\phi_i\in H$, $g,r,d\in \ZZ_{\geq 0}$, and the integral is interpreted as cap product with
the virtual fundamental cycle on the moduli space
$\overline{\M}_{g,r}(\PP^1_a,d)$ of orbifold stable 
maps (see \cite{AGV2008,CR2002}). The following generating function
\ben
\mathcal{D}(\hbar,\mathbf{t}) = \exp\Big(\sum_{g,r,d=0}^\infty
\hbar^{g-1}\frac{Q^d}{r!}
\langle \mathbf{t}(\psi_1),\dots,\mathbf{t}(\psi_r)\rangle_{g,r,d}\Big)
\een
is called the {\em total descendant potential}. Here $Q\in \CC^*$ is a
complex parameter called the {\em Novikov variable}, $\hbar$ is a formal variable, 
$t_0,t_1,\dots\in H$ are formal vector variables, and
$\mathbf{t}(z)=\sum_{k=0}^\infty t_k z^k$. The components of the vector
variable $t_k$ with respect to the basis $\phi_{i,p}$ ($(i,p)\in \mathcal{B}$) will be denoted by $t_{k,i,p}$. The total descendant potential is a formal power series belonging to the following {\em Fock space}:
\beq\label{fock}
\CC[t_{0,0,0}](\!(\hbar)\!)[\![
t_{k,i,p}\ : \ (k,i,p)\neq(0,0,0) ]\!],
\eeq
that is, formal power series in all $t_{k,i,p}$ with $(i,p)\in \mathcal{B}$, $k\geq 0$, and  $(k,i,p)\neq (0,0,0)$  whose coefficients are formal Laurent series in $\hbar$ whose coefficients are polynomials in $t_{0,0,0}$. The polynomial dependence on $t_{0,0,0}$ is a consequence of the so-called {\em string equation}. 

Let us define vertex operators acting on the Fock space \eqref{fock}. Put  
\beq\label{cper}
I_E^{(m)}(\lambda):= \Big(
\frac{
  \lambda^{\theta-m-\tfrac{1}{2}} }{
  \Gamma(\theta-m+\tfrac{1}{2}) }
\Big)\Psi(E),\quad E\in K^0(\PP^1_a),\quad m\in \CC.
\eeq
We refer to \eqref{cper} as the {\em calibrated periods} of $\PP^1_a$. Most of the time we will be interested in calibrated periods with $m\in \ZZ$, but we will need the general case too. 
Let us introduce the vertex operators
\beq\label{vop}
\Gamma^E(\lambda):=\prod_{(j,p)\in \mathcal{B}}\, \exp\left(
\sum_{k=0}^\infty 
\Big(
I_E^{(-k-1)}(\lambda),\phi_{j,p}
\Big) \,
\frac{  t_{k,j,p}  }{  \sqrt{\hbar}   }
\right)
\exp\left(
\sum_{k=0}^\infty 
\Big(
I_E^{(k)}(\lambda),\phi^{j,p} \Big) \,
(-1)^{k+1} \sqrt{\hbar}\partial_{k,j,p}
\right),
\eeq
where $\partial_{k,j,p} = \tfrac{\partial}{\partial t_{k,j,p}}$ and 
$\{\phi^{j,p}\}$ is the dual basis with respect to the orbifold  Poincar\'e pairing:
\ben
\phi^{0,0}= P,\quad
\phi^{0,1}=1,\quad
\phi^{j,p} = a_j \phi_{j,a_j-p}\quad
(1\leq j\leq 3,\ 1\leq p\leq a_j-1).
\een
Let us denote by $\Gamma_{jp}^E(\lambda)$ the vertex operator
corresponding to the $(j,p)$-term in the product defining
$\Gamma^E(\lambda)$. In particular, $\Gamma^E(\lambda)=\prod_{j,p}
\Gamma^E_{jp}(\lambda)$. Recalling formulas \eqref{psi_E} and \eqref{cper},
after a straightforward computation, we get 
\begin{align*}
\Gamma^E_{0,0}(\lambda)  = & \exp\left(
  \sum_{k=0}^\infty
  \frac{\lambda^k}{k!} \Big(
  2\pi\ii \operatorname{deg}(E) -
  \operatorname{rk}(E)\, \log Q  \Big) \,
\frac{  t_{k,0,0}  }{  \sqrt{\hbar}   }
\right)
\exp\left(
  -\operatorname{rk}(E)\, \sqrt{\hbar}\partial_{0,0,0}
\right), \\
  \Gamma^E_{0,1}(\lambda)  = &
\exp\left(
\operatorname{rk}(E)\,            
\sum_{k=0}^\infty
\frac{\lambda^{k+1}}{(k+1)!} \,
\frac{  t_{k,0,1}  }{  \sqrt{\hbar}   }
\right), \\
\Gamma^E_{j,p}(\lambda) = & \exp\left(
\frac{1}{a_j}
  \sum_{k=0}^\infty
                            \frac{\lambda^{k+p/a_j}}{
\frac{p}{a_j} (1+\frac{p}{a_j})\cdots (k+ \frac{p}{a_j})  } \, 
  \chi_{j,-p}(E)\,
\frac{  t_{k,j,p}  }{ \sqrt{\hbar}   }
                            \right) \times \\
  &
                          \exp\left(
                          -
                          \sum_{k=0}^\infty
                          \tfrac{p}{a_j}
                          \Big(\tfrac{p}{a_j}+1\Big)\cdots
                          \Big(\tfrac{p}{a_j}+k-1\Big)\,
                          \lambda^{-k-p/a_j}\, \chi_{j,p}(E)\,
                          \sqrt{\hbar}\partial_{k,j,p}
                          \right),
\end{align*}
where $1\leq j\leq 3$ and $1\leq p\leq a_j-1$. The following automorphism plays a central role in our construction:
\beq\label{cm}
\tau: K^0(\PP^1_a)\to K^0(\PP^1_a),\quad \tau(E)=T\PP^1_a\otimes E,
\eeq
where $T\PP^1_a$ is the orbifold tangent bundle. The vertex operators are $\tau$-twisted, that is, the analytic continuation along a counter-clock-wise oriented loop around $\lambda=0$ transforms $\Gamma^E(\lambda)$ into $\Gamma^{\tau(E)}(\lambda).$ We will refer to $\tau$ as the {\em classical monodromy} operator because under mirror symmetry it represents the big loop around the discriminant. Our construction of HQEs relies on the spectral decomposition
\beq\label{sd_tau}
K(\PP^1_a,\CC)=K_0(\PP^1_a,\CC)\oplus K_\perp (\PP^1_a,\CC),
\eeq
where $K(\PP^1_a,\CC) := K^0(\PP^1_a)\otimes \CC$, $K_0(\PP^1_a,\CC)$ is spanned by the eigenvectors of $\tau$ with eigenvalue 1, and $K_\perp(\PP^1_a,\CC)$ is the subspace spanned by the eigenvectors of eigenvalue $\neq 1$. There is a close relation between $\tau$ and the grading operator $\theta$. Namely, 
Iritani's map $\Psi$ induces an isomorphism $K(\PP^1_a,\CC)\cong H$ and $\tau=\Psi^{-1} e^{2\pi\ii \, (\theta-\tfrac{1}{2})}\,\Psi$. In particular, we get from here that $\tau$ is a finite order automorphism of order $l=\operatorname{lcm}(a_1,a_2,a_3)$. Note that if $a_1\leq a_2\leq a_3$, then $l=a_3$. It is also easy to check that
\ben
K_0(\PP^1_a,\CC)=\operatorname{Ker} (\ |\ )=
\{x\in K(\PP^1_a,\CC)\ |\ (x|y)=0\ \forall y\in K(\PP^1_a,\CC)\}
\een
and that the restriction of $(\ |\ )$ to $K_\perp(\PP^1_a,\CC)$ is non-degenerate. 
Let $\Phi:=\{E\in K^0(\PP^1_a)\ |\ (E|E)=2\}$. The set $\Phi$ coincides with the elliptic root system of type $E_{N-2}^{(1,1)}$ where $N=a_1+a_2+a_3-1$ (see \cite{Sa} for more details on elliptic root systems). The structure of $\Phi$ can be understood as follows. Put 
\begin{align*}
\Phi_{\rm aff} & :=\{ \alpha\in \Phi\ |\ \langle \alpha, L^{-1}\rangle =0\},\\
\Phi_\perp & :=
\{ \alpha\in \Phi\ |\ 
\langle \alpha, L^{-1}\rangle = \langle \alpha, L_3^{-a_3+1}\rangle =0\}.
\end{align*}
Then $\Phi_\perp$ is a root system of type $E_{N-2}$ and 
the subset (see Figure \ref{fig:ep}) 
\ben
\{1,\, 
\gamma_{i,p}\ (1\leq i\leq 2, 1\leq p\leq a_i-1),\, 
\gamma_{3,p} (1\leq p\leq a_3-2)\}\quad \subset \quad \Phi_\perp
\een
is a set of simple roots for which the corresponding Dynkin diagram coincides with the Dynkin diagram of type $E_{N-2}$. Furthermore, $\Phi_{\rm aff}$ is an affine root system of type $E_{N-2}^{(1)}$ and the subset $\{1, \gamma_{i,p}\ (1\leq i\leq 3, 1\leq p\leq a_i-1)\}$ is a set of simple roots for which the corresponding Dynkin diagram coincides with the affine Dynkin diagram of type $E_{N-2}^{(1)}$. Finally, we have 
\begin{align}
\label{ell-s}
\Phi  =\Phi_{\rm aff} + \ZZ\, (L-1)   = 
\{\alpha+m (L-1)\ |\ \alpha \in \Phi_{\rm aff},\ m\in \ZZ\} 
\end{align}
and 
\begin{align}
\label{aff-s}
\Phi_{\rm aff} = \Phi_\perp + \ZZ\,\delta  = 
\{\alpha+n\delta\ |\ \alpha\in \Phi_\perp,\ n\in \ZZ\}
\end{align}
where 
\beq\label{delta}
\delta:= l\, \Big(-2+
\sum_{i=1}^3 \frac{1}{a_i} (1+L_i^{-1}+\cdots + L_i^{-a_i+1})
\Big)=
l+\sum_{j=1}^3\sum_{p=1}^{a_j-1} 
l \,(1-p/a_j) \gamma_{j,p}.
\eeq
In other words, $\Phi_{\rm aff}^{\rm im}:=\ZZ\, \delta$ is the set of imaginary affine roots. 

\subsection{The bilinear operator}\label{sec:bilo}
The main ingredient in the definition of the HQEs for the total descendant potential of $\PP^1_a$ will be a bilinear operator, acting on the tensor square of the Fock space \eqref{fock}, of the following form:
\beq\label{affine_casimir}
\Omega_{\rm aff}(\lambda):=
\sum_{E\in \Phi_{\rm aff}} b_E(\lambda) \Gamma^E(\lambda)\otimes
\Gamma^{-E}(\lambda)-
\Big(
\sum_{F\in \Phi_{\rm aff}^{\rm im}} b_F(\lambda) 
\Gamma^F(\lambda)\otimes \Gamma^{-F}(\lambda)
\Big)
L(\lambda).
\eeq
Let us define the coefficients $b_E(\lambda)$ and $b_F(\lambda)$ and the operator $L(\lambda)$. 

Let us define $B^{E,F}(\lambda,\mu)$ by taking the product of vertex operators
\beq\label{product_f}
\Gamma^E(\lambda)\Gamma^F(\mu) =
B^{E,F}(\lambda,\mu) 
: \Gamma^E(\lambda)\Gamma^F(\mu)  :\ .
\eeq
In other words, $B^{E,F}(\lambda,\mu)$ is the factor that one gains by moving the annihilation part of $\Gamma^E(\lambda)$ through the creation part of $\Gamma^F(\mu)$. Since we have explicit formulas for the vertex operators, it is straightforward to obtain an explicit formula for $B^{E,F}(\lambda,\mu)$ -- see Section \ref{sec:phf}. We have
\beq\label{phase_B}
B^{E,F}(\lambda,\mu)=
e^{
-2\pi\ii\, 
\operatorname{rk}(E)
\operatorname{deg}(F) }\, 
Q^{\operatorname{rk}(E)\operatorname{rk}(F)}\,
\Big( 1- (\mu/\lambda)^{1/l}  \Big)^{(E|F)} \,
\prod_{k=1}^{l-1}
\Big( 1- \xi^k (\mu/\lambda)^{1/l}  \Big)^{(\tau^k(E)|F)}\, ,
\eeq
where $\xi=e^{2\pi\ii/l}$. 
The coefficients in front of the vertex operators are defined as follows:
\beq\label{b_E}
b_E(\lambda)=
\lim_{\mu\to \lambda} 
\frac{B^{E,E}(\lambda,\mu)}{(\lambda-\mu)^{(E|E)}},\quad 
E\in \Phi_{\rm aff}\cup \Phi_{\rm aff}^{\rm im}.
\eeq
Using formula \eqref{phase_B} we get 
\beq\label{bE}
b_E(\lambda) = \frac{1}{l^2} \,
\prod_{k=1}^{l-1} (1-\xi^k)^{(\tau^k(E)|E)}\,
Q^{\operatorname{rk}(E)^2}
e^{-2\pi\ii \operatorname{rk}(E) \operatorname{deg}(E)}\, \lambda^{-2},
\quad E\in \Phi_{\rm aff},
\eeq
and 
\ben
b_F(\lambda) = e^{-2\pi\ii 
\operatorname{rk}(F)\, 
\operatorname{deg}(F)} \, Q^{\operatorname{rk}(F)^2} = 
Q^{\operatorname{rk}(F)^2},\quad 
F\in \Phi_{\rm aff}^{\rm im},
\een
where we used that $\operatorname{rk}(F), \operatorname{deg}(F)\in \ZZ$ for all $F\in\Phi_{\rm aff}^{\rm im}$.
\begin{remark}
The coefficients in front of the vertex operators in \eqref{affine_casimir} are defined in terms of the product formula \eqref{product_f}. This principle was first observed in \cite{FGM2010} in the settings of simple singularities. The same formula works in our case too although the relation to representation theory here is not clear yet.  \qed
\end{remark}
Let us define the operator $L(\lambda)$. We split the definition into two parts. Let us write $L(\lambda)=L_0(\lambda)+L_\perp(\lambda)$. The component $L_\perp(\lambda)$ is constructed in a standard way by using some variation of the {\em Sugawara construction}. Namely, let  
$\{\alpha_i\}_{i=1}^{N-2}$ and $\{\beta_i\}_{i=1}^{N-2}$ be two bases of $K_\perp(\PP^1_a,\CC)$ dual with respect to the intersection pairing: $(\alpha_i|\beta_j)=\delta_{i,j}$ for $1\leq i,j\leq N-2$. For example, let us choose $\{\alpha_i\}_{i=1}^{N-2}$ to be the eigen-basis of $\tau$ corresponding via the isomorphism $\Psi$ to 
$\{\phi_{i,p}\}_{1\leq i\leq 3, 1\leq p\leq a_i-1}$. The dual basis is uniquely determined by such a choice. Put
\beq\label{vir-field_1}
L_\perp(\lambda) := 
\frac{1}{2}\, \operatorname{tr}\Big(
\frac{1}{4} + \theta\,\theta^T \Big) \lambda^{-2}+
\frac{1}{2}
\sum_{i=1}^{N-2}
:\Big(
\phi_{\alpha_i}(\lambda)\otimes 1- 1\otimes
\phi_{\alpha_i}(\lambda)
\Big)\, 
\Big(
\phi_{\beta_i}(\lambda)\otimes 1- 1\otimes
\phi_{\beta_i}(\lambda)
\Big):
\eeq
where $\phi_\alpha(\lambda):=\partial_\lambda \log \Gamma^\alpha(\lambda)$, that is, 
\ben
\phi_\alpha(\lambda)=
\sum_{k=0}^\infty 
\sum_{(j,p)\in \mathcal{B}}
\left(
\Big(
I_\alpha^{(-k)}(\lambda),\phi_{j,p}
\Big) \,
\frac{  t_{k,j,p}  }{  \sqrt{\hbar}   } +
\Big(
I_\alpha^{(k+1)}(\lambda),\phi^{j,p} \Big) \,
(-1)^{k+1} \sqrt{\hbar}\frac{\partial}{\partial t_{k,j,p}}\right).
\een
Let us define $L_0(\lambda)$. Let us fix bases $\{\alpha_{N-1},\alpha_N\}$ and $\{\beta_{N-1},\beta_N\}$ of $K_0(\PP^1_a,\CC)$ dual with respect to the Euler pairing: $\langle \alpha_i,\beta_j\rangle =\delta_{i,j}$ for $i,j\in \{N-1,N\}$. Note that the restriction of the Euler pairing to $K_0(\PP^1_a,\CC)$ is a symplectic form and that $\{\delta/l, L-1\}$ is a Darboux basis: 
\ben
\langle \delta/l, L-1\rangle = -\langle L-1,\delta/l\rangle =1.
\een
Therefore, we may choose $\alpha_{N-1}=\delta/l$, $\alpha_N=L-1$ and 
$\beta_{N-1}=L-1$, $\beta_N=-\delta/l$. Put
\ben
L_0(\lambda)= \tfrac{1}{2}
\sum_{i=N-1,N}
\tfrac{1}{2\pi\ii}\, \left.\tfrac{d}{d\nu}
:\Big(
\phi^\nu_{\alpha_i}(\lambda)\otimes 1- 1\otimes 
\phi^\nu_{\alpha_i}(\lambda)\Big)\, 
\Big(
\phi^{-\nu}_{\beta_i}(\lambda)\otimes 1- 1\otimes 
\phi^{-\nu}_{\beta_i}(\lambda)\Big):\,\right|_{\nu=0} ,
\een
where 
\ben
\phi_\alpha^\nu(\lambda):=
\sum_{k=0}^\infty 
\sum_{(j,p)\in \mathcal{B}}
\left(
\Big(
I_\alpha^{(-k+\nu)}(\lambda),\phi_{j,p}
\Big) \,
\frac{  t_{k,j,p}  }{  \sqrt{\hbar}   } +
\Big(
I_\alpha^{(k+1+\nu)}(\lambda),\phi^{j,p} \Big) \,
(-1)^{k+1} \sqrt{\hbar}\frac{\partial}{\partial t_{k,j,p}}\right).
\een
Recalling the definition \eqref{cper} of the calibrated periods we get that for every fixed $\lambda\neq 0$ the coefficients of the differential operator $\phi^\nu_\alpha(\lambda)$ are entire functions in $\nu\in \CC$. In particular, the $\nu$-derivative in the definition of $L_0(\lambda)$ exists. This completes the construction of the bilinear operator \eqref{affine_casimir}.
\begin{remark}
The definition of $L(\lambda)$ is closely related to the free field realization of the Virasoro operators introduced by Dubrovin and Zhang -- see formula (2.37) in \cite{DZ1999}.
\end{remark}

\subsection{Main result}
\begin{definition}
\label{def:aff_HQE}
Let $\mathcal{D}(\hbar,\mathbf{t})$ be a formal power series in the variables 
\ben
t_{k,0,0}\quad (k>0),\quad 
t_{k,0,1} \quad (k\geq 0),
\quad t_{k,j,p}\quad (k\geq 0, 1\leq j\leq 3, 1\leq p\leq a_j-1 )
\een
with coefficients in $\CC[t_{0,0,0}](\!(\hbar)\!)$. We say that 
$\mathcal{D}(\hbar,\mathbf{t})$ satisfies the descendant HQEs of $\PP^1_a$ if for every integer $m\in \ZZ$, the formal power series 
\beq\label{hqe-desc}
\left.
\Omega_{\rm aff}(\lambda) (
\mathcal{D}(\hbar,\mathbf{t}')
\mathcal{D}(\hbar,\mathbf{t}''))
\right|_{t_{0,0,0}'-t_{0,0,0}''=m\sqrt{\hbar}, t_{k,0,0}'=t_{k,0,0}'' (k>0)}
\eeq
is regular in $\lambda$, that is, the coefficients in front of the monomials in $\mathbf{t}'$, $\mathbf{t}''$, and $\hbar$ are polynomials in $\lambda.$  
\end{definition}
The substitution in \eqref{hqe-desc} eliminates the dynamics in the variables $t_{k,0,0}$ ($k>0$), that is, these variables should be viewed as parameters. Our hierarchy can be extended to include differential equations in $t_{k,0,0}$ ($k>0$) too but in this paper, for the sake of simplicity, we decided not to deal with the extended flows. Furthermore, let us point out that it seems that the constant function $\mathcal{D}(\hbar,\mathbf{t})=1$ is a solution to \eqref{hqe-desc}. Although we do not have a conceptual proof of this fact, we made several numerical computations with Mathematica for the case $a=(3,3,3)$.  There are only finitely many identities to be checked and in all cases we got a perfect match. 
After these preliminary remarks, let us state our main result. 
\begin{theorem}\label{thm:HQE_d}
The total descendant potential $\mathcal{D}(\hbar,\mathbf{q})$ of $\PP^1_a$ satisfies the descendant HQEs of $\PP^1_a$.     
\end{theorem}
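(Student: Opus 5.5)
The proof will follow the Givental--Milanov strategy \cite{GM2005}: reduce the descendant HQEs to an ancestor statement near a semisimple point, and verify that statement locally using the Givental--Teleman formula. The analytic input from Section~\ref{sec:ppp} replaces the monodromy-kernel argument of \cite{GM2005}.

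First, I fix a generic semisimple point $t\in H$ of the quantum cohomology of $\PP^1_a$. By the Kontsevich--Manin/Givental relation, $\mathcal{D}=e^{F^{(1)}(t)}\,\hat{S}_t^{-1}\mathcal{A}_t$. I then conjugate $\Omega_{\rm aff}(\lambda)$ by $\hat{S}_t\otimes\hat{S}_t$. Each vertex operator $\Gamma^E(\lambda)$ goes to a scalar multiple of an ancestor vertex operator $\Gamma^E_t(\lambda)$, built from periods $I^{(m)}_E(t,\lambda)$ of the Frobenius manifold. The scalar factors should cancel between $\Gamma^E\otimes\Gamma^{-E}$. The Sugawara-type term $L(\lambda)$ transforms covariantly, with an explicit scalar correction. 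This correction is fixed by the constant $\tfrac12\operatorname{tr}(\tfrac14+\theta\theta^T)\lambda^{-2}$ and the $\nu$-derivative regularization of $L_0$. The upshot is that regularity in $\lambda$ of \eqref{hqe-desc} becomes equivalent to the statement that $\Omega_{\rm aff}(t,\lambda)(\mathcal{A}_t\otimes\mathcal{A}_t)$ is single-valued in $\lambda$ on $\CC\setminus\{u_1,\dots,u_N\}$ and regular at each critical value $u_i$ of the canonical coordinates. Indeed, polynomiality at $t=0$ in $\lambda$ is recovered by analyticity in $\lambda$ near $\infty$ together with the form of the expansion. The shift $t'_{0,0,0}-t''_{0,0,0}=m\sqrt\hbar$ accounts for the factors $e^{-\operatorname{rk}(E)\sqrt\hbar\partial_{0,0,0}}$ and $Q^{\operatorname{rk}(E)^2}$.

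Second, I will prove single-valuedness. Monodromy around the $u_i$ acts on $K^0(\PP^1_a)$ by reflections in vanishing cycles; via the refined Dubrovin conjecture \cite{CM2024} these are the reflection vectors, which are elements of $\Phi$. The sum over $\Phi_{\rm aff}$, however, is not monodromy invariant: the full monodromy group preserves $\Phi=\Phi_{\rm aff}+\ZZ(L-1)$. I will handle this with the decomposition \eqref{ell-s}. Conjugation by the translation $E\mapsto E+m(L-1)$ acts on the vertex operators as a shift in $t_{0,0,0}$, combined with the factors computed from \eqref{phase_B}, because $L-1\in K_0$ pairs trivially under $(\,|\,)$. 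This is why the HQEs are imposed for every $m\in\ZZ$: summing over $m$ formally restores full $\Phi$-invariance. The imaginary-root correction term $\sum_F b_F\Gamma^F\otimes\Gamma^{-F}L(\lambda)$ must then be shown to cancel the non-invariance of $L_0$. I will check directly that $L_0$ transforms under the monodromy by an additive term in $\Gamma^\delta$-type operators, using $\langle\delta/l,L-1\rangle=1$.

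Third, I will prove regularity at $u_i$ locally, as in \cite{GM2005}. Near $u_i$, I split $\Phi$ into the roots $\pm\alpha_i$ with $\alpha_i$ the vanishing cycle and the rest. Pairs $\{E,E+\alpha_i\}$ with $(E|\alpha_i)=-1$ combine into $\alpha_i$-invariant combinations. The term $\pm\alpha_i$, together with the part of $L$ along $\alpha_i$, reduces via $\mathcal{A}_t=\hat\Psi\hat R\,\prod\mathcal{D}_{pt}$ to the Kac--Wakimoto form of the KdV HQE, including its higher Casimir modes (Proposition~\ref{prop:kdv-kw}). Conjugating by $\hat R$ requires the propagator analysis of Section~\ref{sec:ppp}: one must show that the phase factors $B^{E,F}$ and the composition of $\hat R$-conjugated vertex operators extend analytically across $u_i$. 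This is where the constructions of Section~\ref{sec:Bnp} (the Borcherds $n$-products) enter, expressing the Laurent coefficients of the bilinear operator at $u_i$.

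The main obstacle is the second step. The translations by $L-1$ and the imaginary roots $\ZZ\delta$ break the naive monodromy invariance that holds for simple singularities. Proving that the $L_0$ term together with the $\Gamma^F\otimes\Gamma^{-F}$ correction exactly compensates this, rather than leaving a residual non-single-valued piece, will require careful control of the $\nu$-derivative terms and of the propagators near $\lambda=\infty$. I would first verify the compensation at the level of leading $\lambda$-asymptotics using the Darboux basis $\{\delta/l,L-1\}$, and then bootstrap using the analyticity results of Section~\ref{sec:ppp}.
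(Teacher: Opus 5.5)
Your overall architecture matches the paper's. You conjugate by $\widehat{S}(t)^{-1}$ to the ancestor HQEs, prove single-valuedness of the coefficients away from the discriminant, and then do a local analysis at $u_i$ through $\widehat{R}(t)$ and the two forms of KdV, including Proposition~\ref{prop:kdv-kw}. There are, however, two genuine gaps, plus some missing ingredients.

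\textbf{Gap 1: the step you call the main obstacle rests on a false premise.} The full Virasoro term $L(t,\lambda)$ is already monodromy invariant. By Proposition~\ref{prop:anc_cv} it is the residue at $\nu=0$ of a Casimir-type expression for the pairing $h_\nu$ of \eqref{hp}, and the twisted monodromy preserves $h_\nu$. Likewise $\Gamma^{n\delta}(t,\lambda)$ is invariant because $\delta$ lies in the radical of $(\ |\ )$. So the imaginary-root term has nothing to compensate. Your proposed check could not work in any case. That term multiplies $L$ rather than being added to it. Moreover, monodromy acts linearly on the currents $\phi^\nu_\alpha(t,\lambda)$, so the quadratic expression $L_0$ cannot acquire additive $\Gamma^{\delta}$-type terms.

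What single-valuedness really requires, and what you omit, is the monodromy compatibility of the coefficients. The $\widehat{S}$-factors of $\Gamma^E$ and $\Gamma^{-E}$ do not cancel; they multiply to $e^{W(\mathbf{f}^+_E,\mathbf{f}^+_E)}$, which produces $b_E(t,\lambda)$. Proving $b_E(t,\lambda)\mapsto b_{r_\varphi(E)}(t,\lambda)$ needs the phase-period formula of Proposition~\ref{prop:ph_period} together with $(\alpha|\varphi)\langle\varphi,\beta\rangle\in\ZZ$.

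Also, the $\ZZ(L-1)$-translations are not handled by summing over $m$. For each fixed integer $m$, $\Gamma^{k(L-1)}(t,\lambda)\otimes\Gamma^{-k(L-1)}(t,\lambda)$ is a multiplication operator, since $\mathbf{f}^+_{L-1}(t,\lambda,z)=0$. The substitution turns it into $e^{2\pi\ii km}=1$, and in addition $b_{\beta+k(L-1)}=b_\beta$.

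\textbf{Gap 2: your Step 3 is incomplete, and this is where the imaginary-root term is actually needed.} For the vanishing cycle $\varphi$ at $u_i$, the roots $\alpha\in\Phi_{\rm aff}$ with $(\alpha|\varphi)=\pm2$ are not just $\pm\varphi$. They form the infinite family $\pm(\varphi+k(L-1)+n\delta)$, $n\in\ZZ$. Each such term is singular at $u_i$ and can only be regularized together with the Virasoro term. The paper uses
$b_{\pm\varphi+n\delta}(t,\lambda)=b_{\pm\varphi}(t,\lambda)\,b_{n\delta}(t,\lambda)\,e^{\mp2nl(I^{(-1)}_\varphi(t,\lambda),1)}$.
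With this, the family together with $-\sum_n b_{n\delta}\Gamma^{n\delta}\otimes\Gamma^{-n\delta}L(t,\lambda)$ equals the product of two factors, modulo multiples of $1-\Gamma^{k(L-1)}\otimes\Gamma^{-k(L-1)}$ (which vanish after the substitution):
\begin{itemize}
\item the factor $\sum_n b_{n\delta}\Gamma^{n\delta}\otimes\Gamma^{-n\delta}$, which is regular at $u_i$;
\item the Kac--Wakimoto operator $b_\varphi\Gamma^\varphi\otimes\Gamma^{-\varphi}+b_{-\varphi}\Gamma^{-\varphi}\otimes\Gamma^{\varphi}-L$.
\end{itemize}
This factorization is why $L$ carries the theta-type factor in $\Omega_{\rm aff}$.

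\textbf{Further missing ingredients.}
\begin{itemize}
\item Reducing polynomiality in $\lambda$ to regularity at the $u_i$ needs the coefficients to be genuine analytic functions with polynomial growth (not analyticity) at $\lambda=\infty$. Since they are infinite sums over $n$, the paper uses tameness (Proposition~\ref{prop:wave}) and the formula above for $b_{\beta+n\delta}$. It shows they are polynomials in periods times derivatives of $\theta_a(\tau,z_\beta)$, with $z_\beta$ bounded near $\infty$.
\item For the pairs with $(\alpha|\varphi)=\pm1$, regularity does not follow from $r_\varphi$-invariance. After $\widehat{R}$-conjugation you need the sign relation $c_+=-c_-$, which comes from Proposition~\ref{prop:loc}. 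Only then do you land on the KP-reduction form of KdV.
\end{itemize}
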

The main feature of our HQEs \eqref{hqe-desc} is that they involve an infinite sum of vertex operators. We will return to this point again in Section \ref{sec:convergence}. For now, let us outline the main steps in proving that the infinite sums in our HQEs are convergent. The bilinear operator factorizes as follows:
\beq\label{cfac}
\Omega_{\rm aff}(\lambda) =  
\left(
\sum_{n\in \ZZ}
Q^{n^2l^2} \Gamma^{n\delta}(\lambda)\otimes \Gamma^{-n\delta}(\lambda) 
\right)\,
\left(- 
L(\lambda)  + 
\sum_{\alpha\in \Phi_\perp} b_\alpha(\lambda) 
\Gamma^{\alpha}(\lambda)\otimes 
\Gamma^{-\alpha}(\lambda) \right).
\eeq
The first factor on the RHS is obtained by formally substituting $x= \Gamma^{\delta/l}\otimes \Gamma^{-\delta/l}$ in the following function: 
\beq\label{theta}
\theta(Q,x)=\sum_{n\in \ZZ} Q^{n^2l^2} x^{n l},
\eeq
where the various powers of $x= \Gamma^{\delta/l}\otimes \Gamma^{-\delta/l}$ should be computed by using the normally ordered product. 
Note that under the substitutions $Q=e^{\pi\ii\tau/l^2}$ and $x=e^{2\pi \ii z/l}$, the function $\theta(Q,x)$ becomes the Riemann theta function of an elliptic curve (see \cite{Mu}). 
In particular, $\theta(Q,x)$ is an analytic function in two variables in the domain $\{(Q,x)\in \CC\times \CC^*\ |\ |Q|<1\} $. Quite remarkably, the GW invariants of $\PP^1_a$ are convergent for all $|Q|<1$. More precisely, it was proved by Milanov-Ruan in \cite{MiR} (see also \cite{IMRS}) that under the substitution $Q=e^{2\pi\ii\tau/l}$, the Gromov--Witten invariants of $\PP^1_a$ become quasi-modular forms in $\tau\in \mathbb{H}$ for the modular group $\Gamma(l)\subset \operatorname{SL}(2,\ZZ)$ (see \cite{MiSh}).  
Therefore, by requiring that $|Q|<1$, we resolve the divergence issues in both the generating function of GW invariants of $\PP^1_a$ and the HQEs \eqref{hqe-desc}. Note that the polynomial dependence of $\mathcal{D}(\hbar,\mathbf{t})$ on $t_{0,0,0}$ is very important because the vertex operator
\ben
\Gamma^{\delta/l}(\lambda) = 
\exp\Big(- 
(\log Q + 2\pi\ii) \,
\frac{t_{0,0,0}}{\sqrt{\hbar}}-
\sum_{k=1}^\infty 
\frac{\lambda^k}{k!} \, 
((\log Q+2\pi\ii) \, 
\frac{t_{k  ,0,0}}{\sqrt{\hbar}} - 
\frac{t_{k-1,0,1}}{\sqrt{\hbar}}  ) \Big)
\exp\Big(- 
\sqrt{\hbar}\partial_{0,0,0}\Big),
\een
involves a shift in the variable $t_{0,0,0}$. 

\subsection{Structure of the paper}
Section \ref{sec:gis} can be viewed as an extension of the introduction. We introduce the necessary background on gamma integral structures and work out the explicit formula for the bilinear operator $\Omega_{\rm aff}$. Givental's formula for the total descendant potential is recalled in Section \ref{sec:gf}. The formula is introduced in two steps. First, in Section \ref{sec:da}, by using the so-called {\em calibration} operator $S(t,z)$ and its quantization $\widehat{S}(t)$, the total descendant potential is transformed into the so called total {\em ancestor} potential. The later has a geometric interpretation -- it is the potential of the cohomological field theory constructed via the Gromov--Witten theory of $\PP^1_a$. In the second step, thanks to Teleman's theorem (see \cite{Te2012}), the total ancestor potential is transformed to a product of tau-functions of the KdV hierarchy (see Section \ref{sec:hgr}). The material in Sections \ref{sec:gis} and \ref{sec:gf} is well known. The proof of Theorem \ref{thm:HQE_d} starts in Section \ref{sec:vop}. Conjugating by $\widehat{S}(t)$, we transform the HQEs for the total descendant potential to HQEs for the total ancestor potential. This step is more or less the same as in the original paper by Givental (see \cite{Giv2003}). Nevertheless, now we have a better formula for the coefficients of the bilinear operator which was discovered in \cite{FGM2010} -- see formulas \eqref{b_E} and \eqref{b_Et}. Also we found an interpretation of the Virasoro operator $L(\lambda)$ in terms of the Borcherds $(-1)$-product (see \cite{Bo1986}). These two improvements simplify the conjugation to some extent. The core of our proof is in Section \ref{sec:aa_hqe}. 
The coefficients of the vertex operators of the ancestor HQEs are the so-called periods of the Frobenius structure.
Similarly to what was done in \cite{Giv2003}, using the tameness of the total ancestor potential, we prove that the coefficients of the ancestor HQEs can be expressed in terms of polynomials and theta series in the periods. This is done in Sections \ref{sec:apoly} and \ref{sec:ai_hqe}. The next step is to prove that coefficients of the ancestor HQEs are single valued analytic functions with possible poles only along the discriminant of the Frobenius manifold. This is the first place where the techniques from \cite{Giv2003} and \cite{GM2005} can not be used. There are two new ingredients: the first one is the connection formula for the propagator series -- see Proposition \ref{prop:conf} and the second one is the locality (in the sense of vertex algebras \cite{FB2001,Kac1998}) of the vertex operators which leads to the formula in Proposition \ref{prop:ph_period}. The locality is not a deep result but it requires some long computations (see \cite{Mi2014,MilSa}). The proof of the connection formula is based on the so-called Painleve property of a semi-simple Frobenius manifold. It is a rather involved argument making use of deep results from the theory of isomonodromic deformations. The proofs of both propositions are available in the draft of our book \cite{MilSa} written jointly with Kyoji Saito. Using these results, we prove that the ancestor HQEs are equivalent to proving that the coefficients of the ancestor HQEs extend analytically across the discriminant. This is a local computation done in the last section of the paper, i.e., Section \ref{sec:an_kdv}. Again compared to the original work of Givental \cite{Giv2003}, the local computation can be simplified by using the properties of the propagator series studied in Section \ref{sec:aa_hqe}. 

\subsection{Acknowledgments}
I would like to thank Bojko Bakalov for many useful discussions on Kac--Wakimoto hierarchies and lattice vertex algebras. 
I am also thankful to David Klompenhouwer for an useful discussion on the DR hierarchy and to Sergey Shadrin for very useful comments about the generalization of the DZ hierarchy.
This work is supported by the World Premier International Research Center Initiative (WPI Initiative), MEXT, Japan and by JSPS Kakenhi Grant Number JP22K03265.

\section{Gamma integral structure}\label{sec:gis}
The $\Gamma$-integral structure in quantum cohomology was introduced by Iritani in \cite{Iri2009}. We made several statements without proofs in the introduction about the $\Gamma$-integral structure of $\PP^1_a$. We would like to recall the necessary background on gamma integral structures and to prove the statements used in formulating Theorem \ref{thm:HQE_d}. 

\subsection{The integral lattice of Iritani}\label{sec:iis}

Let $IX$ be the {\em inertia orbifold} of $X$, that is, as a groupoid the points of $IX$ are 
\ben
(IX)_0=\{(x,g)\ |\ x\in X_0,\  g\in \operatorname{Aut}(x)\}
\een
while the arrows from $(x',g')$ to $(x'',g'')$ consists of all arrows
$g\in X_1$ from $x'$ to $x''$ such that 
$g''\circ g = g\circ g'$. It is known that $IX$ is an orbifold
consisting of several connected components $X_v$, $v\in
T:=\pi_0(|IX|)$.

We define a linear map 
\ben
\Psi: K^0(X)\to H^*(IX;\CC)=\oplus_{v\in T} H^*(X_v;\CC)
\een
via 
\beq\label{psi-map}
\Psi(E)=(2\pi)^{(1-\operatorname{dim}  X)/2}\ 
\Big(\widehat{\Gamma}(X) e^{-\sum_{i=1}^rP_i\log Q_i}\Big)\cup
(2\pi\ii)^{\operatorname{deg}}
\operatorname{inv}^* \widetilde{\operatorname{ch}}(E),
\eeq
where $\{P_i\}_{i=1}^r\subset H^2(|X|,\ZZ)$ is an ample basis and
$Q_i$ are the corresponding Novikov variables. 
Let us recall the rest of the notation. The linear operator 
\ben
\operatorname{deg}: H^*(IX;\CC)\to H^*(IX;\CC)
\een 
is defined by $\operatorname{deg} (\phi)=i \phi$ if $\phi \in H^{2i}(IX;\CC)$. 
The involution $\operatorname{inv}:IX\to IX$ inverts all arrows while
on the points it acts as $(x,g)\mapsto (x,g^{-1}).$ If $E$ is an
orbifold vector bundle, then we have an eigenbasis decomposition 
\ben
\operatorname{pr}^*(E) = 
\bigsqcup_{v\in T} E_v=
\bigsqcup_{v\in T}\oplus_{0\leq f<1} E_{v,f},
\een
where $\operatorname{pr}:IX\to X$ is the forgetful map $(x,g)\mapsto
x$ and $E_{v,f}$ is the subbundle of
$E_v:=\operatorname{pr}^*(E)|_{X_v}$ whose fiber over a point
$(x,g)\in (IX)_0$ is the eigenspace of $g$ corresponding to the
eigenvalue $e^{2\pi\sqrt{-1} f}$. Let us denote by $\delta_{v,f,i}$
$(1\leq i\leq l_{v,f}:={\rm rk}(E_{v,f}))$ the Chern roots of
$E_{v,f}$, then the Chern character and the $\Gamma$-class of $E$ are
defined by 
\ben
\widetilde{\operatorname{ch}}(E) = 
\sum_{v\in T} \sum_{0\leq f<1} e^{2\pi\ii f}
\sum_{i=1}^{l_{v,f}} e^{\delta_{v,f,i}}
\een
\ben
\widehat{\Gamma}(E) = 
\sum_{v\in T} 
\prod_{0\leq f<1} 
\prod_{i=1}^{l_{v,f}} 
\Gamma(1-f+\delta_{v,f,i}),
\een
where the value of the $\Gamma$-function $\Gamma(1-f+y)$  at
$y=\delta_{v,f,i}$ is obtained by first expanding in Taylor's series
at $y=0$ and then formally substituting $y=\delta_{v,f,i}$. By
definition $\widehat{\Gamma}(X):=\widehat{\Gamma}(TX)$. The cup
product in \eqref{psi-map} is the usual topological cup product in
$|IX|$.   

\subsection{The Euler pairing}
Let us recall the settings of Section \ref{sec:eol}. Using formulas \eqref{euf} and \eqref{euler-pairing}, we would like to find an explicit formula for the Euler pairing of $\PP^1_a$. 
\begin{proposition}\label{prop:otb}
The $K$-theoretic class of the orbifold tangent bundle of $\PP^1_a$ is 
\ben
T\PP^1_a=L_1+L_2+L_3-L-1=L_1L_2L_3L^{-1}.
\een
\end{proposition}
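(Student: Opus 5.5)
The plan is to compute $T\PP^1_a$ from the presentation $\PP^1_a=[Z/G]$ with $Z\subset(\CC^*)^3$ a hypersurface, and then to check the second equality using the relations in $K^0(\PP^1_a)$.

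\emph{Step 1: the Euler-type exact sequences.} On the ambient quotient $[(\CC^*)^3/G]$ the tangent bundle is trivial as a bundle on $(\CC^*)^3$, but $G$ acts on the coordinate directions $\partial/\partial z_i$ by the characters $g\mapsto g_i$. Hence, as an orbifold bundle restricted to $Z$, it is $L_1\oplus L_2\oplus L_3$. The orbit directions of $G$ give the Lie algebra $\mathfrak{g}=\mathrm{Lie}(G)$, which is one-dimensional. Since $G$ is abelian it acts trivially on $\mathfrak{g}$, so we get a trivial subbundle $\O$ via the infinitesimal action. The action map is injective at every point of $Z$ because the stabilizers are finite. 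Next, $Z$ is cut out by $f=z_1^{a_1}+z_2^{a_2}+z_3^{a_3}$, which transforms under $G$ by the character $g\mapsto g_1^{a_1}=g_2^{a_2}=g_3^{a_3}$. The normal bundle of $[Z/G]$ in $[(\CC^*)^3/G]$ is therefore the line bundle $L$. This gives
\ben
0\to \O\to T[Z/G]\oplus \O_{\rm orb}\ ,\qquad
0\to T_Z\to (L_1\oplus L_2\oplus L_3)|_Z\to L|_Z\to 0 .
\een
Quotienting by the orbit directions then yields in $K$-theory
\[
T\PP^1_a = L_1+L_2+L_3 - L - 1 .
\]
I will also check that $f$ is smooth on $Z$: since each $z_i\neq0$, we have $df\neq0$, so the normal bundle identification holds. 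The main subtlety is making sure the $G$-equivariant structures, meaning the characters, are correct. In particular the normal bundle is $L$ rather than $L^{-1}$: the differential $df$ maps $T(\CC^*)^3\to f^*T\CC$, and $f$ is equivariant for the character defining $L$.

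\emph{Step 2: the identity $L_1+L_2+L_3-L-1=L_1L_2L_3L^{-1}$.} This uses only the relations $(L_i-1)(L_j-1)=0$ for $i\neq j$. Expanding,
\[
(L_1-1)(L_2-1)(L_3-1)=0 .
\]
Also $L_iL_j=L_i+L_j-1$, so
\[
L_1L_2L_3=(L_1+L_2-1)L_3=L_1L_3+L_2L_3-L_3 .
\]
Substituting again gives
\[
L_1L_2L_3=L_1+L_2+L_3-2 .
\]
It remains to show $L(L_1+L_2+L_3-L-1)=L_1+L_2+L_3-2$. Since $L=L_i^{a_i}$, we have $L\cdot L_i=L_i^{a_i+1}$. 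The relation $(L_i-1)(L_j-1)=0$ implies $(L_i^{m}-1)(L_j^{n}-1)=0$, and hence $(L-1)(L_i-1)=0$ by taking $L=L_j^{a_j}$ with $j\neq i$. So $LL_i=L+L_i-1$. Then
\[
L(L_1+L_2+L_3-L-1)=3L+L_1+L_2+L_3-3-L^2-L .
\]
This equals $L_1+L_2+L_3-2$ if and only if $(L-1)^2=0$, which holds since
\[
(L-1)^2=(L_1^{a_1}-1)(L_2^{a_2}-1)=0 .
\]
Multiplying by $L^{-1}$ gives the claim.

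I expect Step 1, the careful equivariant bookkeeping, to be the only real obstacle. As a sanity check I will verify that $\deg T\PP^1_a=\sum 1/a_i -1+1=\chi^{\rm orb}$-compatible, i.e.\ $\deg=2-\sum(1-1/a_i)=0$ here, which matches $1/a_1+1/a_2+1/a_3-1=0$. This also confirms the signs.
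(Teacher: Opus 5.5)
Your proof is correct and follows the paper's argument. The same two $G$-equivariant sequences on $Z$ appear: the orbit directions $Z\times\mathfrak{g}\hookrightarrow TZ$, and $0\to TZ\to L_1\oplus L_2\oplus L_3\to L\to 0$ via $df$. Your Step 2, using $L L_i=L+L_i-1$ and $(L-1)^2=0$, supplies the $K$-ring computation that the paper only says "follows easily from the relations." Two small slips: your first displayed sequence is garbled and should read $0\to\O\to TZ\to Q\to 0$ with $T\PP^1_a=[Q/G]$, and in your sanity check the degree is $\sum_i 1/a_i-1$, not $\sum_i 1/a_i-1+1$.
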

\proof
To begin with, let us recall that the orbifold $K$-ring of $\PP^1_a$ is naturally identified with the $G$-equivariant $K$-ring $K^0_G(Z)$ of $Z$. The relation that we want to prove will be interpreted as a relation in $K^0_G(Z)$. 
The orbifold tangent bundle $T\PP^1_a$ can be identified with $[Q/G]$
where $Q$ is the $G$-equivariant bundle on $Z$ defined by the following short exact sequence of $G$-equivariant vector bundles on $Z$:
\ben
\xymatrix{
0\ar[r] &  Z\times \lieg \ar[r] &  TZ \ar[r] & Q\ar[r] & 0,
}
\een
where $\lieg=\CC$ is the Lie algebra of $G$ and the map $Z\times \lieg \to
TZ$ is defined by
\ben
(z,\xi)\mapsto \left.\frac{d}{d\epsilon} \Big(
e^{\epsilon \xi} z\Big)\right|_{\epsilon=0} \in T_zZ. 
\een
The above exact sequence implies that $Q=TZ-1$ in $K_G(Z)$. 
Furthermore, the tangent bundle $TZ$ can be computed from the following short exact sequence: 
\ben
\xymatrix{
0\ar[r] & TZ \ar[r] & L_1\oplus L_2\oplus L_3 \ar[r] & L\ar[r] & 0
}
\een
where the map to $L$ is defined by $(z,s_1,s_2,s_3)\mapsto \sum_{i=1}^3 a_i z_i^{a_i-1} s_i$.
We get $TZ=L_1+L_2+L_3-L$ in $K^0_G(Z)$. 
Finally, we get 
\ben
T\PP^1_a = Q=  TZ-1 = L_1+L_2+L_3-L-1 = L_1L_2L_3L^{-1},
\een
where the last equality follows easily from the relations in the orbifold $K$-ring.
\qed

Let us prove formula \eqref{psi_E}. The functions $\operatorname{rk}(E)$, $\operatorname{deg}(E)$, and $\chi_{j,p}(E)$ introduced in formula \eqref{psi_E} are in fact the components of the orbifold Chern character map, that is, we have 
\ben
\operatorname{inv}^*
\widetilde{\operatorname{ch}}(E) = 
\operatorname{rk}(E)\phi_{0,0}+ 
\operatorname{deg}(E)\phi_{0,1} + \sum_{j=1}^3\sum_{p=1}^{a_j-1}
\chi_{j,p}(E) \phi_{j,p}.
\een
Let us compute the $\Gamma$-class of $\PP^1_a$. 
The component of the $\Gamma$-class corresponding to the twisted sector 
$[Z^g/G]$ where $g=g_{j,p}$ is $\Gamma(1-p/a_j)\phi_{j,p}$. Indeed, we have 
$T\PP^1_a=L_1L_2L_3L^{-1}$ and 
by definition $L_i$ is a trivial bundle with $g=(g_1,g_2,g_3)$ acting on the fiber by multiplication by 
$g_i= e^{2\pi\ii\delta_{i,j} p/a_j}$. Using that $T\PP^1_a=L_1+L_2+L_3-L-1$, we see that the component of the $\Gamma$-class corresponding to the non-twisted sector must be
\ben
\frac{\Gamma(1+P/a_1)\Gamma(1+P/a_2)\Gamma(1+P/a_3)}{\Gamma(1+P)} =
1+\Gamma'(1) \chi \,P= 1,
\een
where we used that $\chi:=\tfrac{1}{a_1}+\tfrac{1}{a_2}+\tfrac{1}{a_3}-1=0$. 
In other words, 
\ben
\widehat{\Gamma}(\PP^1_a)= 
1+ \sum_{j=1}^3\sum_{p=1}^{a_j-1} 
\Gamma(1-p/a_j)\phi_{j,p}.
\een 
On the other hand, we have
\ben
(2\pi\ii)^{\operatorname{deg}}\operatorname{inv}^*\widetilde{\operatorname{ch}}(E)=
\operatorname{rk}(E)\phi_{0,0}+ 
\operatorname{deg}(E)\,(2\pi\ii)\, P + 
\sum_{j=1}^3\sum_{p=1}^{a_j-1} \chi_{j,p}(E) \phi_{j,p}.
\een
Let us recall the definition \eqref{psi-map} of $\Psi$. Using the above two formulas and $Q^{-P}=1-P\, \log Q$, we get formula \eqref{psi_E}. 
Similarly, we compute that 
$e^{\pi\ii\theta}\Psi(F)$ coincides with 
\ben
\ii \operatorname{rk}(F)\, (1 +\log Q\, P) +
2\pi\,\operatorname{deg}(F) \, P +
\sum_{j=1}^3\sum_{p=1}^{a_j-1} 
\Gamma(1-p/a_j)\, \chi_{j,p}(F)\, e^{-\pi\ii \,p/a_j}\, \ii \phi_{j,p}.
\een
Recalling formulas \eqref{euf}, \eqref{euler-pairing}, and \eqref{psi_E}, we get 
\begin{align*}
\langle E,F\rangle  =
&
\operatorname{rk}(E)\operatorname{deg}(F) - 
\operatorname{rk}(F)\operatorname{deg}(E) + \\
&
+
\sum_{j=1}^3\sum_{p=1}^{a_j-1} 
\Gamma(p/a_j) \, 
\Gamma(1-p/a_j)\, 
\chi_{j,-p}(E)
\chi_{j,p}(F)\, e^{-\pi\ii \,p/a_j}\, \frac{\ii}{2\pi a_j} .
\end{align*}
We have
\ben
\Gamma(p/a_j) \, \Gamma(1-p/a_j)\, 
e^{-\pi\ii \,p/a_j}\, \frac{\ii}{2\pi a_j}=
\frac{1}{a_j}\, \frac{1}{1-\eta_j^p},
\een
where $\eta_j:=e^{2\pi\ii/a_j}$. The formula for the Euler pairing takes the following form:
\begin{align}
\label{euler-form}
\langle E,F\rangle = 
\operatorname{rk}(E)\operatorname{deg}(F) - 
\operatorname{rk}(F)\operatorname{deg}(E) + 
\sum_{j=1}^3
\frac{1}{a_j}
\sum_{p=1}^{a_j-1} 
\frac{\chi_{j,p}(E^\vee \otimes F)}{1-\eta_j^p}.
\end{align}
In case $E=L_1^{k_1}L_2^{k_2}L_3^{k_3}$ and
$F=L_1^{l_1}L_2^{l_2}L_3^{l_3}$ are line bundles, the above formula
can be simplified.
\begin{lemma}
\label{le:identity}
Let $\eta=e^{2\pi\ii/a}$ be a root of unity. 
\begin{enumerate}
\item[a)]
If $1\leq k\leq a$, then the following identity holds:
\ben
\sum_{p=1}^{a-1}
\frac{\eta^{pk}}{1-x\eta^p} = 
\frac{a x^{a-k}}{1-x^a} - 
\frac{1}{1-x}.
\een
\item[b)]
The following identity holds: 
\ben
\sum_{p=1}^{a-1} 
\frac{\eta^{pk}}{1-\eta^p} =
\operatorname{Mod}[k-1,a] - \frac{a-1}{2},\quad k\in \ZZ,
\een
where $\operatorname{Mod}[k-1,a]\in [0,a-1)$ is the remainder on the division of
$k-1$ by $a$.
\end{enumerate}
\end{lemma}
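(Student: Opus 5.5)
For part a), I will use partial fractions in $x$, regarding both sides as rational functions of $x$. The left-hand side has simple poles at $x=\eta^{-p}$ for $1\leq p\leq a-1$ and vanishes at $x=\infty$.

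On the right-hand side, the factor $1-x^a=\prod_{q=0}^{a-1}(1-x\eta^q)$ shows that the only potential poles are at $x=\eta^{-q}$, $0\leq q\leq a-1$.

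First, I check that the pole at $x=1$ cancels. The residue of $a x^{a-k}/(1-x^a)$ at $x=1$ is $a\cdot 1/(-a)=-1$. The residue of $-1/(1-x)$ there is $+1$. So the two contributions cancel.

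Next, at $x=\eta^{-p}$ with $p\neq 0$:
\begin{itemize}
\item The residue of $a x^{a-k}/(1-x^a)$ is $a\,\eta^{-p(a-k)}/(-a\,\eta^{-p(a-1)}) = -\eta^{pk}\eta^{-p}$.
\item The residue of $\eta^{pk}/(1-x\eta^p)$ is $-\eta^{pk}\eta^{-p}$.
\end{itemize}
These agree.

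At infinity, the condition $1\leq k\leq a$ gives $\deg x^{a-k}<a$. Hence the right-hand side also tends to $0$ at infinity.

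Two rational functions with the same poles, the same principal parts, and both vanishing at infinity are equal, by Liouville's theorem. This proves a).

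For part b), I will let $x\to 1$ in a). Both sides are regular at $x=1$, since the pole at $x=1$ cancelled above.

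The left-hand side depends on $k$ only modulo $a$. I will therefore replace $k$ by its representative $k'\in\{1,\dots,a\}$. This gives $k'-1=\operatorname{Mod}[k-1,a]$.

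Setting $x=1+\epsilon$ and $m=a-k'\in\{0,\dots,a-1\}$, I expand
\[
\frac{a(1+\epsilon)^{m}}{1-(1+\epsilon)^a}=-\frac{1}{\epsilon}\,\frac{1+m\epsilon}{1+\frac{a-1}{2}\epsilon}+O(\epsilon).
\]
Adding $-1/(1-x)=1/\epsilon$, the constant term is
\[
-m+\frac{a-1}{2}=k'-a+\frac{a-1}{2}=(k'-1)-\frac{a-1}{2}.
\]
This is exactly the claimed value.

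The only step requiring care is this Laurent expansion and the bookkeeping of the representative $k'$. In particular, when $k'=a$ we have $m=0$, and the value is $\tfrac{a-1}{2}$.
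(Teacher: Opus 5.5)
Your proof is correct. Part b) follows the paper's route: let $x\to 1$ in a), then reduce $k$ modulo $a$. You also write out the Laurent expansion that the paper only calls straightforward.

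For part a) you argue differently from the paper. The paper expands each $1/(1-x\eta^p)$ as a geometric series and uses $\sum_{p=1}^{a-1}\eta^{pn}=a\,\delta_{a\mid n}-1$. The coefficient of $x^m$ is then $-1$, plus $a$ exactly when $m\equiv -k \pmod a$. The hypothesis $1\le k\le a$ makes the first such exponent equal to $a-k\ge 0$, so the identity is read off as an identity of power series at $x=0$.

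You instead compare residues at the $a$-th roots of unity and the behaviour at $\infty$. In your version the hypothesis splits into two roles: $k\le a$ makes $x^{a-k}$ a polynomial, so there is no pole at $x=0$, and $k\ge 1$ gives vanishing at infinity. You use the first of these only implicitly, when you say the only possible poles are at $x=\eta^{-q}$; it is worth stating.

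The paper's argument is purely formal. Yours makes the cancellation of the pole at $x=1$ explicit, and that is exactly the regularity you need for the limit in b). In the paper's version this regularity is automatic, since the left-hand side is visibly regular at $x=1$.
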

\proof
a) Expanding the denominator into a geometric series we get 
$\sum_{m=0}^\infty \sum_{p=1}^{a-1}\eta^{p(k+m)} x^m $. Since
\ben
\sum_{p=1}^{a-1} \eta^{p(k+m)} = 
\begin{cases}
    a-1 & \mbox{ if } k+m \equiv 0 (\operatorname{mod} \ a),\\
    -1 & \mbox{ otherwise },
\end{cases}
\een
the coefficient in front of $x^m$ is $-1$ plus an extra contribution of $a$ for $m=a-k+a s$ ($s\geq 0$). The formula that we want to prove follows. 

b) Let us first prove the formula in the special case when $1\leq k\leq a$. This is straightforward because we need only to compute the limit when $x\to 1$ in the formula in part a). We get $k-1-\tfrac{a-1}{2}$. For the general case, put $k=q \, a + k'$ where $1\leq k'\leq a$. Since the LHS of the identity does not change when we add to $k$ a multiple of $a$, we may replace $k$ by $k'$. According to the previous computation, the sum should be $k'-1-\tfrac{a-1}{2}$. It remains only to notice that $k'-1=\operatorname{Mod}[k-1,a]$.\qed

\begin{proposition}\label{prop:ep}
Let $E=L_1^{k_1}L_2^{k_2}L_3^{k_3}$ and
$F=L_1^{l_1}L_2^{l_2}L_3^{l_3}$ be line bundles. Then
\ben
\langle E, F\rangle = 1-
\sum_{j=1}^3 \left\lceil
\frac{k_j-l_j}{a_j}
\right \rceil ,
\een
where $\lceil x\rceil$ denotes the ceiling of the real number $x$.
\end{proposition}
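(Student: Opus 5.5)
I will plug the line bundles directly into the Euler pairing formula \eqref{euler-form} and evaluate each twisted-sector sum with Lemma~\ref{le:identity}(b).

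\textbf{Rank and degree terms.} Both $E$ and $F$ have rank $1$. By $\ZZ$-linearity of $\operatorname{deg}$ and multiplicativity of line bundles, $\operatorname{deg}(E)=\sum_j k_j/a_j$ and $\operatorname{deg}(F)=\sum_j l_j/a_j$. Hence the first two terms of \eqref{euler-form} contribute
\[
\sum_{j=1}^3 \frac{l_j-k_j}{a_j}.
\]

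\textbf{Twisted-sector sums.} We have $E^\vee\otimes F=L_1^{l_1-k_1}L_2^{l_2-k_2}L_3^{l_3-k_3}$. Since $\chi_{j,p}$ is a ring homomorphism with $\chi_{j,p}(L_i)=\eta_j^{-p\delta_{ij}}$, this gives $\chi_{j,p}(E^\vee\otimes F)=\eta_j^{p(k_j-l_j)}$. Lemma~\ref{le:identity}(b), applied with $a=a_j$ and $k=k_j-l_j$, then yields
\[
\frac{1}{a_j}\sum_{p=1}^{a_j-1}\frac{\eta_j^{p(k_j-l_j)}}{1-\eta_j^p}
=\frac{1}{a_j}\Big(\operatorname{Mod}[k_j-l_j-1,a_j]-\frac{a_j-1}{2}\Big).
\]

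\textbf{Combining.} Write $n=k_j-l_j$ and $a=a_j$. The elementary identity
\[
\operatorname{Mod}[n-1,a]=n-1-a\left\lfloor\tfrac{n-1}{a}\right\rfloor
\quad\text{together with}\quad
\left\lfloor\tfrac{n-1}{a}\right\rfloor=\left\lceil\tfrac{n}{a}\right\rceil-1
\]
(valid for integers $n$ and $a\ge 1$) gives $\operatorname{Mod}[n-1,a]=n-1+a-a\lceil n/a\rceil$. Therefore the $j$-th summand, including its degree contribution $-n/a$, becomes
\[
-\frac{n}{a}+\frac{n-1+a-a\lceil n/a\rceil}{a}-\frac{a-1}{2a}
=\frac{1}{2}-\frac{1}{2a}-\left\lceil\frac{n}{a}\right\rceil .
\]
Summing over $j=1,2,3$ gives
\[
\langle E,F\rangle=\frac{3}{2}-\frac{1}{2}\sum_{j=1}^3\frac{1}{a_j}-\sum_{j=1}^3\left\lceil\frac{k_j-l_j}{a_j}\right\rceil .
\]
The elliptic condition $\sum_j 1/a_j=1$ turns the constant into $1$, which is the claimed formula.

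\textbf{Main obstacle.} The only step that needs care is bookkeeping. The sign of the exponent of $\eta_j$ must come out as $k_j-l_j$, which depends on the dual in $E^\vee$ and on the sign convention in $\chi_{j,p}$. The floor/ceiling identity must also be checked at the boundary case $a\mid n$. As a sanity check, taking $E=F$ should give $\langle E,E\rangle=1$, consistent with Figure~\ref{fig:ep}.
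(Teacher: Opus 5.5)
Your proposal is correct and follows essentially the same route as the paper: you substitute the line bundles into \eqref{euler-form}, evaluate the twisted-sector sums with Lemma~\ref{le:identity}(b), and convert $\operatorname{Mod}[k_j-l_j-1,a_j]$ into a ceiling via the floor identity. You also use $\sum_j 1/a_j=1$ to make the constant equal to $1$. The only difference is cosmetic: you simplify each $j$-summand separately, whereas the paper first collects the constants into $-2+\sum_j \frac{1+l_j-k_j}{a_j}$ and then applies the same floor/ceiling identities.
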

\proof
We have $E^\vee \otimes F = L_1^{l_1-k_1}L_2^{l_2-k_2} L_3^{l_3-k_3}$
and
\ben
\chi_{j,p}(E^\vee \otimes F ) = e^{2\pi \ii p(k_j-l_j)/a_j} =  \eta_j^{p(k_j-l_j)}.
\een
Using Lemma \ref{le:identity}, we get
\ben
\frac{1}{a_j}\sum_{p=1}^{a_j-1}
\frac{\chi_{j,p}(E^\vee\otimes F)}{1-\eta_j^p} =
\frac{1}{a_j}\sum_{p=1}^{a_j-1}
\frac{\eta_j^{p(k_j-l_j)}}{1-\eta_j^p} =
\frac{1}{a_j}
\operatorname{Mod}[k_j-l_j-1, a_j]-\frac{1}{2} + \frac{1}{2 a_j}.
\een
The formula for the Euler pairing becomes
\ben
-2 +
\sum_{j=1}^3 \frac{1+l_j-k_j}{a_j} +
\sum_{j=1}^3 \frac{\operatorname{Mod}[k_j-l_j-1, a_j]}{a_j} =
-2-\sum_{j=1}^3 \left[  \frac{k_j-l_j-1}{a_j} \right]=
1-\sum_{j=1}^3 \left\lceil
\frac{k_j-l_j}{a_j}
\right \rceil,
\een
where we used that
\ben
\frac{n}{a} - \frac{\operatorname{Mod}[n,a]}{a} =
\left[\frac{n}{a}\right],\quad
\left[\frac{n-1}{a}\right] +1 =\left\lceil \frac{n}{a} \right\rceil. \qed
\een
\begin{corollary}
The following formulas hold:
\begin{align*}
  \langle L_i^{-p},L_j^{-q}\rangle & =0,\quad
                                     i\neq j ,\quad
                                     1\leq p\leq a_i-1,\quad
                                     1\leq q\leq a_j-1 \\
  \langle L_i^{-p},L_i^{-q}\rangle & =\delta_{p\geq q}+\delta_{p-q,a_i},\quad
                                     0\leq p,q\leq a_i.
\end{align*}
where $\delta_{p\geq q}$ is $1$ if $p\geq q$ and $0$ otherwise.
\end{corollary}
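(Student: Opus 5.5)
The plan is to specialize Proposition \ref{prop:ep} directly, writing $L_i^{-p}$ as the line bundle $L_1^{k_1}L_2^{k_2}L_3^{k_3}$ with $k_i=-p$ and $k_j=0$ for $j\neq i$. In each case the work reduces to evaluating three ceilings.

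\emph{Case $i\neq j$.} Here $E=L_i^{-p}$ and $F=L_j^{-q}$, so the exponent differences are $k_i-l_i=-p$, $k_j-l_j=q$, and the third is $0$. The formula then gives
\[
1-\left\lceil \frac{-p}{a_i}\right\rceil-\left\lceil\frac{q}{a_j}\right\rceil-\lceil 0\rceil .
\]
Since $1\leq p\leq a_i-1$, we have $-1<-p/a_i<0$, so the first ceiling is $0$. Since $1\leq q\leq a_j-1$, we have $0<q/a_j<1$, so the second ceiling is $1$. The pairing is therefore $1-0-1-0=0$, as claimed.

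\emph{Case $i=j$.} Only one exponent difference is nonzero, namely $k_i-l_i=q-p$. The pairing is $1-\lceil (q-p)/a_i\rceil$, and since $0\leq p,q\leq a_i$ we have $q-p\in[-a_i,a_i]$. We split into three sub-cases.
\begin{itemize}
\item[(i)] If $p\geq q$ and $p-q<a_i$, then $(q-p)/a_i\in(-1,0]$. The ceiling is $0$, so the pairing is $1$.
\item[(ii)] If $p-q=a_i$, which forces $p=a_i$ and $q=0$, then the ceiling is $-1$ and the pairing is $2$. This agrees with $\delta_{p\geq q}+\delta_{p-q,a_i}=2$.
\item[(iii)] If $p<q$, then $(q-p)/a_i\in(0,1]$. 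The ceiling is $1$, so the pairing is $0$.
\end{itemize}
In every sub-case the value matches $\delta_{p\geq q}+\delta_{p-q,a_i}$.

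There is no real obstacle here. The only points needing care are the boundary values $p$ or $q\in\{0,a_i\}$ in the second formula, where the ceiling sits exactly at an integer. The sub-case analysis above handles these explicitly.
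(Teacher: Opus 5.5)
Your proposal is correct and follows the paper's route. The paper states this corollary without a separate proof, as a direct specialization of Proposition~\ref{prop:ep}, and your ceiling evaluations check out in every case, including the boundary case $p=a_i$, $q=0$ that yields $\langle L^{-1},1\rangle=2$.
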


\medskip
\noindent
The orbifold K-ring $K^0(\PP^1_a)$ has the following integral basis:
\begin{align*}
\gamma_{0,0} & =1,\\
\gamma_{0,1} & = L,\\
\gamma_{j,p} & = L_j^{-p}-L_j^{-p+1},\quad 1\leq p\leq a_j-1. 
\end{align*}
The Euler pairing in this basis takes the following form:
\begin{align*}
  \langle 1, L\rangle & =2 \\
  \langle L,L\rangle =\langle 1,1\rangle & = 1\\
  \langle L,\gamma_{i,1} \rangle & = -1\\
  \langle \gamma_{i,p-1},\gamma_{i,p}\rangle &=-1,\quad 1\leq p\leq a_i-1 
\end{align*}
and all other pairings are $0$. Now it is easy to check that the Euler pairing can be described by the quiver on Figure \ref{fig:ep}. 
\subsection{Classical monodromy operator}
The properties of the intersection pairing, Euler pairing, and the classical monodromy operator \eqref{cm} are intertwined in a very interesting way. Now that we have a good understanding of the Euler pairing of $\PP^1_a$, let us explain the role of $\tau$ and use it to make further conclusions about the intersection pairing. 
\begin{proposition}\label{prop:rad}
The intersection form $(\ |\ )$ has the following properties.
\begin{enumerate}
\item[a)]
$(E|F)=\langle (1-\tau)E,F\rangle$. 
\item[b)]
The radical of the intersection form 
\ben
\operatorname{rad}(\ |\ ):= \{ E\in K^0(\PP^1_a)\ |\ (E|F)=0\ \forall F\in K^0(\PP^1_a)\}
\een
is a lattice of rank 2 and $\{L-1,\delta\}$ is a $\ZZ$-basis where $\delta$ is defined by \eqref{delta}.
\end{enumerate}
\end{proposition}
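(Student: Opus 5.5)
For part a), I will use Serre duality on the orbifold curve $\PP^1_a$ in its $K$-theoretic form, $\chi(E^\vee\otimes F)=-\chi(F^\vee\otimes E\otimes K)$, where $K=(T\PP^1_a)^{-1}$ is the canonical bundle. With \eqref{euler-pairing} this reads $\langle E,F\rangle=-\langle F,\tau^{-1}E\rangle$, and replacing $E$ by $\tau E$ gives $\langle F,E\rangle=-\langle \tau E,F\rangle$. Hence
\[
(E|F)=\langle E,F\rangle+\langle F,E\rangle=\langle E,F\rangle-\langle\tau E,F\rangle=\langle(1-\tau)E,F\rangle .
\]

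To avoid relying on orbifold Serre duality, I can instead verify the identity $\langle F,E\rangle=-\langle\tau E,F\rangle$ directly from formula \eqref{euler-form}. Since $\tau$ acts on $H$ by $e^{2\pi\ii(\theta-1/2)}$, and $\theta$ is skew-adjoint for $(\ ,\ )$, this should follow from
\[
\langle a,b\rangle=\tfrac{1}{2\pi}(a,e^{\pi\ii\theta}b)=\tfrac{1}{2\pi}(e^{-\pi\ii\theta}a,b).
\]
Concretely, I compute $\langle \tau E,F\rangle=\tfrac1{2\pi}(e^{2\pi\ii(\theta-1/2)}a,e^{\pi\ii\theta}b)=-\tfrac1{2\pi}(e^{\pi\ii\theta}a,b)=-\langle b,a\rangle$, using skew-adjointness twice and the symmetry of $(\ ,\ )$. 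This is clean, and I will present this version.

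For part b), part a) identifies the radical with $\operatorname{Ker}(1-\tau)\cap K^0(\PP^1_a)$, because the Euler pairing is nondegenerate: it is unimodular, being unitriangular in an ordering of the quiver basis of Figure~\ref{fig:ep}. In $H$, $\operatorname{Ker}(1-\tau)$ is the $\theta$-eigenspace with eigenvalues in $\tfrac12+\ZZ$. On the basis $\phi_{i,p}$ these eigenvalues are $\tfrac12-p/a_i$, so the kernel is spanned by $\phi_{0,0}$ and $\phi_{0,1}$ and has dimension 2. The radical is therefore a saturated sublattice of rank at most 2.

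I then check membership of the proposed basis directly.
\begin{itemize}
\item For $L-1$: $\tau(L)=T\PP^1_a\otimes L$, and $(L-1)(L_i-1)=0$ gives $L_iL=L$ up to the relations, which forces $\tau(L-1)=L-1$. Equivalently, $\chi_{j,p}(L-1)=0$, so $\Psi(L-1)$ lies in the span of $\phi_{0,0}$ and $\phi_{0,1}$.
\item For $\delta$: I check $\chi_{j,p}(\delta)=0$. By definition $\delta$ involves the sums $\sum_k L_i^{-k}$, whose characters $\chi_{j,p}$ vanish when $j=i$ (roots-of-unity sums) and equal $a_i$ when $j\ne i$. Then $\chi_{j,p}(\delta)=l(-2+\sum_{i\ne j}1+0)=0$.
\end{itemize}
Since $\operatorname{rk}(L-1)=0$ while $\operatorname{rk}(\delta)=l(-2+3)=l\neq 0$, the two classes are independent.

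The remaining, and main, obstacle is showing that they form a $\ZZ$-basis, i.e.\ that they span a saturated lattice. I would handle this in the integral basis $\{1,L,\gamma_{j,p}\}$. From formula \eqref{delta}, $\delta=l\cdot 1+\sum l(1-p/a_j)\gamma_{j,p}$, and $L-1=\gamma_{0,1}-\gamma_{0,0}$. A general radical element $x(L-1)+y\delta$ with $x,y\in\mathbb{Q}$ has integral coefficients exactly when:
\begin{itemize}
\item $x\in\ZZ$, from the coefficient of $L$;
\item $yl(1-p/a_j)\in\ZZ$ for all $j,p$, from the $\gamma_{j,p}$ coefficients. Taking $j$ with $a_j=l$ and $p=l-1$ gives $y\in\ZZ$.
\end{itemize}
This shows $\{L-1,\delta\}$ is a $\ZZ$-basis of the radical, completing the proof.
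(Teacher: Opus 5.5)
Your proof is correct, but it takes a different route from the paper in both parts.

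For a), the paper uses exactly the Serre-duality identity $\langle F,E\rangle=-\langle E\otimes T\PP^1_a,F\rangle$ that you mention first. The version you present instead is a purely formal computation on $H$. It uses the definition $\langle a,b\rangle=\frac{1}{2\pi}(a,e^{\pi\ii\theta}b)$, the skew-adjointness $\theta^T=-\theta$, and $\Psi\tau\Psi^{-1}=e^{2\pi\ii(\theta-\frac12)}$ from Proposition~\ref{prop:sd}, part c). There is no circularity, since the proof of that part does not use the present proposition. This version avoids orbifold Serre duality entirely, but it depends on the transcendental identification through $\Psi$ rather than on an intrinsic $K$-theoretic fact.

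For b), the paper does not count dimensions. It writes a $\tau$-fixed $E$ in the integral basis, computes $\tau(E)$ explicitly, and solves the fixed-point equations over $\ZZ$. This gives $c_{j,p}=c_{0,0}(1-p/a_j)$, $l\mid c_{0,0}$, and $E=c_{0,1}(L-1)+(c_{0,0}/l)\,\delta$ in one stroke. Your route has three steps:
\begin{itemize}
\item $\dim\operatorname{Ker}(1-\tau)=2$, read off from the $\theta$-eigenvalues;
\item membership of $L-1$ and $\delta$, via $\chi_{j,p}=0$;
\item saturation, from the coefficients of $L$ and of $\gamma_{j,a_j-1}$ with $a_j=l$.
\end{itemize}
This is shorter and makes the rank transparent. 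The paper's argument instead stays inside the presentation of $K^0(\PP^1_a)$ and produces the explicit normal form of a fixed vector directly.

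One slip to fix: $(L-1)(L_i-1)=0$ does not give $L_iL=L$. That equality is false, since $\chi_{i,p}(L_iL)=e^{-2\pi\ii p/a_i}\neq 1=\chi_{i,p}(L)$. What the relation gives is $L_i(L-1)=L-1$, which is what you need. Your subsequent check $\chi_{j,p}(L-1)=0$ already settles this point on its own.
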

\proof
a)
By Serre duality, we have 
\ben
\operatorname{dim}\, H^i(\PP^1_a,E\otimes F^\vee) =
\operatorname{dim}\, H^{1-i}(\PP^1_a,F\otimes E^\vee \otimes T^\vee),
\een
where $T=T\PP^1_a$ is the tangent bundle and $T^\vee$ is the canonical bundle. We get 
\ben
\langle F,E\rangle = 
\sum_{i=0}^1 (-1)^i 
\operatorname{dim}\, H^i(\PP^1_a,E\otimes F^\vee) =-
\sum_{j=0}^1 (-1)^j
H^{j}(\PP^1_a,F\otimes E^\vee \otimes T^\vee) = -
\langle E\otimes T,F\rangle .
\een
The formula in part a) follows easily from the definition $(E|F)=\langle E, F\rangle + \langle F, E\rangle$. 

b) Since the Euler pairing is non-degenerate, 
the radical of $(\ |\ )$ coincides with the kernel of the operator $1-\tau$, that is, the operator of multiplication by 
\ben
1-T\PP^1_a = 1-L_1L_2L_3 L^{-1} =  1-L_1+1-L_2+1-L_3+1-L^{-1}.
\een
Recalling the relations in the K-ring $K^0(\PP^1_a)$, it is straightforward to check that $L-1$ and $\delta$ belong to the kernel of $1-\tau$, that is, the radical of the intersection pairing contains the lattice $\ZZ\, (L-1) + \ZZ\, \delta$. 

Suppose that 
\ben
E=c_{0,0} + c_{0,1}(L-1) +\sum_{j=1}^3\sum_{p=1}^{a_j-1} c_{j,p}\gamma_{j,p}
\een
is a fixed point for $\tau$. Note that $\tau(\gamma_{j,p})=\gamma_{j,p-1}$ for all $1\leq p\leq a_j-1$ where $\gamma_{j,0} := 1-L_j$. Using $T\PP^1_a=1-(L-1)+ \sum_{j=1}^3 (L_j-1)$ and 
\ben
L_j-1 = (L_j-1)L^{-1}=L_j^{-a_j+1} - L^{-1} = L_j^{-a_j+1}-1 + L-1,
\een
we find 
\ben
\tau(E)= c_{0,0} + (c_{0,1}+ 2c_{0,0}-c_{1,1}-c_{2,1}-c_{3,1}) \, (L-1) + 
\sum_{j=1}^3\sum_{p=1}^{a_j-1}
(c_{0,0}-c_{j,1}+c_{j,p+1})\,\gamma_{j,p},
\een
where $c_{j,a_j}:=0$.
Comparing the coefficients in front of $\gamma_{j,p}$ in $\tau(E)=E$, we get 
\ben
c_{j,p+1}-c_{j,p}= c_{j,1}-c_{0,0},\quad \forall 1\leq p\leq a_j-1,
\een
Put $\Delta_j:=c_{j,1}-c_{0,0}$. Then $c_{j,p}=c_{0,0}+p\, \Delta_j$ 
for $1\leq p\leq a_j-1$. On the other hand, the above equality for $p=a_j-1$ yields $c_{j,a_j-1}= -\Delta_j$. We get $-\Delta_j=c_{0,0}+(a_j-1) \Delta_j$, that is, $c_{0,0}=-a_j\Delta_j$. In particular, $a_j$ divides $c_{0,0}$ for all $j$. Hence, $l$ divides $c_{0,0}$. 
Comparing the coefficients in front of $L-1$, we get 
\ben
2c_{0,0}=c_{1,1}+c_{2,1}+c_{3,1}= 3c_{0,0} + \Delta_1+\Delta_2+\Delta_3=
3c_{0,0} - c_{0,0} \Big(\frac{1}{a_1}+\frac{1}{a_2}+\frac{1}{a_3}\Big). 
\een
The above identity is automatically satisfied because 
$\tfrac{1}{a_1}+\tfrac{1}{a_2} +\tfrac{1}{a_3}=1$. 
We get 
\ben
E=c_{0,0} +c_{0,1} (L-1) + 
\sum_{j=1}^3 \sum_{p=1}^{a_j-1} 
c_{0,0} (1-p/a_j) \gamma_{j,p} = c_{0,1}\, (L-1) + (c_{0,0}/l) \, \delta.
\qed
\een

\medskip
\noindent
Let us recall the spectral decomposition of the classical monodromy operator $\tau$:
\ben
K(\PP^1_a, \CC) =
\bigoplus_{j=0}^{l-1} K_{j/l}(\PP^1_a,\CC), 
\een
where $K(\PP^1_a,\CC):=K^{(0)}(\PP^1_a)\otimes \CC$ and 
\ben
K_{j/l}(\PP^1_a,\CC):=
\{ x\in K(\PP^1_a,\CC) \ |\ \tau(x)=e^{2\pi\ii j/l}x\}
\een
are the eigen-subspaces of $\tau$ (some of them could be trivial). 
\begin{proposition}\label{prop:sd}
The spectral decomposition of $\tau$ has the following properties.
\begin{enumerate}
\item[a)] 
The eigen-subspace $K_0(\PP^1_a,\CC)$ coincides with the radical of $(\ |\ )$, that is, 
\ben
K_0(\PP^1_a,\CC)=\{x\in K(\PP^1_a,\CC)\ |\ (x|y)=0\ \forall y\in K(\PP^1_a,\CC)\}.
\een
\item[b)]
The restriction of the intersection form $(\ |\ )$ to the subspace
\ben
K_\perp(\PP^1_a,\CC):=\oplus_{j=1}^{l-1} K_{j/l}(\PP^1_a,\CC)
\een
is non-degenerate.
\item[c)]
The following formula holds:
$\Psi \circ \tau \circ \Psi^{-1} = 
e^{2\pi\ii \left(\theta-\tfrac{1}{2} \right)}$
where $\Psi$ is the Iritani map \eqref{psi_E}. In particular, $\{\Psi^{-1}(\phi_{i,p})\}_{(i,p)\in \mathcal{B}}$ is an eigenbasis of $\tau.$
\end{enumerate}
\end{proposition}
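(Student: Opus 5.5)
I would prove part c) first, since a) and b) then follow from it together with Proposition \ref{prop:rad}.

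\emph{Part c).} The computation happens entirely in $H$. The key observation is that $\tau$ is multiplication by $T\PP^1_a$, so its action passes through the orbifold Chern character. Since $\widetilde{\operatorname{ch}}$ is a ring homomorphism, $\operatorname{inv}^*\widetilde{\operatorname{ch}}(T\otimes E)=\operatorname{inv}^*\widetilde{\operatorname{ch}}(T)\cdot \operatorname{inv}^*\widetilde{\operatorname{ch}}(E)$. Here the product is the componentwise cup product on $IX$, so each twisted sector multiplies separately.

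\begin{itemize}
\item On a twisted sector $(j,p)$: $\chi_{j,p}$ is a ring homomorphism with $\chi_{j,p}(T)=\chi_{j,p}(L_j)=e^{-2\pi\ii p/a_j}$, using Proposition \ref{prop:otb} and the defining relations for $\chi_{j,p}$. This equals $e^{2\pi\ii(\theta-\frac12)}$ evaluated on $\phi_{j,p}$, namely $e^{2\pi\ii(-p/a_j)}$.
\item On the untwisted sector: $\operatorname{ch}(T)=1+c_1(T)$ with $\deg T=\chi=0$ by Proposition \ref{prop:otb}. Hence $\operatorname{rk}(TE)=\operatorname{rk}E$ and $\deg(TE)=\deg E+\operatorname{rk}E\cdot\deg T=\deg E$.
\end{itemize}

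So $\tau$ acts as the identity on the $\phi_{0,0},\phi_{0,1}$ components of $\widetilde{\operatorname{ch}}$. On those components $e^{2\pi\ii(\theta-\frac12)}$ acts by $e^{2\pi\ii(0)}=1$ and $e^{2\pi\ii(-1)}=1$, so the two operators agree there too.

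Finally, $\Psi$ is $\widetilde{\operatorname{ch}}$ followed by multiplication by the classes $\widehat\Gamma$ and $Q^{-P}$ and by $(2\pi\ii)^{\deg}$. Each of these operators preserves every sector and acts on each sector's span. On each twisted sector the operator $e^{2\pi\ii(\theta-1/2)}$ is a scalar, so it commutes with them. On the untwisted sector it is the identity. This gives $\Psi\tau\Psi^{-1}=e^{2\pi\ii(\theta-\frac12)}$, and the $\phi_{i,p}$ form an eigenbasis.

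\emph{Part a).} From c), the eigenvalue-one subspace is the span of $\Psi^{-1}$ applied to $\phi_{0,0}$ and $\phi_{0,1}$, since $p/a_j\notin\ZZ$ for twisted sectors. This space has dimension $2$. It is the kernel of $1-\tau$ over $\CC$. By Proposition \ref{prop:rad}a) and the non-degeneracy of the Euler pairing, that kernel equals the radical of $(\ |\ )$ over $\CC$. This is consistent with Proposition \ref{prop:rad}b).

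\emph{Part b).} I would show that $K_0$ and $K_\perp$ are orthogonal for $(\ |\ )$; non-degeneracy on $K_\perp$ then follows from linear algebra. Since $\tau$ preserves the Euler pairing (tensoring by a line bundle), it preserves $(\ |\ )$. Hence $K_{j/l}\perp K_{k/l}$ unless $j+k\equiv0 \bmod l$. In particular $K_0\perp K_\perp$, so $K=K_0\oplus K_\perp$ is an orthogonal decomposition. Because $K_0$ is the full radical, any $x\in K_\perp$ orthogonal to all of $K_\perp$ is orthogonal to everything. It then lies in $K_0\cap K_\perp=0$.

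The main point requiring care is the untwisted-sector check in c), namely that $\tau$ fixes $\operatorname{rk}$ and $\deg$. This rests on $\deg T\PP^1_a=\chi=0$ for elliptic orbifold lines, which I would verify as $1/a_1+1/a_2+1/a_3-1=0$ from Proposition \ref{prop:otb}.
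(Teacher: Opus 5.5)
Your proposal is correct and is essentially the paper's argument. For c) you check that tensoring with $T\PP^1_a=L_1L_2L_3L^{-1}$ leaves $\operatorname{rk}$ and $\operatorname{deg}$ unchanged and multiplies $\chi_{j,p}$ by $e^{-2\pi\ii p/a_j}$; for a) and b) you use $(E|F)=\langle(1-\tau)E,F\rangle$, the non-degeneracy of the Euler pairing, and $K_0\cap K_\perp=0$. The paper proves a) without first using c), and your $\tau$-invariance step in b) is redundant: since $K_0$ is the radical, it is already orthogonal to everything.
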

\proof
a) Let us recall part a) of Proposition \ref{prop:rad}: $(E|F)=\langle (1-\tau)E|F\rangle$. Since the Euler pairing is non-degenerate, the claim in part a) is obvious.

b) This part is clearly a corollary of a) because $K_\perp(\PP^1_a,\CC) \cap K_0(\PP^1_a,\CC)=\{0\}$. 

c) Let us recall formula \eqref{psi_E}. 
Since $T\PP^1_a=L_1L_2L_3L^{-1}$, we have 
\begin{align*}
\operatorname{rk}(E\otimes T\PP^1_a) & =\operatorname{rk}(E),\\ 
\operatorname{deg}(E\otimes T\PP^1_a) & =\operatorname{deg}(E),\\ 
\chi_{j,p}(E\otimes T\PP^1_a) & = \chi_{j,p}(E) e^{-2\pi\ii p /a_j}.
\end{align*}
Using the above formulas, we get 
\ben
\Psi (\tau (E)) = e^{2\pi\ii \left(
\theta-\tfrac{1}{2}\right)} \Psi(E).
\een
This is equivalent to what we have to prove.
\qed

\subsection{Elliptic root system}
Let us recall the set $\Phi:=\{E\in K^0(\PP^1_a)\ |\ (E|E)=2\}$ of reflection vectors. We would like to prove that $\Phi$ is an elliptic root system in the sense of Kyoji Saito and to prove formulas \eqref{ell-s} and \eqref{aff-s}. 
\begin{proposition}
\label{prop:slice}
Let $E\in K^0(\PP^1_a)$ and 
\beq\label{c-coeff}
E=c_{0,0}+c_{0,1}\, (L-1)+\sum_{j=1}^3\sum_{p=1}^{a_j-1} c_{j,p} \gamma_{j,p},
\eeq
where $\gamma_{j,p}=L_j^{-p}-L_j^{-p+1}$. The following formulas hold:
\ben
\langle E, L^{-1} \rangle = - c_{0,1},\quad 
\langle E, L_i^{-a_i+1} \rangle = c_{i,a_i-1}-c_{0,1}\quad (1\leq i\leq 3).
\een
\end{proposition}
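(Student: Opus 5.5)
Since both sides are $\ZZ$-linear in $E$, it suffices to compute $\langle x, L^{-1}\rangle$ and $\langle x, L_i^{-a_i+1}\rangle$ for each basis element $x\in\{1,\,L-1,\,\gamma_{j,p}\}$ and then sum with the coefficients $c$ from \eqref{c-coeff}. All of these are pairings between differences of line bundles of the form $L_1^{k_1}L_2^{k_2}L_3^{k_3}$. Each one can therefore be evaluated with Proposition \ref{prop:ep}:
\[
\langle E,F\rangle = 1-\sum_{j}\lceil (k_j-l_j)/a_j\rceil .
\]
The bundle $L$ may be written as $L_3^{a_3}$ (or as any $L_j^{a_j}$), and likewise $L^{-1}=L_3^{-a_3}$.

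First take $F=L^{-1}=L_3^{-a_3}$. The relevant values are as follows.

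\begin{itemize}
\item For $\langle 1,L^{-1}\rangle$, the exponent differences are $(0,0,a_3)$, so the pairing is $1-1=0$.
\item For $\langle L,L^{-1}\rangle$, the differences are $(0,0,2a_3)$, so the pairing is $1-2=-1$. Hence $\langle L-1,L^{-1}\rangle=-1$.
\item For $\gamma_{j,p}=L_j^{-p}-L_j^{-p+1}$ with $1\le p\le a_j-1$, compute $\langle L_j^{-q},L_3^{-a_3}\rangle$ for $q=p,p-1$, where $0\le q\le a_j-1$.
\begin{itemize}
\item If $j\ne 3$, the differences are $-q$ in slot $j$ and $a_3$ in slot $3$. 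This gives $1-\lceil -q/a_j\rceil-1=0$, independently of $q$.
\item If $j=3$, the difference is $a_3-q\in[1,a_3]$, so the ceiling is $1$ and the pairing is $0$.
\end{itemize}
So $\langle\gamma_{j,p},L^{-1}\rangle=0$.
\end{itemize}

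Summing gives $\langle E,L^{-1}\rangle=-c_{0,1}$.

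Next take $F=L_i^{-a_i+1}$.

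\begin{itemize}
\item $\langle 1,L_i^{-a_i+1}\rangle=1-\lceil (a_i-1)/a_i\rceil=0$.
\item $\langle L,L_i^{-a_i+1}\rangle$: write $L=L_i^{a_i}$. The difference in slot $i$ is $2a_i-1$, with ceiling $2$, so the pairing is $-1$. Hence $\langle L-1,L_i^{-a_i+1}\rangle=-1$.
\item For $j\ne i$ and $0\le q\le a_j-1$, the value $\langle L_j^{-q},L_i^{-a_i+1}\rangle=1-\lceil -q/a_j\rceil-1=0$. So $\gamma_{j,p}$ contributes nothing.
\item For $j=i$, $\langle L_i^{-q},L_i^{-a_i+1}\rangle=1-\lceil (a_i-1-q)/a_i\rceil$. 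This equals $1$ when $q=a_i-1$ and $0$ when $0\le q\le a_i-2$. Therefore $\langle\gamma_{i,p},L_i^{-a_i+1}\rangle=\delta_{p,a_i-1}$; for $p=a_i-1$ one checks that the term $q=p-1$ gives $0$.
\end{itemize}

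Summing gives $\langle E,L_i^{-a_i+1}\rangle=c_{i,a_i-1}-c_{0,1}$.

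There is no real obstacle here, since the argument is linearity plus Proposition \ref{prop:ep}. The only point needing care is the bookkeeping of ceilings at the boundary exponents $q=0$ and $q=a_j-1$, together with the consistent choice of representatives for $L^{\pm1}$. A useful cross-check is that $\langle L-1,L^{-1}\rangle=-1$ agrees with the quiver of Figure \ref{fig:ep}, using $\langle 1,L\rangle=2$ and Serre duality.
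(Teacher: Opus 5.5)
Your proof is correct, but it follows a different route from the paper.

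You evaluate each pairing $\langle x,L^{-1}\rangle$ and $\langle x,L_i^{-a_i+1}\rangle$ for $x\in\{1,\,L-1,\,\gamma_{j,p}\}$ directly from the ceiling formula of Proposition~\ref{prop:ep}.

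The paper instead moves the second argument into the first. It uses $\langle E,L^{-1}\rangle=\langle EL,1\rangle$ and $\langle E,L_i^{-a_i+1}\rangle=\langle EL,1\rangle+\langle EL(L_i^{-1}-1),1\rangle$. It then rewrites $EL$ and $EL(L_i^{-1}-1)$ in the basis using ring relations: $(L-1)L=L-1$, $\gamma_{j,p}L=\gamma_{j,p}$, $\gamma_{j,p}(L_i^{-1}-1)=0$ for $j\neq i$, $\gamma_{i,p}(L_i^{-1}-1)=\gamma_{i,p+1}-\gamma_{i,p}$, and $L^{-1}=2-L$. After that it only needs the pairings $\langle x,1\rangle$, which it reads off from the quiver of Figure~\ref{fig:ep}.

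The two approaches trade off as follows. Yours is more mechanical and self-contained: it needs no multiplicative identities in $K^0(\PP^1_a)$, at the cost of a larger table of ceiling evaluations. The paper's version needs only one column of the Euler matrix, and it keeps the argument tied to the quiver picture used immediately afterwards in Proposition~\ref{prop:ers-slicing}.

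One small remark: the ``consistent choice of representatives'' you flag is not actually a constraint. Replacing $(k_j,k_i)\mapsto(k_j+a_j,\,k_i-a_i)$ changes the two ceilings by $+1$ and $-1$. So the formula of Proposition~\ref{prop:ep} depends only on the line bundle, as it must, since it is built from $\operatorname{rk}$, $\operatorname{deg}$ and the ring homomorphisms $\chi_{j,p}$.
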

\proof
Note that $\langle E, L^{-1} \rangle =\langle E\, L,1 \rangle $. On the other hand, since $(L-1)L=L-1$ and $\gamma_{j,p} L=\gamma_{j,p}$, we have 
\ben
E\, L= c_{0,0}\, L+c_{0,1}\, (L-1)+\sum_{j=1}^3\sum_{p=1}^{a_j-1} c_{j,p} \gamma_{j,p}.
\een
Recalling the diagram on Figure \ref{fig:ep}, we find $\langle L, 1\rangle = \langle\gamma_{j,p},1\rangle =0$. Since $\langle 1,1\rangle =1$, we get $\langle E\, L,1\rangle = -c_{0,1}$. This proves the first formula. For the second one, we argue in a similar fashion. First, 
$\langle E, L_i^{-a_i+1} \rangle =\langle E\, L\, L_i^{-1},1 \rangle $. Since $(L-1)(L_i^{-1}-1)=0$, $\gamma_{j,p} (L_i^{-1}-1)=0$ for $j\neq i$, and
$\gamma_{i,p}\, (L_i^{-1}-1)=\gamma_{i,p+1}-\gamma_{i,p}$ for $1\leq p\leq a_i-2$, we get
\ben
E\, L\, (L_i^{-1}-1)=
c_{0,0} \,\gamma_{i,1} +  \sum_{p=1}^{a_i-2} c_{i,p} (\gamma_{i,p+1}-\gamma_{i,p})+
c_{i,a_i-1} \, \gamma_{i,a_i-1}\, (L_i^{-1}-1).
\een
Recalling again Figure \ref{fig:ep}, since $\langle \gamma_{j,p},1\rangle=0 $, we get 
\ben
\langle E\, L\, (L_i^{-1}-1), 1\rangle = 
c_{i,a_i-1}
\langle \gamma_{i,a_i-1}\, (L_i^{-1}-1),1\rangle.
\een
On the other hand, we have 
\ben
\gamma_{i,a_i-1}\, (L_i^{-1}-1)=  
L^{-1} - L_i^{-a_i+1}-\gamma_{i,a_i-1}=
1-L - (\gamma_{i,1}+\cdots +\gamma_{i,a_i-1}+2\gamma_{i,a_i-1}),
\een
where we used that $L^{-1}=2-L$ and $L_i^{-p}=1+\gamma_{i,1}+\cdots+\gamma_{i,p}$.
Recalling again Figure \ref{fig:ep}, since $\langle 1 - L, 1\rangle = 1-0=1$ and $\langle \gamma_{j,p},1\rangle=0$, we get 
\ben
\langle \gamma_{i,a_i-1}\, (L_i^{-1}-1),1\rangle= 1.
\een
Substituting in the above formula, we get $\langle E\, L\, (L_i^{-1}-1), 1\rangle =c_{i,a_i-1}$ and  
\ben
\langle E, L_i^{-a_i+1}\rangle = 
\langle E\, L\, L_i^{-1},1\rangle = 
\langle E\, L\, (L_i^{-1}-1),1\rangle + 
\langle E\, L,1\rangle = c_{i,a_i-1} -c_{0,1}.
\qed
\een

\medskip
Let us introduce the vector space $H_{N-2}^{(1,1)}:=K^0(\PP^1_a)\otimes \RR$, that is, the topological orbifold $K$-ring with real coefficients. Let us define the following two real vector subspaces of $H_{N-2}^{(1,1)}$:
\begin{align*}
H_{N-2}^{(1)} := & \{ E \in H_{N-2}^{(1,1)}\ |\ \langle E , L^{-1}\rangle =0\} ,\\
H_{N-2} := & \{ E \in H_{N-2}^{(1,1)}\ |\ 
\langle E , L_3^{-a_3}\rangle = 
\langle E , L_3^{-a_3+1}\rangle  = 0\}.
\end{align*}
Since $L_3^{a_3}=L$ and the Euler pairing is non-degenerate, we have that $H_{N-2}$ is a hyperplane in $H_{N-2}^{(1)}$ and $H_{N-2}^{(1)}$ is a hyperplane in $H_{N-2}^{(1,1)}$. Recalling Proposition \ref{prop:slice}, we get that in terms of the decomposition \eqref{c-coeff}, the subspace $H_{N-2}^{(1)}$ is cut out by the equation $c_{0,1}=0$ while $H_{N-2}$ -- by $c_{0,1}=c_{3,a_3-1}=0$. 

Slightly abusing the notation we will continue to denote by $(\ |\ )$ the restriction of the intersection pairing to $H_{N-1}^{(1)}$ and $H_{N-2}$. Let us recall that for a real vector space $F$ and a quadratic form $I$ on $F$, Kyoji Saito introduced the concept of a {\em root system belonging to $I$} -- see Section 1, Definition 1 in \cite{Sa} for more details. If the quadratic form is positive definite, then Saito's definition is equivalent to the classical definition of a root system. If the quadratic form is semi-positive definite and the radical has dimension 1, then the root system is equivalent to an affine root system. Finally, if the quadratic form is semi-positive definite and its radical has dimension 2, then the root system is called {\em elliptic root system}. 

\begin{proposition}
\label{prop:ers-slicing}
Let $H_{N-2}\subset H_{N-2}^{(1)}\subset H_{N-2}^{(1,1)}$ be the vector spaces introduced above and equipped with quadratic forms induced by the intersection pairing $(\ |\ )$. 
\begin{enumerate}
\item[a)] 
The subset $\Phi_{\rm aff}:=\Phi\cap H_{N-2}^{(1)}$ is an affine root system in $H_{N-2}^{(1)}$ belonging to the quadratic form $(\ |\ )$. Moreover, the subset $\{1, \gamma_{i,p}\ (1\leq i\leq 3, 1\leq p\leq a_i-1)\}$ is a set of simple roots for which the corresponding Dynkin diagram coincides with the affine Dynkin diagram of type $E_{N-2}^{(1)}$.
\item[b)] 
The subset $\Phi_\perp:=\Phi\cap H_{N-2}$ is a finite root system in $H_{N-2}$ belonging to the quadratic form $(\ |\ )$. Moreover, the subset 
$\{1,\, \gamma_{i,p}\ (1\leq i\leq 2, 1\leq p\leq a_i-1),\, \gamma_{3,p} (1\leq p\leq a_3-2)\}$ 
is a set of simple roots for which the corresponding Dynkin diagram coincides with the Dynkin diagram of type $E_{N-2}$.
\item[c)]
The set $\Phi$ splits in the following way
\begin{align*}
\Phi= & \{E+m \, (L-1) + n\, \delta\ |\ E\in \Phi_\perp,\ (m,n)\in \ZZ^2\}\cong 
\Phi_\perp\times \ZZ^2, \\
\Phi= & \{E+m \, (L-1)\ |\ E\in \Phi_{\rm aff},\ m\in \ZZ\}\cong \Phi_{\rm aff}\times \ZZ,
\end{align*}
where $\delta$ is defined by \eqref{delta} and the isomorphisms $\Phi_\perp\times \ZZ^2\cong \Phi$ and $\Phi_{\rm aff}\times \ZZ\cong \Phi$ are given respectively by 
$(E,m,n)\mapsto E+m \, (L-1) + n\, \delta$ and 
$(E,m)\mapsto E+m \, (L-1)$.
\end{enumerate}
\end{proposition}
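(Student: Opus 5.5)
The plan is to reduce everything to the coordinate description \eqref{c-coeff} together with Proposition \ref{prop:slice} and Proposition \ref{prop:rad}. Write $E\in K^0(\PP^1_a)$ as in \eqref{c-coeff}. Since $L-1$ lies in the radical of $(\ |\ )$ by Proposition \ref{prop:rad}, the intersection form depends only on $c_{0,0}$ and the $c_{j,p}$, and $(E|E)=(E-c_{0,1}(L-1)\,|\,E-c_{0,1}(L-1))$. By Proposition \ref{prop:slice}, $E\in H^{(1)}_{N-2}$ exactly when $c_{0,1}=0$. Hence the map $\Phi_{\rm aff}\times\ZZ\to\Phi$, $(E,m)\mapsto E+m(L-1)$, is a bijection with inverse $E\mapsto (E-c_{0,1}(L-1),c_{0,1})$. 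This is the second splitting in c). Similarly, for the first splitting I will use that $\delta$ has $c_{0,1}(\delta)=0$ and $c_{3,a_3-1}(\delta)=l/a_3=1$ (from \eqref{delta}, since $l=a_3$ after ordering; in general I would use $\langle \delta, L_3^{-a_3+1}\rangle$ computed from Proposition \ref{prop:slice}, which is $l/a_3$). If this equals $1$, subtracting $c_{3,a_3-1}\delta$ lands in $H_{N-2}$ and gives the bijection $\Phi_\perp\times\ZZ^2\cong\Phi$. For $(3,3,3)$ and $(2,4,4)$ we do have $l=a_3$; in general I would order so that $a_3=\max a_i=l$, which holds for all three triples.

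For a) and b), I would first compute the Gram matrix of $(\ |\ )$ on the proposed simple roots directly from Figure \ref{fig:ep}. The diagonal entries are $2$, and the off-diagonal entries are $\langle a,b\rangle+\langle b,a\rangle$. The solid arrows $1\to\gamma_{i,1}$ and $\gamma_{i,p-1}\to\gamma_{i,p}$ give $-1$, and all other pairs among $\{1,\gamma_{i,p}\}$ give $0$ (the vertex $L$ is excluded). This produces the star-shaped graph $T_{a_1,a_2,a_3}$, which is the affine diagram $E^{(1)}_{N-2}$. Removing $\gamma_{3,a_3-1}$ gives $E_{N-2}$. These vectors form a $\ZZ$-basis of $H^{(1)}_{N-2}\cap K^0$, respectively of $H_{N-2}\cap K^0$ (using the coordinates $c_{0,0},c_{j,p}$ with $c_{0,1}=0$, and additionally $c_{3,a_3-1}=0$). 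So $H_{N-2}$ with $(\ |\ )$ is the $E_{N-2}$ root lattice tensored with $\RR$, which is positive definite, and its norm-2 vectors form exactly the $E_{N-2}$ root system, since for ADE lattices all norm-2 vectors are roots. That gives b). For a), the form on $H^{(1)}_{N-2}$ is positive semidefinite with one-dimensional radical spanned by $\delta$. Indeed, $\delta\in H^{(1)}_{N-2}$ because $c_{0,1}(\delta)=0$, and the radical inside this hyperplane is $\RR\delta$ by Proposition \ref{prop:rad}. The lattice equals the $E_{N-2}$ lattice $\oplus\,\ZZ\delta$, by the previous paragraph's coordinate computation. Its norm-2 vectors are then $\Phi_\perp+\ZZ\delta$, which is the standard description of the real roots of $E^{(1)}_{N-2}$ with the given simple roots. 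Saito's axioms (spanning, reflection invariance, integrality $(\alpha|\beta)\in\ZZ$, irreducibility) follow because these are the standard affine and finite root systems realized in a lattice. Alternatively, they can be checked directly: $s_\alpha(x)=x-(x|\alpha)\alpha$ preserves $K^0$ and the form, and hence preserves $\Phi$ and each slice. To see that it preserves each slice, note that $s_\alpha$ changes $x$ by a multiple of $\alpha$, and $\alpha$ lies in the slice.

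The main obstacle I expect is the bookkeeping in showing that the proposed simple roots together with $\delta$ (and $L-1$) form a $\ZZ$-basis of the relevant sublattices, so that "all norm-2 vectors" really matches the root system rather than a finite-index overlattice. The check needed is that $\delta=l\cdot 1+\sum l(1-p/a_j)\gamma_{j,p}$ with coefficient of $\gamma_{3,a_3-1}$ equal to $1$. This gives unimodular triangular change of coordinates from $\{1,\gamma_{j,p},L-1\}$ to $\{1,\gamma_{j,p}\,(\text{excluding }\gamma_{3,a_3-1}),\delta,L-1\}$, and that is what makes both the splitting in c) and the lattice identification in a), b) work.
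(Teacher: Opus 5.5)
Your proposal is correct and follows essentially the paper's argument. You reduce to the coordinates \eqref{c-coeff} via Proposition~\ref{prop:slice}, read off the Gram matrix on $\{1,\gamma_{i,p}\}$ from the quiver in Figure~\ref{fig:ep} (dropping $L$, and also $\gamma_{3,a_3-1}$) to get the $E^{(1)}_{N-2}$ and $E_{N-2}$ diagrams, and obtain c) from $m=c_{0,1}$, $n=c_{3,a_3-1}$ using $c_{3,a_3-1}(\delta)=l/a_3=1$. Your extra lattice bookkeeping (the slices meet $K^0$ exactly in the $\ZZ$-span of the simple roots, so norm-$2$ vectors are precisely the real roots) only makes explicit what the paper leaves implicit in ``$E\in\Phi_{\rm aff}$ iff $E$ is an affine real root.''
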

\proof
Parts a) and b) are straightforward corollaries of Proposition \ref{prop:slice}. Indeed, for part a), let us decompose $E$ as in formula \eqref{c-coeff}. Then $E\in H_{N-2}^{(1)}$ iff $c_{0,1}=0$, that is, $E$ is an integer linear combination of the vectors in the set $\mathcal{P}_{\rm aff}:=\{1, \gamma_{i,p}\ (1\leq i\leq 3, 1\leq p\leq a_i-1)\}$. The Euler pairings $\langle a, b\rangle $ for  $a,b\in \mathcal{P}_{\rm aff}$ can be identified with the Euler form of the quiver $Q$ obtained from the quiver on Figure \ref{fig:ep} by chopping the vertex $L$. Clearly, $Q$ is a quiver of the affine Dynkin diagram of type $E_{N-2}^{(1)}$. Since $E\in \Phi_{\rm aff}$ iff and only if $(E|E)=2$, we get that if we identify the set $\mathcal{P}_{\rm aff}$ with the set of simple roots for the affine root system of type $E_{N-2}^{(1)}$, then $E\in \Phi_{\rm aff}$ iff $E$ is an affine (real) root. The argument for part b) is similar. The condition $E\in H_{N-2}$ is equivalent to $c_{0,1}=c_{3,a_3-1}=0$, that is, we have to chop off from the quiver on Figure \ref{fig:ep} the vertices $L$ and $\gamma_{3,a_3-1}$. The resulting quiver is a Dynkin quiver of type $E_{N-2}$. Therefore, we can proceed in the same way as in a). 

For part c), let us prove the first isomorphism, that is,
\beq\label{2-splitting}
\Phi_\perp\times \ZZ^2\cong \Phi,\quad 
(E,m,n)\mapsto E+m \, (L-1) + n\, \delta.
\eeq
The argument for the other case is similar. First, note that the map lands in $\Phi$ because $L-1$ and $\delta$ are in the radical of $(\ |\ )$. Furthermore, let us prove that the map is one-to-one. Suppose that
$E'+m' \, (L-1) + n'\, \delta = E''+m'' \, (L-1) + n''\, \delta$. Then 
\ben
(m''-m')\, (L-1) + (n''-n')\, \delta = E'-E''\quad \in H_{N-2}.
\een
Therefore, the Euler pairing of $(m''-m')\, (L-1) + (n''-n')\, \delta $ with $L^{-1}$ and $L_3^{-a_3+1}$ must be $0$. On the other hand, according to Proposition \ref{prop:slice}, we have 
\beq\label{div_ep}
\langle L-1, L^{-1}\rangle = -1,\quad 
\langle \delta, L^{-1}\rangle = 0,
\eeq
and 
\beq\label{delta_ep}
\langle L-1, L_3^{-a_3+1}\rangle = -1,\quad 
\langle \delta , L_3^{-a_3+1}\rangle = 1,
\eeq
where we used that in the decomposition \eqref{c-coeff} of $E=\delta$, the coefficient $c_{0,1}=0$ and $c_{3,a_3-1}= l (1-(a_3-1)/a_3) = 1$. We get $-(m''-m')=0$ and $(n''-n')=0$, that is, $m''=m'$ and $n''=n'$. 

It remains to prove that the map is onto. To this end, let $E\in \Phi$ be arbitrary. Let us decompose $E$ as in \eqref{c-coeff}. Then combining Proposition \ref{prop:slice} and formulas \eqref{div_ep} and \eqref{delta_ep}, we get that $E':=E-m(L-1)-n\delta \in H_{N-2}$ iff
\begin{align*}
-c_{0,1}+m = & 0, \\
c_{3,a_3-1}-c_{0,1}+m -n =& 0.
\end{align*}
The above system has a unique integral solution $m=c_{0,1}$ and $n=c_{3,a_3-1}$. Clearly, the map \eqref{2-splitting} maps $(E',m,n)$ to $E$. 
\qed

\medskip

\begin{remark}
Recalling Proposition \ref{prop:rad}, we get $\operatorname{rad}\cap H_{N-2}^{(1)}=\ZZ\,\delta$. In the terminology of Kyoji Saito (see \cite{Sa}), the pair $(\Phi,\ZZ\,\delta)$ is an example of a {\em marked extended affine root system}. \qed
\end{remark}
\begin{remark}
Under mirror symmetry, the K-theoretic lattice $K^0(\PP^1_a)$ can be identified with the Milnor lattice of the so-called {\em cusp polynomials} $f(x)=x_1^{a_1}+x_2^{a_2}+x_3^{a_3}-\tfrac{1}{Q} x_1 x_2 x_3$. If $\tfrac{1}{a_1}+\tfrac{1}{a_2}+\tfrac{1}{a_3}=1$, then the cusp polynomial defines a simple elliptic singularity and the set $\Phi$ can be identified with the set of {\em vanishing cycles}.  The elliptic root systems corresponding to simple elliptic singularities are the main motivation of Kyoji Saito for introducing the theory of elliptic root systems. \qed
\end{remark}

\subsection{Phase factors}\label{sec:phf}
Recalling the explicit formula for the Iritani map \eqref{psi_E} and the definition of the calibrated periods \eqref{cper}, we get the following explicit formula for the calibrated periods:
\begin{proposition}
\label{prop:cp}
Let $E\in K^0(\PP^1_a)$ be a K-theoretic
vector bundle and $m\in \CC$. Then 
\begin{align*}
  I^{(-m)}_E(\lambda) = & \
\operatorname{rk}(E)
\frac{\lambda^m}{\Gamma(m+1)}+
\frac{\lambda^{m-1}}{\Gamma(m)}\, \Big(
2\pi\ii\, \operatorname{deg}(E)-
\operatorname{rk}(E)\, \log Q 
\Big)\,P + \\
&
+
    \sum_{j=1}^3 \sum_{p=1}^{a_j-1}
\chi_{j,p}(E) \,
\frac{\Gamma(1-p/a_j)}{\Gamma(m+1-p/a_j)}\, \lambda^{m-p/a_j}\, 
\phi_{j,p}.
\end{align*}
\end{proposition}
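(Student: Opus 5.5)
The plan is to substitute the explicit formula \eqref{psi_E} for $\Psi(E)$ into the definition \eqref{cper} of the calibrated period, with $m$ replaced by $-m$. Then we use that $\theta$ acts diagonally on the basis $\phi_{i,p}$. One subtlety must be handled first: $P\cdot\log Q$ is not an eigenvector problem, because $\theta$ is diagonal. However, $\lambda^{\theta}$ and $\Gamma(\theta+\ldots)$ are functions of $\theta$ alone, so the whole operator
\[
\lambda^{\theta+m-\frac12}\,\Gamma\bigl(\theta+m+\tfrac12\bigr)^{-1}
\]
acts on each $\phi_{i,p}$ by the scalar obtained by substituting the eigenvalue $\tfrac12-\tfrac{p}{a_i}$.

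The computation then proceeds basis vector by basis vector.
\begin{itemize}
\item For $\phi_{0,0}=1$ the eigenvalue is $\tfrac12$. The exponent becomes $\lambda^{m}$ and the Gamma factor is $\Gamma(m+1)$. This produces the term $\operatorname{rk}(E)\lambda^m/\Gamma(m+1)$.
\item For $\phi_{0,1}=P$ the eigenvalue is $\tfrac12-1=-\tfrac12$. This gives $\lambda^{m-1}/\Gamma(m)$, multiplied by the $P$-coefficient $2\pi\ii\deg(E)-\operatorname{rk}(E)\log Q$ read off from \eqref{psi_E}.
\item For a twisted class $\phi_{j,p}$ the eigenvalue is $\tfrac12-p/a_j$. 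This yields $\lambda^{m-p/a_j}/\Gamma(m+1-p/a_j)$, multiplied by the coefficient $\Gamma(1-p/a_j)\chi_{j,p}(E)$ from \eqref{psi_E}.
\end{itemize}
Summing these three contributions gives exactly the stated formula.

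The only point needing care is the interpretation at non-generic $m$. At poles of $\Gamma$ in the denominator the value is $1/\Gamma=0$, which is consistent with the entire dependence on $m$ noted in the paper. The branch of $\lambda^{m}$ is whatever fixed branch was used in \eqref{cper}. No real obstacle is expected: the statement is a direct bookkeeping of eigenvalues of $\theta$, and the main thing to check is the sign convention $I^{(-m)}$ versus $I^{(m)}$ in \eqref{cper}, i.e.\ that $\lambda^{\theta-(-m)-1/2}=\lambda^{\theta+m-1/2}$.
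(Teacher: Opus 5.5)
Your proposal is correct and is the paper's argument: the paper obtains Proposition~\ref{prop:cp} by substituting \eqref{psi_E} into \eqref{cper} with $m$ replaced by $-m$. It then uses that $\theta$ acts on $\phi_{i,p}$ by $\tfrac12-\tfrac{p}{a_i}$ (with $a_0=1$), which is exactly your basis-by-basis computation. Your aside about $P\log Q$ is unnecessary, since $-\operatorname{rk}(E)\log Q\,P$ is just a scalar multiple of the eigenvector $P$ and simply merges into the $P$-coefficient.
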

Using the formula in Proposition \ref{prop:cp}, it is straightforward to obtain the explicit formulas for the vertex operators stated in the introduction.

We would like to prove formula \eqref{phase_B}. The factor 
$B^{E,F}(\lambda,\mu)=B_0^{E,F}(\lambda,\mu)\, B_\perp^{E,F}(\lambda,\mu)$ where  the factors $B_0^{E,F}(\lambda,\mu)$ and $B_\perp^{E,F}(\lambda,\mu)$ are defined by the following formulas:
\begin{align*}
\Gamma_0^E(\lambda)\Gamma_0^F(\mu) & = 
B_0^{E,F}(\lambda,\mu)\, 
:\Gamma_0^E(\lambda)\Gamma_0^F(\mu): \, \\
\Gamma_\perp^E(\lambda)\Gamma_\perp^F(\mu) & = 
B_\perp^{E,F}(\lambda,\mu)\, 
:\Gamma_\perp^E(\lambda)\Gamma_\perp^F(\mu):\ ,
\end{align*}
where $\Gamma_0^A=\Gamma_{0,0}^A \Gamma_{0,1}^A$ and 
$\Gamma_\perp^A=\prod_{j=1}^3 \prod_{p=1}^{a_j-1} \Gamma_{j,p}^A$. 
The $B_0$-factor is straightforward to compute. We have
\beq\label{B_0}
B_0^{E,F}(\lambda,\mu)=
e^{
-2\pi\ii\, 
\operatorname{rk}(E)
\operatorname{deg}(F) }\, 
Q^{\operatorname{rk}(E)\operatorname{rk}(F)}.
\eeq
Let us compute $B_\perp^{E,F}(\lambda,\mu)$. 
\begin{proposition}\label{prop:B_perp}
The following formula holds:
\ben
B_\perp^{E,F}(\lambda,\mu)=
\prod_{k=1}^l
\Big(
1-\xi^k (\mu/\lambda)^{1/l} 
\Big)^{(\tau^k(E)|F)},
\een
where $\xi:=e^{2\pi\ii/l}$.
\end{proposition}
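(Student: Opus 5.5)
The plan is a direct computation: evaluate the commutator of the annihilation part of $\Gamma^E_\perp(\lambda)$ with the creation part of $\Gamma^F_\perp(\mu)$, then resum using roots of unity.

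Since the annihilation and creation parts are exponentials of linear expressions in $\partial_{k,j,p}$ and $t_{k,j,p}$, their commutator is a scalar. Hence
$$B_\perp^{E,F}(\lambda,\mu)=\exp\Big(\sum_{j,p}\sum_{k\ge 0}c_{k,j,p}\Big),$$
where $c_{k,j,p}$ is the product of the coefficient of $\sqrt{\hbar}\partial_{k,j,p}$ in $\Gamma^E_{j,p}(\lambda)$ and the coefficient of $t_{k,j,p}/\sqrt{\hbar}$ in $\Gamma^F_{j,p}(\mu)$. Different $(j,p)$ do not interact. Using the explicit formulas for $\Gamma^E_{j,p}$ from the introduction, put $r=p/a_j$ and $x=\mu/\lambda$. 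The Pochhammer products telescope, $(r)_k/(r)_{k+1}=1/(k+r)$, and we get
$$c_{k,j,p}=-\frac{1}{a_j}\,\chi_{j,p}(E)\,\chi_{j,-p}(F)\,\frac{x^{k+r}}{k+r}.$$

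Next set $y=x^{1/l}$. Since $a_j\mid l$, the exponent $n:=l(k+r)=lk+lp/a_j$ is a positive integer, and
$$\sum_{k\ge 0}\frac{x^{k+r}}{k+r}=l\sum_{n\equiv lp/a_j \ (\mathrm{mod}\ l)}\frac{y^n}{n}.$$
Insert the projector $\tfrac1l\sum_{s=1}^{l}\xi^{s(n-lp/a_j)}$ onto this residue class and use $\sum_{n\ge1}\xi^{sn}y^n/n=-\log(1-\xi^s y)$. Noting that $\xi^{-slp/a_j}=\eta_j^{-sp}$, the sign from the commutator cancels the sign of the logarithm, and we obtain
$$\log B_\perp^{E,F}(\lambda,\mu)=\sum_{s=1}^{l}C_s\,\log\big(1-\xi^s(\mu/\lambda)^{1/l}\big),\qquad C_s=\sum_{j=1}^3\frac1{a_j}\sum_{p=1}^{a_j-1}\chi_{j,p}(E)\,\chi_{j,-p}(F)\,\eta_j^{-sp}.$$

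It remains to identify $C_s=(\tau^s(E)|F)$. By part c) of Proposition \ref{prop:sd}, or directly from $T\PP^1_a=L_1L_2L_3L^{-1}$, we have $\chi_{j,p}(\tau^sE)=\eta_j^{-sp}\chi_{j,p}(E)$. So it suffices to prove the case $s=0$:
$$(E|F)=\sum_j\frac{1}{a_j}\sum_{p=1}^{a_j-1}\chi_{j,p}(E)\,\chi_{j,-p}(F).$$
To do this, symmetrize formula \eqref{euler-form}. The rank–degree terms cancel. Since $\chi_{j,p}$ is a ring homomorphism, $\chi_{j,p}(E^\vee)=\chi_{j,-p}(E)$. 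In the term coming from $\langle E,F\rangle$, reindex $p\mapsto a_j-p$; it then carries the denominator $1-\eta_j^{-p}$, while the term from $\langle F,E\rangle$ carries $1-\eta_j^{p}$, both with numerator $\chi_{j,p}(E)\chi_{j,-p}(F)$. The identity $\frac1{1-\eta^p}+\frac1{1-\eta^{-p}}=1$ then finishes the computation.

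The main obstacle is bookkeeping rather than anything conceptual. One must track signs and the Pochhammer normalizations in the explicit vertex operators, and handle the branch of $(\mu/\lambda)^{1/l}$ consistently with $\lambda^{-p/a_j}$ and $\mu^{p/a_j}$. If the Pochhammer products do not telescope as expected, I would recheck them against Proposition \ref{prop:cp} directly.
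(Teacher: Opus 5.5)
Your argument is correct, and it reaches the formula by a different route from the paper.

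\textbf{The paper's route.} It first adds the missing $p=0$ terms, so that $\prod_p B_{j,p}$ becomes a sum over all $n\ge 1$ involving $\chi_{j,-n}(E^\vee\otimes F)$; this is compensated by a factor $(1-x)^{-\operatorname{rk}(E)\operatorname{rk}(F)/a_j}$. It then expands $E^\vee\otimes F$ in the integral basis $1,L^{-1},L_j^{-p}$, which turns each $B_j$ into $\prod_p(1-\eta_j^{-p}x^{1/a_j})^{m_{j,p}}$. These factors are refined to $l$-th roots of unity. Finally the exponent $m_{1,-k}+m_{2,-k}+m_{3,-k}$ is identified with $(\tau^k(E)|F)+\operatorname{rk}(E)\operatorname{rk}(F)$ by evaluating $(\tau^k|L_j^{-p})$ with the ceiling formula of Proposition~\ref{prop:ep}.

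\textbf{Your route.} You apply the roots-of-unity filter on $\ZZ/l$ directly, so the exponents come out as character sums $C_s$. You then identify $C_s$ with $(\tau^s(E)|F)$ using two facts: the closed formula $(E|F)=\sum_j a_j^{-1}\sum_{p}\chi_{j,p}(E)\chi_{j,-p}(F)$, obtained by symmetrizing \eqref{euler-form}, and $\chi_{j,p}(\tau E)=\eta_j^{-p}\chi_{j,p}(E)$.

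\textbf{Comparison.} Your route is shorter. It needs no integral decomposition, no ceiling-function bookkeeping, and no adding and subtracting of the rank term. As a byproduct, the character formula for $(\ |\ )$ shows at once that the pairing only sees the twisted-sector components, so its radical is $K_0(\PP^1_a,\CC)$. The paper's route keeps every exponent visibly an integer combination of the $m_{j,p}$ throughout, and it reuses the line-bundle Euler pairing formula it has already established. In your argument the exponents are a priori complex numbers, and their integrality only becomes visible once $C_s=(\tau^s(E)|F)$ is known.

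One small point: $\chi_{j,p}(E^\vee)=\chi_{j,-p}(E)$ does not follow from multiplicativity alone. You also need that dualization is the ring involution with $L_i^\vee=L_i^{-1}$, and the values of $\chi_{j,p}$ on the generators $L_i$.
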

\proof
We have 
$
\Gamma_{j,p}^E(\lambda) \Gamma_{j,p}^F(\mu) =
B_{j,p}(\lambda,\mu) 
: \Gamma_{j,p}^E(\lambda) \Gamma_{j,p}^F(\mu):
$, 
where 
\ben
B_{j,p}(\lambda,\mu)=
\exp\Big(
-\sum_{k=0}^\infty 
\frac{(x^{1/a_j})^{p+k a_j}}{p+k a_j}\, 
\chi_{j,-p}(E^\vee\otimes F)
\Big),
\een
where $x:=\mu/\lambda$. Taking the product over all $p$ ($1\leq p\leq a_j-1$), we get 
\ben
B_j(\lambda,\mu):=
\prod_{p=1}^{a_j-1} B_{j,p}(\lambda,\mu)=
\exp\Big(-
\sum_{n=1}^\infty \frac{(x^{1/a_j})^n}{n}\, 
\chi_{j,-n}(E^\vee\otimes F) + 
\frac{\operatorname{rk}(E)\operatorname{rk}(F)}{a_j} 
\sum_{n=1}^\infty \frac{x^n}{n} 
\Big).
\een
Let us decompose 
\ben
E^\vee\otimes F=
m_{0,0} +m_{0,1} L^{-1} + 
\sum_{j=1}^3 \sum_{p=1}^{a_j-1} m_{j,p} L_j^{-p},
\een
where the coefficients $m_{i,q}\in \ZZ$. Note that 
\ben
\chi_{j,-n}(E^\vee\otimes F) = 
m_{j,0}+ 
\sum_{p=1}^{a_j-1} m_{j,p} \eta_j^{-np} ,
\een
where $\eta_j=e^{2\pi\ii/a_j}$ and $m_{j,0}$ is the sum of all 
$m_{i,q}$ such that $i\neq j$, that is,  
\ben
m_{j,0}= m_{0,0}+m_{0,1}+
\sum_{i:i\neq j}
\sum_{q=1}^{a_i-1} m_{i,q}=
\operatorname{rk}(E)\operatorname{rk}(F) -
\sum_{p=1}^{a_j-1} m_{j,p}.
\een
The above formula takes the following form:
\ben
B_j(\lambda,\mu) = 
(1-x)^{-\operatorname{rk}(E)\operatorname{rk}(F)/a_j} \, 
\prod_{p=0}^{a_j-1} 
\Big(
1-\eta_j^{-p} x^{1/a_j}
\Big)^{m_{j,p}}.
\een
We get the following formula for the orthogonal component of the phase factor 
\ben
B_\perp^{E,F}(\lambda,\mu)=
(1-x)^{-\operatorname{rk}(E)\operatorname{rk}(F)} \, 
\prod_{j=1}^3\prod_{p=0}^{a_j-1} 
\Big(
1-\eta_j^{-p} x^{1/a_j}
\Big)^{m_{j,p}}.
\een
Let us rewrite each factor in the above product in the following way:
\ben
1-\eta_j^{-p} x^{1/a_j} = 
1-\xi^{-pl/a_j} (x^{1/l})^{l/a_j} =
\prod_{s=0}^{\tfrac{l}{a_j}-1} (1-\xi_j^{-s} \xi^{-p} x^{1/l}),
\een
where $\xi_j=\xi^{a_j}$ is a primitive $\tfrac{l}{a_j}$-root of $1$. Since $\xi_j^{-s}\xi^{-p} =\xi^{-s a_j -p}$, we can identify the above product with the product of all $1-\xi^{-k} x^{1/l}$ such that $1\leq k\leq l$ and the remainder of the divison of $k$ on $a_j$ is $p$. It is convenient to extend the definition of $m_{j,p}$ by defining $m_{j,k}:=m_{j,p}$ where $k\in \ZZ$ and $p$ is the remainder of the division of $k$ on $a_j$. We get 
\ben
\prod_{p=0}^{a_j-1} 
\Big(
1-\eta_j^{-p} x^{1/a_j}
\Big)^{m_{j,p}}=
\prod_{k=1}^{l} 
\Big(
1-\xi^{-k} x^{1/l}
\Big)^{m_{j,k}}
\een
and 
\beq\label{Bm}
B_\perp^{E,F}(\lambda,\mu) =
(1-x)^{-\operatorname{rk}(E)\operatorname{rk}(F)} \, 
\prod_{k=1}^{l} 
\Big(
1-\xi^{-k} x^{1/l}
\Big)^{m_{1,k}+m_{2,k}+m_{3,k}}.
\eeq
It remains only to compute the sum $m_{1,k}+m_{2,k}+m_{3,k}$. To this
end, let us examine the pairing
\ben
(\tau^k(E)|F)=(\tau^k(E^\vee\otimes F)|1)=
m_{0,0} (\tau^k|1)+
m_{0,1}(\tau^k|L^{-1}) + 
\sum_{j=1}^3 \sum_{p=1}^{a_j-1}
m_{j,p} (\tau^k|L_j^{-p}),
\een
where slightly abusing the notation we denoted by $\tau$ the orbifold
tangent bundle, that is, $\tau=L_1L_2L_3L^{-1}$.  Recalling Proposition
\ref{prop:ep}, we get
\ben
(\tau^k|L_j^{-p}) =
(L_1^kL_2^kL_3^k L_j^{p}|1) =2-
\left\lceil\frac{k+p}{a_j} \right\rceil-
\left\lceil\frac{-k-p}{a_j} \right\rceil-
\sum_{i:i\neq j}
\Big(
\left\lceil\frac{k}{a_i} \right\rceil+
\left\lceil\frac{-k}{a_i} \right\rceil
\Big),
\een
where for the first equality we set $L=1$ because $L-1$ is in the radical of $(\ |\ )$. Note that
\ben
\lceil x\rceil + \lceil -x\rceil =
\begin{cases}
  0 & \mbox{ if } x\in \ZZ,\\
  1& \mbox{ if } x\notin \ZZ.
\end{cases}
\een
Therefore,
\ben
(\tau^k|L_j^{-p}) = -1+\delta_{a_j|p+k} + \sum_{i:i\neq j}
\delta_{a_i|k},\quad \forall p\in \ZZ,
\een
where we introduced the boolean function $\delta_{a|m}$ which is $1$
if $a|m$ and $0$ otherwise. Using the above formula, we get 
\ben
(\tau^k(E)|F) + \operatorname{rk}(E)\operatorname{rk}(F)=
\operatorname{rk}(E)\operatorname{rk}(F) \, \sum_{i=1}^3 \delta_{a_i|k}  +
\sum_{j=1}^3 \sum_{p=1}^{a_j-1}
m_{j,p} (\delta_{a_j|p+k} -\delta_{a_j|k}),
\een
where we used that
$\operatorname{rk}(E)\operatorname{rk}(F)=
m_{0,0}+m_{0,1}+\sum_{j=1}^3 \sum_{p=1}^{a_j-1} m_{j,p}.$ The RHS of
the above formula can be rewritten as
\ben
\sum_{j=1}^3 \Big(\Big(
\operatorname{rk}(E)\operatorname{rk}(F) - \sum_{p=1}^{a_j-1} m_{j,p}\Big)
\delta_{a_j|k} +
\sum_{p=1}^{a_j-1}
m_{j,p}  \delta_{a_j|p+k} \Big)=
\sum_{j=1}^3 \sum_{p=0}^{a_j-1} m_{j,p} \delta_{a_j|p+k} =
\sum_{j=1}^3 m_{j,-k}.
\een
This proves that
$m_{1,-k}+m_{2,-k}+m_{3,-k} =
(\tau^k(E)|F) +
\operatorname{rk}(E)\operatorname{rk}(F)$. Let us switch the index
$k$ in the product \eqref{Bm} by $k\mapsto -k$. Using the above
formula, we get
\ben
B_\perp^{E,F}(\lambda,\mu) =
(1-x)^{-\operatorname{rk}(E)\operatorname{rk}(F)} \, 
\prod_{k=1}^{l} 
\Big(
1-\xi^{k} x^{1/l}
\Big)^{
(\tau^k(E)|F) +
\operatorname{rk}(E)\operatorname{rk}(F)
}.
\een
To complete the proof, we need only to note
\ben
\prod_{k=1}^l (1-\xi^k
x^{1/l})^{\operatorname{rk}(E)\operatorname{rk}(F)}=
(1-x)^{\operatorname{rk}(E)\operatorname{rk}(F)}.
\qed
\een
Combining formula \eqref{B_0} and Proposition \ref{prop:B_perp} we get \eqref{phase_B}. 

\subsection{Convergence of the bilinear operator}
\label{sec:convergence}
Let us first prove formula \eqref{cfac}. 
\begin{lemma}\label{le:vo-splitting}
The following formula holds:
\ben
\Gamma^{\alpha+n\delta}(\lambda) = 
Q^{-n\,l\, \operatorname{rk}(\alpha) }
\Gamma^{n\delta}(\lambda)
\Gamma^{\alpha}(\lambda)=
Q^{-n\,l\, \operatorname{rk}(\alpha) }
\Gamma^{\alpha}(\lambda)
\Gamma^{n\delta}(\lambda),
\een
where $\alpha\in \Phi_\perp$ and $n\in \ZZ$.
\end{lemma}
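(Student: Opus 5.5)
The plan is to compare exponents directly, using that every vertex operator $\Gamma^E(\lambda)$ is an exponential of a linear expression in $E$. From \eqref{vop}, the creation part of $\Gamma^E$ is $\exp(\sum (I_E^{(-k-1)}(\lambda),\phi_{j,p})t_{k,j,p}/\sqrt\hbar)$ and the annihilation part is $\exp(\sum (I_E^{(k)}(\lambda),\phi^{j,p})(-1)^{k+1}\sqrt\hbar\,\partial_{k,j,p})$. Both exponents are $\CC$-linear in $\Psi(E)$, and $\Psi$ is additive. So $\Gamma^{\alpha+n\delta}(\lambda)$ is the normally ordered product $:\Gamma^{\alpha}(\lambda)\Gamma^{n\delta}(\lambda):$: all creation factors are collected to the left, and all annihilation factors to the right.

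The lemma therefore reduces to computing the phase factors $B^{n\delta,\alpha}(\lambda,\lambda)$ and $B^{\alpha,n\delta}(\lambda,\lambda)$ from \eqref{product_f}. Since
\[
\Gamma^{n\delta}(\lambda)\Gamma^{\alpha}(\mu)=B^{n\delta,\alpha}(\lambda,\mu)\,:\Gamma^{n\delta}(\lambda)\Gamma^{\alpha}(\mu):,
\]
it suffices to show that $B^{n\delta,\alpha}(\lambda,\mu)=Q^{nl\,\operatorname{rk}(\alpha)}$ identically in $\lambda,\mu$, and likewise for $B^{\alpha,n\delta}$. Setting $\mu=\lambda$ then gives $\Gamma^{n\delta}\Gamma^\alpha=Q^{nl\operatorname{rk}(\alpha)}\Gamma^{\alpha+n\delta}$, which is the claim.

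To compute these I would apply formula \eqref{phase_B}. Because $\delta$ lies in the radical of $(\ |\ )$ by Proposition \ref{prop:rad}, and $\tau$ preserves the intersection pairing, we have $(\tau^k(n\delta)|\alpha)=0=(\tau^k(\alpha)|n\delta)$ for all $k$. Hence every factor $(1-\xi^k(\mu/\lambda)^{1/l})^{(\cdot|\cdot)}$ equals $1$; in fact Proposition \ref{prop:rad} gives $\tau(\delta)=\delta$ directly. What remains is the $B_0$ factor \eqref{B_0}. This requires reading off from \eqref{delta}: $\operatorname{rk}(\delta)=l(-2+3)=l$, since each $\gamma_{j,p}$ has rank $0$, and $\deg(\delta)=\sum_j\sum_p l(1-p/a_j)\deg(\gamma_{j,p})$. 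The $\gamma$ expansion is convenient here. Using $\deg(L_j^{-p})=-p/a_j$, we get $\deg\gamma_{j,p}=-1/a_j$, so $\deg\delta=-\sum_j \tfrac{l}{a_j}\sum_{p=1}^{a_j-1}(1-p/a_j)=-\sum_j \tfrac{l(a_j-1)}{2a_j}$. This is an integer (in the three cases it is $-3,-3,-3$ after checking), and the only role of that fact is to kill the root-of-unity phase.

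Then
\[
B_0^{n\delta,\alpha}=e^{-2\pi\ii\, nl\deg\alpha}\,Q^{nl\operatorname{rk}\alpha},\qquad B_0^{\alpha,n\delta}=e^{-2\pi\ii\operatorname{rk}(\alpha)\, n\deg\delta}\,Q^{nl\operatorname{rk}\alpha}.
\]
The second phase is $1$ because $\operatorname{rk}\alpha$ and $n\deg\delta$ are integers. For the first phase, $l\deg\alpha\in\ZZ$ because $\deg$ of any class in $K^0(\PP^1_a)$ lies in $\tfrac1l\ZZ$, being generated by $\deg L_i=1/a_i$. Hence both factors equal $Q^{nl\operatorname{rk}\alpha}$.

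The main obstacle is careful bookkeeping, not any idea. First, one must check that the normal-ordering identity $:\Gamma^A\Gamma^B:=\Gamma^{A+B}$ is compatible with the shift operator $e^{-\operatorname{rk}\sqrt\hbar\partial_{0,0,0}}$ appearing in $\Gamma_{0,0}$, which I would treat as part of the annihilation part. Second, one must confirm the sign conventions in \eqref{B_0}, i.e. which of $\operatorname{rk}(E)\deg(F)$ or $\operatorname{rk}(F)\deg(E)$ appears, so that both orderings give exactly $Q^{-nl\operatorname{rk}\alpha}$ in the normalization stated in the lemma. If the exponent comes out with the opposite sign, the fix is to read the lemma as $\Gamma^{\alpha+n\delta}=B^{-1}\Gamma^{n\delta}\Gamma^{\alpha}$, which is consistent with the computed $B=Q^{nl\operatorname{rk}\alpha}$.
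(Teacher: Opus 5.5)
Your argument is correct and is essentially the paper's proof. The paper observes $\Gamma^{\alpha+n\delta}_{j,p}(\lambda)=\Gamma^{\alpha}_{j,p}(\lambda)$ because $\chi_{j,p}(\delta)=0$, which is a more direct way to get $B_\perp=1$ than going through \eqref{phase_B}. It then computes $\Gamma_0^{n\delta}\Gamma_0^{\alpha}$ from the explicit formulas, where the only interaction is the shift $e^{-nl\sqrt{\hbar}\,\partial_{0,0,0}}$ acting on $e^{(2\pi\ii\deg(\alpha)-\operatorname{rk}(\alpha)\log Q)\,t_{0,0,0}/\sqrt{\hbar}}$. This reproduces exactly your $B_0^{n\delta,\alpha}=e^{-2\pi\ii\, nl\deg(\alpha)}Q^{nl\operatorname{rk}(\alpha)}$ and confirms the sign, so your closing hedge is unnecessary.

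One small slip: using $\sum_j 1/a_j=1$, your sum gives $\deg(\delta)=-\tfrac{l}{2}\sum_j(1-\tfrac{1}{a_j})=-l$. That is $-3,-4,-6$ in the three cases, not $-3$ in all of them, though only integrality is used.
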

\proof
 It is straightforward to check that $\Gamma_{j,p}^{\alpha+ n\delta}(\lambda)= \Gamma_{j,p}^\alpha(\lambda)$. Note that 
\ben
\operatorname{rk}(\delta)=l,\quad 
\operatorname{deg}(\delta)=-l.
\een
Recalling the explicit formulas for the vertex operators $\Gamma_0^E(\lambda)=\Gamma_{0,0}^E(\lambda) \Gamma_{0,1}^E(\lambda)$, we get 
\ben
\Gamma_0^{\alpha + n\delta}(\lambda) = 
e^{2\pi\ii \,n\,l\,\operatorname{deg}(\alpha)}
Q^{-n\,l\,\operatorname{rk(\alpha)}}\, 
\Gamma_0^{n\delta}(\lambda) \,
\Gamma_0^{\alpha }(\lambda) =
Q^{-n\,l\,\operatorname{rk(\alpha)}}\, 
\Gamma_0^{n\delta}(\lambda) \,
\Gamma_0^{\alpha }(\lambda),
\een
where we used that $l\,\operatorname{deg}(\alpha)\in \ZZ$.
\qed

\begin{lemma}\label{le:bE-splitting}
We have: 
$b_{\alpha+n\delta}(\lambda)= Q^{n^2l^2+2nl\operatorname{rk}(\alpha)}\, b_\alpha(\lambda)$ for all
$\alpha\in \Phi$ and $n\in \ZZ$. 
\end{lemma}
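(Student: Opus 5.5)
The plan is to compute $b_{\alpha+n\delta}(\lambda)$ directly from the closed formula \eqref{bE}, which is valid for all $E\in\Phi$ since $(E|E)=2$ for every such $E$. We then compare each of its factors for $E=\alpha+n\delta$ and $E=\alpha$. There are four factors: the $\lambda^{-2}$ and $1/l^2$ prefactors, the product $\prod_{k=1}^{l-1}(1-\xi^k)^{(\tau^k(E)|E)}$, the power $Q^{\operatorname{rk}(E)^2}$, and the phase $e^{-2\pi\ii\operatorname{rk}(E)\operatorname{deg}(E)}$.

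First, the product over $k$ is unchanged. By Proposition \ref{prop:rad}, $\delta$ lies in the radical of $(\ |\ )$, and $\tau(\delta)=\delta$ because $\delta\in K_0$ (Proposition \ref{prop:sd}). Hence
\ben
(\tau^k(\alpha+n\delta)|\alpha+n\delta)=(\tau^k(\alpha)|\alpha),
\een
and the product over $k$ is the same for $\alpha+n\delta$ and $\alpha$. The prefactors $1/l^2$ and $\lambda^{-2}$ are obviously the same.

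Second, we use $\operatorname{rk}(\delta)=l$ and $\operatorname{deg}(\delta)=-l$, as noted in the proof of Lemma \ref{le:vo-splitting}. These follow from \eqref{delta}: each $\gamma_{j,p}$ has rank $0$ and degree $-1/a_j$, so the degree is $-\sum_j\sum_{p=1}^{a_j-1} l(1-p/a_j)/a_j=-l\sum_j\frac{a_j-1}{2a_j}=-l$, using $\sum 1/a_j=1$. Then
\ben
\operatorname{rk}(\alpha+n\delta)^2=\operatorname{rk}(\alpha)^2+2nl\operatorname{rk}(\alpha)+n^2l^2,
\een
which produces exactly the factor $Q^{n^2l^2+2nl\operatorname{rk}(\alpha)}$.

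Third, the phase. We have
\ben
\operatorname{rk}(\alpha+n\delta)\operatorname{deg}(\alpha+n\delta)=\operatorname{rk}(\alpha)\operatorname{deg}(\alpha)-nl\operatorname{rk}(\alpha)+nl\operatorname{deg}(\alpha)-n^2l^2 .
\een
The extra terms $-nl\operatorname{rk}(\alpha)$ and $-n^2l^2$ are integers. The term $nl\operatorname{deg}(\alpha)$ is also an integer, since degrees of elements of $K^0(\PP^1_a)$ lie in $\frac1l\ZZ$ because $\operatorname{deg}(L_i^p)=p/a_i$; this is the same fact used in Lemma \ref{le:vo-splitting}. So the phase is unchanged.

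Combining the three steps gives the claim. The only delicate point is justifying that \eqref{bE} holds for all $E\in\Phi$ and not only for $E\in\Phi_{\rm aff}$. This follows from the same derivation via \eqref{b_E} and \eqref{phase_B}: the limit only uses $(E|E)=2$ and $\tau^l=1$. Alternatively, one can verify the statement directly at the level of $B^{E,E}$ from \eqref{phase_B}, with the same bookkeeping as above.
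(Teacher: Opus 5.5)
Your proof is correct and follows the paper's argument, which simply invokes the explicit formula \eqref{bE}, the fact that $\delta$ lies in the radical of $(\ |\ )$, and $\tau(\delta)=\delta$. You also make explicit the bookkeeping the paper leaves implicit: $\operatorname{rk}(\delta)=l$, $\operatorname{deg}(\delta)=-l$, the integrality $l\operatorname{deg}(\alpha)\in\ZZ$ that keeps the phase $e^{-2\pi\ii\operatorname{rk}(E)\operatorname{deg}(E)}$ unchanged, and the observation that \eqref{bE} holds for all $E\in\Phi$ because its derivation only uses $(E|E)=2$.
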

\proof
This follows immediately from the explicit formula \eqref{bE}, the fact that $\delta$ is in the radical of the form $(\ |\ )$, and $\tau(\delta)=\delta$. \qed

Combining Lemma \ref{le:vo-splitting} and Lemma \ref{le:bE-splitting}, we get 
\ben
b_{\alpha+n\delta}(\lambda) 
\Gamma^{\alpha+n\delta}(\lambda)\otimes 
\Gamma^{-\alpha-n\delta}(\lambda) = 
Q^{n^2l^2} \Gamma^{n\delta}(\lambda)\otimes \Gamma^{-n\delta}(\lambda) \ 
b_\alpha(\lambda) \Gamma^{\alpha}(\lambda)\otimes 
\Gamma^{-\alpha}(\lambda),
\een
where $\alpha\in \Phi_\perp$ and $n\in \ZZ$. Since the vertex operators $\Gamma^\alpha(\lambda)$ and $\Gamma^{n\delta}(\lambda)$ commute, we get \eqref{cfac}.

Let us examine more carefully the action of the bilinear operator in \eqref{hqe-desc}. As we already observed in the introduction, the second factor on the RHS of \eqref{cfac} can be interpreted as
\beq\label{theta_sum}
\theta(Q,\Gamma^{\delta/l}\otimes \Gamma^{-\delta/l}):= 
\sum_{n\in \ZZ}
Q^{n^2l^2} \Gamma^{n\delta}(\lambda)\otimes \Gamma^{-n\delta}(\lambda),
\eeq
where the operator series \eqref{theta_sum} is obtained by formally substituting $x= \Gamma^{\delta/l}\otimes \Gamma^{-\delta/l}$ in the theta series \eqref{theta} and the various powers of $x$ are computed by using the normally ordered product. The vertex operator $\Gamma^{\delta/l}$ has the following form:
\ben
\Gamma^{\delta/l}(\lambda) = e^{- s(\mathbf{t},\lambda)/\sqrt{\hbar}}\, e^{- \sqrt{\hbar}\partial_{0,0,0}},
\een
where 
\ben
s(\mathbf{t},\lambda) := t \,t_{0,0,0}+\sum_{k=1}^\infty \frac{\lambda^k}{k!} \, (t\, t_{k,0,0} - t_{k-1,0,1}  ) ,
\een
where $t:=\log Q + 2\pi\ii$ is the K\"ahler parameter of $\PP^1_a$. Here we are choosing $Q\in (0,1)$ to be a real number and $\log Q = \ln Q$ is defined via the principal branch of the logarithm.  
The operator series \eqref{theta_sum} takes the form
\ben
\theta(Q,\Gamma^{\delta/l}\otimes \Gamma^{-\delta/l}):=  
\sum_{k=0}^\infty 
\frac{1}{k!}
\sum_{n\in \ZZ} Q^{n^2l^2} 
\exp\Big(
n l \, s(\mathbf{t}''-\mathbf{t}',\lambda)/\sqrt{\hbar}
\Big)\, (nl)^k
\Big( \sqrt{\hbar}\, 
(\partial_{0,0,0}''-\partial_{0,0,0}')\Big)^k.
\een
Put 
\ben
\theta^{(k)}(Q,x) = \frac{1}{k!} (x\partial_x)^k \,\theta(Q,x) = \frac{1}{k!}\sum_{n\in \ZZ} Q^{n^2l^2 } \, x^{n l} \, (nl)^k.
\een
Then we have 
\beq\label{theta_do}
\theta(Q,\Gamma^{\delta/l}\otimes \Gamma^{-\delta/l})=  
\sum_{k=0}^\infty 
\theta^{(k)}(Q, e^{s(\mathbf{t}''-\mathbf{t}',\lambda)/\sqrt{\hbar} } ) \, 
\Big(\sqrt{\hbar}\,
(\partial_{0,0,0}'-\partial_{0,0,0}'')\Big)^k.
\eeq
Note that the theta function derivative $\theta^{(k)}(Q,e^s)$ is an entire function in $s$. Therefore, we can expand it in a Taylor series at $s=0$. Substituting 
$s= s(\mathbf{t}''-\mathbf{t}',\lambda)/\sqrt{\hbar}$ yields an infinite series convergent in the formal topology of the Fock space. Thanks to the polynomiality of $\mathcal{D}(\hbar,\mathbf{t}')\mathcal{D}(\hbar,\mathbf{t}'')$ in $t_{0,0,0}'$ and $t_{0,0,0}''$, the action of \eqref{theta_do} on $\mathcal{D}(\hbar,\mathbf{t}')\mathcal{D}(\hbar,\mathbf{t}'')$ is convergent in the formal topology of the Fock space.

\section{Givental's formula}\label{sec:gf}
Our proof of Theorem \ref{thm:HQE_d} uses the higher genus formula for the GW invariants of $\PP^1_a$. The formula was conjectured by Givental in \cite{Giv2001} and proved by Teleman in \cite{Te2012} for any target orbifold $X$ with generically semi-simple quantum cohomology. 

\subsection{Quantum cohomology}
Let us recall the quantum cup product of $\PP^1_a$. Given two cohomology 
classes $\phi_1,\phi_2\in H$, since the orbifold Poincar\'e pairing is non-degenerate, the quantum cup product $\phi_1\bullet \phi_2$ is uniquely determined by the following definition
\beq\label{qcp}
(\phi_1\bullet \phi_2,\phi_3)=
\sum_{l,d=0}^\infty \frac{Q^d}{l!}
\langle \phi_1,\phi_2,\phi_3,t,\dots,t\rangle_{0,3+l,d},
\eeq
where $\phi_3\in H$ is an arbitrary cohomology class and 
$t=t_{0,0} +t_{0,1}P+ \sum_{i=1}^3 \sum_{p=1}^{a_i-1} t_{i,p} \phi_{i,p}\in H$. Note that the quantum cup product is independent of $t_{0,0}$ thanks to the string equation. In addition, the dependence on $t_{0,1}$ is governed by the {\em divisor equation}, that is, the RHS is annihilated by the differential operator $Q\tfrac{\partial}{\partial Q} - \tfrac{\partial}{\partial t_{0,1}}$. In other words, the RHS is a function in $Qe^{t_{0,1}}$. Apriori the expression in \eqref{qcp} is a formal power series in the variables $Q e^{t_{0,1}}$ and $t_{i,p}$ ($1\leq i\leq 3$, $1\leq p\leq a_i-1$). Thanks to mirror symmetry (see \cite{IMRS}), the series is convergent and it extends analytically for all $(Q,t)$ such that $|Q e^{t_{0,1}}|<1$. 

Let us define a family of Frobenius manifolds $M_Q$ depending on the parameter $Q\in \CC^*$.  As a complex manifold 
\ben
M_Q:=\{t\in H\ |\ |Q e^{t_{0,1}}|<1 \} \quad \subset \quad H.
\een
For the ease of the notation, we will drop the index $Q$ and denote $M_Q$ simply by $M$. 
Using the linear structure of $H$ we can trivialize the holomorphic tangent bundle $TM\cong M\times H$: if we interpret the holomorphic tangent space $T_x M$ as the vector space of derivations $\operatorname{Der}(\O_{M,x},\CC)$, then the derivation corresponding to $v\in H$ is just the directional derivative 
\ben
D_v( f)=\lim_{\epsilon\to 0} \frac{f(x+\epsilon v)-f(x)}{\epsilon}.
\een
Using the isomorphism $TM\cong M\times H$, we get that the quantum cup product defines a holomorphic multiplication on $TM$. The unit of this multiplication is $\phi_{0,0}=1\in H$ and it corresponds to the vector field $e:=\tfrac{\partial}{\partial t_{0,1}}$. Furthermore, the cohomology classes $\phi_{i,p}=\tfrac{\partial}{\partial t_{i,p}}$. The orbifold Poincar\'e pairing $(\ ,\ )$ defines a symmetric flat non-degenerate complex bilinear form on $TM$. Finally, the following vector field 
\ben
E:= t_{0,0} \frac{\partial}{\partial t_{0,0}} + 
\sum_{i=1}^3\sum_{p=1}^{a_i-1} 
\Big(1- \frac{p}{a_i} \Big) \, t_{i,p} \frac{\partial}{\partial t_{i,p}}
\een
is an Euler vector field. It is well known that the data $(M,\bullet, (\ ,\ ), e, E)$ is a Frobenius structure of conformal dimension $D=1$. In other words, the following connection is flat: 
\begin{align}\label{qc_1}
    \nabla_{\partial/\partial t_{i}} & =\frac{\partial}{\partial t_{i}} - z^{-1}\, \phi_{i}\bullet \ 
    \quad (i\in \mathcal{B}),\\
    \label{qc_2}
    \nabla_{\partial/\partial z} & =\frac{\partial}{\partial z} - z^{-1} \theta + z^{-2} E\bullet.
\end{align}
The above formulas define a connection on the vector bundle $TM\times \CC^*\to M\times \CC^*$ known as the {\em quantum connection} or {\em Dubrovin connection}. Under the mirror symmetry isomorphism (see \cite{IMRS}), the above connection can be identified with the Gauss-Manin connection on the twisted de Rham cohomology of a simple elliptic singularity and the Frobenius structure is identified with Kyoji Saito's {\em flat structure}. Note that the Hodge grading operator $\theta$ corresponds to the grading operator of the Frobenius structure:
\ben
\theta(v) = \nabla^{\rm L.C.}_v(E) - \frac{2-D}{2} \, v,\quad v\in H,
\een
where $v$ is viewed as a flat vector field on $M$ and $\nabla^{\rm L.C.}$ is the Levi-Civita connection of the Poincar\'e pairing.

\subsection{From descendants to ancestors}\label{sec:da}
The {\em ancestor} GW invariants of $\PP^1_a$ are defined by the following formula:
\beq\label{AGW}
\langle 
\phi_1\overline{\psi}_1^{k_1},\dots 
\phi_n\overline{\psi}_n^{k_n} 
\rangle_{g,n}(t)=\sum_{l,d} \frac{Q^d}{l!} 
\int_{[\overline{\mathcal{M}}_{g,n+l}(\PP^1_a,d)]^{\rm virt}}
\operatorname{ev}^*(\phi_1\otimes \cdots \otimes \phi_n \otimes t^{\otimes l}) 
\overline{\psi}_1^{k_1}\cdots \overline{\psi}_n^{k_n},
\eeq
where $\overline{\psi}_i$ are the pullbacks of the $\psi_i$ classes on $\overline{\mathcal{M}}_{g,n}$ via the forgetful morphism $\overline{\mathcal{M}}_{g,n+l}(\PP^1_a,d)\to \overline{\mathcal{M}}_{g,n}$, that is, the morphism forgetting the holomorphic map and the last $l$ marked point (and contracting the resulting unstable components). 

The ancestor GW invariants can be expressed in terms of the descendant ones. In order to explain the relation, it is convenient to make the so-called {\em dilaton shift}: $\mathbf{t}(z)=\mathbf{q}(z)+z$ where $\mathbf{q}(z)=\sum_{k=0}^\infty q_k z^k$ and $q_k=\sum_{i\in \mathcal{B}} q_{k,i} \phi_i\in H$ are formal vector variables. Under this substitution the Fock space \eqref{fock} becomes 
\beq\label{dfock}
\CC[q_{0,0,0}](\!(\hbar)\!)[\![
q_{k,i,p}+\delta_{(k,i,p),(1,0,0)}\ :\ (k,i,p)\neq (0,0,0)]\!].
\eeq
The total descendant potential $\mathcal{D}(\hbar,\mathbf{t})$ is identified with an element in the Fock space \eqref{dfock}. Slightly abusing the notation we will denote this element by 
$ \mathcal{D}(\hbar,\mathbf{q})$.  
Let us introduce the so-called {\em total ancestor potential}
\ben
\mathcal{A}_t(\hbar,\mathbf{q}):= \exp\Big(
\sum_{n,g=0}^\infty \frac{\hbar^{g-1}}{n!}\, 
\langle 
\mathbf{q}(\overline{\psi}_1) +\overline{\psi}_1,\dots, 
\mathbf{q}(\overline{\psi}_n) +\overline{\psi}_n
\rangle_{g,n}(t)
\Big).
\een
The total ancestor potential is a formal power series in $t$ and $q_0, q_1+1,q_2,\dots$ whose coefficients are in $\CC(\!(\hbar)\!)$.
Furthermore, let us introduce the {\em calibration operator} 
$S(t,z)=1+S_1(t) z^{-1}+S_2(t) z^{-2}+\cdots $ where $S_k(t)\in \operatorname{End}(H)$ is defined by 
\ben
(S_k(t)\phi_1,\phi_2):= 
\sum_{l,d=0}^\infty \frac{Q^d}{l!}
\langle \phi_1 \psi^{k-1},\phi_2,t,\dots,t\rangle_{0,2+l,d},\quad 
\forall \phi_1,\phi_2\in H. 
\een
It is known that the calibration provides a fundamental solution to the Dubrovin connection in a neighborhood of $z=\infty$. More precisely, according to Givental (see \cite{Giv2001}) $S(t,z) z^{\theta}$ is a solution to the Dubrovin connection. Moreover, the operator series satisfies the following symplectic condition: $S(t,z)S(t,-z)^T=1$ where ${}^T$ denotes transposition with respect to the Poincar\'e pairing. Following Givental, let us introduced a linear operator $\widehat{S}(t)^{-1}$ acting on the Fock space as follows:
\beq\label{S-action:fock}
\widehat{S}(t)^{-1}\, \mathcal{F}(\mathbf{q}):= 
e^{\frac{1}{2\hbar}W(\mathbf{q},\mathbf{q})} \mathcal{F}([S(t,z)\mathbf{q}(z)]_+),
\eeq
where the notation is as follows. We have a sequence of formal variables $\mathbf{q}=(q_{k,i,p})$ where $k\geq 0$ and $(i,p)\in \mathcal{B}$. We identify $\mathbf{q}$ with the formal series $\mathbf{q}(z)=\sum_{k=0}^\infty \sum_{(i,p)\in \mathcal{B}} q_{k,i,p} \phi_{i,p} z^k$. This allows us to define $[S(t,z)\mathbf{q}(z)]_+$ where $[\ ]_+$ is just the operation truncating all terms involving negative powers of $z$. Note that the coefficient in front of $\phi_{a} z^k$ in $S(t,z)\mathbf{q}(z)$ is 
\beq\label{ka-coeff}
q_{k,a} + \sum_{b\in \mathcal{B}}\Big(S_{1,ab}(t) q_{k+1,b} + S_{2,ab}(t) q_{k+2,b}  + \cdots \Big),
\eeq
where $S_{l,ab}(t)$ are the entries of the matrix of the linear operator $S_l(t)$ with respect to the basis $\{\phi_{a}\}_{a\in \mathcal{B}}$, that is, $S_l(t) \phi_{b} =: \sum_{a\in \mathcal{B}} \phi_a\, S_{l,ab}(t) $. 
The formal series $\mathcal{F}([S(t,z)\mathbf{q}(z)]_+)$ is obtained from $\mathcal{F}(\mathbf{q})$ by substituting the series \eqref{ka-coeff} for $q_{k,a}$. The formal series in the exponent is a quadratic form 
\beq\label{W_form_1}
W(\mathbf{q},\mathbf{q})=
\sum_{k,l=0}^\infty (W_{kl}(t)\,q_l,q_k) =
\sum_{k,l=0}^\infty \sum_{a,b\in \mathcal{B}} (W_{kl}(t)\,\phi_{b},\phi_{a}) q_{k,a} q_{l,b}
\eeq
where $W_{kl}(t)\in \operatorname{End}(H)$ are defined by the following identity:
\beq\label{W_form_2}
\sum_{k,l=0}^\infty W_{kl}(t) z^{-k} w^{-l} =
\frac{S^T(t,z) S(t,w)-1}{z^{-1}+w^{-1}}.
\eeq
The coefficients of the quadratic form \eqref{W_form_1} can be expressed in terms of GW invariants too
\ben
(W_{kl}(t)\,\phi_{b},\phi_{a}) = 
\sum_{l,d=0}^\infty 
\frac{Q^d}{l!}
\langle \phi_a \psi^k,\phi_b \psi^l,t,\dots,t\rangle_{0,2+l,d}.
\een
We have the following relation between ancestor and descendant GW invariants:
\beq\label{da}
\mathcal{D}(\hbar,\mathbf{q}) = e^{F^{(1)}(t)} \widehat{S}(t)^{-1} \mathcal{A}_t(\hbar,\mathbf{q}),
\eeq
where $F^{(1)}(t)$ is the generating function of genus-1 primary (i.e. no descendants) GW invariants
\ben
F^{(1)}(t)=\sum_{n,d=0}^\infty \frac{Q^d}{n!} 
\langle t,\dots,t\rangle_{1,n,d} ,\quad t\in H.
\een
Due to the dilaton shift, the action of $\widehat{S}^{-1}$ changes the Fock space, that is, the RHS of \eqref{da} is a formal power series in $q_0-t, q_1+1,q_2,\dots$. The LHS in \eqref{da}, although it is independent of $t$, should be expanded formally into a Taylor series at $q_0=t$ so that it becomes an element of the space of formal series to which the RHS belongs. 

\subsection{Higher-genus reconstruction}\label{sec:hgr}
The Frobenius manifold $M$ is semi-simple, that is, there exists a point $t\in M$ and a local coordinate system $(u_1,\dots,u_N)$, called {\em canonical coordinate system}, in a neighborhood of $t$ such that 
\ben
\frac{\partial}{\partial u_i} \bullet 
\frac{\partial}{\partial u_j} = \delta_{i,j} \frac{\partial}{\partial u_j},\quad 
\Big( \frac{\partial}{\partial u_i}, \frac{\partial}{\partial u_j}\Big) = 
\frac{\delta_{i,j}}{\Delta_j},\quad 1\leq i,j\leq N,
\een
for some holomorphic functions $\Delta_j\in \O_{M,t}$ such that $\Delta_j(t)\neq 0$. Such points $t$ are said to be semi-simple. The set $\mathcal{K}$ of points $t\in M$ that are not semi-simple is either empty or an analytic hypersurface in $M$ called the {\em caustic} of $M$. Both the canonical coordinates $u_j$ and $\Delta_j$ ($1\leq j\leq N$) extend analytically along any path avoiding the caustic. 

In the semi-simple settings, it was conjectured by Givental (see \cite{Giv2001}) and proved by Teleman (see \cite{Te2012}) that the total ancestor potential can be reconstructed from the Frobenius structure. Let us recall the reconstruction formalism. The main ingredient of the reconstruction is the asymptotic solution $\Psi R^{\rm can}(t,z) e^{U/z}$ to the Dubrovin connection near $z=0.$ Here $\Psi$ is an $N\times N$-matrix with entries $\Psi_{ai}=\sqrt{\Delta_i}\tfrac{\partial t_a}{\partial u_i}$ where the index $a\in \mathcal{B}$, $U=\operatorname{diag}(u_1,\dots,u_N)$ is a diagonal matrix with entries the canonical coordinates, and $R^{\rm can}(t,z)=1+R^{\rm can}_1(t) z+R^{\rm can}_2(t) z^2+\cdots$ is a formal power series in $z$ with coefficients $N\times N$-matrices $R^{\rm can}_k(t)$. Let us identify the matrix $\Psi$ with the following linear map:
\ben
\Psi :\CC^N \to H,\quad e_i\mapsto \sqrt{\Delta_i} \frac{\partial}{\partial u_i} = 
\sum_{a\in \mathcal{B}}  \phi_{a}\, \Psi_{ai},
\een
where $\{e_i\}_{i=1}^N$ is the standard basis of $\CC^N$. In other words, $\Psi$ is the transition map between two trivializing frames of the tangent bundle $TM$ (in a neighborhood of a semi-simple point), that is, the frame of normalized idempotents $\sqrt{\Delta_i} \tfrac{\partial}{\partial u_i}$ ($1\leq i\leq N$) and the frame of flat vector fields $\tfrac{\partial}{\partial t_a}$ ($a\in \mathcal{B}$). 
Substituting the ansatz $\Psi R^{\rm can}(t,z) e^{U/z}$ into the Dubrovin connection, we obtain the following set of equations:
\begin{align}
    \label{rec:R-1}
    dR^{\rm can}_k+ (\Psi^{-1}d\Psi)R^{\rm can}_k & = 
    [dU,R^{\rm can}_{k+1}]. \\
    \label{rec:R-2}
    kR^{\rm can}_k+[U,R^{\rm can}_{k+1}] & =-(\Psi^{-1}\theta \Psi) R^{\rm can}_k.
\end{align}
Starting with $R^{\rm can}_0=1$, the above equations determine uniquely
$R^{\rm can}_k(t)$. Clearly, the matrices $R^{\rm can}_k(t)$ depend analytically on $t\in M\setminus{\mathcal{K}}$. Let us denote by $R_k(t)\in \operatorname{End}(H)$ the linear operator whose matrix with respect to the basis $\{\Psi(e_i)\}_{i=1}^N$ of $H$ is $R^{\rm can}_k(t)$. It can be proved that the operator series $R(t,z):=1+R_1(t)z+R_2(t)z^2+\cdots $ satisfies the same symplectic condition as the calibration, that is, $R(t,z)R^{T}(t,-z)=1$. 

The quantization formalism of symplectic transformations can be applied to $R(t,z)$ too. We obtain a linear operator $\widehat{R}(t)$ acting on the so-called {\em tame} elements of the ancestor Fock space. The action can be described as follows. Let us define the linear operators $V_{kl}\in \operatorname{End}(H)$ by the following identity:
\beq\label{Vkl}
\sum_{k,l=0}^\infty
V_{kl} (-z)^k (-w)^l =\frac{R^T(t,z)R(t,w)-1}{z+w}.
\eeq
Then 
\beq\label{R-action}
\widehat{R}(t)\mathcal{F}(\mathbf{q})= 
\left(
e^{
\frac{\hbar}{2} V(
\partial_{\mathbf{q}},
\partial_{\mathbf{q}})}
\mathcal{F} \right)(R^{-1}\mathbf{q}),
\eeq
where 
\ben
V(\partial_{\mathbf{q}},\partial_{\mathbf{q}})=
\sum_{k,l=0}^\infty
\sum_{a,b\in \mathcal{B}} (\phi^a,V_{kl}\phi^b) 
\frac{\partial^2}{\partial q_{k,a}\partial q_{l,b}},
\een
where $\{\phi^a\}$ is a basis of $H$ dual to $\{\phi_a\}$ with respect to the Poincare pairing. Let us emphasize the structure of the RHS of \eqref{R-action}. First, we apply the differential operator $e^{\frac{\hbar}{2} V}$ to the formal series $\mathcal{F}$. Then we make a substitution $\mathbf{q}\mapsto R(t,z)^{-1} \mathbf{q}=R^T(t,-z)\mathbf{q}$, that is, 
\ben
q_k\mapsto q_k -R_1^T(t) q_{k-1}+R_2^T(t) q_{k-2} - \cdots +
(-1)^k R_k^T(t) q_{0},
\een
where $q_k=\sum_{a\in \mathcal{B}} q_{k,a}\phi_a$. Finally, the RHS of \eqref{R-action} is convergent in the formal topology if $\mathcal{F}$ is a tame series. We postpone the definition of a tame series until Section \ref{sec:aa_hqe} where we start discussing the analytic properties of the HQEs. 

Let us recall the Witten--Kontsevich tau-function, that is, the total descendant potential of the point
\beq\label{tdp:pt}
\mathcal{D}_{\rm pt}(\hbar,\mathbf{t})= 
\exp\Big(
\sum_{g,n=0}^\infty \frac{\hbar^{g-1}}{n!} \, 
\int_{\overline{\mathcal{M}}_{g,n}} 
\mathbf{t}(\psi_1)\cdots \mathbf{t}(\psi_n)
\Big),
\eeq
where $\mathbf{t}=(t_0,t_1,\dots)$  and $\mathbf{t}(z)=t_0+t_1 z+t_2 z^2+\cdots$.
Let us introduce the following formal power series:
\beq\label{tdp:point-N}
\prod_{i=1}^N \mathcal{D}_{\rm pt}(
\hbar \Delta_i,
q_0(u_i),q_1(u_i)+1,q_2(u_i),\dots ),
\eeq
where 
\ben
q_k=
\sum_{a\in \mathcal{B}} q_{k,a} \phi_a = 
\sum_{a\in \mathcal{B}} q_{k,a} \frac{\partial}{\partial t_a}
\een
is interpreted as a flat vector field on $M$ and $q_k(u_i)$ is the natural derivation of a vector field on a function, that is, the coordinate function $u_i$. 
Note that since $\tfrac{\partial u_i}{\partial t_{0,0}}=1$, the product \eqref{tdp:point-N} is a formal series in $q_0, q_1+1,q_2,\dots$. The higher-genus reconstruction formula can be written in the following way:
\beq\label{hgr}
\mathcal{A}_t(\hbar,\mathbf{q})= \widehat{R}(t)\, 
\prod_{i=1}^N \mathcal{D}_{\rm pt}(
\hbar \Delta_i,
q_0(u_i),q_1(u_i)+1,q_2(u_i),\dots ),
\eeq
where the action of $\widehat{R}(t)$ is defined by formula \eqref{R-action}.

\subsection{Fock spaces for the descendant and the ancestor GW invariants}
Let us recall that the total descendant potential belongs to the Fock space \eqref{dfock} while the total ancestor potential belongs to 
$ 
\CC[\![t]\!](\!(\hbar)\!)[\![q_0,q_1+1,q_2,\dots]\!].
$ 
Both Fock spaces can be replaced by smaller ones because the total descendant and ancestor potentials are in some sense analytic in the non-jet variable $q_0$. Let us start with the ancestor potential. By mirror symmetry, we can interpret the RHS of \eqref{hgr} as the total ancestor potential of a simple elliptic singularity. It is known (see \cite{Mi2014}) that the total ancestor potential in singularity theory extends analytically across the caustic, that is, the ancestor GW invariants \eqref{AGW} are convergent power series in $t$ that extend to analytic functions $\forall t\in M$. Note also that by definition the ancestor GW invariant \eqref{AGW} is $0$ if $2g-2+n\leq 0$, that is, if the moduli space $\overline{\mathcal{M}}_{g,n}$ is empty.  We conclude that both $\log \mathcal{A}_t(\hbar,\mathbf{q})$ and $\mathcal{A}_t(\hbar,\mathbf{q})$  belong to 
$ 
\O(M)(\!(\hbar)\!)[\![q_0,q_1+1,q_2,\dots]\!]
$ 
where $\O(M)$ is the algebra of holomorphic functions on $M.$ For degree reasons, the ancestor GW invariants \eqref{AGW} must be polynomial in the variables $t_a$ ($a\neq (0,1)$).
Let us introduce the ring 
\ben
\CC[M]:=\O(\mathbb{D})[
t_{0,0}, t_{i,1},\dots,t_{i,a_i-1}(1\leq i\leq 3)],
\een
where $\mathbb{D}:=\{t_{0,1}\ |\ |Qe^{t_{0,1}}|<1\}$. Then 
$\mathcal{A}_t(\hbar,\mathbf{q})$ belongs to 
\beq\label{AFS}
\CC[M](\!(\hbar)\!)[\![q_1+1,q_2,\dots]\!].
\eeq
From now on, we will refer to \eqref{AFS} as the {\em ancestor Fock space}. 

Now let us examine the total descendant potential. Recall formula \eqref{da} and the definition \eqref{S-action:fock} of $\widehat{S}(t)^{-1}$. If $\mathcal{F}(\mathbf{q})$ is any element of the ancestor Fock space and $t\in M$ is such that $t_{0,1}=0$, then $\mathcal{F}([S(t,z)\mathbf{q}]_+)$ is still an element of the ancestor Fock space \eqref{AFS}. Indeed, the only subtle point in proving this statement is the substitution
\ben
q_0\mapsto q_0 + S_1(t) q_1+S_2(t) q_2+\cdots =
q_0- S_1(t) 1 + S_1(t)(q_1+1) +S_2(t) q_2+\cdots .
\een
However, $S_1(t)1=t$ and since we assumed that $t_{0,1}=0$, we get that the point $q_0-S_1(t)1=q_0-t$ is in the domain of analyticity of $\mathcal{F}$ provided that $q_0$ is. 
We get that the Taylor series expansion of $\mathcal{D}(\hbar,\mathbf{q})$ at $q_0=t$ belongs to the following space 
\beq\label{M-fock}
e^{\tfrac{1}{2\hbar}W(\mathbf{q},\mathbf{q})+F^{(1)}(t)}
\CC[M](\!(\hbar)\!)[\![q_1+1,q_2,\dots]\!],
\eeq
where the non-jet variable $q_0$ is identified with the flat coordinate $t$ on $M.$ We will refer to \eqref{M-fock} as the {\em descendant Fock space}.

\section{Hirota quadratic equations for the ancestors}\label{sec:vop}

The quantized operator $\widehat{S}(t)^{-1}$ gives an isomorphism between the ancestor and the descendant Fock spaces.
Therefore, the descendant HQEs of $\PP^1_a$ are equivalent to a system of HQEs for the total ancestor potential. We simply have to conjugate the bilinear operator $\Omega_{\rm aff}(\lambda)$ by $\widehat{S}(t)^{-1}\otimes \widehat{S}(t)^{-1}$, that is, let us define $\Omega_{\rm aff}(t,\lambda)$ by 
\beq\label{abio}
\Omega_{\rm aff}(\lambda)\, 
\Big(
\widehat{S}(t)^{-1}\otimes \widehat{S}(t)^{-1} \Big)= 
\Big(
\widehat{S}(t)^{-1}\otimes \widehat{S}(t)^{-1}\Big)\, 
\Omega_{\rm aff}(t,\lambda).
\eeq
Using Givental's quantization formalism we will obtain a formula for $\Omega_{\rm aff}(t,\lambda)$ similar to \eqref{affine_casimir}. The conjugation of the vertex operators will be done in the same way as in \cite{Giv2003}. Let us advertise that for the conjugation of the Virasoro term we will make use of {\em Borcherds $n$-products} (see \cite{Bo1986}).
\subsection{Quantization of linear Hamiltonians}
Following Givental (see \cite{Giv2001}), let us introduce the following symplectic form:
\ben
\Omega(f(z),g(z)):=\operatorname{Res}_{z=0} (f(-z),g(z))dz,
\quad
f(z),g(z)\in H[z,z^{-1}],
\een
where $H[z,z^{-1}]$ is the vector space of Laurent polynomials with coefficients in $H$. Note that the symplectic form $\Omega$ extends to both completions $H(\!(z^{-1})\!)$ and $H(\!(z)\!)$ of $H[z,z^{-1}]$. The calibration $S(t,z)$ and the asymptotic operator $R(t,z)$ are symplectic transformations of respectively $H(\!(z^{-1})\!)$ and $H(\!(z)\!)$. 
The symplectic vector space $H[z,z^{-1}]$ has an associated Heisenberg Lie algebra. As a vector space it is just $H[z,z^{-1}]\oplus \CC$. The Lie bracket is defined by the following formulas:
\ben
[f,g]:= \Omega (f,g),\quad 
[f,1]:=0,\quad 
\forall \, f,g\in H[z,z^{-1}].
\een
The symplectic vector space has also a natural Poisson manifold structure.
Namely, for $f\in H[z,z^{-1}]$, using the symplectic form we define the linear function $h_f:=\Omega(f,\ )$. There exists a unique Poisson bracket on $H[z,z^{-1}]$ such that 
\ben
\{h_f,h_g\}=\Omega(f,g),\quad \forall f,g\in H[z,z^{-1}].
\een
Let us recall the basis $\{\phi_a\}_{a\in \mathcal{B}}$ of $H$ and denote by $\{\phi^a\}_{a\in \mathcal{B}}$ the dual basis with respect to the Poincar\'e pairing. 
The Heisenberg Lie algebra has a natural representation $f\mapsto \widehat{f}$ on the Fock space \eqref{fock} such that the central element acts by multiplication by 1 and 
\beq\label{qlh}
(\phi^a (-z)^{-k-1})\sphat := q_{k,a}/\sqrt{\hbar},\quad 
(\phi_a z^{k})\sphat:= 
- \sqrt{\hbar} 
\frac{\partial}{\partial q_{k,a}},\quad 
k\geq 0,\quad a\in \mathcal{B}.
\eeq
The quantization rules \eqref{qlh} can be interpreted in terms of quantization of linear Hamiltonians.  Put
\beq\label{Darboux}
q_{k,a}:=\Omega(\phi^a(-z)^{-k-1},\ ),\quad 
p_{k,a}:=-\Omega(\phi_a z^k,\ ) =\Omega(\ ,\phi_a z^k).
\eeq
It is straightforward to verify that the set of all linear functions $p_{k,a}$ and $q_{k,a}$ ($k\geq 0$, $a\in \mathcal{B}$) is a Darboux coordinate system, that is, 
$ 
\{p_{k,a},q_{l,b}\}=\delta_{k,l}\delta_{a,b}.
$ 
The quantization rules \eqref{qlh} turn into  
\beq\label{qqp}
\widehat{q}_{k,a}= q_{k,a}/\sqrt{\hbar},\quad  
\widehat{p}_{k,a}=\sqrt{\hbar} \frac{\partial}{\partial q_{k,a}}.
\eeq
The substitutions $q'_{0,0,0}-q''_{0,0,0}=m\sqrt{\hbar}$ and $q'_{k,0,0}-q''_{k,0,0}=0$ for $k>0$ can be written conveniently in the following way: 
\ben
\widehat{\mathbf{f}}_{L-1}(\lambda)\otimes 1 - 
1\otimes \widehat{\mathbf{f}}_{L-1}(\lambda) = 2\pi \ii \, m,
\een
where 
\ben
\mathbf{f}_{L-1}(\lambda,z):=
\sum_{n\in \ZZ} 
I^{(n)}_{L-1}(\lambda) (-z)^n = 
2\pi\ii\sum_{k=0}^\infty \frac{\lambda^k}{k!}\, P\, (-z)^{-k-1}.
\een
Put $\mathbf{f}_{L-1}(t,\lambda,z):=S(t,z)\mathbf{f}_{L-1}(\lambda,z)$ and 
let us recall the bilinear operator $\Omega_{\rm aff}(t,\lambda)$ defined by \eqref{abio}. 
\begin{definition}
\label{def:aff_HQE_anc}
Let $F(\hbar,\mathbf{q})\in \CC[M](\!(\hbar)\!)[\![q_1+1,q_2,\dots]\!]$ be a formal power series. We say that $F$ satisfies the ancestor HQEs of $\PP^1_a$ if for every integer $m\in \ZZ$, the formal power series 
\beq\label{hqe-anc}
\left.
\Omega_{\rm aff}(t,\lambda) (F\otimes F)
\right|_{
\widehat{\mathbf{f}}_{L-1}(t,\lambda)\otimes 1 - 
1\otimes \widehat{\mathbf{f}}_{L-1}(t,\lambda) = 2\pi \ii \, m
}
\eeq
is regular in $\lambda$, that is, the coefficients in front of the monomials in $\mathbf{q}'$, $\mathbf{q}''$, and $\hbar$ are polynomials in $\lambda.$  
\end{definition}

\begin{proposition}\label{prop:HQE_a}
    The total descendant potential $\mathcal{D}(\hbar,\mathbf{q})$ is a solution to the HQEs in Definition \ref{def:aff_HQE} if and only if the total ancestor potential $\mathcal{A}_t(\hbar,\mathbf{q})$ is a solution to the HQEs in Definition \ref{def:aff_HQE_anc}.
\end{proposition}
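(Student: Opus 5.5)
The plan is to use the relation \eqref{da}, $\mathcal{D}=e^{F^{(1)}(t)}\widehat{S}(t)^{-1}\mathcal{A}_t$, together with the defining identity \eqref{abio} of $\Omega_{\rm aff}(t,\lambda)$. The first step is to apply $\Omega_{\rm aff}(\lambda)$ to $\mathcal{D}\otimes\mathcal{D}$. The scalar $e^{2F^{(1)}(t)}$ factors out, and \eqref{abio} moves $\widehat{S}(t)^{-1}\otimes\widehat{S}(t)^{-1}$ to the left. This gives
\begin{equation*}
\Omega_{\rm aff}(\lambda)(\mathcal{D}\otimes\mathcal{D}) = e^{2F^{(1)}(t)}\big(\widehat{S}(t)^{-1}\otimes\widehat{S}(t)^{-1}\big)\,\Omega_{\rm aff}(t,\lambda)(\mathcal{A}_t\otimes\mathcal{A}_t).
\end{equation*}
Here I regard both sides as elements of the tensor square of the descendant Fock space \eqref{M-fock}, expanded at $q_0=t$ with $t_{0,1}=0$. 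I also need that \eqref{abio} genuinely defines $\Omega_{\rm aff}(t,\lambda)$, i.e.\ that the conjugation is well defined term by term. For vertex operators this is Givental's standard computation, which replaces $\mathbf{f}_E(\lambda,z)$ by $S(t,z)\mathbf{f}_E(\lambda,z)$ and multiplies by a scalar factor coming from the quadratic form $W$.

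The second step is to match the two specializations. On the descendant side the constraint is $\widehat{\mathbf{f}}_{L-1}(\lambda)\otimes1-1\otimes\widehat{\mathbf{f}}_{L-1}(\lambda)=2\pi\ii m$. This operator is a linear function of $\mathbf{q}$ only, since $\mathbf{f}_{L-1}$ has only negative powers of $z$. Under conjugation by $\widehat{S}(t)^{-1}$, its quantization becomes the quantization of $S(t,z)\mathbf{f}_{L-1}(\lambda,z)=\mathbf{f}_{L-1}(t,\lambda,z)$. Because $S$ is upper-triangular in $z^{-1}$, this is again purely a multiplication operator, with no derivative part. The difference $\widehat{\mathbf{f}}\otimes1-1\otimes\widehat{\mathbf{f}}$ also commutes with the Gaussian factor $e^{W/2\hbar}\otimes e^{W/2\hbar}$ modulo that change of variables. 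Hence restricting the descendant expression to the descendant constraint is the same as applying $\widehat{S}(t)^{-1}\otimes\widehat{S}(t)^{-1}$ to the ancestor expression restricted to the ancestor constraint in Definition~\ref{def:aff_HQE_anc}.

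The third step is to transfer regularity in $\lambda$ in both directions. The operator $\widehat{S}(t)^{-1}$ acts by multiplication by the $\lambda$-independent factor $e^{W/2\hbar}$ followed by the $\lambda$-independent linear substitution $\mathbf{q}\mapsto[S\mathbf{q}]_+$. Each such operation sends coefficients that are polynomial in $\lambda$ to coefficients that are polynomial in $\lambda$, and the same holds for its inverse $\widehat{S}(t)$. Each coefficient is a finite combination at any fixed order in the formal topology of \eqref{dfock}. This gives the equivalence. Finally, the descendant side is independent of $t$, so it suffices to argue for one convenient $t$ with $t_{0,1}=0$.

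The main obstacle I expect is the infinite sum over $\Phi_{\rm aff}^{\rm im}$, together with the Virasoro term $L(\lambda)$ with its $\nu$-derivative. For these I need the conjugation to commute with the infinite sums and with the theta-series resummation \eqref{theta_do}, and I need $\widehat{S}$ to preserve convergence in the formal topology. I would handle this by grouping the terms as in \eqref{cfac}, conjugating each factor separately, and using that $\theta^{(k)}(Q,e^s)$ is entire in $s$.
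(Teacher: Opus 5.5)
Your proposal is correct and follows essentially the paper's own argument: move $\widehat{S}(t)^{-1}\otimes\widehat{S}(t)^{-1}$ through $\Omega_{\rm aff}(\lambda)$ via \eqref{abio} and \eqref{da}, and discard the $\lambda$-independent exponential factors. Then observe that the change of variables $\mathbf{q}\mapsto[S(t,z)\mathbf{q}]_+$ carries the descendant constraint to the ancestor one, because $\Omega(\mathbf{f}_{L-1}(\lambda,z),\mathbf{q})=\Omega(S(t,z)\mathbf{f}_{L-1}(\lambda,z),[S(t,z)\mathbf{q}]_+)$ by symplecticity of $S$; your conjugation of the multiplication operator $\widehat{\mathbf{f}}_{L-1}(\lambda)$ is exactly this identity, and the convergence issue you raise for the imaginary-root sum and $L(\lambda)$ is not addressed at this point in the paper, since $\Omega_{\rm aff}(t,\lambda)$ is simply defined by \eqref{abio}.
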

\proof
The only thing that we have to prove is that the substitutions that we used to define the two sets of HQEs are intertwined by the operator $\widehat{S}^{-1}\otimes \widehat{S}^{-1}$. Indeed, using formulas \eqref{da} and \eqref{S-action:fock}, we get that the HQEs for the total descendant potential are equivalent to the regularity of the following formal power series:
\beq\label{CalF}
\left.\left( 
 \mathcal{F}([S(t,z)\mathbf{q}']_+,[S(t,z)\mathbf{q}'']_+)
\right)
\right|_{
\widehat{\mathbf{f}}_{L-1}(\lambda)\otimes 1 - 
1\otimes \widehat{\mathbf{f}}_{L-1}(\lambda) = 2\pi \ii \, m
}
\eeq
where
\ben
\mathcal{F}(\mathbf{q}',\mathbf{q}''):=
\Omega_{\rm aff}(t,\lambda) (
\mathcal{A}_t(\hbar,\mathbf{q}')\mathcal{A}_t(\hbar,\mathbf{q}''))
\een
and we dropped the exponential terms from formulas \eqref{da} and \eqref{S-action:fock} 
because they do not affect the regularity condition. Put 
$\overline{\mathbf{q}}'=[S(t,z)\mathbf{q}']_+$ and 
$\overline{\mathbf{q}}''=[S(t,z)\mathbf{q}'']_+$. We have 
\ben
\widehat{\mathbf{f}}_{L-1}(\lambda)(\mathbf{q}') - 
\widehat{\mathbf{f}}_{L-1}(\lambda)(\mathbf{q}'') =
\frac{1}{\sqrt{\hbar}}
\Omega(\mathbf{f}_{L-1}(\lambda,z), \mathbf{q}'-\mathbf{q}'').
\een
and 
\ben
\widehat{\mathbf{f}}_{L-1}(t,\lambda)(\overline{\mathbf{q}}') - 
\widehat{\mathbf{f}}_{L-1}(t,\lambda)(\overline{\mathbf{q}}'')=
\frac{1}{\sqrt{\hbar}}
\Omega(\mathbf{f}_{L-1}(t,\lambda,z), \overline{\mathbf{q}}'-\overline{\mathbf{q}}'').
\een
Since $S(t,z)$ is a symplectic transformation and 
by definition $\mathbf{f}_{L-1}(t,\lambda,z)=S(t,z)\mathbf{f}_{L-1}(\lambda,z)$ and 
$\overline{\mathbf{q}}=[S(t,z)\mathbf{q}]_+$ we get
\ben
\widehat{\mathbf{f}}_{L-1}(\lambda)(\mathbf{q}') - 
\widehat{\mathbf{f}}_{L-1}(\lambda)(\mathbf{q}'') =
\widehat{\mathbf{f}}_{L-1}(t,\lambda)(\overline{\mathbf{q}}') - 
\widehat{\mathbf{f}}_{L-1}(t,\lambda)(\overline{\mathbf{q}}'').
\een
Therefore the series \eqref{CalF} is regular in $\lambda$ if and only if the following series 
\ben
\left.\left( 
 \mathcal{F}(\overline{\mathbf{q}}',\overline{\mathbf{q}}'')
\right)
\right|_{
\widehat{\mathbf{f}}_{L-1}(t,\lambda)\otimes 1 - 
1\otimes \widehat{\mathbf{f}}_{L-1}(t,\lambda) = 2\pi \ii \, m
}
\een
is regular in $\lambda$ which is precisely the HQEs for the total ancestor potential $\mathcal{A}_t(\hbar,\mathbf{q}).$
\qed

\subsection{Quantization of quadratic Hamiltonians}
The operators $\widehat{S}(t)^{-1}$ and 
$\widehat{R}(t)$ can be interpreted via quantization of quadratic Hamiltonians (see \cite{Giv2001}). Namely, if $A(z)$ is an infinitesimal symplectic transformation of $H(\!(z^{-1})\!)$ or $H(\!(z)\!)$, then $h_A(f)=\frac{1}{2}\Omega(Af,f)$ is a quadratic function and we define $\widehat{A}:=\widehat{h}_A$ where the quantization of quadratic Hamiltonians is defined as follows. Let $p_{k,a}$ and $q_{k,a}$ be the Darboux coordinates \eqref{Darboux}. The Hamiltonian $h_A$ is a sum of quadratic Darboux monomials.  Indeed, every $f\in H[z,z^{-1}]$ can be decomposed as 
\ben
f=\sum_{k=0}^\infty \sum_{a\in \mathcal{B}} 
\Big(
p_{k,a} \phi^a (-z)^{-k-1} + q_{k,a} \phi_a z^k\Big).
\een
Substituting this decomposition in the definition of $h_A$ we get that $h_A$ is a sum of quadratic Darboux monomials. The quantization $\widehat{h}_A$ is defined by the rules \eqref{qlh} and normal ordering -- differentiation should be applied first. For example, $(p_{k,a} q_{l,b})\sphat = q_{l,b}\tfrac{\partial}{\partial q_{k,a}}$. 
If we have a symplectic transformation $T$ such that $T=e^A$ for some infinitesimal symplectic transformation $A$, then we define $\widehat{T}:= e^{\widehat{h}_A}$. It can be proved that by applying this construction to $S(t,z)^{-1}$ and $R(t,z)$ we obtain linear operators whose action on the Fock space is given respectively by formulas \eqref{S-action:fock} and \eqref{R-action}.
 
The quantization formalism of linear and quadratic Hamiltonians allows us to give a very elegant description of the commutation relations between quantized symplectic transformations and vertex operators. This was first observed in \cite{Giv2003}. Let us focus on the two cases of interest to us, that is, the case of a {\em lower-triangular} symplectic transformation $S(z)=1+S_1 z^{-1}+S_2 z^{-2}+\cdots$, $S_k\in \operatorname{End}(H)$ and the case of an {\em upper-triangular} symplectic transformation 
$R(z)=1+R_1 z+R_2 z^2+\cdots$, $R_k\in \operatorname{End}(H)$. To begin with, let us introduce the following notation: if $\mathbf{f}(z)=\sum_{n\in \ZZ} I^{(n)}(-z)^n\in H[\![z,z^{-1}]\!]$ is any series, then we define the following two truncations of $\mathbf{f}(z)$:
\ben
\mathbf{f}^+(z):=\sum_{k=0}^\infty I^{(k)}(-z)^k,\quad 
\mathbf{f}^-(z):=\sum_{k=0}^\infty I^{(-k-1)}(-z)^{-k-1}.
\een
A vertex operator is by definition an element of the Heisenberg group of the form 
$:e^{\widehat{\mathbf{f}}}:= e^{\widehat{\mathbf{f}}^-} e^{\widehat{\mathbf{f}}^+}$ where $:\ :$ is the normal ordering. 
We have 
\beq\label{VOS}
e^{\widehat{\mathbf{f}}^-} e^{\widehat{\mathbf{f}}^+} \, 
\widehat{S}^{-1} = 
e^{\tfrac{1}{2}\Omega((S\mathbf{f}^+)^+,(S\mathbf{f}^+)^-)}\, 
\widehat{S}^{-1}\, 
e^{\widehat{(S\mathbf{f}})^-} e^{\widehat{(S\mathbf{f})}^+} 
\eeq
and 
\beq\label{VOR}
e^{\widehat{\mathbf{f}}^-} e^{\widehat{\mathbf{f}}^+} \, 
\widehat{R} = 
e^{\tfrac{1}{2}\Omega((R^{-1}\mathbf{f}^-)^+,(R^{-1}\mathbf{f}^-)^-)}\, 
\widehat{R}\, 
e^{\widehat{(R^{-1}\mathbf{f}})^-} e^{\widehat{(R^{-1}\mathbf{f})}^+} \, .
\eeq
Let us outline the proof of \eqref{VOS}. The proof of \eqref{VOR} is similar. 
It is easy to check that if $A(z)$ is an infinitesimal symplectic transformation, then 
\beq\label{qhh}
[\widehat{A}, \widehat{\mathbf{f}}] = (A(z)\mathbf{f})\sphat,\quad 
\forall \mathbf{f}\in H[z,z^{-1}]. 
\eeq
Using \eqref{qhh} we get that the LHS of \eqref{VOS} is 
$\widehat{S}^{-1} e^{\widehat{(S\mathbf{f}^-)}} e^{\widehat{(S\mathbf{f}^+)}}.$
Note that $S\mathbf{f}^+ = (S\mathbf{f}^+)^- + (S\mathbf{f})^+$ and that 
$S\mathbf{f}^-+ (S\mathbf{f}^+)^- = (S\mathbf{f})^-$. Therefore, in order to obtain the RHS of \eqref{VOS} we need only to recall the Baker--Campbell--Hausdorff formula
$e^{A+B}= e^{-\tfrac{1}{2}[A,B]}e^A e^B  $ for $A=(S\mathbf{f}^+)^-$ and $B=(S\mathbf{f})^+$.   
\begin{remark}
When applying formulas \eqref{VOS} and \eqref{VOR} we have to justify why we can exponentiate the action of the element $\mathbf{f}$ of the Heisenberg Lie algebra and why the composition with the quantized sympletcic transformation makes sense. In our case, $\mathbf{f}$ depends on an auxiliary variable $\lambda$ and we can address both questions by using an appropriate $\lambda$-adic topology.  
\qed
\end{remark}

\subsection{Vertex operators for the ancestors}
Note that the vertex operators \eqref{vop} can be written in the following way. Put 
\ben
\mathbf{f}_E (\lambda,z):= 
\sum_{n\in \ZZ}
I_E^{(n)}(\lambda)\, (-z)^n
\een
and let $\widehat{\mathbf{f}}_E(\lambda):=(\mathbf{f}_E (\lambda,z))\sphat$. 
Then 
$\Gamma^E(\lambda)= 
e^{\widehat{\mathbf{f}}^-_E(\lambda)}
e^{\widehat{\mathbf{f}}^+_E(\lambda)}
$. 
Let us recall the quadratic form $W$ defined by 
\eqref{W_form_1} and \eqref{W_form_2}. The following proposition is straightforward to prove. We leave it as an exercise. 
\begin{proposition}\label{prop:W}
The quadratic form $W$ has the following properties.
\begin{enumerate}
\item[a)]
We have
$W_{kl}=\sum_{i=1}^{k+1} (-1)^{i+1} S_{k+1-i}^T(t) S_{l+i}(t).$
\item[b)]
We have  $W_{kl}^T=W_{lk}.$
\item[c)]
We have 
$W(\mathbf{q},\mathbf{q})=
\Omega([S(t,z)\mathbf{q}(z)]_+,S(t,z)\mathbf{q}(z)).
\qed$
\end{enumerate}
\end{proposition}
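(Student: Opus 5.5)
Each of the three properties follows from the defining identity \eqref{W_form_2}, the symplectic condition $S(t,z)S^T(t,-z)=1$, and the definition of $\Omega$.

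For a), I will multiply \eqref{W_form_2} through by $z^{-1}+w^{-1}$ and compare coefficients of $z^{-k}w^{-l}$. Writing $S^T(t,z)=\sum_i S_i^T z^{-i}$ and $S(t,w)=\sum_j S_j w^{-j}$, this gives the recursion
\[
W_{k-1,l}+W_{k,l-1}=S_k^TS_l\qquad (k,l\geq 0,\ (k,l)\neq(0,0)),
\]
where $W_{-1,\cdot}=W_{\cdot,-1}:=0$. The $(0,0)$ coefficient needs no separate treatment, because $S_0^TS_0-1=0$.

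I will solve the recursion by induction on $k$. At $k=0$ it reads $W_{0,l-1}=S_0^TS_l=S_l$, which agrees with the claimed formula: its only term is $i=1$, namely $S_0^TS_{l+1}$ for $W_{0,l}$. For the inductive step, $W_{k,l}=S_{k+1}^TS_l\cdot$(shifted) minus $W_{k-1,l+1}$. More precisely, $W_{k,l}=S_k^TS_{l+1}-W_{k-1,l+1}$, and substituting the formula for $W_{k-1,l+1}$ and reindexing $i\mapsto i+1$ gives the alternating sum. The only point requiring care is the index bookkeeping.

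For b), I will transpose \eqref{W_form_2}. Since $(S^T(z)S(w))^T=S^T(w)S(z)$, the transposed generating function is the original one with $z$ and $w$ swapped, and therefore $W_{kl}^T=W_{lk}$.

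For c), I will expand $f=[S\mathbf{q}]_+$ and $g=S\mathbf{q}$. Because $f$ contains only nonnegative powers of $z$, it pairs under $\Omega$ only with the negative part $[S\mathbf{q}]_-$ of $g$. Both are expressed through $S_j q_m$, and the residue gives
\[
\Omega(f,g)=\sum(-1)^{n}\bigl(S_aq_m,\;S_bq_{m'}\bigr),
\]
summed over the appropriate index ranges. Grouping this sum by $(q_k,q_l)$ and using $S_a^T$ to move operators across the pairing reproduces the formula of a), with an alternating sign $(-1)^{i+1}$ that comes from the $f(-z)$ in $\Omega$. Matching the signs and index ranges here is the only real obstacle.

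If the direct expansion becomes messy, I will use an alternative route: the symplectic condition gives $\Omega(S\mathbf{q},S\mathbf{q})=\Omega(\mathbf{q},\mathbf{q})=0$, so $\Omega([S\mathbf{q}]_+,S\mathbf{q})=-\Omega([S\mathbf{q}]_-,S\mathbf{q})$. This allows checking the answer against the generating function $\bigl(S^T(z)S(w)-1\bigr)/(z^{-1}+w^{-1})$ directly.
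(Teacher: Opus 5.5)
Your proposal is correct. The paper gives no proof of this proposition (it says it is straightforward and leaves it as an exercise), and your three steps are exactly the routine verification intended: reading off $W_{k-1,l}+W_{k,l-1}=S_k^TS_l$ after clearing the denominator, transposing the generating function, and expanding the residue.

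For c), the index bookkeeping you flag does close up. Only $[S\mathbf{q}]_-$ pairs with $[S\mathbf{q}]_+$. Write a term of $[S\mathbf{q}]_+$ as $S_jq_mz^{m-j}$ and set $i=m-j+1$, so that $1\le i\le m+1$. Its partner in $[S\mathbf{q}]_-$ is then $S_{j'}q_{m'}z^{m'-j'}$ with $j'=m'+i$. This gives
\[
\Omega([S\mathbf{q}]_+,S\mathbf{q})=\sum_{m,m'\ge 0}\sum_{i=1}^{m+1}(-1)^{i+1}\bigl(q_m,S^T_{m+1-i}S_{m'+i}\,q_{m'}\bigr)=\sum_{k,l}(W_{kl}q_l,q_k).
\]
So your fallback via $\Omega(S\mathbf{q},S\mathbf{q})=0$ is not needed; that vanishing is in any case just the antisymmetry of $\Omega$, not the symplectic condition.
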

Combining formula \eqref{VOS} and Proposition \ref{prop:W} we get the following key result due to Givental \cite{Giv2003}.
\begin{proposition}\label{prop:S-conj}
Put $\mathbf{f}_E(t,\lambda,z):=S(t,z) \mathbf{f}_E(\lambda,z)$ and $\Gamma^E(t,\lambda):=
e^{\widehat{\mathbf{f}}^-_E(t,\lambda)}
e^{\widehat{\mathbf{f}}^+_E(t,\lambda)}
$. The following formula holds:
\ben
\Gamma^E(\lambda) \widehat{S}(t)^{-1} = 
e^{\frac{1}{2}\, W(
\mathbf{f}^+_E(\lambda,z)
\mathbf{f}^+_E(\lambda,z))}\,  
\widehat{S}(t)^{-1}
\Gamma^E(t,\lambda),
\een
where $W$ is the quadratic form defined by \eqref{W_form_1} and \eqref{W_form_2}.
\end{proposition}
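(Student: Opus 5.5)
The plan is to apply formula \eqref{VOS} with $S=S(t,z)$ and $\mathbf{f}=\mathbf{f}_E(\lambda,z)$. Since $\Gamma^E(\lambda)=e^{\widehat{\mathbf{f}}^-_E(\lambda)}e^{\widehat{\mathbf{f}}^+_E(\lambda)}$, formula \eqref{VOS} gives at once
\[
\Gamma^E(\lambda)\,\widehat{S}(t)^{-1} = e^{\frac12\Omega\left((S\mathbf{f}_E^+)^+,(S\mathbf{f}_E^+)^-\right)}\,\widehat{S}(t)^{-1}\, e^{\widehat{(S\mathbf{f}_E)}^-}e^{\widehat{(S\mathbf{f}_E)}^+}.
\]
By definition $S\mathbf{f}_E=\mathbf{f}_E(t,\lambda,z)$, so the last two factors are exactly $\Gamma^E(t,\lambda)$. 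What remains is to identify the scalar phase, i.e.\ to show that $\Omega\big((S\mathbf{f}_E^+)^+,(S\mathbf{f}_E^+)^-\big)=W(\mathbf{f}_E^+,\mathbf{f}_E^+)$.

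For this I would use Proposition \ref{prop:W} c), which says $W(\mathbf{q},\mathbf{q})=\Omega([S\mathbf{q}]_+,S\mathbf{q})$. Put $\mathbf{q}=\mathbf{f}^+_E(\lambda,z)$. This is a series in nonnegative powers of $z$, i.e.\ it lies in the ``$q$-half'' of the polarization, so it can legitimately be substituted into the quadratic form $W$; its coefficients are $I^{(k)}_E(\lambda)(-1)^k$. Split $S\mathbf{q}=[S\mathbf{q}]_++[S\mathbf{q}]_-$, where $[\ ]_\pm$ agree with the truncations $(\ )^\pm$. Then
\[
\Omega([S\mathbf{q}]_+,S\mathbf{q})=\Omega([S\mathbf{q}]_+,[S\mathbf{q}]_+)+\Omega([S\mathbf{q}]_+,[S\mathbf{q}]_-).
\]
The first term vanishes because $H[z]$ is Lagrangian for $\Omega$: the residue of a product of two polynomials in $z$ is zero. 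This leaves exactly $\Omega\big((S\mathbf{f}^+_E)^+,(S\mathbf{f}^+_E)^-\big)$, which proves the formula.

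The main point needing care is well-definedness rather than the algebra. The series $S(t,z)\mathbf{f}^+_E$ involves infinite sums over $k$ of $S_{k+j}(t)I^{(k)}_E(\lambda)$. I would justify these in the $\lambda$-adic topology mentioned in the Remark after \eqref{VOR}: $I_E^{(k)}(\lambda)$ carries a factor $\lambda^{-k}$ times fractional powers of $\lambda$, so the expansions are organized as formal series in $\lambda^{-1/l}$ (and $\log\lambda$ for the $P$-component). The relevant coefficients are then finite sums, and the BCH manipulation behind \eqref{VOS} is legitimate termwise. Convention signs, namely the order of arguments in $\Omega$ in \eqref{VOS} versus Proposition \ref{prop:W} c), should be checked against the BCH step $e^{A+B}=e^{-\frac12[A,B]}e^Ae^B$ with $[\widehat f,\widehat g]=\Omega(f,g)$, to confirm that both use $\Omega(\text{plus part},\text{minus part})$.
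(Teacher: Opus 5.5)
Your proposal is correct and is the paper's own argument. The paper obtains this proposition by combining \eqref{VOS} with Proposition~\ref{prop:W}, and your observation that $\Omega([S\mathbf{q}]_+,[S\mathbf{q}]_+)=0$ because $H[z]$ is Lagrangian is precisely the step the paper leaves implicit when it matches the phase in \eqref{VOS} with $W(\mathbf{f}^+_E,\mathbf{f}^+_E)$ via part~c).

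One harmless slip in your convergence remark: no $\log\lambda$ occurs. The $P$-component of $I^{(k)}_E(\lambda)$ vanishes for $k\geq 0$ because $1/\Gamma(-k)=0$, so $\mathbf{f}^+_E$ is a series in $\lambda^{-1/l}$ only.
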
 

\medskip

Using Proposition \ref{prop:S-conj}, we will obtain a formula for the bilinear operator $\Omega_{\rm aff}(t,\lambda)$ in terms of the vertex operators $\Gamma^E(t,\lambda)$ and the currents 
$\phi_E(t,\lambda):=\partial_\lambda \widehat{\mathbf{f}}_E(t,\lambda)$. 
Let us define the operator $L(t,\lambda)$ by 
\beq\label{t-vir}
L(\lambda ) \,
\Big(
\widehat{S}(t)^{-1}\otimes \widehat{S}(t)^{-1} 
\Big) = 
\Big(
\widehat{S}(t)^{-1}\otimes \widehat{S}(t)^{-1} 
\Big)\,
L(t,\lambda).
\eeq
We will discuss $L(t,\lambda)$ in the next section. Recalling Proposition \ref{prop:S-conj}, we get that the ancestor bilinear operator has the following form: 
\beq\label{affine_casimir_anc}
\Omega_{\rm aff}(t,\lambda):=
\sum_{E\in \Phi_{\rm aff}} b_E(t,\lambda) \Gamma^E(t,\lambda)\otimes
\Gamma^{-E}(t,\lambda)-
\Big(
\sum_{F\in \Phi_{\rm aff}^{\rm im}} 
b_F(t,\lambda) \Gamma^F(t,\lambda)\otimes \Gamma^{-F}(t,\lambda)
\Big)
L(t,\lambda),
\eeq
where the coefficients $b_E(t,\lambda)$ for $E\in \Phi_{\rm aff}$ and  $b_F(t,\lambda)$ for $F\in \Phi^{\rm im}_{\rm aff}$ can be described as follows. 
\begin{proposition}\label{prop:be}
Let us define the {\em phase factors} $B^{E,F}(t,\lambda,\mu)$ by the following identity:
\ben
\Gamma^E(t,\lambda)\Gamma^F(t,\mu)= B^{E,F}(t,\lambda,\mu) 
:\Gamma^E(t,\lambda)\Gamma^F(t,\mu):\ , \quad 
E,F\in K^0(\PP^1_a).
\een
Then the coefficient
\beq\label{b_Et}
b_E(t,\lambda) = 
b_E(\lambda) \, 
e^{W(\mathbf{f}_E^+(\lambda,z), \mathbf{f}_E^+(\lambda,z))}=
\lim_{\lambda'\to\lambda} 
\frac{B^{E,E}(t,\lambda',\lambda)}{(\lambda'-\lambda)^{(E|E)}}.
\eeq
\end{proposition}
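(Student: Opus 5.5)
The plan is to prove the two equalities in \eqref{b_Et} separately. The first equality comes from Proposition~\ref{prop:S-conj}. The second comes from comparing the phase factors $B^{E,F}(t,\lambda,\mu)$ and $B^{E,F}(\lambda,\mu)$ through the symplectic property of $S(t,z)$, together with part c) of Proposition~\ref{prop:W}.

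\emph{First equality.} We conjugate the term $b_E(\lambda)\Gamma^E(\lambda)\otimes\Gamma^{-E}(\lambda)$ of $\Omega_{\rm aff}(\lambda)$ by $\widehat S(t)^{-1}\otimes\widehat S(t)^{-1}$. Applying Proposition~\ref{prop:S-conj} to each tensor factor produces the scalar factors $e^{\frac12 W(\mathbf f^+_E,\mathbf f^+_E)}$ and $e^{\frac12 W(\mathbf f^+_{-E},\mathbf f^+_{-E})}$. Since $\mathbf f_{-E}=-\mathbf f_E$ and $W$ is quadratic, these two factors are equal, so their product is $e^{W(\mathbf f^+_E(\lambda,z),\mathbf f^+_E(\lambda,z))}$. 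The same argument applies verbatim to $F\in\Phi^{\rm im}_{\rm aff}$. This gives \eqref{affine_casimir_anc} with $b_E(t,\lambda)=b_E(\lambda)e^{W(\mathbf f_E^+,\mathbf f_E^+)}$.

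\emph{Second equality.} Moving $e^{\widehat{\mathbf f}^+_E}$ past $e^{\widehat{\mathbf f}^-_F}$ via Baker--Campbell--Hausdorff gives
\[
B^{E,F}(\lambda,\mu)=e^{\Omega(\mathbf f_E^+(\lambda),\mathbf f_F^-(\mu))},\qquad
B^{E,F}(t,\lambda,\mu)=e^{\Omega(\mathbf f_E(t,\lambda)^+,\mathbf f_F(t,\mu)^-)}.
\]
Because $S(t,z)=1+O(z^{-1})$ only lowers degrees in $z$, the series $S\mathbf f^-$ has only negative powers. Hence
\[
\mathbf f_E(t)^+=(S\mathbf f_E^+)^+,\qquad \mathbf f_F(t)^-=S\mathbf f_F^-+(S\mathbf f_F^+)^-.
\]
Expanding the exponent of $B^{E,F}(t,\lambda,\mu)$ accordingly, we get
\[
\Omega\big((S\mathbf f_E^+)^+,S\mathbf f_F^-\big)
=\Omega\big(S\mathbf f_E^+,S\mathbf f_F^-\big)-\Omega\big((S\mathbf f_E^+)^-,S\mathbf f_F^-\big).
\]
The second term vanishes, since $\Omega$ pairs a series in negative powers of $z$ with another such series to zero. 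The first term equals $\Omega(\mathbf f_E^+,\mathbf f_F^-)$ by the symplectic condition $S(t,z)S(t,-z)^T=1$. The remaining summand of the exponent is $\Omega\big((S\mathbf f_E^+(\lambda))^+,(S\mathbf f_F^+(\mu))^-\big)$. Polarizing part c) of Proposition~\ref{prop:W}, and using the symmetry in part b), identifies this summand with the bilinear form $W(\mathbf f_E^+(\lambda,z),\mathbf f_F^+(\mu,z))$. Consequently
\[
B^{E,F}(t,\lambda,\mu)=B^{E,F}(\lambda,\mu)\,e^{W(\mathbf f_E^+(\lambda,z),\mathbf f_F^+(\mu,z))}.
\]

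\emph{Taking the limit.} Set $F=E$, divide by $(\lambda'-\lambda)^{(E|E)}$, and let $\lambda'\to\lambda$. The exponential factor is a formal power series in $\lambda,\lambda'$ with coefficients in $\CC[M]$, regular on the diagonal, so it passes through the limit unchanged. The limit of the remaining factor is $b_E(\lambda)$ by \eqref{b_E}, which yields the second equality.

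The main obstacle is bookkeeping. First, the sign and normalization conventions must be tracked carefully: the BCH sign, the ordering in $\Omega$, and the factor $\frac12$ from polarization all have to match the convention in Proposition~\ref{prop:W}~c). Second, the manipulations must be justified in the $\lambda$-adic topology, in particular the claim that the $W$-term has no singularity as $\lambda'\to\lambda$. I would settle the signs by checking the simplest case, keeping only the terms of $S$ up to $S_1$, against the formula for $W_{kl}$ in Proposition~\ref{prop:W}~a).
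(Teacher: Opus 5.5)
Your proof is correct. For the second equality it takes a somewhat different route from the paper.

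\textbf{The paper's route.} The paper never splits $S\mathbf f$ into its $\pm$ parts. It conjugates the product formula $\Gamma^E(\lambda)\Gamma^F(\mu)=B^{E,F}(\lambda,\mu):\Gamma^E(\lambda)\Gamma^F(\mu):$ by $\widehat S(t)^{-1}$. It then applies Proposition~\ref{prop:S-conj} three times: to $\Gamma^E$, to $\Gamma^F$, and to the normally ordered product, which is itself the vertex operator of $\mathbf f_E(\lambda,z)+\mathbf f_F(\mu,z)$. Comparing scalar factors gives
$e^{\frac12(W(\mathbf f_E^+,\mathbf f_E^+)+W(\mathbf f_F^+,\mathbf f_F^+))}B^{E,F}(t,\lambda,\mu)=B^{E,F}(\lambda,\mu)\,e^{\frac12W(\mathbf f_E^++\mathbf f_F^+,\mathbf f_E^++\mathbf f_F^+)}$,
and the cross term is read off by polarization. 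This is a two-line consequence of an operator identity already in hand.

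\textbf{Your route.} You work directly with the propagators $\Omega(\cdot^+,\cdot^-)$. In effect you re-derive, for the cross term, the mechanism behind \eqref{VOS}. This costs some truncation bookkeeping. In return it yields
$\Omega(\mathbf f_E(t,\lambda)^+,\mathbf f_F(t,\mu)^-)=\Omega(\mathbf f_E^+(\lambda),\mathbf f_F^-(\mu))+W(\mathbf f_E^+(\lambda),\mathbf f_F^+(\mu))$
directly, which is exactly the form \eqref{prop-phase} in which the paper uses the result later.

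\textbf{A gap in your justification.} Polarizing Proposition~\ref{prop:W}~c) gives only the symmetrization $\frac12\bigl[\Omega((S\mathbf q_1)^+,(S\mathbf q_2)^-)+\Omega((S\mathbf q_2)^+,(S\mathbf q_1)^-)\bigr]$. Part b) concerns the matrices $W_{kl}$ and does not by itself show that the single cross term $\Omega((S\mathbf q_1)^+,(S\mathbf q_2)^-)$ is symmetric in $\mathbf q_1,\mathbf q_2$. That symmetry can be obtained in either of two ways:
\begin{itemize}
\item From $\Omega((S\mathbf q_1)^+,(S\mathbf q_2)^-)-\Omega((S\mathbf q_2)^+,(S\mathbf q_1)^-)=\Omega(S\mathbf q_1,S\mathbf q_2)=\Omega(\mathbf q_1,\mathbf q_2)=0$ for $\mathbf q_1,\mathbf q_2\in H[\![z]\!]$, using antisymmetry of $\Omega$.
\item By expanding the cross term directly and matching it with the formula in part a), as you already propose to do.
\end{itemize}

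\textbf{The limit step.} $W(\mathbf f_E^+(\lambda'),\mathbf f_E^+(\lambda))$ is a series in $\lambda'^{-1/l},\lambda^{-1/l}$, not in $\lambda,\lambda'$. Setting $\lambda'=\lambda$ is legitimate formally, since each degree receives finitely many terms, and this is all the paper uses at this point. Genuine analyticity across the diagonal is established only later, in Section~\ref{sec:phase}.
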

\proof
By definition, the coefficient $b_E(t,\lambda)$ is defined by the following identity:
\ben
b_E(\lambda) \Gamma^E(\lambda)\otimes \Gamma^{-E}(\lambda) \, 
(\widehat{S}^{-1}\otimes \widehat{S}^{-1}) = 
(\widehat{S}^{-1}\otimes \widehat{S}^{-1})\, 
b_E(t,\lambda) \Gamma^E(t,\lambda)\otimes \Gamma^{-E}(t,\lambda). 
\een
The first identity is a direct corollary of Proposition \ref{prop:S-conj}. To prove the second one, we start with the product formula
\ben
\Gamma^E(\lambda)\Gamma^F(\mu)= 
B^{E,F}(\lambda,\mu) 
:\Gamma^E(\lambda)\Gamma^F(\mu):
\een
Let us conjugate the above formula by $\widehat{S}^{-1}$. Using Proposition \ref{prop:S-conj} we get 
\ben
e^{\tfrac{1}{2} (
W(\mathbf{f}_E^+,\mathbf{f}_E^+) + 
W(\mathbf{f}_F^+,\mathbf{f}_F^+))
}
B^{E,F}(t,\lambda,\mu) = 
B^{E,F}(\lambda,\mu)
e^{\tfrac{1}{2} 
W(\mathbf{f}_E^++\mathbf{f}_F^+,\mathbf{f}_E^++\mathbf{f}_F^+)
}
\een
where we suppressed the arguments of $\mathbf{f}_E^+(\lambda,z)$ and $\mathbf{f}_F^+(\mu,z)$. The above formula implies that 
\beq\label{phase-da}
B^{E,F}(t,\lambda,\mu)= 
B^{E,F}(\lambda,\mu)
e^{W(\mathbf{f}_E^+(\lambda,z),\mathbf{f}_F^+(\mu,z))}.
\eeq
Using the above formula we get 
\ben
\lim_{\lambda'\to\lambda} 
\frac{B^{E,E}(t,\lambda',\lambda)}{(\lambda'-\lambda)^{(E|E)}}=
b_E(\lambda) 
e^{W(\mathbf{f}_E^+(\lambda,z),\mathbf{f}_F^+(\lambda,z))}=
b_E(t,\lambda).
\qed
\een

\subsection{Borcherds n-products}\label{sec:Bnp}
Following Bakalov--Milanov (see \cite{BM2013}, Proposition 3.2) for any pair of fields $A(\lambda)$  , $B(\lambda)$ that are local with respect to each other, we define their $n$-product  
\beq\label{n-product}
A(\lambda)_{(n)}B(\lambda) := \operatorname{Res}_{\lambda'=\lambda} 
A(\lambda')B(\lambda) (\lambda'-\lambda)^n d\lambda',
\eeq
where the composition $A(\lambda')B(\lambda)$, thanks to locality, is interpreted via its Laurent series expansion at $\lambda'=\lambda$ and the residue is computed formally as the coefficient in front of $(\lambda'-\lambda)^{-1}$. The product \eqref{n-product} is a {\em Borcherds $n$-product}, that is, it satisfies Borcherds axioms of a vertex algebra --  see \cite{Bo1986}, Section 4, identities (i)--(v). We would like to use definition \eqref{n-product} when $A(\lambda)$ and $B(\lambda)$ are operator series of the type $\partial_\lambda\widehat{\mathbf{f}}_\alpha(\lambda)$. Such operator series define linear maps
\ben
A,B:
\CC_\hbar [\![q_0,q_1+1,q_2,\dots]\!] \to 
\mathcal{K}_\hbar [\![q_0,q_1+1,q_2,\dots]\!],
\een
where $\mathcal{K}=\CC(\!(\lambda^{-1/l})\!)$ and for any field 
$k$ we denote by $k_\hbar:=k(\!(\hbar^{1/2})\!)$ the field of formal Laurent series in $\hbar^{1/2}$ with coefficients in $k$. The composition $A(\lambda_1)B(\lambda_2)$ is a formal power series in $q_0,q_1+1,q_2,\dots$ with coefficients in $\CC(\!(\lambda_1^{-1/l})\!)(\!(\lambda_2^{-1/l})\!)(\!(\hbar^{1/2})\!)$. The standard definition of locality works, that is, we say that $A$ and $B$ are local to each other if there exists an integer $N\geq 0$ such that 
\ben
(\lambda_1-\lambda_2)^N A(\lambda_1)B(\lambda_2) = 
(\lambda_1-\lambda_2)^N B(\lambda_2)A(\lambda_1).
\een
If $A(\lambda)=\partial_\lambda \widehat{\mathbf{f}}_\alpha(\lambda)$ and 
$B(\lambda)=\partial_\lambda \widehat{\mathbf{f}}_\beta(\lambda)$, then using Proposition \ref{prop:B_perp}, we get 
\ben
A(\lambda_1) B(\lambda_2) = 
\frac{1}{l^2}\sum_{k=1}^l 
(\tau^k(\alpha)|\beta) \, 
\frac{\xi^k}{\left(
\lambda_1^{1/l}-\xi^k \lambda_2^{1/l}\right)^2}\  +\ 
:A(\lambda_1) B(\lambda_2):.
\een
The above sum is symmetric with respect to switching the pairs $(\alpha,\lambda_1)$ and $(\beta,\lambda_2)$. This proves that the fields are local to each other. In particular, we may define their $n$-product by \eqref{n-product}. 
\begin{remark}
Our definition of a field can be compared to the standard one (see \cite{FB2001,Kac1998}) as follows. 
Suppose that $V=\oplus_{d=0}^\infty V_d$ is a graded vector space where the homogeneous components $V_d$ are finite dimensional complex vector spaces. By definition, a field on $V$ is a linear map 
\beq\label{field-A}
A: V \to V(\!(z)\!)=
\bigoplus_{d=0}^\infty \CC(\!(z)\!)\otimes V_d.
\eeq
Let us define topologies on $V$ and $V(\!(z)\!)$ such that a basis of open neighborhoods of $0$ is given respectively by $\oplus_{i\geq d}V_i$ ($d\geq 0$) and $\oplus_{i\geq d} \CC(\!(z)\!) \otimes V_i$ ($d\geq 0$). In our terminology, a field on $\overline{V}$ is any linear map
\ben
A:\overline{V}\to \overline{V(\!(z)\!)} = 
\prod_{d=0}^\infty \CC(\!(z)\!)\otimes V_d
\een
where $\overline{V}$ and $\overline{V(\!(z)\!)}$ are the completions with respect to the above topologies.  
\qed
\end{remark}
Let us recall the Virasoro term $L(\lambda)$ defined in Section \ref{sec:bilo}. We will prove that $L(\lambda)$ can be understood in terms of the $(-1)$-product. 

\begin{proposition}\label{prop:coset-vir}
The generating series $L_\perp(\lambda)$ can be computed by the following formula: 
\ben
L_\perp(\lambda):= \frac{1}{2}\, \sum_{i=1}^{N-2} 
\Big(
\phi_{\alpha_i}(\lambda)\otimes 1- 1\otimes \phi_{\alpha_i}(\lambda)
\Big)_{(-1)} \Big(
\phi_{\beta_i}(\lambda)\otimes 1- 1\otimes \phi_{\beta_i}(\lambda)
\Big).
\een
\end{proposition}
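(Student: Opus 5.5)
The plan is to compute the $(-1)$-product directly from definition \eqref{n-product}, using the operator product expansion of the currents derived just before the statement. Write $A_i(\lambda):=\phi_{\alpha_i}(\lambda)\otimes 1-1\otimes\phi_{\alpha_i}(\lambda)$ and $B_i(\lambda):=\phi_{\beta_i}(\lambda)\otimes 1-1\otimes\phi_{\beta_i}(\lambda)$. Cross terms such as $\phi_{\alpha_i}(\lambda')\otimes 1$ and $1\otimes\phi_{\beta_i}(\lambda)$ act on different tensor factors and hence commute, so they contribute only normally ordered products with no singular part. The two diagonal terms each come with sign $+1$, since $(-1)^2=1$. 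Combining these with the OPE gives
\[
A_i(\lambda')B_i(\lambda)=\frac{2}{l^2}\sum_{k=1}^{l}(\tau^k(\alpha_i)|\beta_i)\,\frac{\xi^k}{\big(\lambda'^{1/l}-\xi^k\lambda^{1/l}\big)^2}+\,:A_i(\lambda')B_i(\lambda):\ .
\]
The normally ordered part is regular at $\lambda'=\lambda$. Its contribution to $\operatorname{Res}_{\lambda'=\lambda}(\cdot)(\lambda'-\lambda)^{-1}d\lambda'$ is therefore just the value at $\lambda'=\lambda$, namely $:A_i(\lambda)B_i(\lambda):$. Summing over $i$ with the factor $\tfrac12$ reproduces the normally ordered sum in \eqref{vir-field_1}. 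What remains is to show that the scalar part produces exactly $\frac12\operatorname{tr}(\frac14+\theta\theta^T)\lambda^{-2}$.

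For the scalar part, I would expand each term in $\lambda'-\lambda$. For $k\neq l$ (that is, $\xi^k\neq1$), the function $\xi^k(\lambda'^{1/l}-\xi^k\lambda^{1/l})^{-2}$ is regular at $\lambda'=\lambda$, and its residue against $(\lambda'-\lambda)^{-1}$ is its value $\xi^k\lambda^{-2/l}(1-\xi^k)^{-2}$. For $k=l$, I would use
\[
\frac{1}{l^2}\frac{1}{(\lambda'^{1/l}-\lambda^{1/l})^2}=\frac{\lambda'^{1/l-1}\lambda^{1/l-1}}{l^2(\lambda'^{1/l}-\lambda^{1/l})^2}\cdot\lambda'^{1-1/l}\lambda^{1-1/l}.
\]
The first factor is $\partial_{\lambda'}\partial_\lambda\log(\lambda'^{1/l}-\lambda^{1/l})$. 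Its Laurent expansion at $\lambda'=\lambda$ is $(\lambda'-\lambda)^{-2}$ plus an explicit constant term of order $\lambda^{-2}$, a standard computation with $(1-1/l^2)/12$-type coefficients. From this expansion I would extract the coefficient of $(\lambda'-\lambda)^0$.

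The main obstacle is the resulting identity between constants. I need to show that
\[
\sum_i\Big[\sum_{k=1}^{l-1}(\tau^k\alpha_i|\beta_i)\frac{\xi^k}{(1-\xi^k)^2}\lambda^{-2/l}\cdots+(\tau^l\alpha_i|\beta_i)c_l\Big]
\]
equals $\operatorname{tr}(\frac14+\theta\theta^T)\lambda^{-2}$ restricted to $K_\perp$. Note that the powers of $\lambda$ must combine into $\lambda^{-2}$. It is cleaner to avoid summing over $k$ and instead work eigenvector by eigenvector. I would choose $\alpha_i=\Psi^{-1}(\phi_{j,p})$, which is a $\tau$-eigenvector with $\theta$-eigenvalue $\frac12-p/a_j$ by Proposition~\ref{prop:sd}\,c). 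Then the $k$-sum collapses, and the two-point function of $\phi_{\alpha_i}(\lambda')\phi_{\beta_i}(\lambda)$ can be computed directly from the calibrated periods, Proposition~\ref{prop:cp}. It is proportional to
\[
\partial_{\lambda'}\partial_\lambda\big[(\lambda/\lambda')^{p/a_j}\cdot\text{(log-type series)}\big],
\]
and equals $\lambda'^{-\nu-1}\lambda^{\nu-1}\cdot\big(\nu+\frac{\lambda'\lambda}{(\lambda'-\lambda)^2}\ldots\big)$ with $\nu=p/a_j$. Expanding $\lambda'^{-\nu}\lambda^{\nu}(\lambda'-\lambda)^{-2}$ to order zero gives the constant $\frac{\nu(\nu-1)}{2}\lambda^{-2}$ plus the $\nu$ term. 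After normalizing by the duality $(\alpha_i|\beta_i)=1$, the contribution per eigenvector becomes $\frac12\big(\frac14+(\frac12-\nu)^2\big)\lambda^{-2}$, up to a check of the sign and normalization. Summing over the eigenbasis then yields the trace. I would verify the precise constant by comparing with the $A_1$/KdV case $l=2$, where it must give the known $\frac1{16}\lambda^{-2}$ term.
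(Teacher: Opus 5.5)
Your reduction is correct and matches the paper. The $(-1)$-product splits into the normally ordered part, which gives exactly the $i$th summand of \eqref{vir-field_1}, plus twice the scalar contraction. Evaluating the contraction eigenvector by eigenvector, instead of through the paper's sum over $k$ (handled there with Lemma~\ref{le:identity} and L'H\^opital), is a legitimate and shorter route.

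The gap is in the step you yourself flag as the main obstacle: the constant you aim for is wrong. You expect $\frac12\big(\frac14+(\frac12-\nu)^2\big)\lambda^{-2}$ per eigenvector and a trace ``restricted to $K_\perp$''. This reads $\theta\theta^T$ as $\theta^2$. But $\theta$ is skew with respect to the Poincar\'e pairing: $\phi_{j,p}$ pairs only with $\phi_{j,a_j-p}$, whose $\theta$-eigenvalue is the opposite one, and likewise for $\phi_{0,0},\phi_{0,1}$. Hence $\theta^T=-\theta$ and
\[
\operatorname{tr}\Big(\tfrac14+\theta\theta^T\Big)=\sum\Big(\tfrac14-\theta^2\Big)=\sum_{j=1}^3\sum_{p=1}^{a_j-1}\frac{p}{a_j}\Big(1-\frac{p}{a_j}\Big).
\]
Here $\phi_{0,0},\phi_{0,1}$ contribute $0$, so no restriction to $K_\perp$ is needed. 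Your per-eigenvector value gives a different total: for $a=(3,3,3)$ it sums to $\tfrac53$ instead of $\tfrac43$. Your proposed check against the point cannot detect this, because there $\theta=0$. (Also, in this bilinear normalization the point's constant is $\frac18\lambda^{-2}$, not $\frac1{16}\lambda^{-2}$.)

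Your intermediate formulas need correcting too. Take $\Psi(\alpha_i)=\phi_{j,p}$ and $\nu=p/a_j$. Proposition~\ref{prop:cp} and the $\Gamma$-reflection formula produce the prefactor $\frac{\sin(\pi\nu)}{\pi}(\Psi(\alpha_i),\Psi(\beta_i))$, which is exactly $(\alpha_i|\beta_i)=1$. One gets
\[
\partial_{\lambda'}\partial_\lambda\Omega\big(\mathbf{f}^+_{\alpha_i}(\lambda',z),\mathbf{f}^-_{\beta_i}(\lambda,z)\big)=\frac{\nu\,\lambda^{\nu-1}(\lambda')^{-\nu}}{\lambda'-\lambda}+\frac{\lambda^{\nu}(\lambda')^{-\nu}}{(\lambda'-\lambda)^2}.
\]
So the first term is $\nu\lambda'/(\lambda'-\lambda)$ times your prefactor, not $\nu$. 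Its $(\lambda'-\lambda)^0$ coefficient is $-\nu^2\lambda^{-2}$. The second term contributes $\frac{\nu(\nu+1)}{2}\lambda^{-2}$, not $\frac{\nu(\nu-1)}{2}\lambda^{-2}$. Together they give $\frac{\nu(1-\nu)}{2}\lambda^{-2}$.

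With the factor $2$ from the two tensor slots and the prefactor $\frac12$, each $i$ contributes $\frac12\nu(1-\nu)\lambda^{-2}$. Summing over $i$ gives the correct trace, which is \eqref{cvir-res}.

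Finally, the OPE you copied is missing the factor $(\lambda')^{1/l-1}\lambda^{1/l-1}$: the $k$th term is $\partial_{\lambda'}\partial_\lambda\log\big(1-\xi^k(\lambda/\lambda')^{1/l}\big)$. That is why your powers of $\lambda$ would not combine into $\lambda^{-2}$. With the factor restored and $(\tau^k\alpha_i|\beta_i)=\xi^{kr}$, resumming over $k$ reproduces the closed formula above. Once these corrections are made, your route gives a complete and shorter proof than the paper's.
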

\proof
By definition the $(-1)$-product of 
$\phi_{\alpha_i}\otimes 1-1\otimes \phi_{\alpha_i}$ and 
$\phi_{\beta_i}\otimes 1-1\otimes \phi_{\beta_i}$ is 
\ben
\operatorname{Res}_{\lambda'=\lambda} 
\frac{d\lambda'}{\lambda'-\lambda} \Big(
\phi_{\alpha_i}(\lambda')\otimes 1-1\otimes \phi_{\alpha_i}(\lambda')
\Big) 
\Big(
\phi_{\beta_i}(\lambda)\otimes 1-1\otimes \phi_{\beta_i}(\lambda)
\Big). 
\een
Let us recall the following identity:
\ben
[\phi^+_{\alpha_i}(\lambda'),\phi^-_{\beta_i}(\lambda)]=
\partial_{\lambda'}\partial_{\lambda} \Omega(
\mathbf{f}^+_{\alpha_i}(\lambda',z),
\mathbf{f}^-_{\beta_i}(\lambda,z))
\een
where $\phi_a^{\pm}(\lambda):=(\partial_\lambda \mathbf{f}^\pm_a(\lambda,z))\sphat$. 
Using the above formula and the normally ordered product we can rewrite the Borcherds $(-1)$-product as a sum of the following two residues:
\ben
\operatorname{Res}_{\lambda'=\lambda} 
\frac{d\lambda'}{\lambda'-\lambda} :\Big(
\phi_{\alpha_i}(\lambda')\otimes 1-1\otimes \phi_{\alpha_i}(\lambda')
\Big) 
\Big(
\phi_{\beta_i}(\lambda)\otimes 1-1\otimes \phi_{\beta_i}(\lambda)
\Big):
\een
and 
\beq\label{BP-res}
\operatorname{Res}_{\lambda'=\lambda} 
\frac{d\lambda'}{\lambda'-\lambda} 
2\partial_{\lambda'}\partial_{\lambda} \Omega(
\mathbf{f}^+_{\alpha_i}(\lambda',z),
\mathbf{f}^-_{\beta_i}(\lambda,z)). 
\eeq
The first residue coincides with the $i$th summand in \eqref{vir-field_1}. To complete the proof we need only to prove that 
\beq\label{cvir-res}
2\lambda^2
\operatorname{Res}_{\lambda'=\lambda} 
\frac{d\lambda'}{\lambda'-\lambda} 
\sum_{i=1}^{N-2}
\partial_{\lambda'}\partial_{\lambda} \Omega(
\mathbf{f}^+_{\alpha_i}(\lambda',z),
\mathbf{f}^-_{\beta_i}(\lambda,z)) = 
\operatorname{tr}\Big(
\frac{1}{4} +\theta \theta^T\Big)=
\sum_{j=1}^3\sum_{p=1}^{a_j-1} 
\frac{p}{a_j}\Big(1-\frac{p}{a_j}\Big).
\eeq
To this end, recalling formula \eqref{phase_B}, we get that
$ 
\partial_{\lambda'}\partial_{\lambda} \Omega(
\mathbf{f}^+_{\alpha_i}(\lambda',z),
\mathbf{f}^-_{\beta_i}(\lambda,z))=
\partial_{\lambda'}\partial_{\lambda} \, 
\log B_{\alpha_i,\beta_i}(\lambda',\lambda) 
$ 
coincides with the following sum: 
\ben
\frac{1}{l^2}\, 
\sum_{k=0}^{l-1} \xi^k
(\tau^k(\alpha_i)|\beta_i) \, 
\frac{\lambda^{\tfrac{1}{l}-1} (\lambda')^{\tfrac{1}{l}-1}}{
\Big(
\lambda^{\tfrac{1}{l}} -\xi^k (\lambda')^{\tfrac{1}{l}} \Big)^2},
\een
where $\xi=2^{2\pi\ii/l}.$
The contribution of the summand with $k=0$ to the residue \eqref{BP-res} is
\beq\label{k=0}
\frac{2}{l^2}\, (\alpha_i|\beta_i) \, 
\operatorname{Res}_{\lambda'=\lambda} 
\frac{d\lambda'}{\lambda'-\lambda} \,
\frac{\lambda^{\tfrac{1}{l}-1} (\lambda')^{\tfrac{1}{l}-1}}{
\Big(
\lambda^{\tfrac{1}{l}} - (\lambda')^{\tfrac{1}{l}} \Big)^2}=
\frac{2}{l} \operatorname{Res}_{x=0} \frac{dx}{((1+x)^l-1)x^2}\, \lambda^{-2} =
\frac{l^2-1}{6 l^2}\, \lambda^{-2},
\eeq
where we changed the variables $\lambda'=(1+x)^l \lambda$. 
The contribution of the remaining terms is 
\beq\label{k>0}
\frac{2}{l^2}\,
\sum_{k=1}^{l-1} 
(\tau^k(\alpha_i)|\beta_i) \, 
\frac{\xi^k\lambda^{-2}}{
(1 -\xi^k)^2} = 
\frac{2}{l^2}\,
\sum_{k=1}^{l-1}  
\frac{\xi^{k(r+1)}\lambda^{-2}}{
(1 -\xi^k)^2} ,
\eeq
where we used that $\Psi(\alpha_i)=\phi_{j,p}$ for some $(j,p)$ with $1\leq j\leq 3$ and $1\leq p\leq a_j-1$, we put $r=(a_j-p)l/a_j$, and finally using Proposition \ref{prop:sd}, part c), we transformed
$
\tau^k(\alpha_i)=
e^{-2\pi \ii k p/a_j} \alpha_i = 
\xi^{k r}\alpha_i. 
$ 
It remains only to compute the following sums:
$\tfrac{2}{l^2}\sum_{k=1}^{l-1} \frac{\xi^{k(r+1)}}{(1-\xi^k)^2}.$ 
It is convenient to consider the following generating series:
\ben
S_r(x):=\frac{2}{l^2} \sum_{k=1}^{l-1} \frac{\xi^{k r} }{1-x\xi^k}=
\frac{2}{l^2} \Big(
\frac{l x^{l-r}}{1-x^l} -\frac{1}{1-x}\Big),
\een
where for the second equality we used Lemma \ref{le:identity}.
We have 
\ben
S_r'(x)= \frac{2}{l^2}\, 
\frac{l(l-r) x^{l-r-1} + lr x^{2l-r-1} - (1+x+\cdots+x^{l-1})^2}{
(x-1)^2 (1+x+\cdots+x^{l-1})^2
}.
\een
Using the L'H\^opital's rule, we get that the limit of $S_r'(x)$ when $x\to 1$ is  
\ben
\frac{1}{l^4} \left(
l(l-r)(l-r-1)(l-r-2) + 
lr(2l-r-1)(2l-r-2) - 
2\Big( \Big(\sum_{i=1}^{l-1} i\Big)^2 + 
l \sum_{i=1}^{l-1} i(i-1) \Big)\right).
\een
The above expression, after some straightforward computations, simplifies significantly and we get 
\ben
\frac{2}{l^2} \sum_{k=1}^{l-1} \frac{\xi^{k(r+1)}}{(1-\xi^k)^2}=
\lim_{x\to 1} S_r'(x) = 
\frac{1}{6 l^2} \Big(1-l^2 + 6rl -6 r^2\Big)=
\frac{1-l^2}{6l^2} + 
\frac{p}{a_j} \Big( 1-\frac{p}{a_j}\Big),
\een
where recall that $r=(a_j-p)l/a_j$.
Substituting this formula in \eqref{k>0} and recalling also \eqref{k=0}, we finally get 
\ben
2
\operatorname{Res}_{\lambda'=\lambda} 
\frac{d\lambda'}{\lambda'-\lambda} 
\partial_{\lambda'}\partial_{\lambda} \Omega(
\mathbf{f}^+_{\alpha_i}(\lambda',z),
\mathbf{f}^-_{\beta_i}(\lambda,z)) = 
\frac{p}{a_j} \Big( 1-\frac{p}{a_j}\Big)\, \lambda^{-2}.
\qed
\een

\begin{proposition}\label{prop:coset-vir-0}
Let $\{\alpha_i\}_{i=N-1}^N$ and 
$\{\beta_i\}_{i=N-1}^N$ be dual bases of $K_0(\PP^1,\CC)$ 
with respect to the Euler pairing. Then
\ben
L_0(\lambda)= \tfrac{1}{2}
\sum_{i=N-1,N}
\operatorname{Res}_{\nu=0} \frac{d\nu}{\nu(q-q^{-1})}
\Big(
\phi^\nu_{\alpha_i}(\lambda)\otimes 1- 1\otimes 
\phi^\nu_{\alpha_i}(\lambda)\Big)_{(-1)}
\Big(
\phi^{-\nu}_{\beta_i}(\lambda)\otimes 1- 1\otimes 
\phi^{-\nu}_{\beta_i}(\lambda)\Big).
\een  
\end{proposition}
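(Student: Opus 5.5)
The plan is to reduce the identity to a comparison of two one-variable residues in $\nu$, after splitting each Borcherds $(-1)$-product into a normally ordered part and a scalar part, exactly as in the proof of Proposition~\ref{prop:coset-vir}. Throughout, I read $q$ as $q=e^{\pi\ii\nu}$ (the notation is not fixed in the text), so that
\ben
\frac{1}{\nu(q-q^{-1})}=\frac{1}{2\pi\ii\,\nu^2}\Big(1+\tfrac{\pi^2\nu^2}{6}+O(\nu^4)\Big)
\een
is even in $\nu$. Consequently, for any $F(\nu)$ holomorphic near $\nu=0$, $\operatorname{Res}_{\nu=0}\frac{F(\nu)\,d\nu}{\nu(q-q^{-1})}=\frac{1}{2\pi\ii}F'(0)$. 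This is precisely the operation $\frac{1}{2\pi\ii}\frac{d}{d\nu}\big|_{\nu=0}$ in the definition of $L_0(\lambda)$, so it suffices to show that, as functions of $\nu$, the two sides differ by an even function.

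\emph{Step 1 (locality and splitting).} I first check that $\phi^\nu_{\alpha}(\lambda)\otimes 1-1\otimes\phi^\nu_\alpha(\lambda)$ and $\phi^{-\nu}_\beta(\lambda)\otimes1-1\otimes\phi^{-\nu}_\beta(\lambda)$ are mutually local for $\alpha,\beta\in K_0(\PP^1_a,\CC)$, so that the $(-1)$-product is defined. Since $\tau$ acts trivially on $K_0$, only the $\phi_{0,0},\phi_{0,1}$ components of the calibrated periods enter (Proposition~\ref{prop:sd}, part c)). Then $[\phi^{\nu,+}_\alpha(\lambda'),\phi^{-\nu,-}_\beta(\lambda)]=\partial_{\lambda'}\partial_\lambda\Omega(\mathbf{f}^{\nu,+}_\alpha(\lambda'),\mathbf{f}^{-\nu,-}_\beta(\lambda))$ is a scalar, which I compute explicitly from Proposition~\ref{prop:cp} with $m$ shifted by $\pm\nu$. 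The sum over $k$ yields an elementary series in $\lambda/\lambda'$, of the type $\lambda'^{\,\nu-1}\lambda^{-\nu-1}$ times a rational function of $\lambda'/\lambda$ with at most a double pole at $\lambda'=\lambda$. This gives locality, and, as in the proof of Proposition~\ref{prop:coset-vir}, the $(-1)$-product equals the normally ordered product (which reproduces the summand in $L_0$) plus $2\,\operatorname{Res}_{\lambda'=\lambda}\frac{d\lambda'}{\lambda'-\lambda}$ of that scalar. Call the total scalar term $C(\nu)\lambda^{-2}$, summed over $i=N-1,N$.

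\emph{Step 2 (normally ordered part).} The coefficients of the normally ordered part are entire in $\nu$ (as remarked after the definition of $L_0$). Hence its residue against $\frac{d\nu}{\nu(q-q^{-1})}$ is $\frac{1}{2\pi\ii}\frac{d}{d\nu}\big|_{\nu=0}$ of it, which is exactly $L_0(\lambda)$.

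\emph{Step 3 (the scalar term, main obstacle).} It remains to show that the residue of $C(\nu)$ vanishes, i.e.\ $C'(0)=0$; I will aim to prove that $C$ is even in $\nu$. The key input is that the Euler pairing on $K_0$ is symplectic. Therefore the tensor $\sum_i\alpha_i\otimes\beta_i$ is antisymmetric: with the Darboux choice it equals $\delta/l\otimes(L-1)-(L-1)\otimes\delta/l$. Swapping the roles of $\alpha_i$ and $\beta_i$ then amounts to $\nu\mapsto-\nu$ together with a sign coming from the antisymmetry and a sign coming from the exchange of $\lambda'$ and $\lambda$ inside the residue. I expect these two signs to cancel, giving $C(-\nu)=C(\nu)$.

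If this symmetry argument fails, the fallback is a brute-force computation. On $\Psi(K_0)=\operatorname{span}\{1,P\}$ the operator $\theta$ has eigenvalues $\pm\tfrac12$. The only contributions therefore come from the pairing of $\operatorname{rk}$ with $2\pi\ii\deg-\operatorname{rk}\log Q$. With $\operatorname{rk}(L-1)=0$, $\operatorname{rk}(\delta)=l$ and $\deg(\delta)=-l$, I would compute $C(\nu)$ as an explicit Laurent residue in $x=\lambda'/\lambda-1$, involving only $(1+x)^{\pm\nu}$ and Gamma-function ratios, and verify directly that its $\nu$-linear coefficient is zero. This computation is the main place where I expect difficulty: one must keep track of the $\log Q$ terms and of the truncation at $k=0$ in $\mathbf{f}^\pm$, and the cancellation depends on both.
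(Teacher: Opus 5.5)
Your Steps 1 and 2 follow the paper's own reduction. Each $(-1)$-product equals the normally ordered product plus a scalar contraction. The normally ordered part, taken against $\frac{d\nu}{\nu(q-q^{-1})}$, gives $\frac{1}{2\pi\ii}\frac{d}{d\nu}\big|_{\nu=0}$ and so reproduces $L_0(\lambda)$. The scalar part must then have vanishing residue.

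The gap is in Step 3. The evenness of $C(\nu)$ that you aim for is false, so the symmetry argument cannot be completed. Use the Darboux choice and put
\[
G_{\alpha,\beta}(\nu)\lambda^{-2}:=2\operatorname{Res}_{\lambda'=\lambda}\frac{d\lambda'}{\lambda'-\lambda}\,\partial_{\lambda'}\partial_\lambda\Omega(\mathbf{f}^{\nu,+}_\alpha(\lambda',z),\mathbf{f}^{-\nu,-}_\beta(\lambda,z)).
\]
Proposition~\ref{prop:cp} and the reflection formula for $\Gamma(1-\nu-k)\Gamma(1+\nu+k)$ give
\begin{align*}
\partial_{\lambda'}\Omega(\mathbf{f}^{\nu,+}_{\delta/l}(\lambda',z),\mathbf{f}^{-\nu,-}_{L-1}(\lambda,z)) &= \frac{(q-q^{-1})\,\lambda^{\nu}(\lambda')^{-\nu}}{\lambda'-\lambda},\\
\partial_{\lambda'}\Omega(\mathbf{f}^{\nu,+}_{L-1}(\lambda',z),\mathbf{f}^{-\nu,-}_{\delta/l}(\lambda,z)) &= -\frac{(q-q^{-1})\,\lambda^{1+\nu}(\lambda')^{-1-\nu}}{\lambda'-\lambda}.
\end{align*}
Hence $G_{\delta/l,L-1}(\nu)=(q-q^{-1})(\nu-\nu^2)$, and $G_{L-1,\delta/l}(\nu)=(q-q^{-1})(\nu+\nu^2)=G_{\delta/l,L-1}(-\nu)$.

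So exchanging the two vectors, together with $\nu\mapsto-\nu$ and $\lambda\leftrightarrow\lambda'$, costs \emph{no} sign. The reason is that the simple-pole coefficient, i.e.\ the $0$-product, vanishes. The antisymmetry of $\sum_i\alpha_i\otimes\beta_i$ then makes your total scalar
\[
C(\nu)=G_{\delta/l,L-1}(\nu)-G_{L-1,\delta/l}(\nu)=-2\nu^2(q-q^{-1}),
\]
which is \emph{odd} and nonzero. Since the kernel $\frac{1}{\nu(q-q^{-1})}$ is even, oddness alone says nothing about $C'(0)$.

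What closes the argument is the order of vanishing of each summand separately. The factor $q-q^{-1}$ produced by the reflection formula cancels the one in the kernel. The remaining $\lambda'$-residue, $\pm\nu-\nu^2$, vanishes at $\nu=0$. The two contributions are therefore
\[
\operatorname{Res}_{\nu=0}(1-\nu)\,d\nu\,\lambda^{-2}=0 \quad\text{and}\quad \operatorname{Res}_{\nu=0}(-1-\nu)\,d\nu\,\lambda^{-2}=0,
\]
recalling that $\beta_N=-\delta/l$.

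This explicit computation is exactly the paper's proof. Your fallback announces it but does not carry it out, and the mechanism you anticipate for it is off in two ways. The $\log Q$ terms never appear, since they come multiplied by $\operatorname{rk}(\alpha_i)\operatorname{rk}(\beta_i)=0$. And no cancellation between the two summands is needed.
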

\proof
We may assume that $\alpha_{N-1}=\delta/l$, $\alpha_N=L-1$, $\beta_{N-1}=L-1$, and $\beta_N=-\delta/l$. We have to prove that the $(-1)$-products on the RHS coincide with the normally ordered products. Just like in the proof of the previous proposition, after recalling the definition of the Borcherds $(-1)$-product, we get that we have to prove that 
\beq\label{BP-res-0}
\sum_{i=N-1,N}
\operatorname{Res}_{\nu=0} \tfrac{d\nu}{\nu(e^{\pi\ii\nu}-e^{-\pi\ii \nu})}
\operatorname{Res}_{\lambda'=\lambda} 
\frac{d\lambda'}{\lambda'-\lambda} 
2\partial_{\lambda'}\partial_{\lambda} \Omega(
\mathbf{f}^{\nu,+}_{\alpha_i}(\lambda',z),
\mathbf{f}^{-\nu,-}_{\beta_i}(\lambda,z)) = 0. 
\eeq
Using Proposition \ref{prop:cp}, we get 
\ben
\mathbf{f}^{\nu,+}_{\delta/l}(\lambda,z)=
\sum_{k=0}^\infty \Big(
\frac{\lambda^{-\nu-k}}{\Gamma(1-\nu-k)} -
\frac{\lambda^{-\nu-k-1}}{\Gamma(-\nu-k)}(2\pi\ii +\log Q) P
\Big) (-z)^k
\een
and 
\ben
\mathbf{f}^{-\nu,-}_{L-1}(\lambda,z)=
\sum_{k=0}^\infty 
2\pi\ii\, \frac{\lambda^{\nu+k}}{\Gamma(\nu+k+1)}\, P\, (-z)^{-k-1},
\een
where we used that $\operatorname{deg}(\delta/l)=-1$, $\operatorname{rk}(\delta/l)=1$, $\chi_{j,p}(\delta/l)=0$ and $\operatorname{deg}(L-1)=1$, $\operatorname{rk}(L-1)=0$, $\chi_{j,p}(L-1)=0$. The propagator becomes 
\ben
\Omega(
\mathbf{f}^{\nu,+}_{\delta/l}(\lambda',z),
\mathbf{f}^{-\nu,-}_{L-1}(\lambda,z))= 
2\pi\ii\, \sum_{k=0}^\infty (-1)^{k+1} \, 
\frac{(\lambda')^{-\nu-k}}{\Gamma(1-\nu-k)}\, 
\frac{\lambda^{\nu+k}}{\Gamma(1+\nu+k)}.
\een
Recall the reflection formula for the gamma function
\ben
\frac{2\pi\ii}{\Gamma(1-\nu-k) \Gamma(1+\nu+k)} =
\frac{(-1)^k}{\nu+k}\, (e^{\pi\ii\nu}-e^{-\pi\ii\nu}),
\een
we get 
\ben
\partial_{\lambda'}
\Omega(
\mathbf{f}^{\nu,+}_{\delta/l}(\lambda',z),
\mathbf{f}^{-\nu,-}_{L-1}(\lambda,z))= 
(e^{\pi\ii\nu}-e^{-\pi\ii\nu})\,
\sum_{k=0}^\infty  
(\lambda')^{-\nu-k-1}\, \lambda^{\nu+k} =
(e^{\pi\ii\nu}-e^{-\pi\ii\nu})\, \lambda^\nu\, (\lambda')^{-\nu}\, 
\frac{1}{\lambda'-\lambda}
\een
and 
\ben
\operatorname{Res}_{\lambda'=\lambda} 
\frac{d\lambda'}{\lambda'-\lambda} 
2\partial_\lambda\partial_{\lambda'}
\Omega(
\mathbf{f}^{\nu,+}_{\delta/l}(\lambda',z),
\mathbf{f}^{-\nu,-}_{L-1}(\lambda,z)) =
(e^{\pi\ii\nu}-e^{-\pi\ii\nu})\, 
(\nu-\nu^2)\, \lambda^{-2}. 
\een
Substituting the above formula into \eqref{BP-res-0} we get that the summand with $i=N-1$ is $0$. Similar computation shows that the other summand $i=N$ is also 0.
\qed

Using Propositions \ref{prop:coset-vir} and \ref{prop:coset-vir-0} we can give the following interpretation of $L(\lambda)$. Let us introduce the following bilinear pairing on $K(\PP^1_a,\CC)$: 
\beq\label{hp}
h_\nu (E,F)= q\langle E, F\rangle + 
q^{-1}\langle F,E\rangle ,
\eeq
where $q=e^{\pi\ii\nu}$. Note that $h_0(E,F)=(E|F)$ is the intersection pairing. 
\begin{proposition}\label{prop:vir_tp}
Let $\{\alpha_i\}_{i=1}^N$ be a basis of $K(\PP^1,\CC)$ and $\{\alpha^j\}_{j=1}^N$ be the dual basis with respect to the pairing \eqref{hp}: $h_\nu(\alpha_i,\alpha^j)=\delta_{ij}$. Then 
\ben
L(\lambda)= \frac{1}{2} \sum_{i=1}^N 
\operatorname{Res}_{\nu=0} \frac{d\nu}{\nu}
\Big(
\phi^\nu_{\alpha_i}(\lambda)\otimes 1- 1\otimes \phi^\nu_{\alpha_i}(\lambda)
\Big)_{(-1)} \Big(
\phi^{-\nu}_{\alpha^i}(\lambda)\otimes 1- 1\otimes \phi^{-\nu}_{\alpha^i}(\lambda)
\Big).
\een
\end{proposition}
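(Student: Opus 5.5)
The strategy is to reduce the statement to Propositions \ref{prop:coset-vir} and \ref{prop:coset-vir-0}. The first step is to check that the right-hand side does not depend on the choice of the basis $\{\alpha_i\}$. For a fixed $\nu$ near $0$ the pairing $h_\nu$ is non-degenerate, because $h_\nu(E,F)=\langle (q+q^{-1}\tau^{-1}\cdots)\,\cdot\,\rangle$ differs from the non-degenerate Euler pairing by an invertible operator for generic $\nu$. The expression
\[
\sum_i X^\nu_{\alpha_i}\,{}_{(-1)}\,X^{-\nu}_{\alpha^i},\qquad X^\nu_\alpha:=\phi^\nu_\alpha\otimes 1-1\otimes\phi^\nu_\alpha,
\]
is bilinear in $(\alpha,\beta)$, since $\alpha\mapsto\phi^\nu_\alpha$ is linear. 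It is therefore the contraction of a tensor against the inverse of $h_\nu$, which makes it basis-independent. The only subtlety is that the dual basis $\alpha^i=\alpha^i(\nu)$ depends meromorphically on $\nu$. The residue in $\nu$ should still be computed by expanding everything in $\nu$.

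The second step is to choose an adapted basis. I would take $\{\alpha_i\}_{i=1}^{N-2}$ to be the $\tau$-eigenbasis of $K_\perp$ corresponding to the $\phi_{j,p}$, and $\alpha_{N-1}=\delta/l$, $\alpha_N=L-1$ spanning $K_0$. I then need to show that $h_\nu$ is block diagonal with respect to $K_\perp\oplus K_0$. By Proposition \ref{prop:rad}(a) and Serre duality, $\langle F,E\rangle=-\langle \tau E,F\rangle$, so
\[
h_\nu(E,F)=\langle (q-q^{-1}\tau)E,F\rangle .
\]
Suppose $E$ is a $\tau$-eigenvector with eigenvalue $\epsilon$. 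Then $h_\nu(E,F)=(q-q^{-1}\epsilon)\langle E,F\rangle$. In the same way, $h_\nu(E,F)=(q-q^{-1}\epsilon'^{-1})\langle E,F\rangle$ when $F$ is an eigenvector with eigenvalue $\epsilon'$.

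For eigenvalues with $\epsilon'\ne\epsilon^{-1}$, I would check that $\langle E,F\rangle=0$, using the $\tau$-invariance of the Euler pairing. This gives the block decomposition. Within the $K_0$ block, $\epsilon=1$ and $h_\nu=(q-q^{-1})\langle\ ,\ \rangle$, so the $h_\nu$-dual basis is $\beta_i/(q-q^{-1})$, where $\beta_i$ is the Euler-dual basis. Within $K_\perp$, $h_\nu$-duality gives $\alpha^i=\beta_i/c_i(\nu)$, where $c_i(\nu)=q-q^{-1}\epsilon_i$, $\beta_i$ is the basis dual to $\alpha_i$ for the intersection pairing $(\,|\,)$, and $c_i(0)=1-\epsilon_i\neq0$. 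Here I use that $(E|F)=\langle(1-\tau)E,F\rangle$. I would rescale $\beta_i$ so that the normalization matches $(\alpha_i|\beta_i)=1$ at $\nu=0$.

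In the third step I would evaluate the $K_\perp$ block. The summand is holomorphic in $\nu$ near $0$: the coefficients of $\phi^\nu$ are entire in $\nu$, and $1/c_i(\nu)$ is holomorphic. Hence $\operatorname{Res}_{\nu=0}\frac{d\nu}{\nu}$ simply evaluates at $\nu=0$. At $\nu=0$ we have $\phi^0_\alpha=\phi_\alpha$ and $\alpha^i$ is the $(\,|\,)$-dual basis, so Proposition \ref{prop:coset-vir} gives exactly $L_\perp(\lambda)$. The $K_0$ block equals
\[
\tfrac12\sum_{i=N-1,N}\operatorname{Res}_{\nu=0}\frac{d\nu}{\nu(q-q^{-1})}\,X^\nu_{\alpha_i}\,{}_{(-1)}\,X^{-\nu}_{\beta_i},
\]
which is precisely the formula of Proposition \ref{prop:coset-vir-0}, and therefore equals $L_0(\lambda)$. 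Summing the two blocks gives $L=L_0+L_\perp$. Note that $(q-q^{-1})=2\pi\ii\nu+O(\nu^3)$, so this residue reproduces $\frac{1}{2\pi\ii}\frac{d}{d\nu}|_{0}$ as in the original definition, which is consistent.

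The main obstacle is the block-diagonality and the exact normalization of the $h_\nu$-dual basis on $K_\perp$. The rest is bookkeeping with the residue, since the $K_\perp$ summands have no pole and the $K_0$ summands have a simple pole matching Proposition \ref{prop:coset-vir-0}. The first thing I would try is to verify $\langle E,F\rangle=0$ for $\tau$-eigenvectors with $\epsilon\epsilon'\ne1$, using Serre duality twice: $\langle E,F\rangle=\langle\tau E,\tau F\rangle$. Given that, $h_\nu(E,F)$ for eigenvectors reduces to $(q-q^{-1}\epsilon)\langle E,F\rangle$, and comparing with $(E|F)=(1-\epsilon)\langle E,F\rangle$ fixes the scaling $\alpha^i=\frac{1-\epsilon_i}{q-q^{-1}\epsilon_i}\,\beta_i$, which tends to $\beta_i$ as $\nu\to0$.
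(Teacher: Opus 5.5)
Your proposal is correct and follows the paper's proof. Both split $K$ into the Euler-orthogonal blocks $K_\perp\oplus K_0$ and use the adapted basis. On $K_\perp$ the $h_\nu$-dual basis is holomorphic at $\nu=0$ with limit the $(\,|\,)$-dual basis, so Proposition~\ref{prop:coset-vir} gives $L_\perp$; on $K_0$ the dual basis is $\beta_i/(q-q^{-1})$, so Proposition~\ref{prop:coset-vir-0} gives $L_0$.

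Two small differences are refinements of steps the paper leaves implicit. Your explicit formula $\alpha^i=\tfrac{1-\epsilon_i}{q-q^{-1}\epsilon_i}\beta_i$ is the right one, not the first-written $\beta_i/c_i(\nu)$. Your basis-independence argument covers the fact that the paper verifies the identity only for one particular basis.
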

\proof
Let us express the RHS of the above formula in terms of non-twisted periods. To begin with, note that the decomposition $K(\PP^1,\CC)=K_\perp(\PP^1,\CC)\oplus K_0(\PP^1,\CC)$ is orthogonal with respect to the Euler pairing, that is, $\langle \alpha,\beta\rangle =-\langle \beta,\alpha\rangle= 0$ for $\alpha \in K_\perp(\PP^1,\CC)$ and $\beta\in K_0(\PP^1,\CC)$. This follows from Propositions \ref{prop:rad} and \ref{prop:sd}, that is, 
\ben
\langle \alpha,\beta\rangle = ((1-\tau) (1-\tau)^{-1}\alpha |\beta) = 0.
\een
Let $\{\alpha_i\}_{i=1}^{N-2}$ be a basis of $K_\perp(\PP^1_a,\CC)$. For example, let us choose 
$\{\alpha_i\}_{i=1}^{N-2}$ to be the eigen-basis of $\tau$ corresponding via the isomorphism $\Psi$ to 
$\{\phi_{i,p}\}_{1\leq i\leq 3, 1\leq p\leq a_i-1}$. 
Furthermore, let $\alpha_{N-1}:=\delta/l$ and $\alpha_N:=L-1$ be a basis of $K_0(\PP^1,\CC)$. 
The dual basis 
$\{\alpha^j(\nu)\}_{j=1}^{N}$ is uniquely determined by the equations
\ben
h_\nu(\alpha_i,\alpha^j(\nu))= \delta_{i,j},\quad 1\leq i,j\leq N,
\een
where we wrote $\alpha^j=\alpha^j(\nu)$ to emphasize that $\alpha^j$ depends on the parameter $\nu$. 
Since the restriction of $h_0$ to $K_\perp$ is non-degenerate, we get that $\alpha^j(\nu)$ $1\leq j\leq N-2$ depend analytically on $\nu$. In particular, $\beta_j:=\alpha^j(0)$ $(1\leq j\leq N-2)$ form a basis to $K_\perp(\PP^1,\CC)$ dual to $\{\alpha_i\}_{i=1}^{N-2}$ with respect to the intersection pairing $(\ |\ )$. The series $\phi_{E}^\nu(\lambda)$ is also analytic in $\nu$. Therefore, the first $N-2$ terms in the series $L(\lambda)$ have a simple pole at $\nu=0$. The corresponding residues are straightforward to compute. We get that the first $N-2$ terms of $L(\lambda)$ have the following contribution:
\ben
\frac{1}{2}\, \sum_{i=1}^{N-2} 
\Big(
\phi_{\alpha_i}(\lambda)\otimes 1- 1\otimes \phi_{\alpha_i}(\lambda)
\Big)_{(-1)} \Big(
\phi_{\beta_i}(\lambda)\otimes 1- 1\otimes \phi_{\beta_i}(\lambda)
\Big)=L_\perp(\lambda),
\een
where the equality follows from Proposition \ref{prop:coset-vir}. 
Furthermore, as we already pointed out in the introduction, the restriction of the Euler pairing to $K_0(\PP^1_a,\CC)$ is a symplectic form and $\{\delta/l, L-1\}$ is a Darboux basis: 
\ben
\langle \delta/l, L-1\rangle = -\langle L-1,\delta/l\rangle =1.
\een
We get that $\alpha^{N-1}(\nu):= \frac{1}{q-q^{-1}} \, (L-1)$, 
$\alpha^N(\nu):= -\frac{1}{q-q^{-1}} \,\delta/l$ is a basis of $K_0(\PP^1,\CC)$ dual to 
$\alpha_{N-1}=\delta/l$, $\alpha_N=L-1$. The remaining two terms can be written as follows:
\ben
\frac{1}{2}
\sum_{i=N-1,N}
\operatorname{Res}_{\nu=0} \frac{d\nu}{\nu(q-q^{-1})}
\Big(
\phi^\nu_{\alpha_i}(\lambda)\otimes 1- 1\otimes 
\phi^\nu_{\alpha_i}(\lambda)\Big)_{(-1)}
\Big(
\phi^{-\nu}_{\beta_i}(\lambda)\otimes 1- 1\otimes 
\phi^{-\nu}_{\beta_i}(\lambda)\Big),
\een
where $\beta_{N-1}:= L-1$ and $\beta_N:= -\delta/l$. Note 
$\{\alpha_i\}_{i=N-1}^N$ and 
$\{\beta_i\}_{i=N-1}^N$ are dual bases of $K_0(\PP^1,\CC)$ 
with respect to the Euler pairing.  According to Proposition \ref{prop:coset-vir-0} the above formula coincides with $L_0(\lambda)$. 
\qed

\subsection{Virasoro operators for the ancestors}
Let us introduce a $\nu$-deformation of the currents $\phi_\alpha(t,\lambda)=\partial_\lambda \widehat{\mathbf{f}}_{\alpha}(t,\lambda).$ 
Put
\ben
\mathbf{f}^\nu_\alpha(\lambda,z) = 
\sum_{m\in \ZZ} I^{(m+\nu)}_\alpha(\lambda)
(-z)^{m},
\een
$\mathbf{f}^\nu_\alpha(t,\lambda,z):=S(t,z)\mathbf{f}^\nu_\alpha(\lambda,z)$, and
$\phi^\nu_\alpha(t,\lambda):=\partial_\lambda \widehat{\mathbf{f}}^\nu_{\alpha}(t,\lambda).$ 
\begin{proposition}\label{prop:anc_cv}
Let $\{\alpha_i\}_{i=1}^N$ be a basis of $K(\PP^1_a,\CC)$ and $\{\alpha^j\}_{j=1}^N$ be the dual basis with respect to the pairing \eqref{hp}. 
The following formula holds:
\ben
L(t,\lambda)= \frac{1}{2}\sum_{i=1}^{N}
\operatorname{Res}_{\nu=0} \frac{d\nu}{\nu}\, 
\Big(
\phi^\nu_{\alpha_i}(t,\lambda)\otimes 1 - 
1 \otimes \phi^\nu_{\alpha_i}(t,\lambda)\Big)_{(-1)}
\Big(
\phi^{-\nu}_{\alpha^i}(t,\lambda)\otimes 1 - 
1 \otimes \phi^{-\nu}_{\alpha^i}(t,\lambda)\Big).
\een
\end{proposition}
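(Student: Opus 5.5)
The plan is to start from Proposition \ref{prop:vir_tp}, which expresses $L(\lambda)$ as a residue in $\nu$ of Borcherds $(-1)$-products of the currents $\phi^\nu_{\alpha_i}(\lambda)$ and $\phi^{-\nu}_{\alpha^i}(\lambda)$. We then conjugate by $\widehat{S}(t)^{-1}\otimes\widehat{S}(t)^{-1}$ term by term. Since $L(t,\lambda)$ is defined by \eqref{t-vir}, it suffices to establish two facts. First, conjugation by $\widehat{S}(t)^{-1}$ sends each current $\phi^\nu_\alpha(\lambda)$ to $\phi^\nu_\alpha(t,\lambda)$. Second, conjugation commutes with the $(-1)$-product and with the residue in $\nu$.

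\emph{Step 1: conjugating the currents.} The current $\phi^\nu_\alpha(\lambda)=\partial_\lambda\widehat{\mathbf f}^\nu_\alpha(\lambda)$ is the quantization of a linear Hamiltonian. From \eqref{qhh}, exponentiated, we get $\widehat{\mathbf f}\,\widehat S^{-1}=\widehat S^{-1}\,(S\mathbf f)\sphat$ for any $\mathbf f$; this is the linearized form of \eqref{VOS}, with no scalar factor because $\mathbf f$ is linear. Applying it to $\mathbf f=\partial_\lambda\mathbf f^\nu_\alpha(\lambda,z)$ gives
\[
\phi^\nu_\alpha(\lambda)\,\widehat S(t)^{-1}=\widehat S(t)^{-1}\,\phi^\nu_\alpha(t,\lambda).
\]
The same holds for the tensor combinations $\phi\otimes1-1\otimes\phi$. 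The infinite sums are handled in the $\lambda$-adic topology, as in the remark following \eqref{VOR}.

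\emph{Step 2: compatibility with the $(-1)$-product.} By \eqref{n-product}, $A(\lambda)_{(-1)}B(\lambda)$ is the coefficient of $(\lambda'-\lambda)^0$ in the Laurent expansion of the composition $A(\lambda')B(\lambda)$ at $\lambda'=\lambda$. Conjugation by a fixed invertible operator turns this composition into the composition of the conjugated fields, $\widehat S^{-1}A(t,\lambda')B(t,\lambda)$. The step I expect to be the main obstacle is showing that this commutes with expansion at $\lambda'=\lambda$. The conjugated fields must still be local, and their product must have the same singular part, so that the residue is computed in the same way on both sides. Locality follows from the analogue of \eqref{phase-da} for currents. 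The commutator $[\phi^{\nu,+}_\alpha(t,\lambda'),\phi^{-\nu,-}_\beta(t,\lambda)]$ differs from the untwisted one by $\partial_{\lambda'}\partial_\lambda W(\mathbf f^{\nu,+}_\alpha(\lambda'),\mathbf f^{-\nu,+}_\beta(\lambda))$. This term is a formal series that is regular at $\lambda'=\lambda$, since $W$ involves only the nonnegative parts in \eqref{W_form_1}. It therefore shifts the normally ordered part by a regular function, and the singular part is unchanged.

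So the $(-1)$-product of the conjugated fields equals the conjugate of the $(-1)$-product. Concretely, it equals the normally ordered product plus $2\operatorname{Res}\frac{d\lambda'}{\lambda'-\lambda}$ applied to the full $t$-dependent propagator. Because the $W$-correction is regular, it contributes exactly its value at $\lambda'=\lambda$, and both sides agree on this term. Finally, the residue in $\nu$ commutes with conjugation, since $\widehat S(t)^{-1}$ does not depend on $\nu$ and all coefficients are analytic in $\nu$ near $0$ apart from the explicit factor $1/\nu$ together with the dual basis $\alpha^i(\nu)$. Combining these with Proposition \ref{prop:vir_tp} and \eqref{t-vir} yields the stated formula for $L(t,\lambda)$.
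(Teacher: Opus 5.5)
Your proposal is correct and follows the paper's route. Like the paper, you start from Proposition~\ref{prop:vir_tp}, conjugate the currents using the infinitesimal form of Proposition~\ref{prop:S-conj}, and use that $\widehat{S}(t)^{-1}$ is compatible with the Borcherds $(-1)$-product and with the residue in $\nu$. The paper only asserts that compatibility; your Step~2 justifies it (the $t$-dependent propagator differs from the $t=0$ one by the phase series $W$, which is regular on the diagonal), and this is the same computation the paper carries out right afterwards to obtain \eqref{cos-vir-perp}.
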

\proof
By definition, 
\ben
L(\lambda) \, (\widehat{S}^{-1}\otimes \widehat{S}^{-1}) = 
(\widehat{S}^{-1}\otimes \widehat{S}^{-1}) \, 
L(t,\lambda).
\een
The infinitesimal version of Proposition \ref{prop:S-conj} yields
\ben
\phi^\nu_\alpha(\lambda) \, \widehat{S}^{-1}(t) = 
\widehat{S}^{-1}(t) \, \phi^\nu_\alpha(t,\lambda).
\een
The operator $\widehat{S}^{-1}$ is compatible with the Borcherd's products:
\ben
\phi^\nu_\alpha(\lambda)_{(n)}\phi^{-\nu}_\beta(\lambda) \, 
\widehat{S}^{-1}(t) = 
\widehat{S}^{-1}(t) \, 
\phi^\nu_\alpha(t,\lambda)_{(n)}\phi^{-\nu}_\beta(t,\lambda). 
\een
The formula follows from Proposition \ref{prop:vir_tp}.
\qed

\medskip

Finally, using the above proposition, let us derive a formula for the Virasoro operator $L(t,\lambda)$ similar to the formula that we used to define $L(\lambda)$. 
Suppose that $\{\alpha_i\}_{i=1}^{N-2}$ is a basis of $K_\perp(\PP^1_a,\CC)$ and let $\alpha_{N-1}:=\delta/l$, $\alpha_N:=L-1$ be a basis of $K_0(\PP^1_a,\CC)$. 
Let us split $L(t,\lambda)=L_\perp(t,\lambda)+L_0(t,\lambda)$ where $L_\perp(t,\lambda)$ (resp. $L_0(t,\lambda)$) is obtained from the formula for $L(t,\lambda)$ in Proposition \ref{prop:anc_cv} by truncating from the sum the last $2$ (resp. the first $N-2$) terms.
Recalling the definition of the Borcherd's $(-1)$-product we get 
\begin{align*}
    L_\perp(t,\lambda) = & 
    \frac{1}{2}\sum_{i=1}^{N-2}
:\Big(
\phi_{\alpha_i}(t,\lambda)\otimes 1 - 
1 \otimes \phi_{\alpha_i}(t,\lambda)\Big)\,
\Big(
\phi_{\beta_i}(t,\lambda)\otimes 1 - 
1 \otimes \phi_{\beta_i}(t,\lambda)\Big): + \\
&
+ 
\operatorname{Res}_{\lambda'=\lambda} 
\frac{d\lambda'}{\lambda'-\lambda}\, 
\sum_{i=1}^{N-2}
\partial_{\lambda'}\partial_{\lambda}\, 
\Omega(
\mathbf{f}_{\alpha_i}^+(t,\lambda',z),
\mathbf{f}_{\beta_i}^-(t,\lambda,z)),  
\end{align*}
where $\{\beta_i\}_{i=1}^{N-2}$ is the basis of $K_\perp(\PP^1_a,\CC)$ dual to $\{\alpha_i\}_{i=1}^{N-2}$ with respect to the intersection pairing.
Note that formula \eqref{phase-da} can be written equivalently as
\ben
\Omega(
\mathbf{f}_E^+(t,\lambda,z),
\mathbf{f}_F^-(t,\mu,z)) = 
\Omega(
\mathbf{f}_E^+(\lambda,z),
\mathbf{f}_F^-(\mu,z)) + 
W(\mathbf{f}_E^+(\lambda,z),
\mathbf{f}_F^+(\mu,z)).
\een
Let us recall that in the proof of Proposition \ref{prop:coset-vir} we derived the following formula (see formula \eqref{cvir-res}):
\ben
\operatorname{Res}_{\lambda'=\lambda} 
\frac{d\lambda'}{\lambda'-\lambda}\, 
\sum_{i=1}^{N-2}
\partial_{\lambda'}\partial_{\lambda}\, 
\Omega(
\mathbf{f}_{\alpha_i}^+(\lambda',z),
\mathbf{f}_{\beta_i}^-(\lambda,z)) = 
\frac{1}{2}\, \operatorname{tr}\Big(
\frac{1}{4}+\theta\,\theta^T\Big)\lambda^{-2}.
\een
Since the quadratic form  $W(\mathbf{f}_E^+(\lambda,z),
\mathbf{f}_F^-(\mu,z))$ does not have a pole at $\lambda=\mu$, the formula for $L_\perp(t,\lambda)$ takes the following form:
\begin{align}
\notag
    L_\perp(t,\lambda) = & 
    \frac{1}{2}\sum_{i=1}^{N-2}
:\Big(
\phi_{\alpha_i}(t,\lambda)\otimes 1 - 
1 \otimes \phi_{\alpha_i}(t,\lambda)\Big)\,
\Big(
\phi_{\beta_i}(t,\lambda)\otimes 1 - 
1 \otimes \phi_{\beta_i}(t,\lambda)\Big): + \\
\label{cos-vir-perp}
&
+ 
\frac{1}{2}\, \operatorname{tr}\Big(
\frac{1}{4}+\theta\,\theta^T\Big)\lambda^{-2} + 
\sum_{i=1}^{N-2} W(
\partial_\lambda
\mathbf{f}_{\alpha_i}^+(\lambda,z), 
\partial_\lambda
\mathbf{f}_{\beta_i}^+(\lambda,z)).
\end{align}
Similar computation yields
\beq\label{cos-vir-0}
L_0(t,\lambda) = \operatorname{Res}_{\nu=0}
\frac{d\nu}{\nu(e^{\pi\ii\nu}-e^{-\pi\ii\nu})} \, 
L_0^\nu(t,\lambda)=
\frac{1}{2\pi\ii}\, 
\left.\frac{d}{d\nu} \, L_0^\nu(t,\lambda)\right|_{\nu=0}\ ,
\eeq
where 
\begin{align}
\label{L0nu}
L_0^\nu(t,\lambda) = & 
\frac{1}{2}\sum_{i=N-1,N}
:\Big(
\phi^\nu_{\alpha_i}(t,\lambda)\otimes 1 - 
1 \otimes \phi^\nu_{\alpha_i}(t,\lambda)\Big)\,
\Big(
\phi^{-\nu}_{\beta_i}(t,\lambda)\otimes 1 - 
1 \otimes \phi^{-\nu}_{\beta_i}(t,\lambda)\Big):
\end{align}
where $\{\beta_{N-1}:=L-1,\beta_{N}:=-\delta/l\}$ is the basis of $K_0(\PP^1_a,\CC)$ dual to $\{\alpha_{N-1}=\delta/l,\alpha_N=L-1\}$ with respect to the Euler pairing. 
Note that if we follow the derivation of \eqref{cos-vir-perp}, then we should have an extra term in formula \eqref{L0nu} of the form 
\ben
\sum_{i=N-1,N-2} W(
\partial_\lambda
\mathbf{f}_{\alpha_i}^{\nu,+}(\lambda,z), 
\partial_\lambda
\mathbf{f}_{\beta_i}^{-\nu,+}(\lambda,z)).
\een
However, this term does not contribute to \eqref{cos-vir-0}. Indeed, differentiating with respect to $\nu$ and setting $\nu=0$ yields an expression that involves either the series
$\partial_\lambda\mathbf{f}_{\alpha_i}^+(\lambda,z)$ or the series
$\partial_\lambda\mathbf{f}_{\beta_i}^+(\lambda,z)$ and they both vanish for $i=N-1,N$.

\section{Analyticity of the ancestor HQEs}\label{sec:aa_hqe}
Let us introduce the following multi-index notation. Suppose that 
\ben
m: \ZZ_{\geq 0}\times \mathcal{B}\to \ZZ_{\geq 0},\quad (k,a)\mapsto m_{k,a}
\een
is a function with finite support, that is, $m_{k,a}\neq 0$ only for finitely many pairs $(k,a)$. For brevity, we will simply say that $m$ is a {\em finitely supported function} and by this we always mean that $m$ has the above form. Let us introduce the following monomials: 
\ben
(\mathbf{q}(z)+z)^m := 
\prod_{k=0}^\infty \prod_{a\in \mathcal{B}} 
(q_{k,a}+\delta_{(k,a),(1,0)})^{m_{k,a}},
\een
where $0\in \mathcal{B}$ denotes the pair $(0,0)$, that is, $\phi_0=\phi_{0,0}=1$. Let us also fix the following notation:
\ben
|m|=\sum_{k,a} m_{k,a},\quad 
m!= \prod_{k,a} m_{k,a}!\, .
\een
The total ancestor potential has the following form:
\beq\label{tap}
\A_t(\hbar,\mathbf{q})=
\sum_{g\in \ZZ} \sum_m 
a^{(g)}_m(t) \, \hbar^{g-1} \, \frac{(\mathbf{q}(z)+z)^m}{m!}\, ,
\eeq
where the coefficients $a^{(g)}_m$ are holomorphic functions in $t\in M$ and the second sum is over all finitely supported functions $m:\ZZ_{\geq 0}\times \mathcal{B}\to \ZZ_{\geq 0}$.
The series \eqref{tap} has the following property: if the coefficient $a^{(g)}_m\neq 0$, then the following inequality holds:
\ben
\sum_{k=0}^\infty \sum_{a\in \mathcal{B}} k m_{k,a} \leq 3g-3 + |m|.
\een
The inequality is easy to prove by degree reasoning: the LHS corresponds to the total degree of the $\psi$-classes in the definition of $a^{(g)}_m$ and the RHS is the dimension of the moduli space over which the $\psi$-classes must be integrated.  
In general, series satisfying such property are said to be {\em tame} (see \cite{Giv2003}). 

\subsection{Ancestor polynomials} \label{sec:apoly}
The total ancestor potential of any target manifold has the following property. Suppose that $\Gamma^{\pm}=:e^{\pm\widehat{\mathbf{f}}}:$ is an arbitrary vertex operator where $\mathbf{f}\in H[\![z,z^{-1}]\!]$. The action of the bilinear operator $\Gamma^+\otimes \Gamma^-$  on $\A_t\otimes \A_t$ is convergent in the formal topology of the ancestor Fock space and the following formula holds:
\beq\label{A-polynomial}
\Gamma^+\otimes \Gamma^-\, 
(\A_t\otimes \A_t) = 
\sum_{g\in \tfrac{1}{2}\ZZ}\, 
\sum_{m',m''} \, A^{(g)}_{m',m''}(\mathbf{f}) \,
\hbar^{g-1}\,
\frac{(\mathbf{q}'(z)+z)^{m'}}{m'!}\,
\frac{(\mathbf{q}''(z)+z)^{m''}}{m''!}
\eeq
where the coefficients $A^{(g)}_{m',m''}(\mathbf{f})$ are polynomials in $\mathbf{f}$. Here the second sum is over all finitely supported functions $m'$ and $m''$. This observation is due to Givental -- see \cite{Giv2001}. We will refer to the polynomials $A^{(g)}_{m',m''}$ as the {\em ancestor polynomials}. There is an additional new feature in the case of the ancestor HQEs of $\PP^1_a$, that is, we have a substitution that eliminates all variables $q'_{k,0,0}$ ($k\geq 0$). Our goal is to prove that up to a certain simple exponential factor the above polynomial property of the total ancestor potential is preserved. 
\begin{lemma}\label{le:suba}
    The substitution in the ancestor HQEs of $\PP^1_a$ is equivalent to the following system of identities:
    \begin{align*}
    [S^T(t,-z)(\mathbf{q}'(z)-\mathbf{q}''(z))]_{0,0,0} & =m\sqrt{\hbar}, \\
    [S^T(t,-z)(\mathbf{q}'(z)-\mathbf{q}''(z))]_{k,0,0} & =0,\quad k>0,
    \end{align*}
    where $[f]_{k,0,0}$ for $f\in H[\![z]\!]$ denotes the coefficient in front of $\phi_{0,0} z^k$.  
\end{lemma}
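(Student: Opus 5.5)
The substitution in Definition \ref{def:aff_HQE_anc} reads $\widehat{\mathbf{f}}_{L-1}(t,\lambda)(\mathbf{q}')-\widehat{\mathbf{f}}_{L-1}(t,\lambda)(\mathbf{q}'')=2\pi\ii\, m$. The plan is to write this as a symplectic pairing, move $S(t,z)$ across it using the symplectic condition, and then separate the result into powers of $\lambda$.

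First, by \eqref{qlh} the quantized linear Hamiltonian acts on the $\mathbf{q}$-variables as
\ben
\widehat{\mathbf{f}}_{L-1}(t,\lambda)(\mathbf{q}')-\widehat{\mathbf{f}}_{L-1}(t,\lambda)(\mathbf{q}'') = \frac{1}{\sqrt{\hbar}}\,\Omega\big(\mathbf{f}_{L-1}(t,\lambda,z),\mathbf{q}'(z)-\mathbf{q}''(z)\big).
\een
This is the same identity used in the proof of Proposition \ref{prop:HQE_a}. Only the negative-power part of $\mathbf{f}_{L-1}(t,\lambda,z)$ pairs with $\mathbf{q}\in H[\![z]\!]$, and the dilaton shift $z$ cancels in the difference $\mathbf{q}'-\mathbf{q}''$.

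Next, since $\mathbf{f}_{L-1}(t,\lambda,z)=S(t,z)\mathbf{f}_{L-1}(\lambda,z)$, I move $S$ to the other side of $\Omega$. The symplectic condition $S(t,z)S^T(t,-z)=1$ gives
\ben
\Omega(S(t,z)f,g)=\operatorname{Res}_{z=0}\big(S(t,-z)f(-z),g(z)\big)dz=\Omega\big(f,S^T(t,-z)g\big).
\een
The subtle point is that $S^T(t,-z)g$ for $g\in H[\![z]\!]$ contains negative powers of $z$. Those terms pair to zero with $\mathbf{f}_{L-1}(\lambda,z)$, because $\mathbf{f}_{L-1}(\lambda,z)$ has only negative powers of $z$. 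I would also check that, degree by degree in $\lambda$, only finitely many $S_k$ contribute, so the formal manipulation is legitimate in the $\lambda$-adic sense.

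Now I use the explicit form
\ben
\mathbf{f}_{L-1}(\lambda,z)=2\pi\ii\sum_{k\ge 0}\frac{\lambda^k}{k!}\,P\,(-z)^{-k-1}.
\een
With $G(z):=S^T(t,-z)(\mathbf{q}'-\mathbf{q}'')$, we have $\operatorname{Res}_{z=0}\big(P z^{-k-1},\phi_a z^j\big)dz=\delta_{jk}(P,\phi_a)$. Since $(P,\phi_{0,0})=1$ and $P$ pairs to zero with every other basis vector, the $z$-residue picks out the $\phi_{0,0}z^k$ coefficient. Up to signs $(\pm1)^{k}$ from $(-z)^{-k-1}$ versus $z^{-k-1}$ under $f(-z)$, which I expect to cancel, this gives
\ben
\frac{2\pi\ii}{\sqrt{\hbar}}\sum_{k\ge 0}\frac{\lambda^k}{k!}\,[G]_{k,0,0}.
\een

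Finally, the condition says that this series in $\lambda$ equals the constant $2\pi\ii\, m$ identically. Comparing coefficients of $\lambda^k$ gives $[G]_{0,0,0}=m\sqrt{\hbar}$ and $[G]_{k,0,0}=0$ for $k>0$, which is the stated system.

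The main obstacle is bookkeeping rather than anything conceptual. The sign conventions in $\Omega$ and in $(-z)^{-k-1}$ must be tracked carefully, and the truncation argument discarding the negative-$z$ part of $S^T(-z)g$ must be justified.
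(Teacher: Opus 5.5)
Your proposal is correct and is essentially the paper's proof: you rewrite the substitution as $\Omega(\mathbf{f}_{L-1}(t,\lambda,z),\mathbf{q}'-\mathbf{q}'')=2\pi\ii\, m\sqrt{\hbar}$, move $S(t,z)$ across as $S^T(t,-z)$, use the explicit form of $\mathbf{f}_{L-1}(\lambda,z)$ to pick out the $\phi_{0,0}z^k$ coefficients, and compare coefficients of $\lambda^k$. No signs actually appear, since $f(-z)$ turns $(-z)^{-k-1}$ into $z^{-k-1}$. One small correction to your side remark: the coefficient of $\lambda^k$ involves all the $S_l$, each paired with a distinct variable $q_{k+l}$, so the manipulation is legitimate in the formal $\mathbf{q}$-adic sense, not because only finitely many $S_l$ contribute.
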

\proof
The substitution at question is by definition 
\beq\label{subst}
2\pi\ii m \sqrt{\hbar}=
\Omega(\mathbf{f}_{L-1}(t,\lambda,z), \mathbf{q}'(z)-\mathbf{q}''(z)) =
\Omega(\mathbf{f}_{L-1}(\lambda,z), S^T(t,-z) (\mathbf{q}'(z)-\mathbf{q}''(z)) )
\eeq
where we used that $\mathbf{f}_{L-1}(t,\lambda,z)=S(t,z)\mathbf{f}_{L-1}(\lambda,z)$ and that $S(t,z)$ is a symplectic transformation. Since 
\ben
\mathbf{f}_{L-1}(\lambda,z) = 2\pi\ii\, \sum_{k=0}^\infty \frac{\lambda^k}{k!} P(-z)^{-k-1},
\een
the identity \eqref{subst} is equivalent to 
\ben
2\pi\ii m \sqrt{\hbar}= 2\pi\ii \, 
\sum_{k=0}^\infty 
\frac{\lambda^k}{k!} [S^T(t,-z) (\mathbf{q}'(z)-\mathbf{q}''(z))]_{k,0,0}. 
\een
The statement of the lemma follows easily by comparing the coefficients in front of $\lambda$.
\qed

The formulas for the substitution from Lemma \ref{le:suba} has the following form: $q'_{k,0}= q_{k,0}''+m\sqrt{\hbar}\, \delta_{k,0} + f_{k}(\mathbf{q}',\mathbf{q}'')$ where
\beq\label{suba_0}
f_{k}(\mathbf{q}',\mathbf{q}'') = 
\sum_{l=1}^\infty  (-1)^{l+1} [S_l^T(t)(q'_{k+l}-q''_{k+l})]_{0,0},
\eeq
where $[v]_{0,0}:=(v,\phi^{0,0})$ for $v\in H$ denotes the $(0,0)$-coordinate of $v$. The RHS of formula \eqref{suba_0} might still involve variables $q'_{k+l,0}$ with $l>0$. However, since 
$q'_{k+l,0}=q_{k+l,0}''+f_{k+l}(\mathbf{q}',\mathbf{q}'')$, 
by iterating the substitution \eqref{suba_0}, we get that 
\beq\label{suba}
f_{k}(\mathbf{q}',\mathbf{q}'') = 
\sum_{l>k}
\sum_{a\in \mathcal{B}}
\Big(
f'_{k,l,a}(t) q'_{l,a}+ f''_{k,l,a}(t)q''_{l,a}
\Big),
\eeq
where the coefficients $f'_{k,l,a}(t)$ and $f''_{k,l,a}(t)$ are holomorphic functions in $t$ and $f'_{k,l,0}=0$.

\begin{proposition}\label{prop:wave}
    Let $\Gamma^{\pm}=:e^{\pm \widehat{f}}:$ be a vertex operator where $\mathbf{f}(z)=\sum_{n\in \ZZ} I^{(n)} \, (-z)^n$ and the coefficients $I^{(n)}\in H$ are {\em formal vector variables}. The formal power series 
    \ben
    \left.
    \Gamma^+\otimes \Gamma^-\, (\mathcal{A}_t\otimes \mathcal{A}_t)
    \right|_{q'_{k,0,0}-q_{k,0,0}''=m\sqrt{\hbar}\, \delta_{k,0} + f_{k}(\mathbf{q}',\mathbf{q}'')}
    \een
    has the following form:
    \beq\label{A-poly}
    e^{m(I^{(-1)},\phi_0)}\,
    \sum_{g\in \tfrac{1}{2}\ZZ}\, \sum_{m',m''} 
    A^{(g)}_{m',m''}(\mathbf{f}) \, \hbar^{g-1} \, 
    \frac{(\mathbf{q}'(z)+z)^{m'}}{(m')!} \, 
    \frac{(\mathbf{q}''(z)+z)^{m''}}{(m'')!}
    \eeq
    where the second sum is over all functions $m',m'':\ZZ_{\geq 0}\times \mathcal{B}\to \ZZ_{\geq 0}$ with finite support such that $m'_{k,0,0}=0$ and the coefficients $A_{m',m''}^{(g)}(\mathbf{f})$ are polynomials in the vector variables 
    $\{I^{(n)}\}_{n\in \ZZ}$.  
\end{proposition}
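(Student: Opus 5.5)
The plan is to compute the action of $\Gamma^+\otimes\Gamma^-$ explicitly, as in Givental's original argument, and then to follow what the substitution does to each factor. Write $\Gamma^{\pm}=e^{\pm\widehat{\mathbf f}^-}e^{\pm\widehat{\mathbf f}^+}$. By the quantization rules \eqref{qlh}, the annihilation part $e^{\pm\widehat{\mathbf f}^+}$ acts as a translation of $\mathbf q$ by $\pm\sqrt\hbar\,\mathbf f^+$, up to the sign conventions of \eqref{qlh}. The creation part multiplies by an exponential of a linear form in $\mathbf q/\sqrt\hbar$ with coefficients $(I^{(-k-1)},\phi_a)$. Hence
\ben
\Gamma^+\otimes\Gamma^-(\A_t\otimes\A_t)=
e^{\sum_{k,a}(I^{(-k-1)},\phi_a)(q'_{k,a}-q''_{k,a})/\sqrt\hbar}\,
\A_t(\mathbf q'+\sqrt\hbar\,\mathbf f^+)\,\A_t(\mathbf q''-\sqrt\hbar\,\mathbf f^+).
\een

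Following \cite{Giv2003}, I would pass to the variables $\mathbf x=(\mathbf q'+\mathbf q'')/2$ and $\mathbf y=(\mathbf q'-\mathbf q'')/(2\sqrt\hbar)$. In these variables the exponential prefactor becomes $e^{2\sum(I^{(-k-1)},\phi_a)y_{k,a}}$, which contains no negative powers of $\hbar$. The product of the two potentials becomes $\A_t(\mathbf x+\sqrt\hbar(\mathbf y+\mathbf f^+))\,\A_t(\mathbf x-\sqrt\hbar(\mathbf y+\mathbf f^+))$. The tameness inequality for the coefficients $a^{(g)}_m$ in \eqref{tap} then implies the following. For a fixed power of $\hbar$ and a fixed monomial in $\mathbf x+z$ and $\mathbf y$, only finitely many terms of the Taylor expansion in $\sqrt\hbar(\mathbf y+\mathbf f^+)$ contribute, so the coefficient is a polynomial in the $I^{(k)}$, $k\geq0$. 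This is the statement \eqref{A-polynomial} recalled above, and I will take it as the starting point.

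The new part is the substitution. By Lemma \ref{le:suba}, the constraint is linear in $\mathbf q'-\mathbf q''=2\sqrt\hbar\,\mathbf y$. It reads $y_{0,0,0}=m/2+\tfrac12\sum_{l\geq1}(-1)^{l+1}[S_l^T(t)\,2y_l]_{0,0}$ and $y_{k,0,0}=\tfrac12\sum_{l\geq1}(-1)^{l+1}[S_l^T(t)\,2y_{k+l}]_{0,0}$ for $k>0$. Iterating as in \eqref{suba}, each $y_{k,0,0}$ becomes $\tfrac m2\delta_{k,0}$ plus a linear combination, with holomorphic coefficients in $t$, of the $y_{l,a}$ with $l>k$ and $a\neq(0,0)$. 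Crucially, these relations contain no $\hbar$.

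Substituting into the prefactor splits off exactly $e^{2(I^{(-1)},\phi_0)\cdot m/2}=e^{m(I^{(-1)},\phi_0)}$. What remains is the exponential of a linear form in the free $y$'s whose coefficients are linear in the $I^{(-k-1)}$. Substituting into the arguments of $\A_t$ shifts the $(0,0)$-components by $\hbar^{1/2}$ times $m/2$ plus linear expressions in higher $y$'s. This is harmless because $\A_t$ is polynomial in $t_{0,0}$, so $q_{0,0,0}$ enters polynomially. Finally I would return to the variables $q'_{k,a},q''_{k,a}$ with $a\neq(0,0)$ together with $q''_{k,0,0}$, so that the monomials satisfy $m'_{k,0,0}=0$ and $\hbar$ appears in half-integer powers.

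The main obstacle is proving that each coefficient $A^{(g)}_{m',m''}$ remains a finite polynomial in the $I^{(n)}$ after the substitution. Two things threaten this. First, the iterated substitution \eqref{suba} involves infinitely many $y_{l,a}$. Second, the exponential prefactor involves all $I^{(-k-1)}$. I would control both with a grading argument: assign $y_{k,a}$ and $x_{k,a}$ the weight $k$ (plus the usual dilaton-shift bookkeeping) and use tameness, $\sum k\,m_{k,a}\leq 3g-3+|m|$. The substitution strictly raises the $k$-index of $y_{k,0,0}$. So for a fixed target monomial, fixed $\hbar$-power and fixed genus bound, only finitely many terms from the expansion of $\A_t\otimes\A_t$ and finitely many terms from the exponential can contribute. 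Each contribution is a product of finitely many $I^{(n)}$, which gives polynomiality. I would first verify this finiteness carefully for the terms coming from the $\mathbf y$-shift inside $\A_t$, since that is where tameness has to be combined with the infinite triangular substitution.
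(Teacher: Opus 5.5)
Your outline is sound, and it follows a different, cleaner route than the paper. The paper first expands $\widetilde{\Gamma}^+\A_t$ as a single series \eqref{tvoa}, in which the creation monomials (weighted by $\hbar^{-|m^-|/2}$) are mixed with the ancestor monomials. It then multiplies the two factors and only afterwards substitutes. In that series the exponent of $\hbar$ is bounded below only in terms of the degree of the monomial, and the substitution removes factors $q'_{0,0,0}$ from the monomial. So the paper needs the estimate \eqref{inf-G}, obtained from the string equation and tameness, to show that only finitely many $q'_{0,0,0}$ can be traded for powers of $m\sqrt{\hbar}$.

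Your variables $\mathbf{y}=(\mathbf{q}'-\mathbf{q}'')/2\sqrt{\hbar}$ make both the creation prefactor and the constraint $\hbar$-free. Hence $e^{m(I^{(-1)},\phi_0)}$ splits off exactly. The leftover constant $\tfrac{m}{2}$ enters $\A_t$ only inside $\sqrt{\hbar}\,\mathbf{w}$, on the same footing as the constant components of $\mathbf{f}^+$. Since $\A_t$ has only powers $\hbar^{g-1}$ with $g\geq 0$, a fixed power of $\hbar$ allows only boundedly many $\mathbf{w}$-factors, and you never need the string equation.

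Two points need fixing.

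First, your stated reason for harmlessness, that $\A_t$ is polynomial in $t_{0,0}$, is beside the point: $t$ is the base point of the ancestor potential, not the variable $q_0$. The conclusion that $q_{0,0,0}$ enters polynomially is true, but it follows from the string equation. In your setting the shift is harmless simply because $g\geq 0$, as explained above.

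Second, the obstacles you single out are the easy ones: a fixed target monomial involves only finitely many variables, hence only finitely many $y_{l,a}$ and $I^{(-k-1)}$. The step that actually needs an argument is your last one, the return to $(\mathbf{q}',\mathbf{q}'')$. There every free $y_{k,a}$ brings back a factor $\hbar^{-1/2}$, and $x_{0,0,0}=q''_{0,0,0}+\tfrac{m}{2}\sqrt{\hbar}+\cdots$ reintroduces the shift you removed. This conversion is finite because, before it, all exponents of $\hbar$ are $\geq -2$. Consequently, if a target monomial of degree $d$ with $\hbar^{G-1}$ absorbs $s$ factors $\tfrac{m}{2}\sqrt{\hbar}$ coming from $x_{0,0,0}$, then $s\leq 2G+d+2$, and only finitely many source terms contribute. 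This two-line bound is what replaces \eqref{inf-G} in your approach, and you should state it explicitly.
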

\proof
Let us recall the expansion \eqref{tap}. Then we have
\ben
\Gamma^+\, \A_t(\hbar,\mathbf{q})= 
e^{\sum_{k=0}^\infty (I^{(-k-1)},q_k)/\sqrt{\hbar}} \, 
\sum_{g\in \ZZ} \sum_m 
a^{(g)}_m \, \hbar^{g-1} \, \frac{(\mathbf{q}(z)-\sqrt{\hbar}\mathbf{f}^+(z)+z)^m}{m!}.
\een
Let us introduce the notation $(\mathbf{f}^-(z))^{m}$ via the following identity:
\beq\label{vob}
e^{\sum_{k=0}^\infty (I^{(-k-1)},q_k+\delta_{k,1})/\sqrt{\hbar}} = 
\sum_{m} \hbar^{-|m|/2}\, (\mathbf{f}^-(z))^{m}\, 
\frac{(\mathbf{q}(z)+z)^m}{m!},
\eeq
where $|m|:=\sum_{k,a} m_{k,a}$ and $q_k+\delta_{k,1}$ is the dilaton shift. In other words, 
\ben
(\mathbf{f}^-(z))^{m} :=
\prod_{k=0}^\infty \prod_{a\in \mathcal{B}} 
(I^{(-k-1)},\phi_a)^{m_{k,a}}.
\een
This is a monomial in the variables $I=(I^{(n)})_{n<0}$. Eventually, we would like to replace $q_{0,0,0}'$ with $q_{0,0,0}''+m\sqrt{\hbar} +\cdots$. For that reason the term in the exponent in \eqref{vob} containing $q_{0,0,0}$ plays a distinguished role and it is better to keep it in the exponent. In other words, let us introduce the following expansion:
\beq\label{vob^000}
e^{\sum_{k=0}^\infty (I^{(-k-1)},q_k+\delta_{k,1})/\sqrt{\hbar}} = 
\sum_{m} \hbar^{-|m|/2}\,  
e^{(I^{(-1)},\phi_0) q_{0,0,0}/\sqrt{\hbar}}\,
(\mathbf{f}^-(z))^{m}\,
\frac{(\mathbf{q}(z)+z)^m}{m!}
\eeq
where the sum is over all finitely supported functions $m:\ZZ_{\geq 0}\times \mathcal{B}\to \ZZ_{\geq 0}$ such that $m_{0,0,0}=0$. 
Let us decompose the vertex operators as follows
\ben
\Gamma^\pm=:
\widetilde{\Gamma}^\pm\,
e^{\pm (I^{(-1)},\phi_{0})q_{0,0,0}/
\sqrt{\hbar}}\, 
e^{\mp (I^{(-2)},1)/\sqrt{\hbar}}.
\een
The action of the vertex operator $\widetilde{\Gamma}^+\, \A_t(\hbar,\mathbf{q})$ can be written in the following form:  
\ben
\sum_{
\substack{g\in \ZZ \\ m,m^+,m^-} } 
a^{(g)}_{m+m^+} \, \hbar^{g-1+(|m^+|-|m^-|)/2} \, 
\frac{(\mathbf{q}(z)+z)^m}{m!}\,
\frac{(\mathbf{q}(z)+z)^{m^-}}{m^-!}\,
\frac{(-\mathbf{f}^+(z))^{m^+}}{m^+!}\,
\frac{(\mathbf{f}^-(z))^{m^-}}{m^-!}\,
\een
where the summation over $m^-$ involves only finitely supported functions $m^-$ such that $m^-_{0,0,0}=0$. 
We claim that if we fix $G:=g+(|m^+|-|m^-|)/2$ and $M=m+m^-$, then there are only finitely many choices for $g$, $m$, $m^\pm$ such that the coefficient $a^{(g)}_{m+m^+}$ is not $0$. Indeed, since $M=m+m^-$ is fixed, there are only finitely many choices for $m$ and $m^-$. Since $g+|m^+|/2=G+|m^-|/2$, we see that $g$ is uniquely determined by $m^+$ and $m^-$. It remains only to estimate $m^+$. The tameness of the total ancestor potential implies that if $a^{(g)}_{m+m^+}\neq 0$, then
\ben
0\leq
\sum_{k,a} k (m_{k,a}+m^+_{k,a}) \leq 3g-3 + |m^+|+|m|=
3G-3+\frac{3}{2} |m^-|-\frac{1}{2} |m^+|+|m|.
\een
Since $m$ and $m^-$ for fixed $G$ are bounded, the above estimate shows that $|m^+|$ is bounded, that is, there are only finitely many choices for $m^+$. This proves that we have an expansion 
\beq\label{tvoa}
\widetilde{
\Gamma}^+\, \A_t(\hbar,\mathbf{q})= 
\sum_{G\in \tfrac{1}{2}\ZZ} \sum_{M} 
\gamma^{(G)}_{M}(\mathbf{f}) \, \hbar^{G-1} \, 
\frac{(\mathbf{q}(z)+z)^M}{M!}
\eeq
where the second sum is over all finitely supported functions $M$ and 
\beq\label{gamma}
\gamma^{(G)}_{M}(\mathbf{f}):=\sum_{g, m,m^\pm} 
a^{(g)}_{m+m^+}\,  
\binom{M}{m^-}\, 
\frac{(-\mathbf{f}^+(z))^{m^+}}{m^+!}\,
\frac{(\mathbf{f}^-(z))^{m^-}}{m^-!}\ ,
\eeq
where the sum is over all $g,m,m^\pm$ with $m^-_{0,0,0}=0$ such that $G=g+(|m^+|-|m^-|)/2$ and $M=m+m^-$. This is a finite sum so the coefficient $\gamma^{(G)}_M$ is a polynomial in $I$. For later purposes we will need the following estimate: if $\gamma^{(G)}_M\neq 0$ then
\begin{align}
\label{inf-G}
G\geq \frac{1}{3} \, M_{0,0,0} -\frac{1}{2}\, |M|.
\end{align}
Indeed, let us recall formula \eqref{gamma}. Thanks to the string equation the number of insertions of $1$ in the ancestor correlator can not exceed $(3+\mbox{the degree of the psi classes})$. This yields the following inequality
\ben
m_{0,0,0}+m_{0,0,0}^+\leq 3 +\sum_{k,a} k(m_{k,a}+m^+_{k,a}).
\een
By tameness, that is the degree of the psi-classes does not exceed the dimension of the moduli space, we get that the RHS of the above inequality does not exceed 
$3g+|m|+|m^+|.$ Since $M=m+m^-$ and $m^-_{0,0,0}=0$ we can estimate 
$M_{0,0,0}=m_{0,0,0}=m_{0,0,0}+m^+_{0,0,0}-m_{0,0,0}^+$ in the following way: 
\ben
M_{0,0,0}\leq 3g + |m|+|m^+|-m_{0,0,0}^+ =
3G-\frac{|m^+|}{2} +\frac{3|m^-|}{2}+|m|-m_{0,0,0}^+
\leq 
3G +\frac{3|m^-|}{2}+|m|.
\een
where we used that $g=G-(|m^+|-|m^-|)/2$. Finally, since 
\ben
\frac{3|m^-|}{2}+|m|=
\frac{|m^-|}{2}+|M|\leq \frac{3|M|}{2},
\een
combining with the previous inequality we get  
\eqref{inf-G}. 

Using formula \eqref{tvoa} we get the following expansion:
\beq\label{bsub}
\widetilde{\Gamma}^+\otimes 
\widetilde{\Gamma}^-\, 
(\A_t\otimes \A_t)=
\sum_{g\in \tfrac{1}{2}\ZZ}\sum_{m',m''} 
c^{(g)}_{m',m''}(\mathbf{f}) \,
\hbar^{g-1}
\frac{(\mathbf{q}'(z)+z)^{m'}}{m'!}\,
\frac{(\mathbf{q}''(z)+z)^{m''}}{m''!}\, ,
\eeq
where 
\beq\label{cgmm}
c^{(g)}_{m',m''} (\mathbf{f})=
\sum_{g=G'+G''-1} 
\gamma^{G'}_{m'}(\mathbf{f})
\gamma^{G''}_{m''}(-\mathbf{f}).
\eeq
Thanks to the estimate \eqref{inf-G}, for the non-zero terms in the above sum both $G'$ and $G''$ are bounded from below. Therefore, the sum must be finite, i.e., the coefficient $c^{(g)}_{m',m''}$ is a polynomial in $I$. 

Let us substitute 
$q'_{k,0,0}=
m\sqrt{\hbar}\delta_{k,0}+f_k(\mathbf{q}',\mathbf{q}'')$ in \eqref{bsub}. Using the binomial formula we can express the powers of $q'_{k,0,0}$ as formal power series of the following form: if $k>0$ then
\ben
\frac{(q'_{k,0,0})^{m'_{k,0,0}}}{m'_{k,0,0}!}=
\sum_{n'_k,n''_k}\, \square_{n_k',n_k''}(t)\,
\frac{(\mathbf{q}'(z)+z)^{n'_k}}{n_k'!}
\frac{(\mathbf{q}''(z)+z)^{n''_k}}{n_k''!}
\een
where $\square_{k,n'_k,n''_k}$ is a holomorphic function in $t$ whose explicit value is not important for now, the sum is over all finitely supported functions $n'_k,n''_k:\ZZ_{\geq 0}\times \mathcal{B}\to \ZZ_{\geq 0}$ such that 
\beq\label{mk00}
n'_{k}(i,0,0)=0\quad \forall i\geq 0,\quad 
|n'_k|+|n''_k|= m'_{k,0,0}.
\eeq
Similarly, if $k=0$, then we have 
\ben
\frac{(q'_{0,0,0})^{m'_{0,0,0}}}{m'_{0,0,0}!}=
\sum_{\nu, n'_0,n''_0}\, 
\square_{\nu,n_0',n_0''}(t)\,
\hbar^{\nu/2}\,
\frac{(\mathbf{q}'(z)+z)^{n'_0}}{n_0'!}\,
\frac{(\mathbf{q}''(z)+z)^{n''_0}}{n_0''!}\, ,
\een
where the sum is over all $\nu\in \ZZ_{\geq 0}$ and all finitely supported functions $n'_0,n''_0:\ZZ_{\geq 0}\times \mathcal{B}\to \ZZ_{\geq 0}$ such that 
\beq\label{m000}
n'_{0}(i,0,0)=0\quad \forall i\geq 0,\quad 
\nu+|n'_0|+|n''_0|= m'_{0,0,0}.
\eeq
Therefore, after the substitution 
$q'_{k,0,0}= q_{k,0,0}''+
m\sqrt{\hbar}\delta_{k,0}+f_k(\mathbf{q}',\mathbf{q}'')$,
the RHS of \eqref{bsub} takes the following form: 
\beq\label{tA-poly}
\widetilde{\Gamma}^+\otimes 
\widetilde{\Gamma}^-\,
(\A_t\otimes \A_t)=
\sum_{g\in \tfrac{1}{2}\ZZ}\sum_{M',M''} 
\widetilde{A}^{(G)}_{M',M''}(\mathbf{f}) \,
\hbar^{G-1}\,
\frac{(\mathbf{q}'(z)+z)^{M'}}{M'!}\,
\frac{(\mathbf{q}''(z)+z)^{M''}}{M''!}\, ,
\eeq
where the coefficient 
$\widetilde{A}^{(G)}_{M',M''}(\mathbf{f})$ has the following form:
\ben
\widetilde{A}^{(G)}_{M',M''}(\mathbf{f})=\sum_{g,\nu, m',m'', n',n''} 
\square_{\nu,n',n''}(t)\, 
c^{(g)}_{m',m''}(\mathbf{f})\, ,
\een
where the sum is over all integers $g,\nu\in \ZZ_{\geq 0}$, all finitely supported functions $m',m''$, and all sequences of finitely supported functions $n'=(n'_k)_{k\geq 0}$ and $n''=(n''_k)_{k\geq 0}$ such that \eqref{mk00}--\eqref{m000} and the following constraints hold:
\begin{align}
\label{G}
G& = g+\frac{\nu}{2}\\
\label{M'}
M' & = \widetilde{m}'+n_0'+n_1'+\cdots\\
\label{M''}
M'' & = m''+n_0''+n_1''+\cdots
\end{align}
where $\widetilde{m}'_{k,a}=m_{k,a}$ for $a\neq 0$ 
and  $\widetilde{m}'_{k,0}=0$. We have to prove that there are finitely $g,\nu,m',m'',n',n''$ satisfying the constraints \eqref{mk00}--\eqref{M''} such that $c^{(g)}_{m',m''}\neq 0$. Since $M'$ and $M''$ are fixed, recalling the constraints \eqref{M'} and \eqref{M''}, we get immediately that there are finitely many $\widetilde{m}',m'',n',$ and $n''$. Furthermore, the value of $m'_{k,0,0}$ for $k>0$ is fixed by \eqref{mk00}.
It remains only to prove that there are finitely many choices for $g,\nu$ and $m'_{0,0,0}.$ Using \eqref{m000} and \eqref{G} we get
\beq\label{gG}
g+\frac{m_{0,0,0}'}{2} = G +\frac{1}{2}(|n_0'|+|n_0''|).
\eeq
On the other hand, recalling the definition of $c^{(g)}_{m',m''}$, that is, \eqref{cgmm} and the estimate \eqref{inf-G}, we get
\ben
g=G'+G''-1\geq 
\frac{1}{3}(m'_{0,0,0}+m_{0,0,0}'')
-\frac{1}{2}(|m'|+|m''|)-1.
\een
Note that $|m'|=m_{0,0,0}'+|\widetilde{m}'|$. Therefore, the above inequality is equivalent to 
\ben
g+\frac{m_{0,0,0}'}{2}\geq 
\frac{1}{3}(m'_{0,0,0}+m_{0,0,0}'')
-\frac{1}{2}(|\widetilde{m}'|+|m''|)-1.
\een
The LHS of the above equality, thanks to \eqref{gG}, can be expressed in terms of $G$, $n'_0$, and $n''_0$. Solving the resulting inequality for $m_{0,0,0}'$, we get the following upper estimate: 
\ben
m_{0,0,0}'\leq 3G+3 + 
\frac{3}{2}(
|\widetilde{m}'|+|m''|+|n_0'|+|n_0''|)-m_{0,0,0}''.
\een
All terms on the RHS of the above inequality are either fixed or bounded (for fixed $G,M',M''$). We conclude that $m_{0,0,0}'$ is also bounded. Then \eqref{gG} implies that $g$ is bounded. Finally, thanks to \eqref{G} the integer $\nu$ is also bounded. We get that $\widetilde{A}^{(G)}_{M',M''}$ is a polynomial in $I$ as claimed. 

Under the substitution $q_{0,0,0}'=q_{0,0,0}''+m\sqrt{\hbar} + 
f_0(\mathbf{q}',\mathbf{q}'')$ we have
\ben
e^{(I^{(-1)},\phi_0)(q_{0,0,0}'-q_{0,0,0}'')/\sqrt{\hbar}}=
e^{(I^{(-1)},\phi_0) m} \, e^{f_0(\mathbf{q}',\mathbf{q}'')/\sqrt{\hbar}}. 
\een
The above expression, after exponentiation of the expansion \eqref{suba} for $f_0(\mathbf{q}',\mathbf{q}'')$, clearly has the form \eqref{A-poly}. To finish the proof it remains only to notice that 
\ben
\Gamma^+\otimes \Gamma^-\, (\A_t\otimes \A_t) =
e^{(I^{(-1)},\phi_0)(q_{0,0,0}'-q_{0,0,0}'')/\sqrt{\hbar}}\, 
\widetilde{\Gamma}^+\otimes 
\widetilde{\Gamma}^-\, (\A_t\otimes \A_t)
\een
and that the form of the expansion \eqref{A-poly} does not change if we multiply it by \eqref{tA-poly}.
\qed


Let us recall the series $\mathbf{f}_\alpha(t,\lambda,z)$ introduced in 
Proposition \ref{prop:S-conj}, that is,  
\ben
\mathbf{f}_\alpha(t,\lambda,z):=
S(t,z)\, \mathbf{f}_\alpha(\lambda,z) =:
\sum_{m\in \ZZ} I^{(m)}_\alpha(t,\lambda) (-z)^m,\quad \alpha\in K(\PP^1,\CC),
\een
where 
\beq\label{periods}
I^{(m)}_\alpha(t,\lambda)=\sum_{k=0}^\infty 
(-1)^k S_k(t) I^{(m+k)}_\alpha(\lambda)
\eeq
are the so-called {\em periods} of the Frobenius manifold.
Let us investigate the dependence of the ancestor HQEs \eqref{hqe-anc} on the periods \eqref{periods}. 
As an immediate corollary of Proposition \ref{prop:wave} we get the following corollary.
\begin{corollary}\label{cor:wave}
Let $\alpha\in \Phi$ be any reflection vector, i.e., real elliptic root. Then 
the formal power series 
\ben
\left.
\Gamma^{\alpha}(t,\lambda)\otimes 
\Gamma^{-\alpha}(t,\lambda) \, 
(\A_t\otimes \A_t)\right|_{
\widehat{\mathbf{f}}_{L-1}(t,\lambda)\otimes 1 -1\otimes
\widehat{\mathbf{f}}_{L-1}(t,\lambda) = 2\pi \ii\, m
}
\een
has an expansion of the following form:
\ben
e^{m\, (I^{(-1)}_\alpha(t,\lambda),1)}
\sum_{g\in \tfrac{1}{2}\ZZ}\,
\sum_{m',m''}\, 
A^{g}_{m',m''}(\mathbf{f}_\alpha)
\hbar^{g-1}\,
\frac{(\mathbf{q}'(z)+z)^{m'}}{m'!}\,
\frac{(\mathbf{q}''(z)+z)^{m''}}{m''!},
\een
where the coefficients $A^{g}_{m',m''}$ 
are polynomials in $I_\alpha^{(k)}(t,\lambda)$ $(k\in \ZZ)$. \qed
\end{corollary}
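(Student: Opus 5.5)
The corollary is Proposition \ref{prop:wave} specialized to $\mathbf{f}=\mathbf{f}_\alpha(t,\lambda,z)$, that is, to $I^{(n)}=I^{(n)}_\alpha(t,\lambda)$. The proof therefore has three parts. First, identify $\Gamma^{\pm\alpha}(t,\lambda)$ with $:e^{\pm\widehat{\mathbf{f}}}:$. Second, identify the substitution in the ancestor HQEs with the one in Proposition \ref{prop:wave}. Third, pass from formal vector variables to actual periods.

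For the first part, I would recall from Proposition \ref{prop:S-conj} that $\Gamma^{E}(t,\lambda)=e^{\widehat{\mathbf{f}}^-_E(t,\lambda)}e^{\widehat{\mathbf{f}}^+_E(t,\lambda)}$. Since $\mathbf{f}_{-\alpha}=-\mathbf{f}_\alpha$ by linearity of $E\mapsto I_E^{(n)}$, we get $\Gamma^{\pm\alpha}(t,\lambda)=\Gamma^\pm$ with $\mathbf{f}=\mathbf{f}_\alpha(t,\lambda,z)=\sum_n I^{(n)}_\alpha(t,\lambda)(-z)^n$. For the second part, Lemma \ref{le:suba} together with the iteration leading to \eqref{suba} shows that the condition $\widehat{\mathbf{f}}_{L-1}(t,\lambda)\otimes1-1\otimes\widehat{\mathbf{f}}_{L-1}(t,\lambda)=2\pi\ii m$ is exactly the substitution $q'_{k,0,0}=q''_{k,0,0}+m\sqrt{\hbar}\,\delta_{k,0}+f_k(\mathbf{q}',\mathbf{q}'')$ used in Proposition \ref{prop:wave}. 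Note that this substitution is independent of $\lambda$ and of $\alpha$.

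For the third part, Proposition \ref{prop:wave} gives the expansion \eqref{A-poly}, with prefactor $e^{m(I^{(-1)},\phi_0)}=e^{m(I^{(-1)}_\alpha(t,\lambda),1)}$ and coefficients $A^{(g)}_{m',m''}$ that are polynomials in the formal variables $I^{(n)}$. The key point, and the only real obstacle, is that the specialization $I^{(n)}\mapsto I^{(n)}_\alpha(t,\lambda)$ must commute with the (formally infinite) action of the vertex operators on $\mathcal{A}_t\otimes\mathcal{A}_t$. The finiteness established in the proof of Proposition \ref{prop:wave} gives exactly this. Tameness and the string-equation bound \eqref{inf-G} show that for fixed $G,M',M''$ only finitely many terms contribute. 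Each coefficient of the fixed monomial $\hbar^{G-1}(\mathbf{q}'+z)^{M'}(\mathbf{q}''+z)^{M''}$ is therefore a finite polynomial expression, and these bounds do not depend on the values of the $I^{(n)}$.

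Substitution of the periods is thus legitimate coefficientwise. Since $I^{(n)}_\alpha(t,\lambda)$ is well defined via \eqref{periods} in the $\lambda$-adic sense of the earlier Remark, the resulting expansion has polynomials in the periods $I^{(k)}_\alpha(t,\lambda)$ as coefficients. The hypothesis $\alpha\in\Phi$ is not needed beyond fixing which vertex operator is used.
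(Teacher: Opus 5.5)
Your proposal is correct and follows the paper's route. The paper states the corollary as an immediate consequence of Proposition~\ref{prop:wave}: take $\mathbf{f}=\mathbf{f}_\alpha(t,\lambda,z)$, so that $\Gamma^{\pm\alpha}(t,\lambda)=:e^{\pm\widehat{\mathbf{f}}}:$, and use Lemma~\ref{le:suba} to match the substitution. Your added point is that the tameness bounds are independent of the values of the $I^{(n)}$, so specializing the formal variables to the periods commutes with the action; this is exactly the justification the paper leaves implicit.
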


\begin{proposition}\label{prop:coset-poly}
Suppose that $\varphi=n\delta$ is an imaginary affine root. Then the formal power series 
\beq\label{hqe-im}
\left.
\Gamma^{\varphi}(t,\lambda)\otimes 
\Gamma^{-\varphi}(t,\lambda) \, 
L(t,\lambda) \, (\A_t\otimes \A_t)\right|_{
\widehat{\mathbf{f}}_{L-1}(t,\lambda)\otimes 1 -1\otimes
\widehat{\mathbf{f}}_{L-1}(t,\lambda) = 2\pi \ii\, m
}
\eeq
has an expansion of the following form:
\ben
(Qe^{t_{01}})^{-mnl}
\sum_{g\in \tfrac{1}{2}\ZZ}\,
\sum_{m',m''}\, 
L^{g}_{m',m''}(
\mathbf{f}_\varphi,
\mathbf{f}_{\alpha_1},
\dots,
\mathbf{f}_{\alpha_N},C)
\hbar^{g-1}\,
\frac{(\mathbf{q}'(z)+z)^{m'}}{m'!}\,
\frac{(\mathbf{q}''(z)+z)^{m''}}{m''!},
\een
where the coefficients $L^{g}_{m',m''}$ 
are polynomials in $I_\varphi^{(k)}(t,\lambda)$ 
$(k\in \ZZ)$, 
$I_{\alpha_i}^{(k)}(t,\lambda)$ 
$(1\leq i\leq N$, $k\in \ZZ)$, 
$\tfrac{d}{d\nu} I^{(k+\nu)}_{\alpha_i}(t,\lambda)|_{\nu=0}$ 
$(i=N-1,N$, $k\in \ZZ)$, and the central constant (see \eqref{cos-vir-perp})
\beq\label{vir_cc}
C(t,\lambda):=
\frac{1}{2}\, \operatorname{tr}\Big(
\frac{1}{4}+\theta\,\theta^T\Big)\lambda^{-2} + 
\sum_{i=1}^{N-2} W(
\partial_\lambda
\mathbf{f}_{\alpha_i}^+(\lambda,z), 
\partial_\lambda
\mathbf{f}_{\beta_i}^+(\lambda,z)).
\eeq 
\end{proposition}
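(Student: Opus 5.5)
The plan is to reduce the statement to Proposition~\ref{prop:wave}, applied to a vertex operator with formal auxiliary parameters, and then differentiate with respect to those parameters. By \eqref{cos-vir-perp} and \eqref{cos-vir-0}, $L(t,\lambda)$ is the sum of three pieces: the central constant $C(t,\lambda)$; the normally ordered quadratic expressions $:(\phi_{\alpha_i}(t,\lambda)\otimes 1-1\otimes\phi_{\alpha_i}(t,\lambda))(\phi_{\beta_i}(t,\lambda)\otimes1-1\otimes\phi_{\beta_i}(t,\lambda)):$ for $1\le i\le N-2$; and $\tfrac{1}{2\pi\ii}\tfrac{d}{d\nu}|_{\nu=0}$ of the analogous $\nu$-deformed expressions for $i=N-1,N$. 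The term $C(t,\lambda)$ just multiplies $\Gamma^\varphi\otimes\Gamma^{-\varphi}(\A_t\otimes\A_t)$, so Corollary~\ref{cor:wave} handles it. For the quadratic terms I would write each current as a derivative of a vertex operator, namely
\[
\phi_\alpha(t,\lambda)=\partial_\lambda\widehat{\mathbf f}_\alpha(t,\lambda)=\partial_\lambda\partial_\epsilon\, :e^{\epsilon\widehat{\mathbf f}_\alpha(t,\lambda)}:\big|_{\epsilon=0},
\]
where $\partial_\lambda$ acts on the coefficients $I^{(k)}_\alpha(t,\lambda)$. Each normally ordered quadratic term then becomes $\partial_\epsilon\partial_\eta$ at $\epsilon=\eta=0$ of a single normally ordered vertex operator $\Gamma^{\pm}$ of the type in Proposition~\ref{prop:wave}, built from $\mathbf f=\epsilon\,\mathbf f_{\alpha_i}+\eta\,\mathbf f_{\beta_i}$ (or $\mathbf f^{\nu}_{\alpha_i},\mathbf f^{-\nu}_{\beta_i}$ in the $K_0$ case). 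The $\lambda$-derivatives of periods are again periods, since $\partial_\lambda I^{(k)}_\alpha=I^{(k+1)}_\alpha$.

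The next step is to compose with $\Gamma^\varphi(t,\lambda)\otimes\Gamma^{-\varphi}(t,\lambda)$. The product $\Gamma^\varphi\,:e^{\epsilon\widehat{\mathbf f}_{\alpha}+\eta\widehat{\mathbf f}_\beta}:$ equals a phase factor times the normally ordered vertex operator with $\mathbf f=\mathbf f_\varphi+\epsilon\mathbf f_\alpha+\eta\mathbf f_\beta$. I would apply Proposition~\ref{prop:wave} with the formal vector variables $I^{(n)}$ specialized to $I^{(n)}_\varphi+\epsilon I^{(n)}_{\alpha}+\eta I^{(n)}_\beta$. The prefactor $e^{m(I^{(-1)},\phi_0)}$ should become $e^{m(I^{(-1)}_\varphi(t,\lambda),1)}$ times $e^{m\epsilon(\cdots)+m\eta(\cdots)}$. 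Differentiating in $\epsilon,\eta$ produces only polynomial factors in $(I^{(-1)}_{\alpha_i},1)$ and $(I^{(-1)}_{\beta_i},1)$, together with $\nu$-derivatives of these for $i=N-1,N$.

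I must then check that $e^{m(I^{(-1)}_{n\delta}(t,\lambda),1)}$ equals $(Qe^{t_{01}})^{-mnl}$ up to a factor absorbed into the polynomials. Using \eqref{periods}, $\Psi(\delta/l)=1-(\log Q+2\pi\ii)P$, the divisor-equation dependence of $S(t,z)$ on $t_{0,1}$, and $S_1(t)1=t$, the $\phi_0$-component of $I^{(-1)}_{\delta/l}(t,\lambda)$ should be $\lambda-t_{0,0}$ minus the $P$-component contributions. I expect it to reduce to $-(\log Q+t_{01}+2\pi\ii)$ plus terms polynomial in the periods. Since $m$ and $nl$ are integers, the factor $e^{-2\pi\ii mnl}=1$ drops out.

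The main obstacle is the phase factors coming from $\Gamma^\varphi$ moving past the currents. Their logarithmic $\epsilon$-derivatives are of the form $\partial_\lambda\Omega(\mathbf f^+_{\varphi}(t,\lambda),\mathbf f^-_{\alpha}(t,\mu))|_{\mu=\lambda}$, and these are a priori infinite sums of products of periods, not polynomials. To handle them I would use \eqref{phase-da}: $\log B^{\varphi,\alpha}(t,\lambda,\mu)=\log B^{\varphi,\alpha}(\lambda,\mu)+W(\mathbf f^+_\varphi(\lambda,z),\mathbf f^+_\alpha(\mu,z))$. Because $\varphi=n\delta$ lies in the radical and is $\tau$-invariant, \eqref{phase_B} shows that $B^{\varphi,\alpha}(\lambda,\mu)$ is a constant ($Q$-power times a phase), so its derivatives vanish. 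For the $W$-term, $\mathbf f^+_\delta(\lambda,z)$ has only $1$ and $P$ components and is explicit (essentially $e^{-\lambda/z}$-type). I would try to show that $W(\mathbf f^+_{\delta}(\lambda,z),\cdot)$ collapses, via $S(t,z)S^T(t,-z)=1$ and \eqref{W_form_2}, to a finite expression in the periods $I^{(k)}_\alpha(t,\lambda)$, namely the difference between the $t$-calibrated and uncalibrated pairings, which for $\mathbf f^-$ truncated at low order is a finite sum. If this fails, the fallback is to keep $\partial_\lambda\log B^{\varphi,\alpha_i}(t,\lambda,\lambda)$ as an extra generator and prove its polynomiality separately.
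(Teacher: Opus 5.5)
Your architecture is the paper's. The paper splits $L=L_\perp+L_0$ and expands $e^{\widehat{\mathbf f}}:\widehat{\mathbf g}_1\widehat{\mathbf g}_2:$ by a Wick formula; your $\partial_\epsilon\partial_\eta$ generating function is the same device, with the vertex operator built from $\epsilon\,\partial_\lambda\mathbf f_{\alpha_i}+\eta\,\partial_\lambda\mathbf f_{\beta_i}$. It then reads each normally ordered piece as an $\epsilon$-coefficient of $e^{\widehat{\mathbf f}_\varphi+\epsilon_1\phi_{\alpha_i}+\epsilon_2\phi_{\beta_i}}\otimes e^{-\widehat{\mathbf f}_\varphi-\epsilon_1\phi_{\alpha_i}-\epsilon_2\phi_{\beta_i}}$ and applies Proposition~\ref{prop:wave}.

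The gap is the step you call the main obstacle. You leave the contractions unresolved and describe $\mathbf f^+_\delta$ as ``$e^{-\lambda/z}$-type''. Your fallback is to adjoin $\partial\log B^{\varphi,\alpha_i}(t,\lambda,\lambda)$ as a new generator, which would not prove the statement as formulated. The missing observation is that $\mathbf f^+_{n\delta}(t,\lambda,z)=nl\,\phi_{0,0}$ exactly. By Proposition~\ref{prop:cp}, $I^{(0)}_\delta(\lambda)=l$ and $I^{(k)}_\delta(\lambda)=0$ for $k>0$; the $e^{-\lambda/z}$ behaviour lives entirely in $\mathbf f^-_\delta$. So by \eqref{periods}, $I^{(k)}_\delta(t,\lambda)=\sum_{j\ge 0}(-1)^jS_j(t)I^{(k+j)}_\delta(\lambda)=l\,\delta_{k,0}$ for $k\ge 0$. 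Each contraction is then a single residue,
\[
\Omega\big(\mathbf f^+_{n\delta}(t,\lambda,z),\,\partial_\lambda\mathbf f^-_{\alpha}(t,\lambda,z)\big)=-nl\,\big(I^{(0)}_\alpha(t,\lambda),1\big).
\]
This is doubled by the tensor structure, and $I^{(\pm\nu)}$ replaces $I^{(0)}$ for the $\nu$-deformed currents in $L_0$. It is manifestly polynomial in the allowed generators. Your detour through \eqref{phase-da} also closes, via Lemma~\ref{le:I-1}(d), but only once you know $\mathbf f^+_\delta(\lambda,z)=l$.

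The prefactor also needs care: it must be computed exactly, not ``up to terms polynomial in the periods''. Any period-dependent remainder in the exponent $m(I^{(-1)},1)$ would give a non-polynomial factor $e^{m(\cdots)}$ that cannot be absorbed into the $L^{g}_{m',m''}$. Note too that pairing with $1=\phi_{0,0}$ extracts the $P$-component of a vector, not its $\phi_{0,0}$-component. The exact computation uses $S_1(t)1=t$ and $I^{(k)}_\delta(\lambda)=0$ for $k>0$:
\[
I^{(-1)}_\delta(t,\lambda)=I^{(-1)}_\delta(\lambda)-S_1(t)I^{(0)}_\delta(\lambda)=I^{(-1)}_\delta(\lambda)-l\,t.
\]
Hence $(I^{(-1)}_{n\delta}(t,\lambda),1)=-nl(2\pi\ii+t_{01}+\log Q)$, and $e^{m(I^{(-1)}_{n\delta}(t,\lambda),1)}=(Qe^{t_{01}})^{-mnl}$ with no remainder. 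This is Lemma~\ref{le:I-1}(b), which the paper's proof invokes. The $\epsilon$-dependent part of the prefactor involves $(I^{(0)}_{\alpha_i}(t,\lambda),1)$ rather than $(I^{(-1)}_{\alpha_i},1)$, because of the $\partial_\lambda$. As you say, it only contributes polynomial factors after differentiation.
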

\proof
Note that we have the following identity which is some variation of the Wick formula:
\ben
e^{\widehat{\mathbf{f}}} :
\widehat{\mathbf{g}}_1 
\widehat{\mathbf{g}}_2:
= 
:e^{\widehat{\mathbf{f}}}\ 
\widehat{\mathbf{g}}_1 
\widehat{\mathbf{g}}_2: + 
\Omega(\mathbf{f}^+,\mathbf{g}_1^-)\, 
:e^{\widehat{\mathbf{f}}}\ 
\widehat{\mathbf{g}}_2: + 
\Omega(\mathbf{f}^+,\mathbf{g}_2^-)\, 
:e^{\widehat{\mathbf{f}}}\
\widehat{\mathbf{g}}_1: + 
\Omega(\mathbf{f}^+,\mathbf{g}_1^-)\, 
\Omega(\mathbf{f}^+,\mathbf{g}_2^-)\, 
e^{\widehat{\mathbf{f}}}.
\een
Let us split $L(t,\lambda)=L_\perp(t,\lambda)+L_0(t,\lambda)$. We claim that the statement of the proposition holds when $L(t,\lambda)$ in \eqref{hqe-im} is replaced by either $L_\perp(t,\lambda)$ or $L_0(t,\lambda)$. The proof in both cases is similar so let us consider only the case when $L$ is replaced by $L_\perp$. Let us recall formula \eqref{cos-vir-perp}. We apply the above Wick formula with
\begin{align*}
\mathbf{f} & =
\mathbf{f}_\varphi(t,\lambda,z)\otimes 1 -1\otimes
\mathbf{f}_\varphi(t,\lambda,z),\\ \mathbf{g}_1 & =\partial_\lambda 
\mathbf{f}_{\alpha_i}(t,\lambda,z)\otimes 1 -1\otimes
\partial_\lambda
\mathbf{f}_{\alpha_i}(t,\lambda,z), \\ 
\mathbf{g}_2 & =\partial_\lambda 
\mathbf{f}_{\beta_i}(t,\lambda,z)\otimes 1 - 1\otimes 
\partial_\lambda
\mathbf{f}_{\beta_i}(t,\lambda,z).
\end{align*}
We get a sum of 4 normally ordered terms. Let us discuss the first one, that is, 
\ben
: e^{\widehat{\mathbf{f}}_\varphi}\otimes 
e^{\widehat{-\mathbf{f}}_\varphi}\, 
(\phi_{\alpha_i}\otimes 1-1\otimes \phi_{\alpha_i})\,
(\phi_{\beta_i}\otimes 1-1\otimes \phi_{\beta_i}):
\een
The above expression can be interpreted as the coefficient in front of $\epsilon_1\epsilon_2$ of the following vertex operator:
\ben
e^{
\widehat{\mathbf{f}}_\varphi + \epsilon_1 
\phi_{\alpha_i}+ \epsilon_2 
\phi_{\beta_i}}\otimes 
e^{
-\widehat{\mathbf{f}}_\varphi - \epsilon_1 
\phi_{\alpha_i}- \epsilon_2 
\phi_{\beta_i}}.
\een
Let us apply the above vertex operator to $\A_t\otimes \A_t$ and substitute $\widehat{\mathbf{f}}_{L-1}(t,\lambda)\otimes 1 -1\otimes
\widehat{\mathbf{f}}_{L-1}(t,\lambda) = 2\pi \ii\, m$. Recalling Proposition \ref{prop:wave} we get 
\ben
e^{m\, (I^{(-1)}_{
\varphi+\epsilon_1
\alpha_i+\epsilon_2
\beta_i},1)}\, 
\sum_{g,m',m''}
A^{(g)}(
\mathbf{f}_{\varphi}+ \epsilon_1
\partial_\lambda\mathbf{f}_{\alpha_i}+ \epsilon_2
\partial_\lambda\mathbf{f}_{\beta_i})
\hbar^{g-1} \, 
\frac{(\mathbf{q}'(z)+z)^{m'}}{(m')!} \, 
\frac{(\mathbf{q}''(z)+z)^{m''}}{(m'')!}.
\een
According to Lemma \ref{le:I-1}, b), $e^{m(I^{(-1)}_\varphi(t,\lambda),1)} = (Qe^{t_{01}})^{-mnl}$. Extracting the coefficient in front of $\epsilon_1\epsilon_2$ we get an expansion in which the coefficients will have the required polynomial dependence. 

The analysis in the remaining 3 cases is similar. First, note that the propagators 
\ben
\Omega(\mathbf{f}^+,\mathbf{g}_1^-) =  
2\Omega(\mathbf{f}_\varphi^+(t,\lambda,z), 
\partial_\lambda 
\mathbf{f}_{\alpha_i}^+(t,\lambda,z)) = 
-2nl\,(I^{(0)}_{\alpha_i}(t,\lambda),1)
\een
and 
\ben
\Omega(\mathbf{f}^+,\mathbf{g}_2^-) =  
2\Omega(\mathbf{f}_\varphi^+(t,\lambda,z), 
\partial_\lambda 
\mathbf{f}_{\beta_i}^+(t,\lambda,z)) = 
-2nl\,(I^{(0)}_{\beta_i}(t,\lambda),1)
\een
have the required polynomial dependence. It remains only to note that $e^{\widehat{\mathbf{f}}}\ 
\widehat{\mathbf{g}}_2$ and 
$e^{\widehat{\mathbf{f}}}\ 
\widehat{\mathbf{g}}_1$
are the coefficients in front of $\epsilon$ of respectively 
$e^{
\widehat{\mathbf{f}}_\varphi + \epsilon 
\phi_{\beta_i}}\otimes 
e^{
-\widehat{\mathbf{f}}_\varphi - \epsilon 
\phi_{\beta_i}}
$ 
and 
$e^{
\widehat{\mathbf{f}}_\varphi + \epsilon 
\phi_{\alpha_i}}\otimes 
e^{
-\widehat{\mathbf{f}}_\varphi - \epsilon 
\phi_{\alpha_i}}.
$ 
To complete the proof it remains only to recall Proposition \ref{prop:wave}.
\qed 

\subsection{Period vectors}
The vector valued functions \eqref{periods} are called the {\em periods} of the Frobenius manifold. Apriori, they are formal Laurent series in $\lambda^{-1/l}$. The series is however convergent because one can check that $I^{(m)}_\alpha(t,\lambda)$ is a solution to the following connection:
\begin{align}
\label{ssc-1}
\nabla^{(m)}_{\partial/\partial t_a} & = 
\frac{\partial}{\partial t_a} + 
(\lambda -E\bullet)^{-1} \phi_a\bullet 
\Big(\theta-m+\tfrac{1}{2}\Big) \\
\label{ssc-2}
\nabla^{(m)}_{\partial/\partial \lambda} & = 
\frac{\partial}{\partial \lambda} - 
(\lambda -E\bullet)^{-1} 
\Big(\theta-m+\tfrac{1}{2}\Big).
\end{align}
This is the so-called {\em second structure} connection of the Frobenius manifold. It is a connection on the trivial bundle $(M\times \CC)'\times H$ where 
\ben
(M\times \CC)':=\{ (t,\lambda)\in M\times \CC\ |\ 
\operatorname{det}(\lambda-E\bullet)\neq 0\}.
\een
Since $\lambda=\infty$ is a regular singular point for \eqref{ssc-2}, we get that the Laurent series $I^{(m)}_E(t,\lambda)$ is convergent. Using the differential equations \eqref{ssc-1}--\eqref{ssc-2}, we extend $I^{(m)}_E(t,\lambda)$ to a multivalued analytic function on $(M\times \CC)'$. In order to keep track of the different branches, we fix a reference point $(t^\circ,\lambda^\circ)\in (M\times \CC)'$ such that the canonical coordinates $u_i(t^\circ)\neq u_j(t^\circ)$ for $i\neq j$ and $\lambda^\circ $ is a positive real number such that $|u_i(t^\circ)|<\lambda^\circ$ for all $1\leq i\leq N$.  

Suppose that $t\in M$ is a generic semi-simple point such that the canonical coordinates are pairwise distinct: $u_i(t)\neq u_j(t)$ for $i\neq j$.
Similarly to formula \eqref{periods}, we can use Givental's R-matrix to construct solutions to the second structure connection near $\lambda=u_i(t)$. Namely, let us define
\ben
I^{(m)}_i(t,\lambda)= \sqrt{2\pi} 
\sum_{k=0}^\infty (-1)^k\, 
R_k(t) e_i(t) \, 
\frac{(\lambda-u_i)^{k-m-\tfrac{1}{2}}}{
\Gamma(k-m+\tfrac{1}{2})
}\, ,
\quad m\in \ZZ,
\een
where $R_k(t)\in \operatorname{End}(H)$ is the operator defined in Section \ref{sec:hgr} and 
$e_i(t)=
\sum_{a\in \mathcal{B}}\, 
\phi_a \, \Psi_{ai}(t)\in H
$ 
are the normalized idempotents of the quantum cup product:
\ben
(e_i(t),e_j(t)) = \delta_{i,j},\quad 
e_i(t)\bullet e_j(t) = \sqrt{\Delta_i(t)} \, \delta_{i,j}\, e_j(t).
\een
It is straightforward to check that $I^{(m)}_i(t,\lambda)$ is a solution to the second structure connection $\nabla^{(m)}$. In particular, if we fix a reference path $C_i$ from $(t^\circ,\lambda^\circ)$ to a neighborhood of $(t,u_i(t))$, then there exists a unique vector $\varphi_i\in H$ such that 
\ben
I^{(m)}_i(t,\lambda) = I^{(m)}_{\varphi_i}(t,\lambda).
\een
Such a vector is called a {\em reflection vector}. Furthermore, we have Saito's formula
\beq\label{saf}
(I^{(0)}_\alpha(t,\lambda),
(\lambda-E\bullet)
I^{(0)}_\beta(t,\lambda)) = (\alpha|\beta),\quad \forall \alpha,\beta\in H.
\eeq
Using the above formula, one can prove easily that 
the analytic continuation along the simple loop corresponding to the reference path $C_i$ transforms $I^{(m)}_\alpha(t,\lambda)$ into $I^{(m)}_{r_{\varphi_i}(\alpha)}(t,\lambda)$ where 
\beq\label{refl}
r_{\varphi_i}(\alpha):= \alpha-(\alpha|\varphi_i) \varphi_i,\quad \alpha\in H.
\eeq
In general, analytic continuation defines a map 
\ben
r:\pi_1((M\times \CC)',(t^\circ,\lambda^\circ))\to 
\operatorname{GL}(H),\quad 
C\mapsto r_C
\een
known as the {\em monodromy representation}.  
\begin{proposition}\label{prop:rv}
The set of all reflection vectors coincides with the real elliptic root system $\Phi$.    
\end{proposition}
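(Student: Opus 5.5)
We prove two inclusions: every reflection vector lies in $\Phi$, and every element of $\Phi$ is a reflection vector for a suitable reference path. Along the way we use Iritani's $\Gamma$-integral structure, i.e., the identification $H\cong K(\PP^1_a,\CC)$ via $\Psi$.

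\emph{Reflection vectors lie in $\Phi$.} Let $\varphi_i$ be a reflection vector attached to a reference path $C_i$.

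\begin{itemize}
\item[(i)] First we check $(\varphi_i|\varphi_i)=2$. Insert $\alpha=\beta=\varphi_i$ into Saito's formula \eqref{saf} and use the local expansion $I^{(0)}_i(t,\lambda)=\sqrt{2\pi}\,e_i(t)(\lambda-u_i)^{-1/2}/\Gamma(1/2)+\cdots$ near $\lambda=u_i$. The leading term gives $2\,(e_i,(\lambda-E\bullet)e_i)/(\lambda-u_i)=2$, since $E\bullet e_i=u_ie_i$. Hence $(\varphi_i|\varphi_i)=2$.
\item[(ii)] Next we show $\varphi_i\in K^0(\PP^1_a)$, i.e., $\varphi_i$ is integral. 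This is the refined Dubrovin conjecture for $\PP^1_a$, proved in \cite{CM2024}. It says that, for a suitable choice of reference paths at a semisimple point, the reflection vectors $\varphi_1,\dots,\varphi_N$ are the $\Psi$-images of a full exceptional collection in $D^b(\PP^1_a)$.
\item[(iii)] For a general reference path we argue by monodromy. Any other reference path differs from the chosen ones by an element of $\pi_1((M\times\CC)',(t^\circ,\lambda^\circ))$. Analytic continuation along that loop acts on the periods by $r_C$, and $r_C$ is a composition of the reflections \eqref{refl} in the $\varphi_j$. Each such reflection preserves the lattice $K^0(\PP^1_a)$ and the intersection form. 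Consequently a general reflection vector has the form $r_C(\varphi_j)$, which is again integral of norm $2$, so it lies in $\Phi$.
\end{itemize}

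\emph{Every element of $\Phi$ is a reflection vector.} The set of reflection vectors is $W\cdot\{\varphi_1,\dots,\varphi_N\}$, where $W$ is the monodromy group generated by $r_{\varphi_1},\dots,r_{\varphi_N}$. We must show that this orbit is all of $\Phi$.

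\begin{itemize}
\item[(i)] Under mirror symmetry \cite{MiR,IMRS}, $(\varphi_j)$ corresponds to a distinguished basis of vanishing cycles of the simple elliptic singularity. Such a basis can be transformed by braid moves and sign changes into Saito's basis, consisting of the vertices $1$, $L$, $\gamma_{i,p}$ of Figure~\ref{fig:ep} (up to braid group action). The collection $\{\mathcal{O},L,\ \mathcal{O}\text{ twisted by }L_i^{-p}\}$ is an explicit exceptional collection, so the braid action can be taken inside the exceptional-collection picture.
\item[(ii)] Saito's theory \cite{Sa} then gives $\Phi=W\cdot\{\text{simple roots}\}$ for an elliptic root system, where the Weyl group $W$ is generated by reflections in these vectors.
\item[(iii)] Alternatively, and more self-contained: Proposition~\ref{prop:ers-slicing}c) gives $\Phi=\Phi_\perp+\ZZ(L-1)+\ZZ\delta$. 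The finite Weyl group acts transitively on $\Phi_\perp$ for the simply-laced type $E_{N-2}$. Translations by $L-1$ and by $\delta$ on roots are realized by products of two reflections, $r_{\alpha}r_{\alpha+\kappa}$ with $\kappa$ in the radical; these send $\alpha\mapsto\alpha+2\kappa$ or $\alpha+\kappa$ as usual in affine/elliptic Weyl groups. So $W\cdot\{1,\gamma_{i,p},L-1+1\}$ exhausts $\Phi$.
\end{itemize}

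\emph{Main obstacle.} The hardest step is step (ii) of the first part together with the identification of the reference basis: we must know that some distinguished system of reflection vectors is exactly $\Psi$ of an integral basis such as $\{1,L,\gamma_{i,p}\}$. For this we rely on \cite{CM2024}, the refined Dubrovin conjecture for $\PP^1_a$, or on its mirror-symmetry counterpart \cite{MiR}. The remaining steps are lattice combinatorics covered by Propositions~\ref{prop:rad} and~\ref{prop:ers-slicing}.
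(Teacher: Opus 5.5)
Your overall architecture matches the paper's. The reflection vectors are the monodromy orbit of one distinguished system, and that orbit is compared with the orbit of the quiver vertices $1,L,\gamma_{i,p}$ under the group generated by reflections in them, which is $\Phi$. The pieces you actually prove are fine:
\begin{itemize}
\item $(\varphi_i|\varphi_i)=2$ follows from Saito's formula.
\item $K^0(\PP^1_a)$ is stable under $r_\varphi$ once $\varphi$ is integral.
\item Your second-part step (iii) correctly expands the paper's ``easy check''. The products $r_1r_L$ and $r_{\gamma_{3,a_3-1}-\delta}\,r_{\gamma_{3,a_3-1}}$ supply the radical translations, since $\gamma_{3,a_3-1}-\delta\in\Phi_\perp$ by Proposition~\ref{prop:ers-slicing}.
\end{itemize}
The gap is that the one substantive input is never established: a distinguished system of reflection vectors that is an integral basis with the same Gram matrix of $(\ |\ )$ as the quiver. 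In the first part you take it from ``the refined Dubrovin conjecture for $\PP^1_a$, proved in \cite{CM2024}''. The paper, however, says the proposition is essentially equivalent to that conjecture and does not quote it as known for $\PP^1_a$; its introduction leaves $\PP^1_{2,2,2,2}$ conditional precisely on establishing it. So this step assumes what is to be proved. In the second part, step (i) asserts without justification that a distinguished basis of vanishing cycles braids to the literal vectors $1,L,\gamma_{i,p}\in K^0(\PP^1_a)$. That is again the refined Dubrovin statement, and it is stronger than anything you can cite.

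The paper supplies this input through mirror symmetry with $f=x_1^{a_1}+x_2^{a_2}+x_3^{a_3}-Q^{-1}x_1x_2x_3$. Iritani's lattice is identified with the Milnor lattice (as in Theorem~12 of \cite{MST2016}), the periods with period integrals, $(\ |\ )$ with the intersection form up to sign, and the second structure connection with Gauss--Manin. By Picard--Lefschetz the reflection vectors are then exactly the vanishing cycles, so integrality comes for free. For the reverse inclusion one needs only Gabrielov's theorem \cite{Ga1974} that \emph{some} distinguished basis $\{v_j\}$ has the same intersection matrix as the quiver vertices $\{e_j\}$, not that $v_j=e_j$.

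This is the idea you are missing. The map $g\colon e_j\mapsto v_j$ is an automorphism of the lattice $K^0(\PP^1_a)$ preserving $(\ |\ )$, so $g(\Phi)=\Phi$. Let $W_e$ and $W_v$ denote the groups generated by reflections in the $e_j$ and the $v_j$. Then $W_v=gW_eg^{-1}$, and the set of vanishing cycles is $W_v\cdot\{v_j\}=g\big(W_e\cdot\{e_j\}\big)=g(\Phi)=\Phi$. Replace your braid-equivalence claim with this isometry argument, grounded in the mirror identification, and your lattice combinatorics completes the proof.
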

\proof
The complete proof of this proposition is rather involved. The statement is essentially equivalent to the so-called {\em refined Dubrovin conjecture} (see \cite{CM2024}). Let us just outline the main steps and give references where further details can be found. It is known that under the mirror symmetry between $\PP^1_a$ and the simple elliptic singularity $f=x_1^{a_1}+x_2^{a_2}+x_3^{a_3}-\tfrac{1}{Q}x_1x_2x_3$, the gamma integral structure of Iritani corresponds to the Milnor lattice of $f$, that is, $K^0(\PP^1_a)\cong H_2(f^{-1}(1);\ZZ)$. This can be proved in the same way as Theorem 12 in \cite{MST2016} (see also Theorems 6.9 and 6.10 in \cite{Iri2011}). By mirror symmetry, the periods $I^{(m)}_\alpha(t,\lambda)$ can be identified with period integrals on the Milnor fibers of the miniversal deformation $f_t$ of $f$. Moreover, up to a sign $(\ |\ )$  coincides with the intersection pairing of the Milnor lattice. Since under mirror symmetry the second structure connection becomes the Gauss-Manin connection, by using Picard--Lefschetz theory (see \cite{AGuV}), we conclude that the set of all reflection vectors corresponds to the set of vanishing cycles. On the other hand, it is easy to check that by using reflections \eqref{refl} with $\varphi_i$ corresponding to the vertices of the quiver on Figure \ref{fig:ep}, any element of $\Phi$ can be transformed into an element of the set $\{1,\gamma_{j,p}(1\leq j\leq 3, 1\leq p\leq a_j-1), L\}$. To complete the proof, it remains only to prove that there exists a set of vanishing cycles corresponding to the vertices of the quiver on Figure \ref{fig:ep} such that the intersection pairing of the vanishing cycles coincides with the intersection pairing of the corresponding vertices of the quiver. The existence of such vanishing cycles follows easily from the results of Gabrielov (see \cite{Ga1974}). \qed

\subsection{Twisted periods}
Formula \eqref{periods} makes sense also for any $m\in \CC$. We will be interested in the case when $m\in \nu+\ZZ$ where $\nu\in (-\tfrac{1}{2},\tfrac{1}{2})$. The corresponding vector valued functions $I^{(m)}_\alpha(t,\lambda)$ will be called {\em $\nu$-twisted} or simply {\em twisted} periods. They were first introduced by Dubrovin in \cite{Du2004}. In the settings of singularity theory, the twisted periods were studied by Givental in \cite{Giv1988}. We follow the exposition of \cite{CM2024}.

The discussion from the previous section remains the same. The only difference is that the reflection vectors and the monodromy representation will depend on $\nu$. Let us  introduce the bilinear pairing 
\beq\label{comp_r}
h_\nu(\alpha,\beta) := (
I^{(\nu)}_\alpha(t,\lambda), (\lambda-E\bullet)
I^{(-\nu)}_\beta(t,\lambda) ) = 
q\langle \alpha,\beta\rangle + q^{-1}\langle\beta,\alpha\rangle ,
\eeq
where $q:=e^{\pi\ii \nu}$ and for the second equality we refer to Theorem 1.4.1 in \cite{CM2024}. Note that the function $(\lambda-u_i)^{k-m-\tfrac{1}{2}}$ involved in the definition of a reflection vector is multi-valued: the analytic continuation around $\lambda=u_i$ changes the value by a factor of $-q^{-2}$. Therefore, the reflection vector $\varphi_i(\nu)$ corresponding to a reference path $C_i$ is uniquely determined up to a factor in $(-q^{-2})^\ZZ$. The analytic continuation along the simple closed loop corresponding to the reference path $C_i$ transforms $I^{(m)}_\alpha(t,\lambda)$ into $I^{(m)}_{r^\nu_{\varphi_i}(\alpha)}(t,\lambda)$ where
\ben
r^\nu_{\varphi_i}(\alpha)= \alpha - q^{-1} h_\nu(\alpha,\varphi_i(-\nu))\, \varphi_i(\nu).
\een
Let us point out that $h_\nu(\varphi_i(\nu),\varphi_i(-\nu))=q+q^{-1}$. We refer to Sections 2.2 and 2.3 in \cite{CM2024} for further details and proves. 

\subsection{Phase form, propagators, and phase series}
\label{sec:ppp}
Let us introduce the following formal Laurent series
\begin{align}
\nonumber
\Omega_{\alpha,\beta}(t,\lambda_1,\lambda_2)  & =
\Omega(\mathbf{f}_\alpha^+(t,\lambda_1,z), \mathbf{f}_\beta(t,\lambda_2,z))
=\\
\label{propagator}
& 
=
\sum_{k=0}^\infty (-1)^{k+1}
(I^{(k)}_\alpha(t,\lambda_1), I^{(-1-k)}_\beta(t,\lambda_2)).
\end{align}
Following physics terminology we refer to $\Omega_{\alpha,\beta}(t,\lambda_1,\lambda_2)$ as {\em propagators}. We always assume that $\lambda_2$ is sufficiently close to $\lambda_1$ so that the reference path for $I^{(-1-k)}_\beta(t,\lambda_2)$ is obtained by composing the reference path for $I^{(k)}_\alpha(t,\lambda_1)$ and the straight segment from $(t,\lambda_1)$ to $(t,\lambda_2)$. 
The propagator is interpreted as a formal Laurent series in $\lambda_1^{-1/l}$. On the other hand, using that $I^{(k)}_\alpha(t,\lambda_1)$ and $I^{(-1-k)}_\beta(t,\lambda_2)$ are solutions to the second structure connection, it is straightforward to prove (see \cite{MilSa}, Section 3.4) that 
\beq\label{d-prop}
d\Omega_{\alpha,\beta}(t,\lambda_1,\lambda_2)=
\mathcal{W}_{\alpha,\beta}(t,\lambda_1,\lambda_2),
\eeq
where $d$ on the LHS is the de Rham differential of the complex manifold $M\times \CC^2$ and 
\begin{align}
\label{phase_form}
\mathcal{W}_{\alpha,\beta}(t,\lambda_1,\lambda_2) = & 
\sum_{i\in \mathcal{B}}  (
I^{(0)}_\alpha(t,\lambda_1), \phi_i \bullet 
I^{(0)}_\beta(t,\lambda_2))\, dt_i + \\
\nonumber
&
+ (
(\lambda_1-E\bullet)
I^{(0)}_\alpha(t,\lambda_1),
I^{(0)}_\beta(t,\lambda_2))\,
   \frac{d(\lambda_1-\lambda_2)}{\lambda_1-\lambda_2} +\\
\nonumber
&
- (
I^{(0)}_\alpha(t,\lambda_1),
I^{(0)}_\beta(t,\lambda_2)
) d\lambda_1
\end{align}
is a multi-valued analytic 1-form on $M\times \CC^2$ with singularities along the hypersurface 
\ben
\{(t,\lambda_1,\lambda_2)\ |\ 
(\lambda_1-\lambda_2)\,
\operatorname{det}(\lambda_1-E\bullet_t) \, 
\operatorname{det}(\lambda_2-E\bullet_t)  =0\}\quad \subset \quad 
M\times \CC^2,
\een
where we wrote $E\bullet_t$ to emphasize that the quantum cup product depends on the parameter $t\in M$. In particular, we have 
\beq\label{prop_der}
\partial_{\lambda_1}
\Omega_{\alpha,\beta}(t,\lambda_1,\lambda_2) =
\frac{1}{\lambda_1-\lambda_2}\, 
\left(
I^{(0)}_\alpha(t,\lambda_1), (\lambda_2-E\bullet) 
I^{(0)}_\beta(t,\lambda_2) \right).
\eeq
The above formula implies that the propagator 
$\Omega_{\alpha,\beta}(t,\lambda_1,\lambda_2)$ is convergent as a Laurent series in $\lambda_1^{-1/l}$ for all $(t,\lambda_1,\lambda_2)$ such that $|\lambda_1|>|\lambda_2|$ and $|\lambda_1|>r(t)$ where 
\ben
r(t):=\operatorname{max}(|u_1(t)|,\dots,|u_N(t)|),\quad
t\in M,
\een
where $u_1(t),\dots,u_N(t)$ is the set of all eigenvalues of $E\bullet_t$, each repeated as many times as its multiplicity. Let $\epsilon>0$ be a sufficiently small positive real number, that is, we require that the disk with center 1 and radius $\epsilon$ does not contain non-trivial $l$-roots of $1$ where $l$ is the order of the automorphism $\tau$. Let us introduce the following open subsets of $M\times \CC^2$:
\begin{align*}
(M\times \CC^2)'_\epsilon & := 
\{(t,\lambda_1,\lambda_2)\ :\ 
|\lambda_1-\lambda_2|<\epsilon\, 
\operatorname{min}\{ 
|\lambda_1-u_i(t)|,\, 
|\lambda_2-u_i(t)|\, (1\leq i\leq N)\}, \\
(M\times \CC^2)''_\epsilon & := 
\{(t,\lambda_1,\lambda_2)\in 
(M\times \CC^2)'_\epsilon\ :\ \lambda_1\neq \lambda_2\}, \\
(M\times \CC^2)^+_\epsilon & := 
\{(t,\lambda_1,\lambda_2)\ :\ 
|\lambda_1-\lambda_2|< \epsilon  
(|\lambda_2| -r(t)) < \epsilon 
(|\lambda_1|-r(t))\}. 
\end{align*}
These domains will appear quite frequently in what follows. Let us discuss shortly their structure and make few suggestions on how to think about them.
The domain $(M\times \CC^2)'_\epsilon$ is obtained by thickening the $\lambda$-direction of $(M\times \CC)'$. Namely, we have a diagonal embedding 
\ben
(M\times \CC)'\to (M\times \CC^2)'_\epsilon,\quad
(t,\lambda)\mapsto (t,\lambda,\lambda)
\een
and the domain $(M\times \CC^2)'_\epsilon$ is a tubular neighborhood of $(M\times \CC)'$ in $M\times \CC^2$. It is also useful to consider the projection map
\beq\label{d_proj}
(M\times \CC^2)'_\epsilon\to (M\times \CC)',\quad
(t,\lambda_1,\lambda_2)\mapsto (t,\lambda_1).
\eeq
This realizes  $(M\times \CC^2)'_\epsilon$ as the total space of a disk fibration over $(M\times \CC)'$: the fiber over $(t,\lambda_1)$ is the intersection of the following $2N$ disks (we suppress the dependence of $u_i$ on $t$)
\ben
\{\lambda_2\in \CC\ |\ 
|\lambda_2-\lambda_1|<\epsilon |\lambda_1-u_i|\},\quad
1\leq i\leq N
\een
and 
\beq\label{disk_i}
\{\lambda_2\in \CC\ |\ 
|\lambda_2-\lambda_1|<\epsilon |\lambda_2-u_i|\},\quad
1\leq i\leq N.
\eeq
Note that the inequality in \eqref{disk_i} defines a disk with center $\tfrac{\lambda_1-\epsilon^2u_i}{1-\epsilon^2}$ and radius 
$\tfrac{\epsilon}{1-\epsilon^2}\, |\lambda_1-u_i|$. The diagonal embedding is a section of the fibration \eqref{d_proj}. The domain $(M\times \CC^2)''_\epsilon$ is obtained from $(M\times \CC^2)'_\epsilon$ by removing the diagonal, that is, in terms of \eqref{d_proj}, it is a fibration of punctured disks. Finally, for the 3rd domain $(M\times \CC^2)^+_\epsilon$, using the triangle inequality, we have 
\ben
|\lambda_i|-r(t) \leq 
|\lambda_i|-|u_j(t)|\leq 
||\lambda_i|-|u_j(t)||\leq 
|\lambda_i-u_j(t)|.
\een
We get the following inclusions:
\ben
(M\times \CC^2)^+_\epsilon\subset 
(M\times \CC^2)''_\epsilon\subset
(M\times \CC^2)'_\epsilon\ .
\een

Let us return to our description of the analytic properties of the propagators. 
Using the triangle inequality, it is straightforward to prove that for every point $(t,\lambda_1,\lambda_2)\in (M\times \CC^2)'_\epsilon$, the line segment between $(t,\lambda_1)$ and $(t,\lambda_2)$ in $M\times \CC$ does not intersect the discriminant: $u_i(t)\neq s\lambda_1 +(1-s)\lambda_2$ for all $s\in [0,1]$ and for all $i\in[1,N]$. The propagator series is convergent for all $(t,\lambda_1,\lambda_2)\in (M\times \CC^2)^+_\epsilon$ and the phase form $\mathcal{W}_{\alpha,\beta}$ is a multi-valued analytic 1-form in the domain $(M\times \CC^2)''_\epsilon$. Therefore, the propagator series extends analytically along any path in $(M\times \CC^2)''_\epsilon$ by the following formula:
\ben
\Omega_{\alpha,\beta}(t,\lambda_1,\lambda_2):= 
\Omega_{\alpha,\beta}(t^\circ,\lambda^\circ_1,\lambda^\circ_2) + 
\int_{(t^\circ,\lambda^\circ_1,\lambda^\circ_2)}^{(t,\lambda_1,\lambda_2)} 
\mathcal{W}_{\alpha,\beta},
\een
where $(t^\circ,\lambda^\circ_1,\lambda^\circ_2)\in (M\times \CC^2)^+_\epsilon$ and the integration path is contained in $(M\times \CC^2)''_\epsilon$. 

\subsubsection{Paths with a transverse direction}
Suppose that $C\subset \CC$ is a smooth oriented path without self-intersections. Let us fix a parametrization of $C$, that is, a smooth embedding $\Lambda:[0,1]\to \CC$ compatible with the orientation. Put
\ben
T^+(C):= \bigcup_{s\in C} \RR_{\geq 0}\, \Lambda'(s).
\een
In other words, $T^+(C)$ is the subcone of $\CC$ consisting of all {\em positively} oriented tangent vectors to $C$. Clearly, $T^+(C)$ is independent of the choice of the parametrization $\Lambda$. We will say that $\delta\in \CC$ is a {\em transverse direction}
for $C$ if $\delta\notin T^+(C)$. Note that the set of all transverse directions for $C$ coincides with
the complement $\CC\setminus T^+(C)$.
\begin{remark}
The notion of transverse is slightly abused here. A transverse direction $\delta$ is allowed to be a tangent vector to $C$ (at some point) as long as it has the
opposite orientation.
\qed
\end{remark}
Let $u_1,\dots,u_N$ be a set of pairwise distinct complex
numbers and $\Lambda_0\in \CC$ be a reference point such that $|\Lambda_0|>|u_j|$ for all $1\leq j\leq N$.
Recall that a simple loop 
$L\subset \CC\setminus{\{u_1,\dots,u_N\}}$ around
$u_j$ based at $\Lambda_0$ is a path consisting of two pieces: a path $C$ from $\Lambda_0$ to some point
$u_j+\delta$ and a circle with radius $|\delta|$ and center
$u_j$, where $|\delta|$ is required to be so small that the points $u_i$ ($i\neq j$) are outside the circle.
The simple loop $L$ is defined as the path traversed by a point traveling along $C$ from $\Lambda_0$ to $u_j+\delta$, going around
$u_j$ in counter-clockwise direction along the
circle, and finally returning back to $\Lambda_0$ along the path $C$. We will refer to $C$ as the {\em tail} of the simple loop $L$.
\begin{definition}\label{def:sl_direction}
Let $L$ be a simple loop around $u_i$ based at $\Lambda_0$. We say that $L$ is a simple loop {\em approaching $u_i$ in a transverse direction} $\delta$ if the following conditions are satisfied:
\begin{enumerate}
\item[(i)]
The tail $C$ of the simple loop $L$ ends at a point
$u_i+\delta$ such that
$\delta$ is a transverse direction for $C$.
\item[(ii)] We have $|\Lambda_0|>|\lambda-\delta|$ for all
$\lambda\in C$.
\qed
\end{enumerate}
\end{definition}
A simple loop $L$ approaching $u_i$ in a transverse direction can be constructed as follows. Let $[u_i,\Lambda_0]$ be the line segment
between $u_i$ and $\Lambda_0$. Let $\delta\in \mathbb{C}$ be such that
$u_i+\delta\in [u_i,\Lambda_0]$ and $|\delta|<|u_j-u_i|$ for all
$j\neq i$. Note that since $|u_i|<|\Lambda_0|$, we have
$|\lambda-\delta|<|\Lambda_0|$ for all $\lambda\in
[u_i+\delta,\Lambda_0]$. If the line segment $[u_i+\delta,\Lambda_0]$ does not contain any of the points $u_j$, then we can simply take the tail of our loop to be $C= [u_i+\delta,\Lambda_0]$. Otherwise, for each $u_j\in [u_i+\delta,\Lambda_0]$, we cut a small piece from the line segment around $u_j$ and replace it with a half-circle avoiding $u_j$. The resulting path $C$ has all the required properties provided the pieces that we have removed are sufficiently small. Note that in particular, the fundamental group
$\pi_1(\CC\setminus{\{u_1,\dots,u_N\}},\Lambda_0)$ is
generated by simple loops with transverse directions. Therefore, it is sufficient to compute the periods of the phase form along simple loops that admit a transverse direction.

\subsubsection{Connection formula and phase periods}
\label{sec:conf}
In this subsection we let $t\in M$ be a generic semi-simple point such that the canonical coordinates $\{u_i(t)\}_{i=1}^N$ are pairwise distinct. Suppose that $\lambda_2$ is very close to $u_i(t)$ and that $\beta$ is a reflection vector. Then the period vectors $I^{(-k-1)}_{\beta}(t,\lambda_2)$ have order of vanishing at $\lambda_2=u_i(t)$ at least $k+\tfrac{1}{2}$. The propagator series \eqref{propagator} can be interpreted as a formal Laurent series  
$\Omega^i_{\alpha,\beta}(t,\lambda_1,\lambda_2)$ in 
$(\lambda_2-u_i(t))^{1/2}$. 
The same argument as above implies that the series is convergent in the domain 
$(M\times \CC^2)^i_\epsilon\subset M\times \CC^2$ 
defined by the following inequalities:
\ben
\Big\{
(t,\lambda_1,\lambda_2)\ :\ 
|\lambda_1-\lambda_2|<
\epsilon(|\lambda_2-u_i(t)|)<
\epsilon (|\lambda_1-u_i(t)|)< \epsilon(|u_j(t)-u_i(t))|\quad
\forall j\neq i\Big\}.
\een
We would like to compare the values of the Laurent series $\Omega_{\alpha,\beta}(t,\Lambda_1,\Lambda_2)$ at a point $(t,\Lambda_1,\Lambda_2)\in (M\times \CC^2)^+_\epsilon$ and 
$\Omega^i_{\alpha,\beta}(t,\lambda_1,\lambda_2)$ at a point $(t,\lambda_1,\lambda_2)\in (M\times \CC^2)^i_\epsilon$. 
\begin{proposition}\label{prop:conf}
 Suppose that $\Lambda_2\in \CC$ satisfies $|\Lambda_2|>r(t)$. 
 Let $C\subset \CC\setminus\{u_1(t),\dots,u_N(t)\}$ be a path with a transverse direction $\delta$ connecting $\Lambda_2$ and $\lambda_2:=u_i(t)+\delta$ and suppose that 
 \ben
 |\delta|<|u_j(t)-u_i(t)|\quad \forall j\neq i.
 \een
 There exists a complex number $\Lambda_1$ sufficiently close to $\Lambda_2$ such that 
\begin{enumerate}
\item[(i)]
The path 
$
\widehat{C}:=\{(t,\Lambda_1-\Lambda_2+\lambda,\lambda)\ |\ \lambda\in C\}$ is contained in $(M\times \CC^2)''_\epsilon$. 
\item[(ii)]
The end points $(t,\Lambda_1,\Lambda_2)$ and $(t,\lambda_1,\lambda_2)$ of $\widehat{C}$ belong respectively to $(M\times \CC^2)^+_\epsilon$ and $(M\times \CC^2)^i_\epsilon$.
\end{enumerate}
For every $\Lambda_1$ satisfying conditions (i) and (ii), 
the following formula holds:
\ben
\Omega^i_{\alpha,\beta}(t,\lambda_1,\lambda_2) =
\Omega_{\alpha,\beta}(t,\Lambda_1,\Lambda_2) + 
\int_{\widehat{C}} \mathcal{W}_{\alpha,\beta},\quad
\forall\ \alpha\in H,
\een
where $\beta\in H$ is the reflection vector corresponding to the composition of $\{t\}\times C$ and the reference path of $\Omega_{\alpha,\beta}(t,\Lambda_1,\Lambda_2).$
\qed
\end{proposition}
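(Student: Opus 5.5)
The plan is to split the statement into an elementary geometric existence claim for $\Lambda_1$ and an analytic identity, and to obtain the identity from the facts that both sides are analytic continuations of one multivalued function and that the integral of the phase form computes that continuation, via \eqref{d-prop}.

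\textbf{Existence of $\Lambda_1$.} Put $\Lambda_1=\Lambda_2+\eta$ with $\eta$ small. Along $\widehat{C}$ the difference $\lambda_1-\lambda_2=\eta$ is constant. So conditions (i) and (ii) reduce to inequalities of the form $|\eta|<\epsilon\,\cdot(\text{positive continuous function})$ on the compact path $C$, together with $\eta\neq 0$.
\begin{itemize}
\item At the starting point, membership in $(M\times\CC^2)^+_\epsilon$ needs $|\eta|<\epsilon(|\Lambda_2|-r(t))$ and $|\Lambda_2|<|\Lambda_1|$. The second holds if $\eta$ points radially outward, which is possible because $|\Lambda_2|>r(t)$.
\item At the endpoint, membership in $(M\times\CC^2)^i_\epsilon$ needs $|\eta|<\epsilon|\delta|$ and $|\delta|<|\lambda_1-u_i|=|\delta+\eta|$. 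This is where the transverse direction enters: take $\eta$ to be a small positive multiple of a direction that increases $|\delta+\eta|$, compatible with $\delta\notin T^+(C)$.
\item Condition (ii) of Definition \ref{def:sl_direction} is used to reconcile the direction chosen at the start with the one chosen at the end.
\end{itemize}
I expect this step to be fiddly but routine. One could also move $\Lambda_1$ slightly at the start and absorb the change by integrating $\mathcal{W}_{\alpha,\beta}$ inside $(M\times\CC^2)^+_\epsilon$.

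\textbf{Analytic identity.}
\begin{enumerate}
\item The right-hand side is by definition the analytic continuation of $\Omega_{\alpha,\beta}$ along $\widehat{C}$, so the task is to show that this continuation equals the local series $\Omega^i_{\alpha,\beta}$.
\item Both $\Omega_{\alpha,\beta}$ and $\Omega^i_{\alpha,\beta}$ are given by the same formula \eqref{propagator} in terms of periods; only the branches and the expansions differ. Once the branch of $I_\beta$ is fixed through the reference path $\{t\}\times C$, which is how $\beta$ is defined, both are primitives of the same form $\mathcal{W}_{\alpha,\beta}$ on the common domain.
\item Hence their difference is a constant $c(\alpha,\beta,t)$, and it remains to show $c=0$.
\end{enumerate}

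\textbf{The main obstacle: proving $c=0$.} The infinite sum in $k$ cannot simply be continued term by term, since convergence of the two series holds in different regions. I would argue as follows.
\begin{itemize}
\item First, $c$ is independent of $t$, because $d$ of the difference vanishes in all directions, including the $t$-directions.
\item Next, homogeneity under the Euler vector field $E$, with the grading of $I^{(m)}$ governed by $\theta-m+\tfrac12$, shows that $c$ is quasi-homogeneous.
\item Then examine the limit $\lambda_2\to u_i$. Since $\beta$ is a reflection vector, $I^{(-1-k)}_\beta(t,\lambda_2)$ vanishes to order $\geq k+\tfrac12$, so $\Omega^i_{\alpha,\beta}\to 0$ as $\lambda_2\to u_i$ with $\lambda_1$ fixed.
\item Meanwhile $\Omega_{\alpha,\beta}(t,\Lambda_1,\Lambda_2)+\int\mathcal{W}_{\alpha,\beta}$ tends to the continued value of the same expression. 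To show that this value is $0$, use the isomonodromic (Painlev\'e) property: as $t$ moves so that $u_j\to\infty$ for $j\neq i$, the problem localizes to the rank-one Airy-type model, where both series can be computed explicitly and agree.
\end{itemize}
I expect this localization, which needs control of the $t$-continuation avoiding the caustic, to be the genuinely hard part. Following the paper's hints, I would rely on the isomonodromic-deformation arguments of \cite{MilSa} for it.
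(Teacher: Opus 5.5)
\textbf{Gap: the vanishing of the constant.} Reducing to a locally constant difference $c$ is fine. The whole content of the proposition is $c=0$, and your sketch does not establish it. The paper does not prove it either; it defers to \cite{MilSa} and the Painlev\'e property. Your three ingredients fall short as follows.
\begin{enumerate}
\item[(a)] Homogeneity gives no information. Contracting $\mathcal{W}_{\alpha,\beta}$ with $E+\lambda_1\partial_{\lambda_1}+\lambda_2\partial_{\lambda_2}$ gives $(I^{(0)}_\alpha,E\bullet I^{(0)}_\beta)-(E\bullet I^{(0)}_\alpha,I^{(0)}_\beta)=0$. So propagators have degree $0$, and every constant is trivially quasi-homogeneous.
\item[(b)] You use the limit $\lambda_2\to u_i$ only on the $\Omega^i$ side. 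Asserting that the continued value on the other side is $0$ merely restates the proposition.
\item[(c)] The localization as $u_j\to\infty$ would not work as stated. The Painlev\'e property gives meromorphic continuation of the isomonodromic family over the universal cover of the configuration space of distinct canonical coordinates, off a divisor. It gives no control of asymptotics as $u_j\to\infty$. Moreover $r(t)\to\infty$ in that limit, so the expansion at $\lambda=\infty$ does not reduce to an Airy model; that expansion carries global data such as $S(t,z)$ and the Gamma-integral structure.
\end{enumerate}

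\textbf{Missing idea: a common domain of convergence at fixed $t$.} Your $\lambda_2\to u_i$ step can be completed without isomonodromy once you stop requiring $\lambda_1$ to stay near $\lambda_2$.
\begin{enumerate}
\item[(1)] By the same derivative argument as for \eqref{prop_der}, the series in $(\lambda_2-u_i)^{1/2}$ converges for $|\lambda_2-u_i|<\min(|\lambda_1-u_i|,\min_{j\neq i}|u_j-u_i|)$ with $\lambda_1$ arbitrary, since its coefficients are periods in $\lambda_1$.
\item[(2)] The Laurent series in $\lambda_1^{-1/l}$ converges for $|\lambda_1|>\max(r(t),|\lambda_2|)$, with $\lambda_2$ continued along $C$ all the way to $u_i$. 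Each of its coefficients is a finite combination of components of $I^{(-1-k)}_\beta(t,\lambda_2)$. These vanish at $\lambda_2=u_i$ because $\beta$ is the reflection vector for that path.
\item[(3)] On the overlap ($|\lambda_1|\gg1$, $\lambda_2$ near $u_i$), both primitives of $\mathcal{W}_{\alpha,\beta}$ therefore tend to $0$ as $\lambda_2\to u_i$, so the constant vanishes there.
\end{enumerate}
What remains is topological. You must compare $\widehat C$ with the detour that pushes $\lambda_1$ far out, moves $\lambda_2$ along $C$, and brings $\lambda_1$ back along $C+\Lambda_1-\Lambda_2$, in the complement of $\{\lambda_1=\lambda_2\}$ and $\{\lambda_a=u_j\}$. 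A winding of $\lambda_1$ around $\lambda_2$ shifts the answer by $2\pi\ii(\alpha|\beta)$, the residue of $\mathcal{W}_{\alpha,\beta}$ on the diagonal by \eqref{saf}. This is where the transverse direction has to be used.

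\textbf{On the existence of $\Lambda_1$.} The transverse direction is not needed there. Neither is condition (ii) of Definition \ref{def:sl_direction}, which is not a hypothesis of this proposition. Any sufficiently small $\eta=\Lambda_1-\Lambda_2$ with $|\Lambda_2+\eta|>|\Lambda_2|$ and $|\delta+\eta|>|\delta|$ works. Such an $\eta$ always exists: if $\delta$ and $\Lambda_2$ point in opposite directions, take $\eta$ orthogonal to $\delta$.
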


Let $L$ be a simple loop in $\CC$ around $u_i(t)$ that approaches $u_i(t)$ in a transverse direction.
We will be interested in loops $L$ whose base point $\Lambda_2$
is such that
$|\Lambda_2|>|u_j(t)|$ for all $1\leq j\leq N$. If $\Lambda_1$
is sufficiently
close to $\Lambda_2$, then the path 
\ben
\widehat{L}:\  (t,\Lambda_1-\Lambda_2+\lambda,\lambda),\quad
\lambda \in L
\een
belongs to $(M\times \CC^2)''_\epsilon$ and hence the integral of the phase form is well defined. Let us fix a reference path in $(M\times \CC^2)^+_\epsilon$ so that the values of the propagators 
$\Omega_{\alpha,\beta}(t,\Lambda_1,\Lambda_2)$ are uniquely
fixed. Let $\varphi$ be the reflection vector corresponding to the reference path and the simple loop
$\{t\}\times L\subset (M\times \CC)'$.  
\begin{proposition}\label{prop:ph_period}
Under the above notation the following formula holds:
\beq\label{discr_period}
\Omega_{\alpha,\beta}(t,\Lambda_1,\Lambda_2) - 
\Omega_{w(\alpha),w(\beta)}(t,\Lambda_1,\Lambda_2) + 
\int_{\widehat{L}} \mathcal{W}_{\alpha,\beta}  = -
2\pi\ii\, (\alpha|\varphi)\, \langle \varphi,\beta\rangle,
\eeq
where $w=r_\varphi$ is the reflection corresponding
to $\varphi$.
\end{proposition}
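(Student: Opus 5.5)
Decompose the simple loop $L$ into its three pieces: the tail $C$ from $\Lambda_2$ to $\lambda_2=u_i(t)+\delta$, the small circle around $u_i(t)$, and the tail traversed backwards. Correspondingly, $\widehat{L}$ splits into $\widehat{C}$, a small circle $\widehat{c}$ in the domain $(M\times\CC^2)^i_\epsilon$, and $\widehat{C}^{-1}$, which is traversed with the analytically continued branches of the periods. The plan is to express each of the three integrals through Proposition \ref{prop:conf}.

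\textbf{Step 1 (outgoing tail).} By Proposition \ref{prop:conf},
\[
\int_{\widehat{C}}\mathcal{W}_{\alpha,\beta}=\Omega^i_{\alpha,\beta}(t,\lambda_1,\lambda_2)-\Omega_{\alpha,\beta}(t,\Lambda_1,\Lambda_2).
\]

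\textbf{Step 2 (returning tail).} After going around $u_i$, the periods $I_\alpha,I_\beta$ become $I_{w(\alpha)},I_{w(\beta)}$ with $w=r_\varphi$, by \eqref{refl}. Hence the integral over the reversed tail equals
\[
-\int_{\widehat{C}}\mathcal{W}_{w(\alpha),w(\beta)}=\Omega_{w(\alpha),w(\beta)}(t,\Lambda_1,\Lambda_2)-\Omega^i_{w(\alpha),w(\beta)}(t,\lambda_1,\lambda_2),
\]
again by Proposition \ref{prop:conf}. Adding Steps 1 and 2, the left-hand side of \eqref{discr_period} reduces to
\[
\Omega^i_{\alpha,\beta}(t,\lambda_1,\lambda_2)-\Omega^i_{w(\alpha),w(\beta)}(t,\lambda_1,\lambda_2)+\int_{\widehat{c}}\mathcal{W}_{\alpha,\beta}.
\]
This quantity is the local monodromy defect of the Laurent series $\Omega^i$ in $(\lambda_2-u_i)^{1/2}$. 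Indeed, by \eqref{d-prop} the integral over $\widehat{c}$ equals the change of $\Omega^i_{\alpha,\beta}$ under analytic continuation around the circle.

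\textbf{Step 3 (local computation).} Write $\alpha=\alpha'+\tfrac12(\alpha|\varphi)\varphi$, $\beta=\beta'+\tfrac12(\beta|\varphi)\varphi$ with $(\alpha'|\varphi)=(\beta'|\varphi)=0$. Then $w$ fixes $\alpha',\beta'$ and sends $\varphi\mapsto-\varphi$. The defect is bilinear, and it involves only the multivalued components near $u_i$, namely the $\varphi$-parts: $I_\varphi^{(m)}$ is the local series $I_i^{(m)}$ carrying $(\lambda-u_i)^{k-m-1/2}$ and changes sign. The periods of $\alpha',\beta'$ are holomorphic at $u_i$. So the defect comes only from the cross terms between $\varphi$ and the invariant parts, where the series changes sign. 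These are computed from the local expansion of $I^{(-1-k)}_\varphi(t,\lambda_2)$ together with the leading singular behaviour of the phase form. Concretely, the $d(\lambda_1-\lambda_2)/(\lambda_1-\lambda_2)$ term contributes $2\pi\ii$ times $(\lambda_1-E\bullet)$-pairings, which Saito's formula \eqref{saf} turns into intersection numbers.

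The terms then need to be organised into $(\alpha|\varphi)$ times a linear functional in $\beta$, and that functional must be identified as $-\langle\varphi,\beta\rangle$. The Euler pairing rather than the symmetric pairing appears because the local propagator $\Omega^i_{\varphi,\varphi}$ picks up one half of $(\varphi|\varphi)=2$, namely $\langle\varphi,\varphi\rangle=1$.

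I expect Step 3 to be the main obstacle: getting the asymmetric Euler pairing and the exact constant $-2\pi\ii$. My first approach is to test the identity in the rank-one case $\alpha=\beta=\varphi$. There the left side should reduce to $\Omega^i_{\varphi,\varphi}-\Omega^i_{-\varphi,-\varphi}+\oint\mathcal{W}$, which I can evaluate from the explicit $R$-matrix expansion at leading order in $\lambda_2-u_i$. I then extend by bilinearity, using the vanishing of the defect for pairs orthogonal to $\varphi$. If pure bilinearity does not separate the terms cleanly, the fallback is to compute the $\lambda_1$-derivative of both sides via \eqref{prop_der}, which makes the dependence on $\beta$ explicit.
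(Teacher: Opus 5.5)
Your Steps 1--2 are only justified when $\beta\in\CC\varphi$. Proposition~\ref{prop:conf}, and even the local series $\Omega^i_{\alpha,\beta}$, exist only when the second slot is the reflection vector $\varphi$. Only then does $I^{(-1-k)}_\beta(t,\lambda_2)$ vanish to order $k+\tfrac12$ at $u_i$.

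In that case your plan does go through, though not for the reason you give. Write $\alpha=\alpha'+\tfrac12(\alpha|\varphi)\varphi$.
\begin{itemize}
\item The odd series $\Omega^i_{\alpha',\varphi}$ changes sign around the circle, and $w(\varphi)=-\varphi$ cancels this exactly, so the cross term contributes $0$.
\item The whole value $-2\pi\ii(\alpha|\varphi)$ comes from the logarithmic part of $\Omega^i_{\varphi,\varphi}$, which locally is $\Omega_{\rm pt}$ plus a single-valued series and changes by $-4\pi\ii$.
\item Along $\widehat{L}$ one has $d(\lambda_1-\lambda_2)=0$, so the $d(\lambda_1-\lambda_2)/(\lambda_1-\lambda_2)$ term of \eqref{phase_form} contributes nothing. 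Only $-(I^{(0)}_\alpha(t,\lambda_1),I^{(0)}_\beta(t,\lambda_2))\,d\lambda_1$ survives.
\end{itemize}

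The genuine gap is the component $\beta'$ of $\beta$ with $(\beta'|\varphi)=0$. There the right-hand side $-2\pi\ii(\alpha|\varphi)\langle\varphi,\beta'\rangle$ is typically nonzero, because $(\varphi|\beta')=0$ does not force $\langle\varphi,\beta'\rangle=0$. It cannot come out of any computation local at $u_i$, where the periods only see $(\,\cdot\,|\varphi)$.

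Take $\beta=L-1$. By Proposition~\ref{prop:cp}, $I^{(0)}_{L-1}(t,\lambda)=0$, so $\mathcal{W}_{\alpha,L-1}\equiv 0$, and also $w(L-1)=L-1$. Hence for $\alpha=\varphi$ the left side of \eqref{discr_period} is $2\Omega_{\varphi,L-1}=-4\pi\ii\operatorname{rk}(\varphi)$ by \eqref{prop-phase} and \eqref{calp}. This is purely the constant term $-2\pi\ii\operatorname{rk}(\alpha)\operatorname{deg}(\beta)$, a normalization at $\lambda=\infty$, and it equals $-4\pi\ii\langle\varphi,L-1\rangle$. Your bilinear extension from $\alpha=\beta=\varphi$, using ``vanishing for pairs orthogonal to $\varphi$'', would instead give $-\pi\ii(\alpha|\varphi)(\beta|\varphi)$, which is wrong.

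The missing ingredient is locality. The paper's introduction says this is what leads to the formula, and it defers the proof to \cite{Mi2019,MilSa}. The argument runs as follows.
\begin{itemize}
\item From \eqref{phase_form} one checks directly that $\mathcal{W}_{\alpha,\beta}(t,\lambda_1,\lambda_2)=\mathcal{W}_{\beta,\alpha}(t,\lambda_2,\lambda_1)$.
\item So $\int_{\widehat{L}}\mathcal{W}_{\varphi,\beta'}$ is a period of $\mathcal{W}_{\beta',\varphi}$ along the swapped loop based at $(\Lambda_2,\Lambda_1)$.
\item By the case $\beta=\varphi$ settled above (now $\varphi$ is in the second slot and $(\beta'|\varphi)=0$), this period equals $-2\Omega_{\beta',\varphi}(t,\Lambda_2,\Lambda_1)$.
\item Proposition~\ref{prop:loc}, which is proved independently of the statement, then gives the left side as $2\Omega_{\varphi,\beta'}(t,\Lambda_1,\Lambda_2)-2\Omega_{\beta',\varphi}(t,\Lambda_2,\Lambda_1)=-4\pi\ii\langle\varphi,\beta'\rangle$, as required.
\end{itemize}
This last step needs the reference paths tracked: the swapped base point and the arc around the diagonal. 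So the asymmetric Euler pairing enters through the locality constant, not through $\langle\varphi,\varphi\rangle=1$.
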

The proofs of Propositions \ref{prop:conf} and \ref{prop:ph_period} are based on the so-called Painleve property of a semi-simple Frobenius manifold. In the settings of singularity theory, the proof of Proposition \ref{prop:ph_period} can be found in \cite{Mi2019} (see Theorem 2.3). For the most general settings we refer to Theorems 3.23 and 3.28 in \cite{MilSa}. 

\subsubsection{Phase series}\label{sec:phase}
Let $W_{kl}(t)\in\operatorname{End}(H)$ be the linear operators defined by \eqref{W_form_2}. Motivated by Proposition \ref{prop:S-conj}, let us introduce the following series
\ben
W_{\alpha,\beta}(t,\lambda_1,\lambda_2)=
W(
\mathbf{f}^+_\alpha(\lambda_1),
\mathbf{f}^+_\beta(\lambda_2)) =
\sum_{k,l=0}^\infty (-1)^{k+l} (W_{kl}(t)
I^{(l)}_\alpha(\lambda_1), 
I^{(k)}_\beta(\lambda_2)).
\een
We will refer to the above series as the {\em phase series}. Using the properties of $W_{kl}(t)$ (see Proposition \ref{prop:W}), it is straightforward to check that 
\beq\label{prop-phase}
\Omega_{\alpha,\beta}(t,\lambda_1,\lambda_2)=
\Omega_{\alpha,\beta}(\lambda_1,\lambda_2) + 
W_{\alpha,\beta}(t,\lambda_1,\lambda_2).
\eeq
On the other hand, we have an explicit formula for 
$\Omega_{\alpha,\beta}(\lambda_1,\lambda_2)=\log B^{\alpha,\beta}(\lambda_1,\lambda_2)$ -- see formula \eqref{phase_B}. In other words,
\begin{align}
\label{calp}
\Omega_{\alpha,\beta}(\lambda_1,\lambda_2) = & 
-2\pi\ii\, 
\operatorname{rk}(\alpha)
\operatorname{deg}(\beta) + \, 
\operatorname{rk}(\alpha)
\operatorname{rk}(\beta)\,\log Q +\, \\
\notag
& 
(\alpha|\beta)\, 
\log \Big( 1- (\lambda_2/\lambda_1)^{1/l}  \Big) + \,
\sum_{k=1}^{l-1}
(\tau^k(\alpha)|\beta)
\log \Big( 1- \xi^k (\lambda_2/\lambda_1)^{1/l}\Big).
\end{align}
The phase series is apriori a formal Laurent series in $\lambda_1^{-1/l}$. However, thanks to \eqref{prop-phase}, it is convergent and it defines a multi-valued analytic function on $(M\times \CC^2)^+_\epsilon$. In fact, the phase series extends analytically across the diagonal $\lambda_1=\lambda_2$. Indeed, using \eqref{prop-phase}, \eqref{calp}, and \eqref{prop_der} we get 
\ben
\partial_{\lambda_1} W_{\alpha,\beta}(t,\lambda_1,\lambda_2) = 
\frac{1}{\lambda_1-\lambda_2}\Big(I^{(0)}_{\alpha}(t,\lambda_1),(\lambda_2-E\bullet)I^{(0)}_\beta(t,\lambda_2)\Big) - 
\frac{1}{l}\sum_{k=0}^{l-1} 
\frac{(\tau^k(\alpha)|\beta)}{\lambda_1^{1/l}-\xi^k \lambda_2^{1/l}} 
\xi^k \lambda_1^{-1}\lambda_2^{1/l}.
\een
Let us determine the disk of convergence of the RHS as a Laurent series in $\lambda_1^{-1/l}$. Clearly, we must have $|\lambda_1|>|u_i(t)|$ for all $i$. The RHS has a pole at $\lambda_1=\lambda_2$ of order 1. However, using Saito's formula \eqref{saf}, it is straightforward to see that the residue is $0$. In other words, the RHS is analytic at $\lambda_1=\lambda_2$, so $\lambda_2$ is inside the disk of convergence. Since differentiation does not change the disk of convergence, the same must be true for $W_{\alpha,\beta}(t,\lambda_1,\lambda_2)$. 
We get that $W_{\alpha,\beta}(t,\lambda_1,\lambda_2)$ is a multi-valued analytic function on the following domain:
\ben
(M\times \CC^2)^\infty_\epsilon:=
\{ (t,\lambda_1,\lambda_2)\ |\ 
|\lambda_1-\lambda_2|<\epsilon 
(|\lambda_i|-r(t)),\ i=1,2\}.  
\een
The analytic extension of $W_{\alpha,\beta}(t,\lambda_1,\lambda_2)$ can be and will be done via formula \eqref{prop-phase}. We need only to clarify in what sense $\Omega_{\alpha,\beta}(\lambda_1,\lambda_2)$ can be viewed as a function on $(M\times \CC)''_\epsilon$. To begin with, the function 
$\Omega_{\alpha,\beta}(\lambda_1,\lambda_2)$ 
is naturally a multi-valued analytic function on the domain
\beq\label{cal_d}
\{(\lambda_1,\lambda_2)\in \CC^2\ |\ 
0<
|\lambda_1-\lambda_2|<\epsilon |\lambda_i|,\ i=1,2\}.
\eeq
Namely, let us fix a reference point $(\lambda_1^\circ,\lambda_2^\circ)$ in the domain \eqref{cal_d}. For example, put $\lambda_1^\circ=\lambda^\circ$ and $\lambda_2^\circ = (1-\tfrac{\epsilon^2}{2})\lambda^\circ$ where $\lambda^\circ$ is a sufficiently large positive real number -- same as in our choice $(t^\circ,\lambda^\circ)$ for a reference point of $(M\times \CC)'$. 
The value of $\Omega_{\alpha,\beta}(\lambda_1^\circ,\lambda_2^\circ)$ is defined via the principal branch of the logarithm. The value at other points depends on the choice of a reference path. Thanks to our choice of $\epsilon$, all logarithmic terms $\log (1-\xi^k (\lambda_2/\lambda_1)^{1/l})$ for $1\leq k\leq l-1$ are in fact single valued in the domain \eqref{cal_d}. 
Note that if $(t,\lambda_1,\lambda_2)\in (M\times \CC^2)^\infty_\epsilon$ and $\lambda_1\neq \lambda_2$, then $(\lambda_1,\lambda_2)$ belongs to the domain \eqref{cal_d}. Therefore, we can define $\Omega_{\alpha,\beta}(\lambda_1,\lambda_2)$, that is, we can view $\Omega_{\alpha,\beta}(\lambda_1,\lambda_2)$ as a multi-valued analytic function on $(M\times \CC^2)^\infty_\epsilon\setminus{\{\lambda_1\neq \lambda_2\}}$. 

\medskip

\begin{proposition}\label{prop:pan}
a) The phase series $W_{\alpha,\beta}(t,\lambda_1,\lambda_2)$ extends analytically along any path in the domain
\beq\label{phase_d}
\{ (t,\lambda_1,\lambda_2)\in (M\times \CC^2)'_\epsilon\ |\ 
|\lambda_1-\lambda_2|<\epsilon |\lambda_i|,\ i=1,2\}.
\eeq

b) The phase series is symmetric: 
\ben
W_{\alpha,\beta}(t,\lambda_1,\lambda_2)=
W_{\beta,\alpha}(t,\lambda_2,\lambda_1). 
\een

c) The coefficient $b_\alpha(t,\lambda)$ of the ancestor HQEs is a multi-valued analytic function on 
$(M\times \CC)'$ compatible with the monodromy representation: the analytic continuation of $b_{\alpha}(t,\lambda)$ along a closed loop $C$ is $b_{r_C(\alpha)}(t,\lambda)$. 

d) The central constant \eqref{vir_cc} can be expressed explicitly in terms of the periods as follows:
\ben
C(t,\lambda)=\frac{1}{2} \sum_{i=1}^{N-2}(
I^{(1)}_{\alpha_i}(t,\lambda), (\lambda-E\bullet)
I^{(1)}_{\beta_i}(t,\lambda)).
\een
Moreover, $C(t,\lambda)$ is a single valued analytic function on $(M\times \CC)'$.
\end{proposition}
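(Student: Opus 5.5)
The plan is to obtain all four parts from the decomposition \eqref{prop-phase}, $W_{\alpha,\beta}=\Omega_{\alpha,\beta}(t,\lambda_1,\lambda_2)-\Omega_{\alpha,\beta}(\lambda_1,\lambda_2)$. The first term is controlled by the phase form $\mathcal{W}_{\alpha,\beta}$ via \eqref{d-prop}. The second term is given explicitly by \eqref{calp}.

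\textbf{Part a).} Both summands are multivalued analytic on the punctured domain. The first extends along paths in $(M\times\CC^2)''_\epsilon$ by integrating $\mathcal{W}_{\alpha,\beta}$. The second lives on \eqref{cal_d}, and the extra condition $|\lambda_1-\lambda_2|<\epsilon|\lambda_i|$ is exactly what is needed for it to be defined. It remains to extend across the diagonal $\lambda_1=\lambda_2$.
\begin{itemize}
\item Near the diagonal, the only singular term of $\mathcal{W}_{\alpha,\beta}$ is $((\lambda_1-E\bullet)I^{(0)}_\alpha,I^{(0)}_\beta)\,d\log(\lambda_1-\lambda_2)$. By Saito's formula \eqref{saf}, its residue on the diagonal is $(\alpha|\beta)$, a constant.
\item In \eqref{calp}, the only term singular at $\lambda_1=\lambda_2$ is $(\alpha|\beta)\log(1-(\lambda_2/\lambda_1)^{1/l})$, whose logarithmic coefficient is the same $(\alpha|\beta)$. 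The other terms are single valued there by the choice of $\epsilon$.
\item So in the difference the logarithmic monodromy around the diagonal cancels. The remainder is locally bounded, hence extends holomorphically by Riemann's removable singularity theorem. This is the same residue computation that was used for $(M\times\CC^2)^\infty_\epsilon$, now carried out at every point of the thickened diagonal.
\end{itemize}

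\textbf{Part b).} By part a), it suffices to prove symmetry on $(M\times\CC^2)^\infty_\epsilon$, where both sides are convergent series. Proposition \ref{prop:W} b), $W_{kl}^T=W_{lk}$, gives
\[
(W_{kl}I^{(l)}_\alpha(\lambda_1),I^{(k)}_\beta(\lambda_2))=(W_{lk}I^{(k)}_\beta(\lambda_2),I^{(l)}_\alpha(\lambda_1)),
\]
so the two series agree term by term. Analytic continuation then propagates the identity to the whole domain.

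\textbf{Part c).} By \eqref{b_Et}, $b_\alpha(t,\lambda)=b_\alpha(\lambda)e^{W_{\alpha,\alpha}(t,\lambda,\lambda)}$, and by part a) the restriction to the diagonal is analytic on $(M\times\CC)'$.
\begin{itemize}
\item Loops around $\lambda=0$ at large $|\lambda|$ are handled by the explicit formula \eqref{bE}. Since $(\tau^k(\alpha)|\alpha)$, $\operatorname{rk}$ and $\deg$ are $\tau$-invariant up to integer shifts, these loops act by $\alpha\mapsto\tau(\alpha)$.
\item The fundamental group is generated by such loops together with simple loops $L$ approaching $u_i$ in a transverse direction. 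For the latter we use Proposition \ref{prop:ph_period} with $\beta=\alpha$. Continuing $\Omega_{\alpha,\alpha}(t,\Lambda_1,\Lambda_2)$ along $\widehat L$ yields $\Omega_{w\alpha,w\alpha}$ plus the constant $-2\pi\ii(\alpha|\varphi)\langle\varphi,\alpha\rangle$.
\item Exponentiating, this constant contributes only a factor $e^{-2\pi\ii\cdot\text{integer}}=1$, because $(\alpha|\varphi),\langle\varphi,\alpha\rangle\in\ZZ$ on the lattice.
\item Meanwhile $\Omega_{\alpha,\alpha}(\lambda_1,\lambda_2)$ is unaffected by the loop, and the limit defining $b$ divides by $(\lambda_1-\lambda_2)^{(\alpha|\alpha)}$, which is also unchanged. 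Hence the continuation of $b_\alpha(t,\lambda)$ equals $b_{w\alpha}(t,\lambda)$.
\end{itemize}
The main obstacle is bookkeeping: checking that the branches of $\Omega_{\alpha,\beta}(\lambda_1,\lambda_2)$ and the reference paths match consistently when composing with the limit $\lambda_1\to\lambda_2$.

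\textbf{Part d).} Use $C=\tfrac12\mathrm{tr}(\cdot)\lambda^{-2}+\sum_i W_{\alpha_i,\beta_i}$ with $\lambda$-derivatives, obtained from \eqref{cos-vir-perp}. Equivalently, $C$ equals the residue term
\[
\operatorname{Res}\frac{d\lambda'}{\lambda'-\lambda}\,\partial_{\lambda'}\partial_\lambda\,\Omega_{\alpha_i,\beta_i}(t,\lambda',\lambda),
\]
as in the derivation of \eqref{cos-vir-perp}. Substituting \eqref{prop_der} and expanding $I^{(0)}_\alpha(t,\lambda')$ in Taylor series at $\lambda'=\lambda$, using $\partial_\lambda I^{(1)}=I^{(0)}$, the residue reduces to $(I^{(1)}_{\alpha_i},(\lambda-E\bullet)I^{(1)}_{\beta_i})$ up to the factor $\tfrac12$ summed over $i$. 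Single-valuedness follows because the monodromy acts on $K_\perp$ preserving $(\ |\ )$. The sum $\sum_i\alpha_i\otimes\beta_i$ is the inverse of the restricted intersection form, and $\Phi$-reflections and $\tau$ preserve $K_\perp$ and $K_0$, so this tensor is invariant.
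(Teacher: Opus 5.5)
Parts a)--c) follow the paper's argument: the $(\alpha|\beta)\log(\lambda_1-\lambda_2)$ singularity cancels via Saito's formula, symmetry comes from $W_{kl}^T=W_{lk}$, and the monodromy of $b_\alpha$ comes from Proposition~\ref{prop:ph_period} together with integrality of $(\alpha|\varphi)\langle\varphi,\alpha\rangle$. Your separate treatment of the big loop around $\lambda=0$ is redundant, since that loop is a product of simple loops around the $u_i$, but it is harmless. The gaps are in part d).

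\textbf{The residue computation does not close as described.} Expanding via \eqref{prop_der}, the residue is $\partial_\lambda C^{(1)}_{\alpha,\beta}-\tfrac12 C^{(2)}_{\alpha,\beta}$, where $C^{(k)}_{\alpha,\beta}=(I^{(k)}_\alpha(t,\lambda),(\lambda-E\bullet)I^{(0)}_\beta(t,\lambda))$. The correct relation is $\partial_\lambda I^{(k)}=I^{(k+1)}$, not $\partial_\lambda I^{(1)}=I^{(0)}$. Using it, the residue equals $\tfrac12(I^{(1)}_\alpha,(\lambda-E\bullet)I^{(1)}_\beta)+\tfrac12X_{\alpha,\beta}$, where $X_{\alpha,\beta}=(I^{(2)}_\alpha,(\lambda-E\bullet)I^{(0)}_\beta)+2(I^{(1)}_\alpha,I^{(0)}_\beta)+(I^{(1)}_\alpha,(\lambda-E\bullet)I^{(1)}_\beta)$. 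Nothing in a Taylor expansion makes $X$ vanish. The paper supplies the missing input: the $\lambda$-equation of the second structure connection, $(\lambda-E\bullet)I^{(k)}=(\theta+\tfrac12-k)I^{(k-1)}$, together with $\theta^T=-\theta$, gives $X_{\alpha,\beta}=0$. Alternatively, differentiating Saito's formula \eqref{saf} twice gives $X_{\alpha,\beta}+X_{\beta,\alpha}=0$; since $\sum_i\alpha_i\otimes\beta_i$ is symmetric, $\sum_i X_{\alpha_i,\beta_i}=0$.

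\textbf{The single-valuedness argument rests on a false claim.} The reflections $r_\varphi$, $\varphi\in\Phi$, fix $K_0$ pointwise but do not preserve $K_\perp$. For instance, $\pi_0(1)=\delta/l+L-1\neq 0$ and $(\pi_\perp(1)|1)=2$. Hence $r_1(\pi_\perp(1))=\pi_\perp(1)-2$ has $K_0$-component $-2\pi_0(1)\neq0$, so ``the monodromy acts on $K_\perp$'' is not available. What makes the argument work is that $I^{(k)}_\gamma(t,\lambda)=0$ for $k\geq 1$ and $\gamma\in K_0$. Indeed, by Proposition~\ref{prop:cp}, $I^{(k)}_{L-1}(\lambda)=I^{(k)}_\delta(\lambda)=0$ for $k\geq1$, and $I^{(k)}(t,\lambda)$ only involves $I^{(k+j)}(\lambda)$ with $j\geq 0$. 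Hence the bilinear expression $(\alpha,\beta)\mapsto(I^{(1)}_\alpha,(\lambda-E\bullet)I^{(1)}_\beta)$ factors through $K/K_0$. The monodromy acts on $K/K_0$ by isometries of the induced nondegenerate form. Since $K_\perp\to K/K_0$ is an isometric isomorphism, the image of $\sum_i\alpha_i\otimes\beta_i$ is the inverse of that form, and is therefore invariant.
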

\proof
a) This is a direct consequence of identity \eqref{prop-phase} and the analyticity of the phase series $W_{\alpha,\beta}(t,\lambda_1,\lambda_2)$ at $\lambda_1=\lambda_2$. If we consider only points $(t,\lambda_1,\lambda_2)\in (M\times \CC^2)''_\epsilon$ such that $(\lambda_1,\lambda_2)$ belongs to the domain \eqref{cal_d}, then the analytic extension exist because both $\Omega_{\alpha,\beta}(t,\lambda_1,\lambda_2)$ and $\Omega_{\alpha,\beta}(\lambda_1,\lambda_2)$ can be extended. It remains only to check that the difference $\Omega_{\alpha,\beta}(t,\lambda_1,\lambda_2)-
\Omega_{\alpha,\beta}(\lambda_1,\lambda_2)$ does not have singularities along the diagonal $\lambda_1=\lambda_2$. The singular term in $\Omega_{\alpha,\beta}(\lambda_1,\lambda_2)$ is $(\alpha|\beta)\log (\lambda_1-\lambda_2)$ while the singularity of $\Omega_{\alpha,\beta}(t,\lambda_1,\lambda_2)$ is concentrated in an integral of the form $\int_{(t,\lambda_1',\lambda_2')}^{(t,\lambda_1,\lambda_2)}
\mathcal{W}_{\alpha,\beta}$ where the integration path is in some small neighborhood of a point on the diagonal $\lambda_1=\lambda_2$. We have to check that  
\beq\label{log_sing}
\int_{(t,\lambda_1',\lambda_2')}^{(t,\lambda_1,\lambda_2)} \mathcal{W}_{\alpha,\beta}-
(\alpha|\beta)\log (\lambda_1-\lambda_2)
\eeq
is holomorphic at $\lambda_1=\lambda_2$. This is a local computation. Put $u:=\lambda_1-\lambda_2$ and $\lambda:=\lambda_2$. The singular term in the phase form has the form 
\ben
\frac{du}{u} \, ( (\lambda+u-E\bullet)
I_\alpha^{(0)}(t,\lambda+u),
I^{(0)}_\beta(t,\lambda) ) .
\een
Expanding the integrand into Taylor series at $u=0$, we get 
\ben
\frac{du}{u} \,\Big(
(\alpha|\beta) + u\, \Big(
I^{(0)}_\alpha(t,\lambda)+(\lambda-E\bullet) I^{(1)}_\alpha(t,\lambda)\, ,\, 
I^{(0)}_\beta(t,\lambda)\Big) + O(u^2) \, \Big),
\een
where we used Saito's formula 
$(\alpha|\beta)=((\lambda-E\bullet) I^{(0)}_\alpha(t,\lambda),
I^{(0)}_\beta(t,\lambda))$. Integrating, we get  $(\alpha|\beta)\log u +$ regular terms in $u$, that is, the integral in \eqref{log_sing} has a logarithmic singularity which cancels out with the logarithmic term in \eqref{log_sing}. 

b) 
This is a direct corollary of Proposition \ref{prop:W}, part b). 

c) According to Proposition \ref{prop:be}, we have $b_\alpha(t,\lambda)=b_\alpha(\lambda) e^{W_{\alpha,\alpha}(t,\lambda,\lambda)}$.  The fact that $b_\alpha(t,\lambda)$ extends analytically along any path in $(M\times \CC)'$ is a corollary of a). It remains to prove that the coefficient is compatible with the monodromy representation. More generally, let us prove that 
$
B^{\alpha,\beta}(t,\Lambda_1,\Lambda_2)= e^{
\Omega_{\alpha,\beta}(t,\Lambda_1,\Lambda_2)
}$
is compatible with the monodromy representation for all $\alpha,\beta\in \Phi$. Since the fundamental group $\pi_1((M\times \CC)',(t^\circ,\lambda^\circ))$ is generated by loops with a transversal direction, it is sufficient to prove the compatibility for loops $L$ for which we can apply the formula from Proposition \ref{prop:ph_period}. The analytic continuation along $\widehat{L}=\{(t,\Lambda_1-\Lambda_2+\lambda)\ |\ \lambda\in L\}$ transforms 
$B^{\alpha,\beta}(t,\Lambda_1,\Lambda_2)$ into 
\ben
B^{\alpha,\beta}(t,\Lambda_1,\Lambda_2) 
e^{\int_{\widehat{L}}\mathcal{W}_{\alpha,\beta}} =
B^{r_\varphi(\alpha),r_\varphi(\beta)}(t,\Lambda_1,\Lambda_2)
e^{-2\pi \ii  (\alpha|\varphi)\, 
\langle \varphi,\beta\rangle }=
B^{r_\varphi(\alpha),r_\varphi(\beta)}(t,\Lambda_1,\Lambda_2)
\een
where we used that if $\alpha,\beta,\varphi\in \Phi$, then  $\langle \varphi,\beta\rangle\in \ZZ$ and $(\alpha|\varphi)\in \ZZ$. 

d)  Using the formula for $L(t,\lambda)$ in Proposition \ref{prop:anc_cv}, we get that the central constant is 
\ben
C(t,\lambda)= \sum_{i=1}^{N-2}
\operatorname{Res}_{\lambda'=\lambda}
\partial_{\lambda'}\partial_\lambda
\Omega_{\alpha_i,\beta_i}(t,\lambda',\lambda)\, 
\frac{d\lambda'}{\lambda'-\lambda}\, .
\een
Recalling formula \eqref{prop_der} we get 
\ben
C(t,\lambda)= \sum_{i=1}^{N-2}
\operatorname{Res}_{\lambda'=\lambda}
\partial_\lambda \Big(
\frac{1}{\lambda'-\lambda}\, (
I^{(0)}_{\alpha_i}(t,\lambda'), (\lambda-E\bullet)
I^{(0)}_{\beta_i}(t,\lambda) )\Big)\, 
\frac{d\lambda'}{\lambda'-\lambda}.
\een
Let us expand the following pairing into Taylor series at $\lambda'=\lambda$: 
\ben
(
I^{(0)}_{\alpha}(t,\lambda'), (\lambda-E\bullet)
I^{(0)}_{\beta}(t,\lambda)) =
(\alpha|\beta) +
\sum_{k=1}^\infty 
C^{(k)}_{\alpha,\beta}(t,\lambda)\, 
\frac{(\lambda'-\lambda)^k}{k!}\, ,
\een
where 
\ben
C^{(k)}_{\alpha,\beta}(t,\lambda)= (
I^{(k)}_\alpha(t,\lambda), 
(\lambda-E\bullet)
I^{(0)}_{\beta}(t,\lambda) ).
\een
Straightforward computation yields
\beq\label{C-res}
\operatorname{Res}_{\lambda'=\lambda}
\partial_\lambda \Big(
\frac{1}{\lambda'-\lambda}\, (
I^{(0)}_{\alpha}(t,\lambda'), (\lambda-E\bullet)
I^{(0)}_{\beta}(t,\lambda) )\Big)\, 
\frac{d\lambda'}{\lambda'-\lambda}= 
\partial_\lambda C^{(1)}_{\alpha,\beta}(t,\lambda) -
\frac{1}{2}
C^{(2)}_{\alpha,\beta}(t,\lambda).
\eeq
Using the differential equations of the second structure connection we transform $C_{\alpha,\beta}^{(k)}$ as follows:
\ben
C^{(k)}_{\alpha,\beta}(t,\lambda) = 
((\lambda-E\bullet) I^{(k)}_\alpha(t,\lambda), 
I^{(0)}_\beta(t,\lambda) ) =
((\theta+\tfrac{1}{2}-k) I^{(k-1)}_\alpha(t,\lambda), 
I^{(0)}_\beta(t,\lambda) )\, .
\een
Using the above formula we transform \eqref{C-res} into
\ben
((\theta-\tfrac{1}{2}) I^{(1)}_\alpha(t,\lambda), 
I^{(0)}_\beta(t,\lambda) ) +
((\theta-\tfrac{1}{2}) I^{(0)}_\alpha(t,\lambda), 
I^{(1)}_\beta(t,\lambda) )- 
\frac{1}{2}\,
((\theta-\tfrac{3}{2}) I^{(1)}_\alpha(t,\lambda), 
I^{(0)}_\beta(t,\lambda) ).
\een
Combining the 1st and the 3rd terms we get
\ben
\frac{1}{2}\,
((\theta+\tfrac{1}{2}) I^{(1)}_\alpha(t,\lambda), 
I^{(0)}_\beta(t,\lambda) ) + 
((\theta-\tfrac{1}{2}) I^{(0)}_\alpha(t,\lambda), 
I^{(1)}_\beta(t,\lambda) ).
\een
Using that $\theta^T=-\theta$ and that $(\theta-\tfrac{1}{2})
I^{(0)}(t,\lambda) = (\lambda-E\bullet) I^{(1)}(t,\lambda)$, it is straightforward to find that the above pairings add up to  
$\tfrac{1}{2}(I^{(1)}_\alpha(t,\lambda),(\lambda-E\bullet)I^{(1)}_\beta(t,\lambda))$. 
\qed

\subsection{Analytic interpretation of the ancestor HQEs}
\label{sec:ai_hqe}
We are in position to obtain an analytic interpretation of the ancestor HQEs. Let us expand 
\beq\label{Omega-an}
\Omega_{\rm aff}(t,\lambda)\,
(\A_t\otimes \A_t)|_{
\widehat{\mathbf{f}}_{L-1}(t,\lambda)\otimes 1 -
1\otimes \widehat{\mathbf{f}}_{L-1}(t,\lambda) =
2\pi\ii m }
\eeq
into a formal power series of the following form:
\ben
\sum_{g\in \tfrac{1}{2}\ZZ} \,
\Omega^{(g)}_{m',m''}(t,\lambda) \, 
\hbar^{g-1}
\frac{(\mathbf{q}'(z)+z)^{m'}}{m'!} 
\frac{(\mathbf{q}''(z)+z)^{m''}}{m''!}. 
\een
We would like to prove that $\Omega^{(g)}_{m',m''}(t,\lambda)$ can be expressed analytically via the ancestor polynomials $A^{(g)}_{m',m''}$, $\Lambda^{(g)}_{m',m''}$, and the  following theta series:
\beq\label{theta-a}
\theta_a(\tau,z):=
\sum_{n\in \ZZ} 
e^{\pi\ii \tau (n^2+2an)+2\pi \ii\, z\, (n+a) }=
e^{-\pi \ii \tau a^2}\theta_{a,0}(\tau,z),
\eeq
where $a:=-\tfrac{m}{2l}$ and $\theta_{a,b}(\tau,z)$ is the so-called theta function with characteristics. The above theta series is convergent and it defines an analytic function in two variables for all $(\tau,z)\in \mathbb{H}\times \CC$ where $\mathbb{H}:=\{\tau\in \CC\ |\ \operatorname{Im}(\tau)>0\}$. For more details we refer to Chapter I of Mumford's book \cite{Mu}.  

\medskip

Let us expand the following part of the ancestor HQEs  
\ben
\Big(
\sum_{\alpha\in \Phi_{\rm aff}}
b_\alpha(t,\lambda) 
\Gamma^{\alpha}(t,\lambda)\otimes 
\Gamma^{\alpha}(t,\lambda)\Big)\,
(\A_t\otimes \A_t)|_{
\widehat{\mathbf{f}}_{L-1}(t,\lambda)\otimes 1 -
1\otimes \widehat{\mathbf{f}}_{L-1}(t,\lambda) =
2\pi\ii m }
\een
into formal power series of the following form:
\ben
\sum_{g\in \tfrac{1}{2}\ZZ} \,
\Theta^{(g)}_{m',m''}(t,\lambda) \, 
\hbar^{g-1}
\frac{(\mathbf{q}'(z)+z)^{m'}}{m'!} 
\frac{(\mathbf{q}''(z)+z)^{m''}}{m''!} 
\een
Recalling Corollary \ref{cor:wave} we get  
\ben
\Theta^{(g)}_{m',m''}(t,\lambda)=
\sum_{\alpha\in \Phi_{\rm aff}}
b_\alpha(t,\lambda)\,
e^{m\, (I^{(-1)}_\alpha(t,\lambda),1)}\, 
A^{(g)}_{m',m''}(\mathbf{f}_\alpha(t,\lambda)).
\een
 The affine roots have the form $\alpha=\beta+n\delta$ where $\beta\in \Phi_\perp$ and $n\in \ZZ$. The above sum can be reorganized as follows:
\ben
\sum_{\beta\in \Phi_\perp}\,
\left(
\sum_{n\in \ZZ}
b_{\beta+n\delta}(t,\lambda)\,
e^{m\, (I^{(-1)}_{\beta+n\delta}(t,\lambda),1)}\, 
A^{(g)}_{m',m''}(
\mathbf{f}_\beta(t,\lambda)+n \mathbf{f}_\delta(t,\lambda))
\right).
\een
\begin{lemma}\label{le:I-1}
The following formulas hold.
\begin{enumerate}
\item[a)]
$I^{(-1)}_\delta(\lambda)=l \, 
(\lambda-(2\pi\ii +\log Q)P)$.
\item[b)]
$(I^{(-1)}_\delta(t,\lambda),1)=-l \, 
(2\pi\ii +t_{01}+\log Q)$.
\item[c)]
$W(\mathbf{f}^+_\delta,\mathbf{f}^+_\delta)=t_{01}l^2 $.
\item[d)]
$W(\mathbf{f}^+_\beta,\mathbf{f}^+_\delta)= l\,\Big(
(I^{(-1)}_\beta(\lambda),1) - 
(I^{(-1)}_\beta(t,\lambda),1)\Big).
$
\end{enumerate}
\end{lemma}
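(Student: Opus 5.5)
The plan is to compute everything directly from the explicit formula for the calibrated periods in Proposition~\ref{prop:cp} and the definition \eqref{periods} of the periods, after one observation: $\delta$ has only the two components $1$ and $P$. We have $\operatorname{rk}(\delta)=l$ and $\operatorname{deg}(\delta)=-l$, as recorded in the proof of Lemma~\ref{le:vo-splitting}. We also have $\chi_{j,p}(\delta)=0$ for all $1\leq j\leq 3$, $1\leq p\leq a_j-1$. The reason is that $\tau(\delta)=\delta$ by Proposition~\ref{prop:rad}, while by Proposition~\ref{prop:sd}~c) $\tau$ acts on the $\phi_{j,p}$-component of $\Psi$ by the nontrivial scalar $e^{-2\pi\ii p/a_j}$. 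Hence that component of $\Psi(\delta)$ must vanish.

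For part a), substitute $m=1$ into Proposition~\ref{prop:cp}. This gives $I^{(-1)}_\delta(\lambda)=l\lambda+(-2\pi\ii l-l\log Q)P$, which is the claim. The same formula also shows that $I^{(0)}_\delta(\lambda)=l\cdot 1$ and that $I^{(k)}_\delta(\lambda)=0$ for $k\geq 1$. Indeed, for $k\geq 1$ the coefficients $1/\Gamma(1-k)$ and $1/\Gamma(-k)$ vanish. This vanishing is the key simplification for the remaining parts.

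For part b), use \eqref{periods}: $I^{(-1)}_\delta(t,\lambda)=\sum_{k\geq0}(-1)^kS_k(t)I^{(k-1)}_\delta(\lambda)$. Only $k=0,1$ survive, so $I^{(-1)}_\delta(t,\lambda)=I^{(-1)}_\delta(\lambda)-l\,S_1(t)1$. We then use $S_1(t)1=t$, which is recalled in the discussion of the descendant Fock space. Pairing with $1$ under the Poincar\'e pairing gives the result, using $(1,1)=0$, $(P,1)=1$ and $(t,1)=t_{01}$.

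For part c), the only nonzero term of $\mathbf{f}^+_\delta$ is $I^{(0)}_\delta=l\cdot1$. Hence $W(\mathbf{f}^+_\delta,\mathbf{f}^+_\delta)=l^2(W_{00}1,1)$. By Proposition~\ref{prop:W}~a) we have $W_{00}=S_1$, so this equals $l^2(t,1)=l^2t_{01}$.

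For part d), the same reduction gives
\[
W(\mathbf{f}^+_\beta,\mathbf{f}^+_\delta)=l\sum_{k\geq0}(-1)^k(W_{0k}I^{(k)}_\beta(\lambda),1),
\]
where I use the symmetry in Proposition~\ref{prop:W}~b) to fix the order of the arguments. Proposition~\ref{prop:W}~a) gives $W_{0k}=S_{k+1}$. On the other hand, pairing \eqref{periods} with $1$ gives
\[
(I^{(-1)}_\beta(t,\lambda),1)-(I^{(-1)}_\beta(\lambda),1)=\sum_{k\geq1}(-1)^k(S_kI^{(k-1)}_\beta(\lambda),1)=-\sum_{k\geq0}(-1)^k(S_{k+1}I^{(k)}_\beta(\lambda),1),
\]
and comparing the two expressions yields d). The main place where an error could occur is the sign and transpose bookkeeping in d). This includes the $(-1)^{k+l}$ convention in the definition of the phase series, and which slot of $W$ carries $\delta$. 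I would double-check it by testing the identity on the lowest-order term, $k=0$, where both sides reduce to $(S_1I^{(0)}_\beta,1)$ up to sign.
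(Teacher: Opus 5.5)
Your proposal is correct and follows the paper's proof essentially step by step. The ingredients are the same: the vanishing $I^{(k)}_\delta(\lambda)=0$ for $k\geq 1$ together with $I^{(0)}_\delta(\lambda)=l$, the identity $S_1(t)1=t$, and $W_{0k}=S_{k+1}$ from Proposition~\ref{prop:W}. The only difference is minor: you derive $\chi_{j,p}(\delta)=0$ from $\tau(\delta)=\delta$ and Proposition~\ref{prop:sd}~c), whereas the paper simply asserts it (it also follows directly by applying the ring homomorphism $\chi_{j,p}$ to the defining formula \eqref{delta}).
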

\proof
Note that $\operatorname{rk}(\delta)=l$ and $\operatorname{deg}(\delta)=-l$. Part a) is an immediate corollary of Proposition \ref{prop:cp} -- note that $\chi_{j,p}(\delta)=0.$ For part b), first note that by part a) $I^{(0)}_\delta(\lambda)=l$ and $I^{(n)}_\delta(\lambda)=0$ for $n>0$. Then we get
\ben
I^{(-1)}_\delta(t,\lambda) = 
I^{(-1)}_\delta(\lambda)-
S_1(t) I^{(0)}_\delta(\lambda) =
I^{(-1)}_\delta(\lambda) - l t,
\een
where we used that $S_1(t)1 = t$. The above formula, written in more details, takes the following form:
\beq\label{per_delta}
I^{(-1)}_\delta(t,\lambda) = l\Big(
(\lambda-t_{00})\phi_{0,0} -
(2\pi\ii + t_{01}+\log Q)\phi_{0,1} -
\sum_{j=1}^3 \sum_{p=1}^{a_j-1} t_{j,p}\phi_{j,p}
\Big).
\eeq
The formula in part b) is obtained by extracting the $\phi_{0,1}$-component of the above formula. 
Finally, for part c), since $\mathbf{f}^+_\delta(\lambda) = l$, we get
\ben
W(\mathbf{f}^+_\delta,\mathbf{f}^+_\delta) = 
l^2 W(1,1)= l^2 (W_{0,0}(t) 1,1)=
l^2 (S_1(t) 1,1) = t_{01} l^2.
\een
It remains to prove d). Since $\mathbf{f}^+_\delta(\lambda) = l$, we get
\ben
W(\mathbf{f}^+_\beta,\mathbf{f}^+_\delta) = 
l\sum_{k=0}^\infty (-1)^k(W_{0,k} I^{(k)}(\lambda),1)=
l\sum_{k=0}^\infty (-1)^k(S_{k+1} I^{(k)}(\lambda),1)
\een
where we used that $W_{0,k}=S_{k+1}$ -- see Proposition \ref{prop:W}, a). On the other hand, by definition 
\ben
I^{(-1)}_\beta(t,\lambda) = 
I^{(-1)}_\beta(\lambda) + 
\sum_{k=1}^\infty 
(-1)^k S_k(t) I^{(-1+k)}_\beta(\lambda) =
I^{(-1)}_\beta(\lambda) -
\sum_{k=0}^\infty 
(-1)^k S_{k+1}(t) I^{(k)}_\beta(\lambda).
\een
The formula in part d) follows easily from the last two identities.
\qed

\medskip

Combining Lemma \ref{le:I-1}, Proposition \ref{prop:be}, and Lemma \ref{le:bE-splitting} we find 
\ben
b_{\beta+n\delta}(t,\lambda)= 
b_{\beta+n\delta}(\lambda) e^{W(
\mathbf{f}_{\beta+n\delta}^+,
\mathbf{f}_{\beta+n\delta}^+)}=
b_\beta(t,\lambda)\, 
Q^{n^2l^2 +2\operatorname{rk}(\beta)\, nl}\, 
e^{t_{01}\, n^2l^2}\, 
e^{2n W(
\mathbf{f}_{\beta}^+,
\mathbf{f}_{\delta}^+)}\, .
\een
According to Proposition \ref{prop:cp}, $(I^{(-1)}_\beta(\lambda),1)=2\pi\ii\, 
\operatorname{deg}(\beta)-
\operatorname{rk}(\beta)\,\log Q$. Using this formula and 
Lemma \ref{le:I-1}, d), we get
\ben
e^{2n W(
\mathbf{f}_{\beta}^+,
\mathbf{f}_{\delta}^+)} = 
e^{2nl 
\Big(2\pi\ii\,  \operatorname{deg}(\beta)-
\operatorname{rk}(\beta)\,\log Q\Big) -
2nl\, (I^{(-1)}_\beta(t,\lambda),1)}=
Q^{-2nl\, \operatorname{rk}(\beta)}
e^{ -
2nl\, (I^{(-1)}_\beta(t,\lambda),1)},
\een
where we used that $l\, \operatorname{deg}(\beta)\in \ZZ.$ We get the following formula:
\beq\label{delta-period}
b_{\beta+n\delta}(t,\lambda)= 
b_\beta(t,\lambda)\, 
(Qe^{t_{01}})^{n^2l^2}\, 
e^{-2nl\, (I^{(-1)}_\beta(t,\lambda),1)  }\, .
\eeq
Using Lemma \ref{le:I-1}, b), we get 
\ben
e^{m\, (I^{(-1)}_{\beta+n\delta}(t,\lambda),1)  } =
e^{m\, (I_{\beta}(t,\lambda),1)}\,
(Qe^{t_{01})^{-mnl}}.
\een
Combining these computations we get:
\ben
\Theta^{(g)}_{m',m''}(t,\lambda)=
\sum_{\beta\in \Phi_\perp}\,
b_\beta(t,\lambda)\, 
\left(
\sum_{n\in \ZZ}
(Qe^{t_{01}})^{n^2l^2-mnl}
e^{(m-2nl)\, (I^{(-1)}_{\beta}(t,\lambda),1)}\, 
A^{(g)}_{m',m''}(
\mathbf{f}_\beta+n \mathbf{f}_\delta
\right),
\een
where for simplicity we suppressed the dependence of $\mathbf{f}_\beta$ and $\mathbf{f}_\delta$ on $t$ and $\lambda$.
In order to make the connection with the theta series \eqref{theta-a} transparent, let us put 
\beq\label{theta_v}
Qe^{t_{01}} =: e^{\pi\ii \tau/ l^2},\quad 
(I^{(-1)}_\beta(t,\lambda),1)=: -\pi\ii z_\beta /l,\quad
a:= -m/2l.
\eeq
Then the above expression becomes 
\ben
\sum_{\beta\in \Phi_\perp}\,
b_\beta(t,\lambda)\, 
\left(
\sum_{n\in \ZZ}
e^{\pi\ii \tau (n^2+2an) + 2\pi i z_\beta (n+a)}\, 
A^{(g)}_{m',m''}(
\mathbf{f}_\beta+n \mathbf{f}_\delta)
\right).
\een
It is clear that the coefficient $\Theta^{(g)}_{m',m''}(t,\lambda)$ can be expressed as a polynomial in $I^{(n)}_\beta(t,\lambda)$, $I^{(n)}_\delta(t,\lambda)$, and the derivatives $\theta^k_a(\tau,z):=\partial_z^k \theta_a(\tau,z)$ of the theta function. Indeed, let us expand the ancestor polynomial in the following way:
\ben
A^{(g)}_{m',m''}(\mathbf{f}_\beta + n \mathbf{f}_\delta)=
\sum_{k\geq 0} 
A^{(g)}_{m',m'',k}(\mathbf{f}_\beta,\mathbf{f}_\delta) 
(2\pi\ii (n+a))^k,
\een
where $A^{(g)}_{m',m'',k}=0$ for $k\gg 0$. Then 
\beq\label{co-Theta}
\Theta^{(g)}_{m',m''}(t,\lambda)=
\sum_{\beta\in \Phi_\perp}
\sum_{k=0}^\infty
b_\beta(t,\lambda)\, 
\theta^k_a(\tau,z_\beta)\,
A^{(g)}_{m',m'',k}(
\mathbf{f}_\beta(t,\lambda),
\mathbf{f}_\delta(t,\lambda) ),
\eeq
where $\tau$ and $z_\beta$ are multi-valued analytic functions on $(M\times \CC)'$ defined by \eqref{theta_v}.

\medskip

It remains to analyze the Virasoro term in the HQEs, that is, 
\ben
\sum_{n\in \ZZ}
\left.
b_{n\delta}(t,\lambda)
\Gamma^{n\delta}(t,\lambda)\otimes 
\Gamma^{-n\delta}(t,\lambda) \, 
L(t,\lambda) \, (\A_t\otimes \A_t)\right|_{
\widehat{\mathbf{f}}_{L-1}(t,\lambda)\otimes 1 -1\otimes
\widehat{\mathbf{f}}_{L-1}(t,\lambda) = 2\pi \ii\, m
} \ .
\een
Note that 
\ben
b_{n\delta}(t,\lambda)=
b_{n\delta}(\lambda) \, 
e^{n^2 W(
\mathbf{f}^+_\delta,
\mathbf{f}^+_\delta )
} = 
(Q e^{t_{01}})^{n^2l^2}
\een
where we used Lemma \ref{le:I-1}, part c) and formula \eqref{bE}.
Recalling Proposition \ref{prop:coset-poly}, we get that the Virasoro term in the ancestor HQEs has an expansion of the form 
\ben
\sum_{g\in \tfrac{1}{2}\ZZ} \,
\Lambda^{(g)}_{m',m''}(t,\lambda) \, 
\hbar^{g-1}
\frac{(\mathbf{q}'(z)+z)^{m'}}{m'!} 
\frac{(\mathbf{q}''(z)+z)^{m''}}{m''!} 
\een
where 
\ben
\Lambda^{(g)}_{m',m''}(t,\lambda)= 
\sum_{n\in \ZZ} (Qe^{t_{01}})^{n^2l^2-mnl}\, 
L^{(g)}_{m',m''}(
n\mathbf{f}_\delta,
\mathbf{f}_{\alpha_1},\dots,
\mathbf{f}_{\alpha_{N-2}}, C).
\een
Again, let us take the Taylor series expansion
\ben
L^{(g)}_{m',m''}(n \mathbf{f}_\delta,
\mathbf{f}_{\alpha_1},\dots,
\mathbf{f}_{\alpha_{N-2}}, C) =
\sum_{k\geq 0} 
L^{(g)}_{m',m'',k}(\mathbf{f}_\delta,
\mathbf{f}_{\alpha_1},\dots,
\mathbf{f}_{\alpha_{N}}, C) 
(2\pi\ii (n+a))^k.
\een
We have $L^{(g)}_{m',m'',k}=0$ for $k\gg 0$ and 
\beq\label{co-Lambda}
\Lambda^{(g)}_{m',m''}(t,\lambda)=
\sum_{k=0}^\infty
\theta^k_a(\tau,0)\,
L^{(g)}_{m',m'',k}(
\mathbf{f}_\delta(t,\lambda),
\mathbf{f}_{\alpha_1}(t,\lambda),\dots,
\mathbf{f}_{\alpha_{N}}(t,\lambda), C ).
\eeq
\begin{proposition}\label{prop:anal-HQE}
The coefficients $\Omega^{(g)}_{m',m''}(t,\lambda)$ have the following properties:
\begin{enumerate}
\item[a)]
$\Omega^{(g)}_{m',m''}(t,\lambda)$ is a single-valued analytic function on $(M\times \CC)'$. 
\item[b)]
The total ancestor potential is a solution to the ancestor HQEs of $\PP^1_a$ if and only if the coefficients $\Omega^{(g)}_{m',m''}(t,\lambda)$ extend holomorphically across the discriminant. 
\end{enumerate}
\end{proposition}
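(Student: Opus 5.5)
Write $\Omega^{(g)}_{m',m''}=\Theta^{(g)}_{m',m''}-\Lambda^{(g)}_{m',m''}$, with $\Theta$ and $\Lambda$ given by \eqref{co-Theta} and \eqref{co-Lambda}. For part a) I will first establish analyticity and then single-valuedness. Each building block is analytic on the universal cover of $(M\times\CC)'$: the periods $I^{(k)}_\alpha(t,\lambda)$, the twisted periods and their $\nu$-derivatives, the coefficients $b_\beta(t,\lambda)$ (Proposition~\ref{prop:pan}, c), and the central constant $C(t,\lambda)$ (Proposition~\ref{prop:pan}, d). The theta derivatives $\theta^k_a(\tau,z_\beta)$ are analytic because $\operatorname{Im}\tau>0$. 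This holds since $|Qe^{t_{01}}|<1$ on $M$, and the variables $\tau$, $z_\beta$ of \eqref{theta_v} are analytic on $(M\times\CC)'$. The sums over $\Phi_\perp$ and over $k$ are finite. Hence $\Omega^{(g)}_{m',m''}$ is a multi-valued analytic function on $(M\times\CC)'$.

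For single-valuedness I will return to the unreorganized form. This is the sum over $\alpha\in\Phi_{\rm aff}$ of $b_\alpha(t,\lambda)\,e^{m(I^{(-1)}_\alpha(t,\lambda),1)}A^{(g)}_{m',m''}(\mathbf f_\alpha(t,\lambda))$, minus the imaginary-root/Virasoro part. Let $C$ be a loop in $(M\times\CC)'$. By Proposition~\ref{prop:rv} the monodromy $r_C$ is generated by the reflections $r_\varphi$ with $\varphi\in\Phi$, so it preserves $\Phi$ and the pairings $(\,|\,)$ and $\langle\,,\rangle$. Continuation along $C$ replaces $\mathbf f_\alpha$ by $\mathbf f_{r_C\alpha}$, and by Proposition~\ref{prop:pan}, c) it replaces $b_\alpha$ by $b_{r_C\alpha}$. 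Two facts are then needed.

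First, $r_C$ fixes the radical $K_0=\ZZ(L-1)\oplus\ZZ\delta$ pointwise. Indeed, $r_\varphi(x)=x-(x|\varphi)\varphi=x$ for $x$ in the radical.

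Second, $r_C$ does not preserve $\Phi_{\rm aff}$; it may shift roots by multiples of $L-1$. I will handle this by showing that the weight $e^{m(I^{(-1)}_\alpha,1)}$ together with the substitution compensates the shift. By the identity in Lemma~\ref{le:suba}, with $\mathbf f_{L-1}$ the substitution gives $e^{m(I^{(-1)}_{\alpha+j(L-1)},1)}=e^{m(I^{(-1)}_\alpha,1)}e^{2\pi\ii mj}=e^{m(I^{(-1)}_\alpha,1)}$. Similarly $\Gamma^{L-1}\otimes\Gamma^{-(L-1)}$ acts trivially after the substitution. Therefore the summand depends only on the class of $\alpha$ modulo $\ZZ(L-1)$. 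Since $\Phi=\Phi_{\rm aff}+\ZZ(L-1)$ bijectively by Proposition~\ref{prop:ers-slicing}, c), the sum over $\Phi_{\rm aff}$ equals the sum over $\Phi/\ZZ(L-1)$, and $r_C$ permutes this quotient. The Virasoro part involves only the radical vectors $n\delta$, which are fixed, and $L(t,\lambda)$. Proposition~\ref{prop:anc_cv} writes $L(t,\lambda)$ in a basis-independent way via $h_\nu$, and $h_\nu$ is monodromy invariant by \eqref{comp_r}, so this part is invariant as well. I expect the main obstacle to be making the $L-1$ shift precise, including the $e^{\pm 2\pi\ii}$ ambiguities in $\tau$ and $z_\beta$. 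I would verify it directly on the theta expressions: a shift of $z_\beta$ by an integer multiple of $2l$-related periods should reproduce $\theta_a$ up to the factor $e^{2\pi\ii\cdot\text{integer}}$, because $a=-m/2l$.

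For part b), set $\Lambda$-adic coefficients aside and note that HQE regularity means polynomiality in $\lambda$ of each coefficient. This is a statement at $t$ fixed, about expansion at $\lambda=\infty$. By a), $\Omega^{(g)}_{m',m''}(t,\cdot)$ is single valued on $\CC\setminus\{u_i(t)\}$ and, by construction, has an expansion in $\lambda^{-1/l}$ near $\infty$. Single-valuedness forces this to be a Laurent series in $\lambda$, and its growth at $\infty$ is at most polynomial, since the periods grow polynomially up to logarithms, which must cancel. If the function extends holomorphically across the discriminant, it is entire in $\lambda$ with polynomial growth, hence a polynomial. Conversely, polynomiality gives an entire function. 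Holomorphy in $t$ of the extension follows by Hartogs/Riemann extension applied to a function that is locally bounded on the complement of a hypersurface.
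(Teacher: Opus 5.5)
Your part a) is essentially the paper's argument. Analyticity comes from the building blocks of \eqref{co-Theta} and \eqref{co-Lambda}. Invariance of $L(t,\lambda)$ comes from the $h_\nu$-dual-basis formula of Proposition~\ref{prop:anc_cv} together with \eqref{comp_r}, invariance of $\Gamma^{n\delta}$ from $\delta$ lying in the radical, and $b_\alpha\mapsto b_{r_C(\alpha)}$ from Proposition~\ref{prop:pan}~c). Viewing the summand as a function on $\Phi/\ZZ(L-1)\cong\Phi_{\rm aff}$, which $r_C$ permutes because it fixes $L-1$, is a tidy repackaging of the paper's splitting of $r_\varphi$ into an affine Weyl reflection followed by a translation by a multiple of $L-1$. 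You should also record $b_{\alpha+j(L-1)}(t,\lambda)=b_\alpha(t,\lambda)$, which holds because $L-1$ is in the radical, $\operatorname{rk}(L-1)=0$, $\operatorname{deg}(L-1)=1$ and $\mathbf{f}^+_{L-1}=0$. One side claim is false, though you do not really use it: the reflections $r_\varphi$ do not preserve the Euler pairing. For $\varphi=1$, $y=L$ one gets $\langle r_\varphi\varphi,r_\varphi y\rangle=\langle L,1\rangle=0\neq 2=\langle 1,L\rangle$. What \eqref{comp_r} actually gives is that the $\nu$- and $(-\nu)$-twisted monodromies are adjoint with respect to $h_\nu$, and that is exactly what the invariance of $L(t,\lambda)$ requires.

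The genuine gap is in part b), at the growth estimate at $\lambda=\infty$. You attribute polynomial growth to the polynomial growth of the periods. But $\Theta^{(g)}_{m',m''}$ also contains the factors $\theta^k_a(\tau,z_\beta(t,\lambda))$, which encode the infinite sum over $n\in\ZZ$. These are entire in $z$ but have Gaussian growth in $\operatorname{Im}z$. So if $z_\beta$ grew even linearly in $\lambda$, the coefficient could be an entire function of $\lambda$ that is not a polynomial, and Liouville would give nothing. The same point is hidden in your assertion that the expansion at $\infty$ is, ``by construction'', a series in $\lambda^{-1/l}$ with only finitely many positive powers; otherwise the factors $e^{-2nl(I^{(-1)}_\beta(t,\lambda),1)}$ in \eqref{delta-period} would produce infinitely many.

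The missing step, which is the heart of the paper's proof of b), is that $z_\beta=-\tfrac{l}{\pi\ii}(I^{(-1)}_\beta(t,\lambda),1)$ stays bounded near $\lambda=\infty$. By Proposition~\ref{prop:cp}, the term $\operatorname{rk}(\beta)\lambda\,\phi_{0,0}$ of $I^{(-1)}_\beta(\lambda)$ pairs to zero with $1$, so $(I^{(-1)}_\beta(\lambda),1)=2\pi\ii\operatorname{deg}(\beta)-\operatorname{rk}(\beta)\log Q$ is constant. The corrections $\sum_{k\geq 1}(-1)^k(S_k(t)I^{(k-1)}_\beta(\lambda),1)$ involve only $I^{(j)}_\beta(\lambda)$ with $j\geq 0$, which are bounded at infinity. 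Hence the theta factors are bounded, while in $\Lambda^{(g)}_{m',m''}$ theta appears only at $z=0$. Polynomial growth of $\Omega^{(g)}_{m',m''}(t,\cdot)$ then follows, and your Liouville argument goes through.
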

\proof
a)
Let us first point out that according to Proposition \ref{prop:pan} the central constant $C(t,\lambda)$ is a quadratic polynomial in $I^{(1)}_{\alpha_i}$ ($1\leq i\leq N-2)$.
Since the period vectors $I^{(n)}_\alpha(t,\lambda)$ $(a\in H)$ are multi-valued analytic functions on the complement of the discriminant, by using formulas 
\eqref{co-Theta} and \eqref{co-Lambda}, we get that the function $\Omega^{(g)}_{m',m''}(t,\lambda)=
\Theta^{(g)}_{m',m''}(t,\lambda)+
\Lambda^{(g)}_{m',m''}(t,\lambda)$ is also multi-valued analytic. We need only to prove that it is single valued, that is, it is invariant under the analytic continuation along a simple loop around the discriminant. To begin with, note that the Virasoro term $L(t,\lambda)$ is invariant. This follows from Proposition \ref{prop:anc_cv}. Indeed, let us look at the 1-form in the $\nu$-plane whose residue at $\nu=0$ is $L(t,\lambda)$ -- see Proposition \ref{prop:anc_cv}. Using the reflection formula \eqref{comp_r} and the relations:
\ben
\sum_{i=1}^N h_\nu(\alpha_i,\varphi)\alpha^i= 
\sum_{i=1}^N h_\nu(\varphi,\alpha^i)\alpha_i=
\varphi\quad 
h_\nu(\varphi,\varphi)=q+q^{-1},
\een
where $\varphi\in \Phi$ is the reflection vector corresponding to the simple loop,
it is straightforward to check that the $1$-form is invariant under the analytic continuation. The vertex operators $\Gamma^{n\delta}(t,\lambda)$ are invariant under the analytic continuation because $\delta$ is in the radical of the intersection form.  Finally, it remains to prove that the remaining part of $\Omega_{\rm aff}(t,\lambda)$ is also invariant. The coefficients $b_{\beta}(t,\lambda)$ are compatible with the monodromy representation -- see Proposition \ref{prop:pan}, part c),i.e., if $B$ is a simple closed loop around the discriminant, then the analytic continuation of $b_\beta(t,\lambda)$ along $B$ is $b_{\beta-(\beta|\varphi)\varphi}(t,\lambda)$ where $\varphi$ is the reflection vector (or vanishing cycle) corresponding to $B$. Let us write $\varphi=\alpha+n\delta+k(L-1)$ where $\alpha\in \Phi_\perp$ and $k,l\in \ZZ$ are some integers. Then
\ben
\beta-(\beta|\varphi)\varphi = 
\beta-(\beta|\alpha+n\delta)(\alpha+n\delta) -
(\beta|\alpha)\, k (L-1). 
\een
The affine ancestor Casimir operator $\Omega_{\rm aff}(t,\lambda)$ is invariant under the action of the affine Weyl group, that is, under the reflections $\beta\mapsto \beta- (\beta|\alpha+n\delta)(\alpha +n\delta)$, because such reflections permute the terms in the sum over $\Phi$. Therefore, we need only to prove that the HQEs are invariant under the translation $\beta\mapsto \beta+k (L-1)$. It is easy to see that all coefficients are invariant, that is, $b_{\beta+k(L-1)}(t,\lambda)=b_\beta(t,\lambda)$. For the vertex operators, we have 
\ben
\Gamma^{\beta+k(L-1)}(t,\lambda)\otimes 
\Gamma^{-\beta-k(L-1)}(t,\lambda) = 
e^{
(\widehat{\mathbf{f}}_{L-1}\otimes 1- 1\otimes
\widehat{\mathbf{f}}_{L-1})/\sqrt{\hbar} }\, 
\Gamma^{\beta}(t,\lambda)\otimes 
\Gamma^{-\beta}(t,\lambda).
\een
After the substitution 
$\widehat{\mathbf{f}}_{L-1}\otimes 1- 1\otimes
\widehat{\mathbf{f}}_{L-1}) = 2\pi\ii \, m\sqrt{\hbar}$ the RHS coincides with $\Gamma^{\beta}(t,\lambda)\otimes 
\Gamma^{-\beta}(t,\lambda)$. This completes the proof of part a). 

b) By definition, if the ancestor HQEs hold, then $\Omega^{(g)}_{m',m''}(t,\lambda)$ is a polynomial in $\lambda$. The statement that it extends across the discriminant is obvious. We have to prove the converse. Let us fix $t\in M$ generic such that $u_i(t)\neq u_j(t)$ for all $i\neq j$. The period vectors have singularities only at $\lambda=u_i(t)$ ($1\leq i\leq N$) and $\lambda=\infty.$ In order to prove that $\Omega^{(g)}_{m',m''}(t,\lambda)$ are polynomials in $\lambda$ it is sufficient to prove that $\Omega^{(g)}_{m',m''}(t,\lambda)$ has at most polynomial growth at $\lambda=\infty$. Indeed, if we knew that, then we can subtract a polynomial from $\Omega^{(g)}_{m',m''}(t,\lambda)$ such that the resulting function will be bounded at infinity and hence it must be constant by the Liouville's theorem. The period vectors have at most polynomial growth because they are solutions to an ODE with a Fuchsian singularity at $\lambda=\infty$. By formula \eqref{co-Lambda}, we get that $\Lambda^{(g)}_{m',m''}(t,\lambda)$ has at most polynomial growth at infinity. To prove that $\Theta^{(g)}_{m',m''}(t,\lambda)$ has polynomial growth, let us recall formula \eqref{co-Theta}. The only issue could come from the theta functions 
$\theta^k_a(\tau,z_\beta)$. However, 
\ben
z_\beta= -\frac{l}{\pi\ii}\, (I^{(-1)}_\beta(t,\lambda),1) = 
(I^{(-1)}_\beta(\lambda),1) + 
\sum_{k=1}^\infty (-1)^k
(S_k(t)I^{(k-1)}_\beta(\lambda),1).
\een
According to Proposition \ref{prop:cp}, 
\ben
I^{(-1)}_\beta(\lambda)=
\operatorname{rk}(\beta)\,\lambda + (
2\pi\ii \, \operatorname{deg}(\beta) -
\operatorname{rk}(\beta)\, \log Q) P +
\sum_{j=1}^3\sum_{p=1}^{a_j-1} 
\frac{\lambda^{1-\tfrac{p}{a_j}}}{
1-\tfrac{p}{a_j} }\, 
\chi_{j,p}(E) \,\phi_{j,p}.
\een
Using this formula, we get that $z_\beta$ is bounded near $\lambda=\infty$. Hence all coefficients $\theta^k_a(\tau,z_\beta)$ in \eqref{co-Theta} are also bounded. 
\qed

\section{From ancestors to KdV}\label{sec:an_kdv}

Finally, we are in position to prove the main result -- Theorem \ref{thm:HQE_d}. Thanks to Proposition \ref{prop:HQE_a} and Proposition \ref{prop:anal-HQE}, it is sufficient to prove that the coefficients of the formal power series \eqref{Omega-an} are analytic at $\lambda=u_i(t)$ for all $i=1,2,\dots,N$ where $t\in M$ is a generic semi-simple point. Without loss of generality, we may assume that $t=t^\circ$ is the reference point. Let us fix a reference path in $\CC:=\{t \}\times \CC$ to a point $\lambda_i^\circ$ sufficiently close to $u_i :=u_i(t )$ such that the connection formula in Proposition \ref{prop:S-conj} holds, i.e., if we put $\delta_i:=\lambda_i^\circ-u_i $, then $\delta_i$ is a transverse direction for the reference path. Let $\varphi$ be the reflection vector corresponding to the reference path. 

Our proof follows the strategy from Givental-Milanov \cite{GM2005}. Namely, we use the higher-genus reconstruction formula \eqref{hgr} and the fact that the total descendant potential $\mathcal{D}_{\rm pt}(\hbar,\mathbf{q})$ is a tau-function of the KdV hierarchy. We need two formulations of the KdV hierarchy -- one as a Kac--Wakimoto hierarchy and another one as a reduction of the KP-hierarchy. The necessary background will be given in Section \ref{sec:KdV}. The main steps in the proof are as follows. The vertex operators $\Gamma^\alpha(t,\lambda)\otimes \Gamma^{-\alpha}(t,\lambda)$ in \eqref{Omega-an} corresponding to real roots $\alpha$ split into three types depending on whether $(\alpha|\varphi)=0$, $\pm 1$, or $\pm 2$. In the first case they are holomorphic at $\lambda=u_i $  so we can exclude them from our analyzes. The operators in the second group, will be split into pairs: $\alpha$ will be paired with $r_\varphi(\alpha)=\alpha-(\alpha|\varphi)\varphi$. We will prove that the contribution of each pair is holomorphic at $\lambda=u_i $ because it gives an operator conjugate to the bilinear operator of the KP-reduction form of the KdV hierarchy. Finally, the operators in the 3rd group will be combined with the Virasoro term and we will see that the resulting operator is conjugate to the bilinear operator of the Kac--Wakimoto form of the KdV hierarchy.  

\subsection{Two forms of KdV}\label{sec:KdV}

Let us apply our construction of HQEs to the case of a point. 
Now $H=H^*({\rm pt},\CC)=\CC$, the grading operator is $\theta=0$, and the Iritani's map is $\Psi:\ZZ\to \CC$, $\Psi(n)= \sqrt{2\pi} \, n$. The Euler pairing is $\langle m, n\rangle = mn$ while the intersection pairing is $(m|n)=2mn$. The set of roots, that is, 
$\Phi=\{\varphi\in \ZZ\ |\ (\varphi|\varphi)=2\}$, has only two elements $\pm \alpha$ where $\alpha=1$. In other words, the lattice $K^0({\rm pt})=\ZZ$ is identified with the root lattice of type $A_1$. The period vectors are given by 
\ben
I^{(n)}_\alpha(u,\lambda) =\sqrt{2\pi} \, 
\frac{(\lambda-u)^{-n-\tfrac{1}{2}}}{\Gamma(-n+\tfrac{1}{2})},\quad n\in \ZZ.
\een
The propagator series is 
\beq\label{prop-pt}
\Omega_{\rm pt}(u,\lambda_1,\lambda_2):=
\Omega_{\alpha,\alpha}(u,\lambda_1,\lambda_2)= 
2\log \left(
\frac{
\sqrt{\lambda_1-u} - \sqrt{\lambda_2-u} 
}{
\sqrt{\lambda_1-u} + \sqrt{\lambda_2-u} 
}\right).
\eeq
The coefficients of the Casimir operator (see Proposition \ref{prop:be}) become
\beq\label{kw_coef}
b_{\pm\alpha}(u,\lambda)= 
\operatorname{\lim}_{\lambda'\to \lambda} 
\left(
\frac{
\sqrt{\lambda'-u} - 
\sqrt{\lambda-u} }{
\lambda'-\lambda}
\right)^2\, 
\frac{1}{(\sqrt{\lambda'-u} + \sqrt{\lambda-u})^2 } = \frac{1}{16(\lambda-u)^2}.
\eeq
The Virasoro term has the following form 
\ben
L(u,\lambda) = \frac{1}{4}\, 
: \Big(
\phi_\alpha(u,\lambda)\otimes 1 -1\otimes \phi_\alpha(u,\lambda)\Big)^2: + 
C(u,\lambda),
\een
where $\phi_\alpha(u,\lambda):=\partial_\lambda \widehat{\mathbf{f}}_\alpha(u,\lambda)$ and the central constant is 
\ben
C(u,\lambda)= \frac{1}{4} (
I^{(1)}_\alpha(u,\lambda), (\lambda-u) 
I^{(1)}_\alpha(u,\lambda) ) =  
\frac{1}{8(\lambda-u)^2}. 
\een
The HQEs in this case take the form: we say that a formal power series 
$\mathcal{A}\in \CC[q_0](\!(\hbar)\!)[\![q_1+1,\dots]\!]$ is a solution to the ancestor HQEs of a point if 
\beq\label{KW-pt}
  \Omega^{\rm KW} (u,\lambda) \mathcal{A}\otimes \mathcal{A}\quad 
\mbox{ is regular (i.e. polynomial) in }\quad \lambda,
\eeq
where 
\ben
  \Omega^{\rm KW} (u,\lambda) = 
b_\alpha(u,\lambda) 
\Gamma^\alpha(u,\lambda)\otimes 
\Gamma^{-\alpha}(u,\lambda)+
b_{-\alpha}(u,\lambda) 
\Gamma^{-\alpha}(u,\lambda)\otimes 
\Gamma^{\alpha}(u,\lambda) - 
L(u,\lambda). 
\een
Let us compare the HQEs \eqref{KW-pt} with the HQEs of the KdV hierarchy. 
According to Givental, a formal power series $\mathcal{A}$ is a tau-function of the KdV hierarchy if 
\beq\label{KP-pt}
\frac{1}{\sqrt{\lambda-u}} 
\left(
\Gamma^{\alpha/2}(u,\lambda)\otimes 
\Gamma^{-\alpha/2}(u,\lambda) - 
\Gamma^{-\alpha/2}(u,\lambda)\otimes 
\Gamma^{\alpha/2}(u,\lambda)
\right) \,
\mathcal{A}\otimes \mathcal{A}
\eeq
is regular in $\lambda$. This formulation of the KdV hierarchy is obtained as a reduction of the KP hierarchy (see \cite{Giv2003}). 
\begin{proposition}\label{prop:kdv-kw}
Every solution to the HQEs \eqref{KP-pt} is a solution to the HQEs \eqref{KW-pt}.
\end{proposition}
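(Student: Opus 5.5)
The plan is to obtain the Kac--Wakimoto operator $\Omega^{\rm KW}(u,\lambda)$ as a Borcherds-type product of the KP-reduction operator \eqref{KP-pt} with a conjugate copy of itself, in the spirit of Section~\ref{sec:Bnp}. Put
\ben
X(\lambda):=\frac{1}{\sqrt{\lambda-u}}\Big(\Gamma^{\alpha/2}(u,\lambda)\otimes\Gamma^{-\alpha/2}(u,\lambda)-\Gamma^{-\alpha/2}(u,\lambda)\otimes\Gamma^{\alpha/2}(u,\lambda)\Big).
\een
Since the normally ordered product of two exponentials of free fields is again an exponential, the natural candidate is the composition $X(\lambda')X(\lambda)$ acting on $\mathcal{A}\otimes\mathcal{A}$, or more precisely its Laurent expansion at $\lambda'=\lambda$. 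The $\pm\alpha/2$ factors combine into $\Gamma^{\pm\alpha}$ (from the terms with equal signs) or into products $\Gamma^{\alpha/2}(\lambda')\Gamma^{-\alpha/2}(\lambda)$ (from the terms with opposite signs). Expanding the latter to second order in $\lambda'-\lambda$ produces the current $\phi_\alpha$, the normally ordered square $:(\phi_\alpha\otimes1-1\otimes\phi_\alpha)^2:$, and a scalar.

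The steps are as follows.

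\textbf{Step 1: phase factors.} Using \eqref{prop-pt} (with $\alpha$ replaced by $\pm\alpha/2$, which scales the exponent by $\pm1/4$), I will write down
\ben
\Gamma^{\epsilon\alpha/2}(u,\lambda')\Gamma^{\epsilon'\alpha/2}(u,\lambda)=B^{\epsilon\epsilon'}(\lambda',\lambda)\,:\Gamma^{\epsilon\alpha/2}(u,\lambda')\Gamma^{\epsilon'\alpha/2}(u,\lambda):,
\een
where $B^{\epsilon\epsilon'}(\lambda',\lambda)=\big(\tfrac{\sqrt{\lambda'-u}-\sqrt{\lambda-u}}{\sqrt{\lambda'-u}+\sqrt{\lambda-u}}\big)^{\epsilon\epsilon'/2}$. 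On the tensor square, the two factors contribute the same exponent, so for equal signs the total factor is a double zero of the form $(\lambda'-\lambda)^2/(16(\lambda-u)^2)$ up to higher order, and for opposite signs it is a double pole $16(\lambda-u)^2/(\lambda'-\lambda)^2$.

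\textbf{Step 2: expansion of $X(\lambda')X(\lambda)$.} I will Laurent-expand $X(\lambda')X(\lambda)$ in $w=\lambda'-\lambda$ and take the coefficient $\operatorname{Res}_{w=0}X(\lambda+w)X(\lambda)\,w\,dw$, i.e.\ the $(1)$-product. The equal-sign terms yield $b_{\pm\alpha}\Gamma^{\pm\alpha}\otimes\Gamma^{\mp\alpha}$ with exactly the coefficients \eqref{kw_coef}. The opposite-sign terms have a double pole; their $w^{-1}$-coefficient should produce $-L(u,\lambda)$, including the central constant $\tfrac{1}{8(\lambda-u)^2}$ computed above. Checking that all the numerical constants match, including the factor $1/(\lambda-u)$ coming from the prefactors $1/\sqrt{\lambda'-u}\sqrt{\lambda-u}$, is the main computational obstacle. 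If the $(1)$-product does not give exactly $\Omega^{\rm KW}$, I would use a combination of the $(0)$- and $(1)$-products together with $\partial_\lambda$ of $X$, choosing coefficients to match.

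\textbf{Step 3: regularity.} If $X(\lambda)(\mathcal{A}\otimes\mathcal{A})$ is regular in $\lambda$, I need to conclude that $X(\lambda')X(\lambda)(\mathcal{A}\otimes\mathcal{A})$ has no singularity at $\lambda=u$ after taking residues. This does not follow immediately, since $X$ is bilinear rather than an operator on solutions. Instead, I would work with $X(\lambda')$ acting on $\mathcal{A}\otimes\mathcal{A}$ and observe that the relevant mode is obtained by applying $X$ to the regular series $X(\lambda)(\mathcal{A}\otimes\mathcal{A})$ in the $\lambda'$ variable. Locality makes the residue at $\lambda'=\lambda$ equal to a difference of the two orderings' contour integrals around $\lambda'=\infty$ and $\lambda'=u$. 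Regularity of each single application then shows the $\lambda'=u$ contributions vanish, leaving a polynomial in $\lambda$. This is the point I expect to be the genuine difficulty, and I would first try to reduce it to the corresponding argument for the KP hierarchy given in \cite{Giv2003}.
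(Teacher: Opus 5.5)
There is a genuine gap in Steps 1--2. Your per-factor exponent $\epsilon\epsilon'/2$ is correct, but on the tensor square the two exponents add up to $\epsilon\epsilon'$, not $2\epsilon\epsilon'$.

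\begin{itemize}
\item In $X(\lambda')X(\lambda)$ the equal-sign terms carry the \emph{simple} zero $\frac{\sqrt{\lambda'-u}-\sqrt{\lambda-u}}{\sqrt{\lambda'-u}+\sqrt{\lambda-u}}\approx\frac{\lambda'-\lambda}{4(\lambda-u)}$. The opposite-sign terms carry its reciprocal, a simple pole.
\item The double zero $(\lambda'-\lambda)^2/16(\lambda-u)^2$ you wrote is the phase factor of $\Gamma^{\alpha}(\lambda')\Gamma^{\alpha}(\lambda)$ inside a single tensor factor, i.e.\ the one defining $b_\alpha$. It is not the phase factor of $X(\lambda')X(\lambda)$.
\item Hence $X(\lambda+w)X(\lambda)$ has at most a simple pole at $w=0$. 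Your $(1)$-product therefore vanishes identically, and $X_{(0)}X=-8$ is a scalar.
\item The fallback also fails. Since $(\partial^kX)_{(n)}X=(-1)^kn(n-1)\cdots(n-k+1)\,X_{(n-k)}X$, no combination of nonnegative products and $\lambda$-derivatives can yield $\Omega^{\rm KW}$.
\end{itemize}

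The identity you need, which is the heart of the paper's proof, involves a \emph{negative} product:
\[
4\,\Omega^{\rm KW}(u,\lambda)=X_{(-2)}X=\operatorname{Res}_{\lambda'=\lambda}\frac{d\lambda'}{(\lambda'-\lambda)^2}\,X(\lambda')X(\lambda),
\]
i.e.\ the coefficient of $w^{1}$. Put $s=\lambda-u$ and $\Phi=\phi_\alpha\otimes 1-1\otimes\phi_\alpha$.
\begin{itemize}
\item The equal-sign terms are $\frac{w}{4s^2}\,\Gamma^{\pm\alpha}\otimes\Gamma^{\mp\alpha}+O(w^2)$, which gives $4b_{\pm\alpha}$.
\item The opposite-sign terms are $-\frac{1}{w}\big(4+\frac{w^2}{4s^2}+\cdots\big)\big(2+\frac{w^2}{4}:\Phi^2:+\cdots\big)$; the $\partial\Phi$ terms cancel between the two signs. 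Their $w^1$-coefficient is $-:\Phi^2:-\frac{1}{2s^2}=-4L(u,\lambda)$.
\end{itemize}

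Because the product is negative, regularity is not automatic, and this is where Step 3 must do real work. Your idea of trading the residue at $\lambda'=\lambda$ for the two expansions can be made to work, but note that $X$ is \emph{odd}. Both coefficient functions in the product formula are antisymmetric under $\sqrt{\lambda'}\leftrightarrow\sqrt{\lambda}$, while the normally ordered sums are symmetric. So the two orderings enter with a plus sign, not as a difference, and in fact $X(\lambda')X(\lambda)+X(\lambda)X(\lambda')=-8\,\delta(\lambda',\lambda)$.

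The paper uses exactly this relation. Write (for $u=0$) $X(\lambda)=\sum_nX_{(n)}\lambda^{-n-1}$. The hypothesis says $X_{(n)}(\mathcal{A}\otimes\mathcal{A})=0$ for $n\ge 0$. The relation $X_{(m)}X_{(-n-1)}+X_{(-n-1)}X_{(m)}=-8\delta_{m,n}$ lets you move these modes to the right, giving
\[
X(\lambda')X(\lambda)(\mathcal{A}\otimes\mathcal{A})=-\frac{8}{\lambda'-\lambda}\,\mathcal{A}\otimes\mathcal{A}+\sum_{m,n\ge 0}(\lambda')^m\lambda^n\,X_{(-m-1)}X_{(-n-1)}(\mathcal{A}\otimes\mathcal{A}).
\]
The $(-2)$-residue kills the first term. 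It leaves $\sum_{m,n\ge0}m\,\lambda^{m+n-1}X_{(-m-1)}X_{(-n-1)}(\mathcal{A}\otimes\mathcal{A})$, which is manifestly regular. No reduction to Givental's KP argument is needed.
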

\proof
For simplicity, let us put $u=0$ and $\Gamma^a(0,\lambda)=\Gamma^a(\lambda)$. The same argument works in general too. Put 
\ben
\Omega^{\rm KdV}(\lambda)=
\frac{1}{\sqrt{\lambda}} 
\left(
\Gamma^{\alpha/2}(\lambda)\otimes 
\Gamma^{-\alpha/2}(\lambda) - 
\Gamma^{-\alpha/2}(\lambda)\otimes 
\Gamma^{\alpha/2}(\lambda).
\right) 
\een
Using formula \eqref{prop-pt}, it is straightforward to prove the following product formula for $\Omega^{\rm KdV}$:
\begin{align*}
\Omega^{\rm KdV}(\lambda') \Omega^{\rm KdV}(\lambda) = &
\iota_{\lambda',\lambda}
\frac{\sqrt{\lambda'}-\sqrt{\lambda}}{\sqrt{\lambda'}\sqrt{\lambda} (\sqrt{\lambda'}+\sqrt{\lambda})}
\sum_{\pm } 
:\Gamma^{\pm\alpha/2}(\lambda')\Gamma^{\pm\alpha/2}(\lambda):\otimes 
:\Gamma^{\mp\alpha/2}(\lambda')\Gamma^{\mp\alpha/2}(\lambda): +\\
& - 
\iota_{\lambda',\lambda}
\frac{\sqrt{\lambda'}+\sqrt{\lambda}}{
\sqrt{\lambda'}\sqrt{\lambda}(
\sqrt{\lambda'}-\sqrt{\lambda}) }
\sum_{\pm } 
:\Gamma^{\pm\alpha/2}(\lambda')\Gamma^{\mp\alpha/2}(\lambda):\otimes 
:\Gamma^{\mp\alpha/2}(\lambda')\Gamma^{\pm\alpha/2}(\lambda): \ ,
\end{align*}
where $\iota_{\lambda',\lambda}$ denotes the Laurent series expansion in the domain $|\lambda'|>|\lambda|$. Note that both sums are symmetric in $\sqrt{\lambda}$ and $\sqrt{\lambda'}$. Let us compute the anti-commutator
\beq\label{ancom}
\Omega^{\rm KdV}(\lambda') \Omega^{\rm KdV}(\lambda) + \Omega^{\rm KdV}(\lambda) \Omega^{\rm KdV}(\lambda') .
\eeq
The first line of the above product formula will contain a factor of the following formal delta function:
\ben
\delta(\sqrt{\lambda'},-\sqrt{\lambda}) = 
(\iota_{\lambda',\lambda} - \iota_{\lambda,\lambda'})
\frac{1}{\sqrt{\lambda'}+\sqrt{\lambda}} = 
\sum_{n\in \ZZ} (\sqrt{\lambda'})^{-n-1} (-\sqrt{\lambda})^n.
\een
Using $\delta(\sqrt{\lambda'},-\sqrt{\lambda})F(\sqrt{\lambda'},\sqrt{\lambda})=
F(-\sqrt{\lambda},\sqrt{\lambda})$, we get 
\ben
\delta(\sqrt{\lambda'},-\sqrt{\lambda})\Gamma^{\pm \alpha/2}(\lambda')=
\delta(\sqrt{\lambda'},-\sqrt{\lambda})\Gamma^{\mp \alpha/2}(\lambda).
\een
Now it is clear that the contribution of the first line of the product formula to the anti-commutator \eqref{ancom} is 
\ben
\frac{\sqrt{\lambda'}-\sqrt{\lambda}}{\sqrt{\lambda'} \sqrt{\lambda}} \,
\delta(\sqrt{\lambda'},-\sqrt{\lambda})\, 2 = 
\frac{4}{\sqrt{\lambda}} \, \delta(\sqrt{\lambda'},-\sqrt{\lambda})
\een
Since the second line of the product formula for $\Omega^{\rm KdV}$ is obtained from the first one by switching $\sqrt{\lambda}\mapsto -\sqrt{\lambda}$, its contribution to the anti-commutator must be 
\ben
-\frac{\sqrt{\lambda'}+\sqrt{\lambda}}{\sqrt{\lambda'} \sqrt{\lambda}} \,
\delta(\sqrt{\lambda'},\sqrt{\lambda})\, 2 = 
-\frac{4}{\sqrt{\lambda}} \, \delta(\sqrt{\lambda'},\sqrt{\lambda}).
\een
Finally, since
\ben
\frac{4}{\sqrt{\lambda}} \, \delta(\sqrt{\lambda'},-\sqrt{\lambda}) -
\frac{4}{\sqrt{\lambda}} \, \delta(\sqrt{\lambda'},\sqrt{\lambda}) = 
-8 \delta(\lambda',\lambda),
\een
we get 
\beq\label{acom}
\Omega^{\rm KdV}(\lambda') \Omega^{\rm KdV}(\lambda) + \Omega^{\rm KdV}(\lambda) \Omega^{\rm KdV}(\lambda') = - 8 \delta(\lambda',\lambda).
\eeq
On the other hand, using the above product formula, we get 
\ben
4\Omega^{\rm KW}(\lambda)= 
\Omega^{\rm KdV}(\lambda)_{(-2)} \Omega^{\rm KdV}(\lambda) =
\operatorname{Res}_{\lambda'=\lambda} 
\frac{d\lambda'\Omega^{\rm KdV}(\lambda') \Omega^{\rm KdV}(\lambda)}{(\lambda'-\lambda)^2}.
\een
Let us decompose $\Omega^{\rm KdV}(\lambda)=\sum_{n\in \ZZ} \Omega^{\rm KdV}_{(n)}\lambda^{-n-1}$. We get 
\beq\label{KWA}
4\Omega^{\rm KW}(\lambda)\mathcal{A} \otimes \mathcal{A}= 
\operatorname{Res}_{\lambda'=\lambda} 
\frac{d\lambda'}{(\lambda'-\lambda)^2} 
\sum_{m,n\in \ZZ}^\infty 
(\lambda')^{-m-1} \lambda^{n}
\Omega^{\rm KdV}_{(m)} \Omega^{\rm KdV}_{(-n-1)} \mathcal{A}\otimes \mathcal{A} . 
\eeq
Suppose that $\mathcal{A}$ is a solution to \eqref{KP-pt}. Then 
$\Omega^{\rm KdV}_{(n)}\mathcal{A}\otimes \mathcal{A}=0$ for all $n\geq 0$. The anti-commutation relations \eqref{acom} are equivalent to 
\ben
\Omega^{\rm KdV}_{(m)} \, \Omega^{\rm KdV}_{(-n-1)} + 
\Omega^{\rm KdV}_{(-n-1)}\, \Omega^{\rm KdV}_{(m)} = - 8 \delta_{m,n}.
\een
The sum on the RHS of \eqref{KWA} takes the following form:
\ben
\Big(
-\frac{8}{\lambda'-\lambda} + 
\sum_{m,n=0}^\infty 
(\lambda')^m \lambda^n
\Omega^{\rm KdV}_{(-m-1)} \Omega^{\rm KdV}_{(-n-1)}\Big) 
\mathcal{A}\otimes \mathcal{A} .
\een
Formula \eqref{KWA} becomes 
\ben
4\Omega^{\rm KW}(\lambda)\, (\mathcal{A}\otimes \mathcal{A}) = 
\sum_{m,n=0}^\infty m 
\Omega^{\rm KdV}_{(-m-1)}\Omega^{\rm KdV}_{(-n-1)}\, 
(\mathcal{A}\otimes \mathcal{A})\, 
\lambda^{m+n-1}.
\een
The RHS is clearly regular in $\lambda$. This proves that $\mathcal{A}$ is a solution to the HQEs \eqref{KW-pt}. 
\qed

According to Givental, the Witten conjecture (see \cite{Wi1991}) proved by Kontsevich (see \cite{Ko1992}) can be formulated as follows. The total descendant potential of a point $\mathcal{D}_{\rm pt}(\hbar,\mathbf{q})$ is a solution to the HQEs \eqref{KP-pt} for all $u\in \CC$. Using Proposition \ref{prop:kdv-kw}, we get that $\mathcal{D}_{\rm pt}(\hbar,\mathbf{q})$ is also a solution to the ancestor HQEs of the point \eqref{KW-pt}.

\begin{remark}
The operator $e^{((u_2-u_1)/z)\sphat}$ transforms a solution of the HQEs \eqref{KP-pt} for $u=u_1$ to a solution corresponding to $u=u_2$.  
Thanks to the string equation $(1/z)\sphat\, \mathcal{D}_{\rm pt}(\hbar,\mathbf{q})=0 $, the total descendant potential of a point is a solution to the HQEs \eqref{KP-pt} for all $u\in \CC$. 
\end{remark}

\begin{remark}
In our notation the Kac--Wakimoto form of the KdV hierarchy is given by the following bilinear equations:
\beq\label{KW-a1}
\operatorname{Res} \lambda d\lambda \, 
  \Omega^{\rm KW} (0,\lambda) \mathcal{A}\otimes \mathcal{A} =0.
\eeq 
More precisely, if $\mathcal{A}\in \CC[q_0,q_1,\dots]$, that is, if $\mathcal{A}$ is a polynomial solution to \eqref{KW-a1}, then according to Kac and Wakimoto $\mathcal{A}$ is a tau-function of the KdV hierarchy, that is, $\mathcal{A}$ satisfies \eqref{KP-pt}. Proposition \ref{prop:kdv-kw} implies that $\mathcal{A}$ is a solution to \eqref{KW-pt}. We get that the inverse of Proposition \ref{prop:kdv-kw} is true for polynomial tau-functions. In other words, although \eqref{KW-a1} seems to be just a corollary of \eqref{KW-pt}, in fact when it comes to polynomial solutions, the two system of bilinear equations are equivalent. It will be interesting to find out if \eqref{KW-pt} is equivalent to \eqref{KP-pt} in the case when $\mathcal{A}$ is a formal power series. 
\end{remark}


\subsection{From ancestors to the KdV reduction of KP}
Suppose that $\alpha$ is a real root such that $(\alpha|\varphi)=1$. 
The goal in this subsection is to prove that the coefficients of the formal
power series 
\beq\label{pm1_pair}
\Big(
b_\alpha(t ,\lambda)\Gamma^\alpha(t ,\lambda)\otimes
\Gamma^{-\alpha}(t ,\lambda)+
b_{r_\varphi(\alpha)}(t ,\lambda)
\Gamma^{r_\varphi(\alpha)}(t ,\lambda)\otimes
\Gamma^{-r_\varphi(\alpha)}(t ,\lambda)
\Big) \mathcal{A}_{t }\otimes \mathcal{A}_{t }
\eeq
are holomorphic at $\lambda=u _i$. As we already mentioned above, we have to conjugate vertex operators by $\widehat{R}(t ):=(R(t ,z))\sphat$. 
Let us introduce the vertex operators
\ben
\Gamma_{\rm pt}^{c}(u_i,\lambda)=
e^{c \widehat{\mathbf{f}^-}_{\rm pt}(u_i ,\lambda)}\, 
e^{c \widehat{\mathbf{f}^+}_{\rm pt}(u_i ,\lambda)},\quad 
c\in \CC,
\een
where
\ben
\mathbf{f}_{\rm pt}(u_i ,\lambda,z) = \sqrt{2\pi}
\sum_{n\in \ZZ} \frac{(\lambda-u_i )^{-n-\tfrac{1}{2}}}{
\Gamma(-n+\tfrac{1}{2})} e_i (-z)^n.
\een
These are the vertex operators of the point but there is a small difference coming from quantization
\ben
(e_i (-z)^{-k-1})\sphat = \sum_{a\in \mathcal{B}}
\frac{1}{\sqrt{\Delta_i}}\, 
\frac{\partial u_i}{\partial t_a} \, 
(\phi^a (-z)^{-k-1})\sphat = 
\frac{1}{\sqrt{\Delta_i \hbar}}\, q_{k,a}\, 
\frac{\partial u_i}{\partial t_a} =
\frac{1}{\sqrt{\Delta_i \hbar}}\, q_{k}(u_i).
\een
In other words, the vertex operators $\Gamma_{\rm pt}^{c}(u_i,\lambda)$ are obtained from the vertex operators of the point by rescaling $\hbar\mapsto \hbar \sqrt{\Delta_i}$ and replacing the scalar variables by $q_0(u_i),q_1(u_i),\dots$. Note that this is exactly what we need to do if we want to construct HQEs for the factors $\mathcal{D}_{\rm pt}(\hbar\sqrt{\Delta_i}, \mathbf{q}(u_i))$ in the higher-genus reconstruction formula \eqref{hgr}.  
\begin{proposition}\label{prop:R-conj_vo}
Let $\varphi$ be the reflection vector fixed above. 
\begin{enumerate}
\item[a)] 
The following formula holds:
\ben
\Gamma^{c\varphi}(t,\lambda) \, 
\widehat{R}(t) = 
e^{\tfrac{c^2}{2}\, V(\mathbf{f}^-_\varphi,\mathbf{f}^-_\varphi)}\, 
\widehat{R}(t)\, 
:e^{c\, \widehat{\mathbf{f}}_{\rm pt}(u_i,\lambda)}:\ ,
\een
where $c\in \CC$ is a scalar and 
\ben
V(\mathbf{f}^-_\varphi,\mathbf{f}^-_\varphi) =
\Omega(
(R(t,z)^{-1} \mathbf{f}^-_\varphi(t,\lambda,z))^+, 
(R(t,z)^{-1} \mathbf{f}^-_\varphi(t,\lambda,z))^-). 
\een
\item[b)] 
The formal series $V(\mathbf{f}^-_\varphi,\mathbf{f}^-_\varphi)$ is a convergent power series in $\lambda-u_i$ and the following formula holds:
\ben
V(\mathbf{f}^-_\varphi,\mathbf{f}^-_\varphi) = -
\lim_{\lambda_1\to \lambda_2}\Big(
\Omega^i_{\varphi,\varphi}(t,\lambda_1,\lambda_2)- 
\Omega_{\rm pt}(u_i,\lambda_1,\lambda_2) )\Big),
\een
where $\Omega_{\rm pt}(u_i,\lambda_1,\lambda_2)$ is the propagator series of the point (see \eqref{prop-pt}). 
\end{enumerate}
\end{proposition}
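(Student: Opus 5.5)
Part a) comes from formula \eqref{VOR}. We apply it with $R=R(t,z)$ and $\mathbf{f}=c\,\mathbf{f}_\varphi(t,\lambda,z)$, writing $\Gamma^{c\varphi}(t,\lambda)=e^{c\widehat{\mathbf{f}}^-_\varphi(t,\lambda)}e^{c\widehat{\mathbf{f}}^+_\varphi(t,\lambda)}$. Formula \eqref{VOR} gives
\[
\Gamma^{c\varphi}(t,\lambda)\widehat{R}(t)
= e^{\frac{c^2}{2}\Omega((R^{-1}\mathbf{f}^-_\varphi)^+,(R^{-1}\mathbf{f}^-_\varphi)^-)}\,\widehat{R}(t)\,:e^{c\,(R^{-1}\mathbf{f}_\varphi(t,\lambda,z))\sphat}:\ .
\]
The scalar prefactor is, by definition, $e^{\frac{c^2}{2}V(\mathbf{f}^-_\varphi,\mathbf{f}^-_\varphi)}$. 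It remains to identify $R(t,z)^{-1}\mathbf{f}_\varphi(t,\lambda,z)$ with $\mathbf{f}_{\rm pt}(u_i,\lambda,z)$. For this we use the definition of the reflection vector: near $\lambda=u_i$, along the fixed reference path, $I^{(m)}_\varphi(t,\lambda)=I^{(m)}_i(t,\lambda)$. In generating-function form this reads
\[
\mathbf{f}_\varphi(t,\lambda,z)=\sum_m I^{(m)}_i(t,\lambda)(-z)^m=R(t,z)\,\mathbf{f}_{\rm pt}(u_i,\lambda,z).
\]
To check this, expand $I^{(m)}_i=\sqrt{2\pi}\sum_k(-1)^kR_k e_i\,(\lambda-u_i)^{k-m-\frac12}/\Gamma(k-m+\frac12)$ and reindex $m\mapsto m-k$; the factor $(-1)^k(-z)^{m}$ becomes $z^k(-z)^{m-k}$, which is the $R_kz^k$ term acting on $\mathbf{f}_{\rm pt}$. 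One also has to justify composing with $\widehat{R}$ in a suitable topology. Here I would work $\lambda$-adically in $(\lambda-u_i)^{1/2}$, as in the remark after \eqref{VOR}, using tameness so that the composition is well defined.

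For part b), set $\mathbf{g}:=\mathbf{f}_{\rm pt}(u_i,\lambda,z)$, so that $R^{-1}\mathbf{f}^-_\varphi=R^{-1}(R\mathbf{g})^-$. Since $R$ is upper triangular in $z$, we have $(R\mathbf{g})^-=R\mathbf{g}^- -(R\mathbf{g}^-)^+$. Hence
\[
R^{-1}\mathbf{f}^-_\varphi=\mathbf{g}^- - R^{-1}(R\mathbf{g}^-)^+ .
\]
The second term lies in $H[\![z]\!]$, so $(R^{-1}\mathbf{f}^-_\varphi)^-=\mathbf{g}^-$ and $(R^{-1}\mathbf{f}^-_\varphi)^+=-R^{-1}(R\mathbf{g}^-)^+$. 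Therefore
\[
V(\mathbf{f}^-_\varphi,\mathbf{f}^-_\varphi)=-\Omega(R^{-1}(R\mathbf{g}^-)^+,\mathbf{g}^-).
\]
Symplecticity of $R$ gives $\Omega(R^{-1}a,b)=\Omega(a,Rb)$, so this equals $-\Omega((R\mathbf{g}^-)^+,R\mathbf{g}^-)$.

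To compare with the propagators, I polarize. Take $\lambda_1,\lambda_2$ near $u_i$ and write $\mathbf{g}_j:=\mathbf{f}_{\rm pt}(u_i,\lambda_j,z)$. The propagator from \eqref{propagator} is
\[
\Omega^i_{\varphi,\varphi}(t,\lambda_1,\lambda_2)=\Omega\big((R\mathbf{g}_1)^+,R\mathbf{g}_2\big),
\]
expanded in $(\lambda_2-u_i)^{1/2}$. The point propagator \eqref{prop-pt} is $\Omega(\mathbf{g}_1^+,\mathbf{g}_2)$. Split $\mathbf{g}_1=\mathbf{g}_1^++\mathbf{g}_1^-$. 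The term $\mathbf{g}_1^+$ contributes $\Omega(R\mathbf{g}_1^+,R\mathbf{g}_2)=\Omega(\mathbf{g}_1^+,\mathbf{g}_2)$, which cancels the point propagator exactly. What is left is
\[
\Omega\big((R\mathbf{g}_1^-)^+,R\mathbf{g}_2\big)
=\Omega\big((R\mathbf{g}_1^-)^+,R\mathbf{g}_2^-\big)+\Omega\big((R\mathbf{g}_1^-)^+,R\mathbf{g}_2^+\big).
\]
The last term vanishes because both arguments lie in $H[\![z]\!]$, which is Lagrangian. So
\[
\Omega^i_{\varphi,\varphi}-\Omega_{\rm pt}=\Omega\big((R\mathbf{g}_1^-)^+,R\mathbf{g}_2^-\big)=-V_{12},
\]
where $V_{12}$ is the polarized $V$-form. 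This difference has no singularity at $\lambda_1=\lambda_2$, so we may take the limit, and the limit is $-V(\mathbf{f}^-_\varphi,\mathbf{f}^-_\varphi)$, which is the claimed identity.

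The main obstacle is convergence. A priori $V$ is only a formal series, because $(R\mathbf{g}^-)^+$ involves infinite sums $\sum_k R_k(\lambda-u_i)^{k-m-\frac12}$ with divergent (Gevrey-type) coefficients. Convergence would follow from the identity just proved. The difference $\Omega^i_{\varphi,\varphi}-\Omega_{\rm pt}$ is analytic in the domain $(M\times\CC^2)^i_\epsilon$: the propagator is analytic there by the discussion before Proposition~\ref{prop:conf}, and $\Omega_{\rm pt}$ is explicit. Its logarithmic singularities along the diagonal cancel, by the same Saito-formula computation used in Proposition~\ref{prop:pan}(a). Hence it extends holomorphically across $\lambda_1=\lambda_2$. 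The delicate point is that this must be matched coefficient-by-coefficient with the formal expansion in $(\lambda-u_i)^{1/2}$. One also needs the half-integer powers to cancel in the diagonal limit, so that the result is a power series in $\lambda-u_i$. I would deduce this from invariance under $\sqrt{\lambda-u_i}\mapsto-\sqrt{\lambda-u_i}$, since $\mathbf{g}\mapsto-\mathbf{g}$ under that substitution and $V$ is quadratic.
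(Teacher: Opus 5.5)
Your proposal is correct and follows the paper's route. Part a) is \eqref{VOR} combined with $\mathbf{f}_\varphi(t,\lambda,z)=R(t,z)\mathbf{f}_{\rm pt}(u_i,\lambda,z)$. Part b) rests on the same propagator identity $\Omega^i_{\varphi,\varphi}-\Omega_{\rm pt}=-V^i_{\varphi,\varphi}(t,\lambda_1,\lambda_2)$, which the paper leaves as an exercise and you derive explicitly, together with the Saito-formula cancellation at $\lambda_1=\lambda_2$.

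The ``delicate point'' you leave open is settled in the paper by regarding $V^i_{\varphi,\varphi}$ as a series in $(\lambda_1-u_i)^{1/2}$. By \eqref{prop_der}, its $\lambda_1$-derivative is a difference of two terms with simple poles at $\lambda_1=\lambda_2$, both of residue $(\varphi|\varphi)=2$, so the derivative is analytic there. Since differentiation does not change the radius of convergence, the series for $V^i_{\varphi,\varphi}$ then converges at $\lambda_1=\lambda_2$.
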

\proof
The formula in part a) follows from \eqref{VOR} and 
$\mathbf{f}_\varphi(t,\lambda,z)= R(t,z)\mathbf{f}_{\rm pt}(u_i,\lambda,z)$.

b) 
We have the following general relation between propagators
\ben
\Omega(f^+,g^-) - \Omega((R^{-1}f)^+,(R^{-1}g)^-)  = - 
\Omega((R^{-1}f^-)^+,(R^{-1}g^-)^-),
\een
where $f,g\in H[z,z^{-1}]$ and $R=1+R_1 z+R_2z^2+\cdots$ is an upper triangular symplectic transformation. The proof is an elementary exercise and we omit it. Let us apply this formula to our settings, that is, $f=\mathbf{f}_\varphi(t,\lambda_1,z)$, 
$g=\mathbf{f}_\varphi(t,\lambda_2,z)$, and 
$R=R(t,z)$. Recall that $\mathbf{f}_\varphi(t,\lambda,z)= R(t,z)\mathbf{f}_{\rm pt}(u_i,\lambda,z)$ and 
$\Omega_{\rm pt}(u_i,\lambda_1,\lambda_2)=\Omega(
\mathbf{f}^+_{\rm pt}(u_i,\lambda_1,z) ,
\mathbf{f}^-_{\rm pt}(u_i,\lambda_2,z) )$.  We get 
\ben
\Omega^i_{\varphi,\varphi}(t,\lambda_1,\lambda_2) -
\Omega_{\rm pt}(u_i,\lambda_1,\lambda_2)
 = - \Omega(
(R^{-1} \mathbf{f}^-_\varphi(t,\lambda_1,z))^+, 
(R^{-1} \mathbf{f}^-_\varphi(t,\lambda_2,z))^-),
\een
where we suppressed the dependence of $R(t,z)$ on $t$ and $z$. 
The LHS is the difference of the propagator series of $\PP^1_a$ and the propagator series of the point. We have to check that this difference extends analytically across the diagonal $\lambda_1=\lambda_2$. 
Let us denote the symplectic pairing on the RHS by $V^i_{\varphi,\varphi}(t,\lambda_1,\lambda_2)$. We have 
\beq\label{phase-ui}
V^i_{\varphi,\varphi}(t,\lambda_1,\lambda_2)=
\sum_{k,l=0}^\infty 
(\widetilde{V}_{kl}(t) 
I^{(-l-1)}_\varphi(t,\lambda_1), 
I^{(-k-1)}_\varphi(t,\lambda_2) ), 
\eeq
where 
$ 
\widetilde{V}_{kl}(t)=
\sum_{i=0}^l (-1)^{i+1} R_{k+i+1}(t) R_{l-i}^T(t).
$
Let us think of $V^i_{\varphi,\varphi}(t,\lambda_1,\lambda_2)$ as a Taylor series in $(\lambda_1-u_i)^{\tfrac{1}{2}}$. Using formula \eqref{prop_der}, we get that the derivative
$\partial_{\lambda_1} V^i_{\varphi,\varphi}(t,\lambda_1,\lambda_2)$ coincides with 
\ben
\frac{1}{\lambda_1-\lambda_2}\Big(
I^{(0)}_{\rm pt}(u_i,\lambda_1),(\lambda_2-u_i) 
I^{(0)}_{\rm pt}(u_i,\lambda_2) \Big) - 
\frac{1}{\lambda_1-\lambda_2}\Big(
I^{(0)}_{\varphi}(t,\lambda_1),(\lambda_2-E\bullet) 
I^{(0)}_{\varphi}(t,\lambda_2) \Big).
\een
Note that each term in the above difference has a pole of order $1$ at $\lambda_1=\lambda_2$ of residue $2$ (thanks to Saito's formula \eqref{saf}). Therefore, the difference, as a series in $(\lambda_1-u_i)^{\tfrac{1}{2}}$ is convergent at $\lambda_1=\lambda_2$. Since differentiation does not change the radius of convergence, we conclude that $\lambda_1=\lambda_2$ is in the disk of convergence of the series $V^i_{\varphi,\varphi}(t,\lambda_1,\lambda_2)$. 
\qed

\medskip
Let us recall the propagator series $\Omega^i_{\alpha,\varphi}(t,\lambda_1,\lambda_2)$ introduced in Section \ref{sec:conf}.
\begin{proposition}\label{prop:fac}
Suppose that $\alpha\in H$ is an arbitrary vector  and let
$\alpha':=\alpha-(\alpha|\varphi)\varphi/2$. 
\begin{enumerate}
\item[a)] 
The propagator series 
$\Omega^i_{\alpha',\varphi}(t,\lambda_1,\lambda_2)$ extends analytically across the diagonal $\lambda_1=\lambda_2$ for $\lambda_1$ and $\lambda_2$ sufficiently close to $u_i(t)$. 
\item[b)] 
The following formula holds:
\ben
\Gamma^\alpha(t ,\lambda) =
e^{-c\, \Omega^i_{\alpha',\varphi}(t ,\lambda,\lambda)}
\Gamma^{\alpha'}(t ,\lambda)
\Gamma^{c\, \varphi}(t ,\lambda),
\een
where $c=(\alpha|\varphi)/2.$
\end{enumerate}
\end{proposition}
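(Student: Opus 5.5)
Part b) is algebraic and I will prove it first. Since $\alpha=\alpha'+c\varphi$ and $\mathbf{f}_\alpha$ is linear in $\alpha$, we have $\widehat{\mathbf{f}}^\pm_\alpha=\widehat{\mathbf{f}}^\pm_{\alpha'}+c\,\widehat{\mathbf{f}}^\pm_\varphi$. Hence
\[
\Gamma^\alpha(t,\lambda)=e^{\widehat{\mathbf{f}}^-_{\alpha'}}e^{c\widehat{\mathbf{f}}^-_\varphi}e^{\widehat{\mathbf{f}}^+_{\alpha'}}e^{c\widehat{\mathbf{f}}^+_\varphi}.
\]
To reach $\Gamma^{\alpha'}\Gamma^{c\varphi}$ we move $e^{\widehat{\mathbf{f}}^+_{\alpha'}(t,\lambda)}$ to the left past $e^{c\widehat{\mathbf{f}}^-_\varphi(t,\lambda)}$. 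By the Heisenberg commutation relations, $[\widehat{\mathbf{f}}^+_{\alpha'},\widehat{\mathbf{f}}^-_\varphi]=\Omega(\mathbf{f}^+_{\alpha'},\mathbf{f}^-_\varphi)$, and this equals the propagator $\Omega_{\alpha',\varphi}(t,\lambda,\lambda)$. Moving the factor therefore produces the scalar $e^{-c\,\Omega_{\alpha',\varphi}(t,\lambda,\lambda)}$.

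This computation only makes sense as the coincident-point value of the propagator. So I would first carry it out at two points: write $\Gamma^{\alpha'}(t,\lambda_1)\Gamma^{c\varphi}(t,\lambda_2)=e^{c\,\Omega_{\alpha',\varphi}(t,\lambda_1,\lambda_2)}:\Gamma^{\alpha'}(t,\lambda_1)\Gamma^{c\varphi}(t,\lambda_2):$. Then I set $\lambda_1=\lambda_2=\lambda$, which is legitimate once part a) is known; the normal ordered product at equal points is exactly $\Gamma^\alpha(t,\lambda)$. Near $u_i$ we must use the expansion $\Omega^i$ in $(\lambda_2-u_i)^{1/2}$, because $\mathbf{f}^-_\varphi$ has the half-integer vanishing there. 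By Proposition \ref{prop:conf} this is the correct analytic continuation of the propagator along the reference path that defines $\varphi$.

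Part a) is the real content. I would argue as for the phase series in Proposition \ref{prop:pan} a) and Proposition \ref{prop:R-conj_vo} b): show that $\partial_{\lambda_1}\Omega^i_{\alpha',\varphi}$ has no pole at $\lambda_1=\lambda_2$. By \eqref{prop_der},
\[
\partial_{\lambda_1}\Omega^i_{\alpha',\varphi}(t,\lambda_1,\lambda_2)=\frac{1}{\lambda_1-\lambda_2}\bigl(I^{(0)}_{\alpha'}(t,\lambda_1),(\lambda_2-E\bullet)I^{(0)}_\varphi(t,\lambda_2)\bigr).
\]
The pole at $\lambda_1=\lambda_2$ is at most simple, and by Saito's formula \eqref{saf} its residue is $(\alpha'|\varphi)=(\alpha|\varphi)-(\alpha|\varphi)(\varphi|\varphi)/2=0$, since $(\varphi|\varphi)=2$. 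The derivative is therefore holomorphic on a neighbourhood of the diagonal inside $(M\times\CC^2)^i_\epsilon$.

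The main obstacle is to make this a statement about the series $\Omega^i$ itself rather than about a local primitive. The derivative series lives in a disk in the variable $(\lambda_1-u_i)^{1/2}$. Once the apparent singularity at $\lambda_1=\lambda_2$ is removed, that disk must contain $\lambda_1=\lambda_2$, exactly as argued for $V^i_{\varphi,\varphi}$. Since integration in $\lambda_1$ does not change the radius of convergence, $\Omega^i_{\alpha',\varphi}$ converges on the diagonal and defines an analytic extension there. One also has to check that the integration constant does not introduce a logarithm, but the residue computation shows no $\log(\lambda_1-\lambda_2)$ term arises.
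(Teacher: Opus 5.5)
Your part b) is the same normal-ordering computation as in the paper, and the sign $e^{-c\,\Omega}$ is right. In a), the residue computation via Saito's formula is also correct.

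The gap is in the last step of a). The series
$\Omega^i_{\alpha',\varphi}(t,\lambda_1,\lambda_2)=\sum_k(-1)^{k+1}\bigl(I^{(k)}_{\alpha'}(t,\lambda_1),I^{(-k-1)}_\varphi(t,\lambda_2)\bigr)$
is an expansion in $(\lambda_2-u_i)^{1/2}$. The $\lambda_2$-slot carries the vanishing periods, of order $k+\tfrac{1}{2}$. The $\lambda_1$-slot carries $I^{(k)}_{\alpha'}(t,\lambda_1)$; since $(\alpha'|\varphi)=0$ these are holomorphic at $u_i$, but they acquire more derivatives, not more vanishing, as $k$ grows.

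So the series is not a power series in $(\lambda_1-u_i)^{1/2}$. This is where the analogy with $V^i_{\varphi,\varphi}$ breaks down: there, both slots carry vanishing periods. Moreover, the a priori domain of convergence $|\lambda_2-u_i|<|\lambda_1-u_i|$ is not a disk in $\lambda_1$ about $u_i$. For fixed $\lambda_2$, the diagonal lies on the \emph{inner} edge of the admissible $\lambda_1$, not on the rim of a disk that could be pushed outward.

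Consequently the radius-of-convergence principle you invoke does not apply in $\lambda_1$. Termwise $\lambda_1$-integration returns $\Omega^i_{\alpha',\varphi}$ only up to a $\lambda_1$-independent series. Controlling the integral from a point of the known domain up to the diagonal would require convergence, in $(\lambda_2-u_i)^{1/2}$, uniformly up to the diagonal. That is exactly what you are trying to prove.

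The repair is to run your argument in the expansion variable, as the paper does: fix $\lambda_1$ and differentiate in $\lambda_2$. From the phase form \eqref{phase_form},
\[
\partial_{\lambda_2}\Omega^i_{\alpha',\varphi}(t,\lambda_1,\lambda_2)=-\frac{1}{\lambda_1-\lambda_2}\bigl((\lambda_1-E\bullet)I^{(0)}_{\alpha'}(t,\lambda_1),I^{(0)}_\varphi(t,\lambda_2)\bigr).
\]
By Saito's formula its residue at $\lambda_2=\lambda_1$ is $(\alpha'|\varphi)=0$. In the variable $(\lambda_2-u_i)^{1/2}$, the critical circle passes through both square roots of $\lambda_1-u_i$. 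On the second sheet $I^{(0)}_\varphi$ changes sign, so that residue vanishes as well.

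Hence the radius of convergence of the $\lambda_2$-derivative exceeds $|\lambda_1-u_i|$. Differentiation does not change the radius, so the same holds for $\Omega^i_{\alpha',\varphi}$ itself, and the series converges at $\lambda_2=\lambda_1$. This is exactly what your two-point specialization in b) needs.
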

\proof
a) Let us estimate the radius of convergence of the propagator series $\Omega_{\alpha',\varphi}^i(t,\lambda_1,\lambda_2)$ as a Laurent series in $(\lambda_2-u_i)^{1/2}$. Using formulas \eqref{d-prop}--\eqref{phase_form}, we get that the derivative with respect to $\lambda_2$ is  
\ben
\partial_{\lambda_2}\, 
\Omega_{\alpha',\varphi}^i(t,\lambda_1,\lambda_2) =
-\frac{1}{\lambda_1-\lambda_2}\, 
((\lambda_1-E\bullet)
I^{(0)}_{\alpha'}(t,\lambda_1),
I^{(0)}_{\alpha'}(t,\lambda_2) ).
\een
The RHS extends analytically across $\lambda_2=\lambda_1$ because thanks to Saito's formula the coefficient in front of $(\lambda_1-\lambda_2)^{-1}$ is $(\alpha'|\varphi)=0$. Since the circle of convergence of the Laurent series must contain a point at which the series does not extend analytically, we conclude that $\lambda_1$ is in the interior of the disk of convergence, i.e., the radius of convergence is $>|\lambda_1-u_i|$. The radius of convergence does not change when we differentiate so we conclude that the propagator series $\Omega_{\alpha',\varphi}^i(t,\lambda_1,\lambda_2)$ also has radius of convergence $>|\lambda_1-u_i|$, that is, the propagator series is convergent at $\lambda_2=\lambda_1$. This is exactly what we had to prove.

b)
We have the following factorization:
\ben
\Gamma^\alpha(t ,\lambda) =
e^{-\Omega(\mathbf{f}_{\alpha'}^+, \mathbf{f}_{\varphi})\,
(\alpha|\varphi)/2}
\Gamma^{\alpha'}(t ,\lambda)
\Gamma^{(\alpha|\varphi)\varphi/2}(t ,\lambda),
\een
where the propagator 
\beq\label{split-prop}
\Omega(\mathbf{f}_{\alpha'}^+(t ,\lambda),
\mathbf{f}_{\varphi}(t ,\lambda))=\sum_{k=0}^\infty (-1)^{k+1} (
I^{(k)}_{\alpha'}(t ,\lambda), 
I^{(-k-1)}_{\varphi}(t ,\lambda))
\eeq
is interpreted via the Laurent
series expansion of the period vectors at $\lambda=u _i$. Here the condition $(\alpha'|\varphi)=0$ is crucial because it forces the period vectors $I^{(k)}_{\alpha'}(t ,\lambda)$ to be holomorphic at $\lambda=u_i $ and therefore the Laurent series is convergent at least in the formal $(\lambda-u_i)$-adic sense. It remains only to notice that the series on the RHS of \eqref{split-prop} is obtained from the propagator series $\Omega^i_{\alpha',\varphi}(t ,\lambda_1,\lambda_2)$ by substituting $\lambda_1=\lambda_2=\lambda$. According to a), the RHS of \eqref{split-prop} is a convergent series and its value coincides with $\Omega^i_{\alpha',\varphi}(t,\lambda,\lambda)$. 
\qed

\medskip

\begin{proposition}\label{prop:loc}
The propagators have the following symmetry:
\ben
\Omega_{\alpha,\beta}(t,\lambda_1,\lambda_2)-
\Omega_{\beta,\alpha}(t,\lambda_2,\lambda_1)= - 
2\pi\ii\langle \alpha,\beta\rangle\, ,
\quad 
\forall \alpha,\beta\in H,
\een
where the reference path for the second propagator on the LHS is obtained from the reference path of the first propagator and a small arc $\gamma$ in the $(\lambda_1,\lambda_2)$-space around the diagonal $\lambda_1=\lambda_2$ connecting the points $(\lambda_1,\lambda_2)$ and $(\lambda_2,\lambda_1)$ such that the vector $\lambda_1-\lambda_2$ rotates anti-clockwise around $0$  as $(\lambda_1,\lambda_2)$ moves along $\gamma$.
\end{proposition}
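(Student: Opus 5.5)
The plan is to reduce the identity to the case $t$ near infinity and $|\lambda_1|,|\lambda_2|$ large, where everything is explicit, and then propagate it by analytic continuation. First I will show that the difference
\[
D_{\alpha,\beta}(t,\lambda_1,\lambda_2):=\Omega_{\alpha,\beta}(t,\lambda_1,\lambda_2)-\Omega_{\beta,\alpha}(t,\lambda_2,\lambda_1)
\]
is locally constant on $(M\times\CC^2)''_\epsilon$. By \eqref{d-prop}, $dD_{\alpha,\beta}=\mathcal{W}_{\alpha,\beta}(t,\lambda_1,\lambda_2)-\sigma^*\mathcal{W}_{\beta,\alpha}$, where $\sigma$ swaps $\lambda_1,\lambda_2$. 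So I will check from the explicit formula \eqref{phase_form} that $\mathcal{W}_{\alpha,\beta}$ is symmetric under $(\alpha,\lambda_1)\leftrightarrow(\beta,\lambda_2)$.

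The $dt_i$-terms match because $\phi_i\bullet$ is self-adjoint for the Poincar\'e pairing. For the $d\lambda$-terms, $\sigma^*\mathcal{W}_{\beta,\alpha}$ contributes
\[
((\lambda_2-E\bullet)I^{(0)}_\beta(\lambda_2),I^{(0)}_\alpha(\lambda_1))\,\frac{d(\lambda_1-\lambda_2)}{\lambda_1-\lambda_2}-(I^{(0)}_\beta(\lambda_2),I^{(0)}_\alpha(\lambda_1))\,d\lambda_2 .
\]
Subtracting this from the corresponding part of $\mathcal{W}_{\alpha,\beta}$ and using that $E\bullet$ is self-adjoint, the difference is
\[
(\lambda_1-\lambda_2)(I^{(0)}_\alpha,I^{(0)}_\beta)\,\frac{d(\lambda_1-\lambda_2)}{\lambda_1-\lambda_2}-(I^{(0)}_\alpha,I^{(0)}_\beta)(d\lambda_1-d\lambda_2),
\]
which vanishes. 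Hence $D_{\alpha,\beta}$ is constant along any path in $(M\times\CC^2)''_\epsilon$.

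Next, because the prescribed reference path (reference path for the first propagator followed by the arc $\gamma$) lies in this domain, it suffices to evaluate the constant at one convenient point. I take a point of $(M\times\CC^2)^+_\epsilon$ with $\lambda_1,\lambda_2$ large, and apply \eqref{prop-phase}: $\Omega_{\alpha,\beta}(t,\lambda_1,\lambda_2)=\Omega_{\alpha,\beta}(\lambda_1,\lambda_2)+W_{\alpha,\beta}(t,\lambda_1,\lambda_2)$. By Proposition \ref{prop:pan}, $W$ is symmetric and analytic across the diagonal. So the $W$-parts cancel, with no monodromy along $\gamma$, and the constant equals $\Omega_{\alpha,\beta}(\lambda_1,\lambda_2)-\Omega_{\beta,\alpha}(\lambda_2,\lambda_1)$ computed from the explicit formula \eqref{calp}. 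The continuation is along $\gamma$, starting from $\Omega_{\beta,\alpha}$ evaluated at the swapped point.

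The main obstacle is this explicit computation, particularly branch bookkeeping. The constant terms give
\[
-2\pi\ii\bigl(\operatorname{rk}\alpha\deg\beta-\operatorname{rk}\beta\deg\alpha\bigr),
\]
and the $\log Q$ terms cancel. For the logarithms, write $x=(\lambda_2/\lambda_1)^{1/l}$. Then $\log(1-1/x)=\log(1-x)-\log x+\log(-1)$, where the branch of $\log(-1)=\pm\pi\ii$ is fixed by $\gamma$ rotating $\lambda_1-\lambda_2$ anticlockwise. This gives $-\pi\ii(\alpha|\beta)$ plus the terms from $\log x$. For $k\ge1$, $1-\xi^k/x=-\xi^k x^{-1}(1-\xi^{-k}x)$, and $(\tau^k\beta|\alpha)=(\beta|\tau^{-k}\alpha)$ reindexes the sum. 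The $\log x$ terms collect to $-\frac1l\log(\lambda_2/\lambda_1)\sum_k(\tau^k\alpha|\beta)$. This vanishes because $\sum_{k}\tau^k$ projects onto $K_0$, which is the radical of $(\ |\ )$ by Proposition \ref{prop:sd}. What remains is
\[
-2\pi\ii\,\mathrm{rk}\wedge\deg\;-\;\pi\ii(\alpha|\beta)\;+\;\sum_{k=1}^{l-1}(\tau^k\alpha|\beta)\Bigl(\log(-\xi^k)\text{ on the principal branch}\Bigr).
\]
I must show this equals $-2\pi\ii\langle\alpha,\beta\rangle$. I would check it on the eigenbasis $\Psi^{-1}(\phi_{j,p})$ together with $K_0$, using Proposition \ref{prop:rad}(a), $(E|F)=\langle(1-\tau)E,F\rangle$, and the explicit Euler pairing \eqref{euler-form}. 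On an eigenvector with $\tau$-eigenvalue $\xi^r$ this reduces to a root-of-unity sum of the kind handled by Lemma \ref{le:identity}(b). On $K_0$ it reduces to the $\mathrm{rk}\wedge\deg$ term, which matches $\langle\ ,\ \rangle$ restricted to $K_0$.
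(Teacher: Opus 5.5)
Your proposal is correct and is essentially the paper's proof. You reduce via \eqref{prop-phase} and the symmetry of $W_{\alpha,\beta}$ to the explicit formula \eqref{calp}, track the branches along $\gamma$, drop the $\log$ terms because $\sum_k\tau^k$ projects onto the radical $K_0$, and identify the remaining constant $\frac{2\pi\ii}{l}\sum_{k=1}^{l-1}k(\tau^k\alpha|\beta)-2\pi\ii(\operatorname{rk}\alpha\operatorname{deg}\beta-\operatorname{rk}\beta\operatorname{deg}\alpha)$ with $-2\pi\ii\langle\alpha,\beta\rangle$ via Proposition~\ref{prop:rad}(a). The paper does that last step at the operator level, using $\sum_k k\tau^k=l(\tau-1)^{-1}$ on $K_\perp$, which is your eigenvector check. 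Your preliminary step $\mathcal{W}_{\alpha,\beta}=\sigma^*\mathcal{W}_{\beta,\alpha}$ is not in the paper but is correct, and it makes the identity hold along arbitrary paths in $(M\times\CC^2)''_\epsilon$, not only where \eqref{prop-phase} is available.
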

\proof
Since $\Omega_{\alpha,\beta}(t,\lambda_1,\lambda_2)=
\Omega_{\alpha,\beta}(\lambda_1,\lambda_2)+
W_{\alpha,\beta}(t,\lambda_1,\lambda_2)$ and the phase series is symmetric: 
$
W_{\alpha,\beta}(t,\lambda_1,\lambda_2)=
W_{\beta,\alpha}(t,\lambda_2,\lambda_1),
$
we get 
\ben
\Omega_{\alpha,\beta}(t,\lambda_1,\lambda_2)-
\Omega_{\beta,\alpha}(t,\lambda_2,\lambda_1)=
\Omega_{\alpha,\beta}(\lambda_1,\lambda_2)-
\Omega_{\beta,\alpha}(\lambda_2,\lambda_1).
\een
The above difference can be computed using the explicit formula \eqref{calp}. Indeed, let us rewrite \eqref{calp} as follows:  
\begin{align}
\label{calp_2}
\Omega_{\alpha,\beta}(\lambda_1,\lambda_2) = & 
-2\pi\ii\, 
\operatorname{rk}(\alpha)
\operatorname{deg}(\beta) + \, 
\operatorname{rk}(\alpha)
\operatorname{rk}(\beta)\,\log Q +\, \\
\notag
& 
(\alpha|\beta)\, 
\log \Big( \lambda_1^{1/l}- \lambda_2^{1/l}  \Big) + \,
\sum_{k=1}^{l-1}
(\tau^k(\alpha)|\beta)
\log \Big( \lambda_1^{1/l}- \xi^k \lambda_2^{1/l}\Big),
\end{align}
where we used that the difference between the above formula and \eqref{calp} is
\ben
\frac{1}{l}\sum_{k=0}^{l-1} (\tau^k(\alpha)|\beta)  \log \lambda_1 = 
(\pi_0(\alpha)|\beta)\, \log \lambda_1 = 0. 
\een
Note that with our choice of the arc $\gamma$ connecting $(\lambda_1,\lambda_2)$ and $(\lambda_2,\lambda_1)$, since $\lambda_2^{1/l} - \xi^{-k} \lambda_1^{1/l} = -\xi^{-k} (\lambda_1^{1/l} - \xi^{k} \lambda_2^{1/l})$, we must have 
\ben
\log (\lambda_2^{1/l} - \xi^{-k} \lambda_1^{1/l}) =
\log( \lambda_1^{1/l} - \xi^{k} \lambda_2^{1/l}) + 
\pi \ii -\frac{2\pi\ii\, k}{l}  \, .
\een
We get 
\ben
\Omega_{\alpha,\beta}(\lambda_1,\lambda_2) - 
\Omega_{\beta,\alpha}(\lambda_2,\lambda_1) = 
\frac{2\pi\ii}{l}
\sum_{k=1}^{l-1} k (\tau^k(\alpha)|\beta)
-2\pi\ii \Big(
\operatorname{rk}(\alpha)\, 
\operatorname{deg}(\beta) - 
\operatorname{rk}(\beta)\, 
\operatorname{deg}(\alpha) \Big) \, .
\een
Using the identity 
\ben
\sum_{k=1}^{l-1} k x^k (x-1)^2 = l x^l(x-1)-(x^l-1) x
\een
we get 
\ben
\sum_{k=1}^{l-1} k (\tau^k(\alpha)|\beta) = 
\sum_{k=1}^{l-1} k (\tau^k (\tau-1)^2 (\tau-1)^{-2}\pi_\perp(\alpha)|\beta) = 
l ((\tau-1)^{-1}\pi_\perp(\alpha)|\beta).
\een
Combining the above formula with Proposition \ref{prop:rad}, we get 
\beq\label{prop_c}
\Omega_{\alpha,\beta}(\lambda_1,\lambda_2) - 
\Omega_{\beta,\alpha}(\lambda_2,\lambda_1) = 
-2\pi\ii \langle \pi_\perp(\alpha),\beta\rangle 
-2\pi\ii \Big(
\operatorname{rk}(\alpha)\, 
\operatorname{deg}(\beta) - 
\operatorname{rk}(\beta)\, 
\operatorname{deg}(\alpha) \Big) \, ,
\eeq
where $\pi_\perp: K(\PP^1_a,\CC)\to K_\perp(\PP^1_a,\CC)$ is the projection corresponding to the spectral decomposition 
$K(\PP^1,\CC)= K_0(\PP^1,\CC)\oplus K_\perp(\PP^1,\CC)$ -- see Proposition \ref{prop:sd}. Let us denote by $\pi_0: K(\PP^1_a,\CC)\to K_0(\PP^1_a,\CC)$ the complimentary projection. It is straightforward to check, using Proposition \ref{prop:sd}, part a) and the explicit form for the Euler pairing (see Figure \ref{fig:ep}), that 
\ben
\pi_0(\alpha)= 
\operatorname{rk}(\alpha)\, (\delta/l + L-1) + 
\operatorname{deg}(\alpha) \, (L-1). 
\een
Note that by Proposition \ref{prop:rad}, we have $\langle \pi_0(\alpha),\pi_\perp(\beta)\rangle= 0. $ Therefore, 
\ben
\langle 
\pi_0(\alpha), \beta\rangle = 
\langle 
\pi_0(\alpha), \pi_0(\beta)\rangle = 
\operatorname{rk}(\alpha)\, \operatorname{deg}(\beta) - 
\operatorname{rk}(\beta)\, \operatorname{deg}(\alpha),
\een
where we used that $\langle \delta,L-1\rangle = - \langle L-1,\delta\rangle =l$ and $\langle \delta,\delta\rangle = \langle L-1,L-1\rangle =0$. Comparing the above formula with \eqref{prop_c} we get that the RHS of \eqref{prop_c} coincides with $-2\pi\ii \langle \alpha,\beta\rangle$. This completes the proof of the proposition.
\qed

\medskip

Let us proof the regularity of \eqref{pm1_pair}. According to Proposition \ref{prop:fac} we have
\ben
\Gamma^{\alpha}(t ,\lambda)\otimes 
\Gamma^{-\alpha}(t ,\lambda)=
e^{-
\Omega^i_{\alpha',\varphi}(t ,\lambda,\lambda)}\,
\Big(\Gamma^{\alpha'}(t ,\lambda)\otimes 
\Gamma^{-\alpha'}(t ,\lambda)\Big)\, 
\Big(\Gamma^{\varphi/2}(t ,\lambda)\otimes 
\Gamma^{-\varphi/2}(t ,\lambda)\Big)\, ,
\een
where we used that $(\alpha|\varphi)=1$. Furthermore, recalling Proposition \ref{prop:R-conj_vo}, we get 
\ben
\Big(\Gamma^{\varphi/2}(t ,\lambda)\otimes 
\Gamma^{-\varphi/2}(t ,\lambda)\Big) \, 
\widehat{R}(t )\otimes 
\widehat{R}(t ) = 
e^{\tfrac{1}{4}
V(\mathbf{f}^-_\varphi, \mathbf{f}^-_\varphi)}\, 
\widehat{R}(t )\otimes 
\widehat{R}(t )\, 
\Big(
\Gamma^{1/2}_{\rm pt}(u_i ,\lambda)\otimes 
\Gamma^{-1/2}_{\rm pt}(u_i ,\lambda)\Big).
\een
Using the above formulas we can write the operator
\ben
b_\alpha(t ,\lambda)\, 
\Big(
\Gamma^{\alpha}(t ,\lambda)\otimes 
\Gamma^{-\alpha}(t ,\lambda)\Big)\, 
\Big(\widehat{R}(t )\otimes 
\widehat{R}(t )\Big)
\een
as follows
\ben
\Big(\Gamma^{\alpha'}(t ,\lambda)\otimes 
\Gamma^{-\alpha'}(t ,\lambda)\Big)\, 
\Big(\widehat{R}(t )\otimes 
\widehat{R}(t )\Big)\, 
\Big( c_+(t ,\lambda)\,
\Gamma^{1/2}_{\rm pt}(u_i ,\lambda)\otimes 
\Gamma^{-1/2}_{\rm pt}(u_i ,\lambda)\Big)
\een
where 
\ben
c_+(t ,\lambda) = b_\alpha(t ,\lambda)\,
e^{-
\Omega^i_{\alpha',\varphi}(t ,\lambda,\lambda) + 
\tfrac{1}{4}
V(\mathbf{f}^-_\varphi, \mathbf{f}^-_\varphi)
}\, .
\een
The coefficient $c_+(t ,\lambda)$ can be expressed in terms of the propagators. Namely, put $\lambda_2:=\lambda$. Then the coefficient is obtained by taking the limit as $\lambda_1\to \lambda_2$ of the following function
\beq\label{cp12}
(\lambda_1-\lambda_2)^{-(\alpha|\alpha)}
\exp\Big( 
\Omega_{\alpha,\alpha}(t ,\lambda_1,\lambda_2) - 
\Omega_{\alpha',\varphi}(t ,\lambda_1,\lambda_2) -
\Omega_{\varphi/2,\varphi/2}(t ,\lambda_1,\lambda_2) + 
\tfrac{1}{4}
\Omega_{\rm pt}(u_i ,\lambda_1,\lambda_2)\Big),
\eeq
where we used Propositions \ref{prop:be} and \ref{prop:R-conj_vo} to express $b_\alpha(t ,\lambda)$ and $V(\mathbf{f}^-_\varphi, \mathbf{f}^-_\varphi)$ in terms of propagators. Note that we have also used the connection formula from Proposition \ref{prop:conf} to identify 
\ben
\Omega^i_{\alpha',\varphi}(t ,\lambda_1,\lambda_2)=
\Omega_{\alpha',\varphi}(t ,\lambda_1,\lambda_2)
\een
and 
\ben
\Omega^i_{\varphi/2,\varphi/2}(t ,\lambda_1,\lambda_2) =
\Omega_{\varphi/2,\varphi/2}(t ,\lambda_1,\lambda_2).
\een
Using that $\alpha=\alpha'+\varphi/2$, we get that the sum of the propagators in the exponent in \eqref{cp12} coincides with 
\ben
\Omega_{\alpha',\alpha'}(t ,\lambda_1,\lambda_2)+
\frac{1}{2} (
\Omega_{\varphi,\alpha'}(t ,\lambda_1,\lambda_2) -
\Omega_{\alpha',\varphi}(t ,\lambda_1,\lambda_2)) +
\frac{1}{4}
\Omega_{\rm pt}(u_i ,\lambda_1,\lambda_2).
\een
Using $(\alpha|\alpha)=(\alpha'|\alpha')+(\varphi/2|\varphi/2)$ and Proposition \ref{prop:loc} we get 
\beq\label{cpt}
c_+(t ,\lambda)= 
e^{\pi\ii \langle \alpha',\varphi\rangle}\ 
\lim_{\lambda_1\to \lambda_2} 
\frac{e^{\Omega_{\alpha',\alpha'}(t ,\lambda_1,\lambda_2)}}{
(\lambda_1-\lambda_2)^{(\alpha'|\alpha')}}\ 
\lim_{\lambda_1\to \lambda_2} 
\frac{e^{\tfrac{1}{4}\Omega_{\rm pt}(u_i ,\lambda_1,\lambda_2)}}{
(\lambda_1-\lambda_2)^{(\varphi/2|\varphi/2)}},
\eeq
where $\lambda_2:=\lambda.$ Let us denote the first limit by 
$b_{\alpha'}(t ,\lambda)$. Since the derivative of the propagators are polynomial expressions in the period vectors and the period vectors $I^{(k)}_{\alpha'}(t ,\lambda)$ are analytic at $\lambda=u_i $, we get that $b_{\alpha'}(t ,\lambda)$ is holomorphic at $\lambda=u_i $. 
The second limit can be computed explicitly
\ben
\lim_{\lambda_1\to \lambda_2} 
\frac{e^{\tfrac{1}{4}\Omega_{\rm pt}(u_i ,\lambda_1,\lambda_2)}}{
(\lambda_1-\lambda_2)^{(\varphi/2|\varphi/2)}} = 
\frac{1}{2\sqrt{\lambda-u_i }}\, .
\een
We get 
\ben
c_+(t ,\lambda)= 
b_{\alpha'}(t ,\lambda)\, 
\frac{
e^{\pi\ii \langle \alpha',\varphi\rangle}
}{2\sqrt{\lambda-u_i }}\, .
\een
Similarly, let us repeat the above computation with $r_\varphi(\alpha)=\alpha'-\varphi/2$ instead of $\alpha$. We have to replace everywhere $\varphi$ with $-\varphi$ and also note that 
\ben
\Big(\Gamma^{-\varphi/2}(t ,\lambda)\otimes 
\Gamma^{\varphi/2}(t ,\lambda)\Big) \, 
\widehat{R}(t )\otimes 
\widehat{R}(t ) = 
e^{\tfrac{1}{4}
V(\mathbf{f}^-_\varphi, \mathbf{f}^-_\varphi)}\, 
\widehat{R}(t )\otimes 
\widehat{R}(t )\, 
\Big(
\Gamma^{-1/2}_{\rm pt}(u_i ,\lambda)\otimes 
\Gamma^{1/2}_{\rm pt}(u_i ,\lambda)\Big).
\een
The operator
\ben
b_{r_\varphi(\alpha)}(t ,\lambda)\, 
\Big(
\Gamma^{r_\varphi(\alpha)}(t ,\lambda)\otimes 
\Gamma^{-r_\varphi(\alpha)}(t ,\lambda)\Big)\, 
\Big(\widehat{R}(t )\otimes 
\widehat{R}(t )\Big)
\een
takes the following form
\ben
\Big(\Gamma^{\alpha'}(t ,\lambda)\otimes 
\Gamma^{-\alpha'}(t ,\lambda)\Big)\, 
\Big(\widehat{R}(t )\otimes 
\widehat{R}(t )\Big)\, 
\Big( c_-(t ,\lambda)\,
\Gamma^{-1/2}_{\rm pt}(u_i ,\lambda)\otimes 
\Gamma^{1/2}_{\rm pt}(u_i ,\lambda)\Big)
\een
where 
\ben
c_-(t ,\lambda) = b_{r_\varphi(\alpha)}(t ,\lambda)\,
e^{
\Omega^i_{\alpha',\varphi}(t ,\lambda,\lambda) + 
\tfrac{1}{4}
V(\mathbf{f}^-_\varphi, \mathbf{f}^-_\varphi)
}\, .
\een
The coefficient $c_-(t ,\lambda)$ can be expressed in terms of propagators again. We get the same formula as in \eqref{cpt} except that we have to replace $\varphi$ by $-\varphi$, that is, 
\ben
c_-(t ,\lambda)= 
b_{\alpha'}(t ,\lambda)\, 
\frac{
e^{-\pi\ii \langle \alpha',\varphi\rangle}
}{2\sqrt{\lambda-u_i }}\, .
\een
Since $\mathcal{D}_{\rm pt}$ is a tau-function of KdV, the formal power series 
\begin{align*}
\frac{1}{2\sqrt{\lambda-u_i }}\left(
\Gamma^{1/2}_{\rm pt}(u_i ,\lambda)\otimes 
\Gamma^{-1/2}_{\rm pt}(u_i ,\lambda)-
\Gamma^{-1/2}_{\rm pt}(u_i ,\lambda)\otimes 
\Gamma^{1/2}_{\rm pt}(u_i ,\lambda)\right)\, & 
\mathcal{D}_{\rm pt}(\hbar\sqrt{\Delta_i},\mathbf{q}'(u_i))\times \\
&
\times \mathcal{D}_{\rm pt}(\hbar\sqrt{\Delta_i},\mathbf{q}''(u_i))
\end{align*}
is regular in $\lambda$, that is, it is holomorphic at $\lambda=u_i $. Therefore, proving that \eqref{pm1_pair} is holomorphic at $\lambda=u_i$ is reduced to verifying that $c_+(t ,\lambda)=-c_-(t ,\lambda)$. On the other hand, 
\ben
c_+(t ,\lambda)/c_-(t ,\lambda) = 
e^{2\pi\ii \langle \alpha',\varphi\rangle} = 
e^{2\pi\ii \langle \alpha,\varphi\rangle +
\pi\ii \langle \varphi,\varphi\rangle } =-1
\een
because $\langle \alpha,\varphi\rangle\in \ZZ$ and $\langle \varphi,\varphi\rangle=1$. 

\subsection{From ancestors to the Kac-Wakimoto form of KdV}
Finally, it remains to prove that the restriction of 
\ben
\left(
\sum_{\alpha: (\alpha|\varphi)=\pm 2}
b_\alpha(t,\lambda) 
\Gamma^\alpha(t,\lambda)\otimes 
\Gamma^{-\alpha}(t,\lambda) - 
\sum_{n\in \ZZ} b_{n\delta}(t,\lambda) 
\Gamma^{n\delta}(t,\lambda)\otimes 
\Gamma^{-n\delta}(t,\lambda)\, L(t,\lambda)\right)
\mathcal{A}_t\otimes
\mathcal{A}_t
\een
to $
\widehat{\mathbf{f}}_{L-1}(t,\lambda)\otimes 1 - 
1\otimes \widehat{\mathbf{f}}_{L-1}(t,\lambda) = 2\pi \ii \, m
$ is regular at $\lambda=u_i$ where the sum over $\alpha$ is over all $\alpha\in \Phi_{\rm aff}$ such that $(\alpha|\varphi)=\pm 2$.  Let us choose an integer $k$ such that $\varphi + k (L-1)\in \Phi_{\rm aff}$. 
Let us compare the operator in the brackets with the composition of the following operators:
\ben
\Big(
\Gamma^{k(L-1)}(t,\lambda)\otimes 
\Gamma^{-k(L-1)}(t,\lambda)\Big)\, 
\Big(\sum_{n\in \ZZ} b_{n\delta}(t,\lambda) 
\Gamma^{n\delta}(t,\lambda)\otimes 
\Gamma^{-n\delta}(t,\lambda) \Big)
\een
and 
\ben
b_\varphi(t,\lambda)
\Gamma^{\varphi}(t,\lambda)\otimes 
\Gamma^{-\varphi}(t,\lambda) + 
b_{-\varphi}(t,\lambda)
\Gamma^{-\varphi}(t,\lambda)\otimes 
\Gamma^{\varphi}(t,\lambda)-
L(t,\lambda).
\een
According to formula \eqref{delta-period}, we have 
\ben
b_{\pm\varphi+n\delta}(t,\lambda)=
b_{\pm\varphi}(t,\lambda)\, b_{n\delta}(t,\lambda)\, 
e^{\mp 2nl(I^{(-1)}_\varphi(t,\lambda),1)}.
\een
Note that the exponential factor matches the exponential factor of the following composition
\ben
\Big(
\Gamma^{n\delta}(t,\lambda)\otimes 
\Gamma^{-n\delta}(t,\lambda)\Big) 
\Big(
\Gamma^{\pm\varphi}(t,\lambda)\otimes 
\Gamma^{\mp\varphi}(t,\lambda) \Big)= 
e^{2 \Omega(
\mathbf{f}_{n\delta}^+,
\mathbf{f}_{\pm\varphi}^-)}
\Gamma^{n\delta\pm\varphi}(t,\lambda)\otimes 
\Gamma^{-n\delta\mp\varphi}(t,\lambda). 
\een
Indeed, according to formula \eqref{per_delta} we have 
$\mathbf{f}_{n\delta}^+(t,\lambda,z)= nl \phi_{0,0}$ so the symplectic pairing in the above formula is $-nl (I^{(-1)}_\varphi(t,\lambda),1)$ as claimed. Note that $b_{\alpha+k(L-1)}=b_\alpha$ and that the set of all $\alpha\in \Phi_{\rm aff}$ such that $(\alpha|\varphi)=\pm 2$ coincides with the set $\pm(\varphi + k(L-1)+ n\delta) $ ($n\in \ZZ$). The conclusion is that the difference of the two operators is proportional to $1-\Gamma^{k(L-1)}(t,\lambda)\otimes 
\Gamma^{-k(L-1)}(t,\lambda) $. However the latter vanishes when restricted to $
\widehat{\mathbf{f}}_{L-1}(t,\lambda)\otimes 1 - 
1\otimes \widehat{\mathbf{f}}_{L-1}(t,\lambda) = 2\pi \ii \, m
$ because $\mathbf{f}_{L-1}^+(t,\lambda,z)=0$ so 
\ben
\Gamma^{k(L-1)}(t,\lambda)\otimes 
\Gamma^{-k(L-1)}(t,\lambda) = 
e^{k(
\widehat{\mathbf{f}}_{L-1}(t,\lambda)\otimes 1 - 
1\otimes \widehat{\mathbf{f}}_{L-1}(t,\lambda))
}=e^{2\pi\ii k m}=1.
\een
Since the sum $\sum_{n\in \ZZ}b_{n\delta}(t,\lambda) \Gamma^{n\delta}(t,\lambda)\otimes \Gamma^{-n\delta}(t,\lambda)$ is regular at $\lambda=u_i$, the problem is reduced to proving that 
\beq\label{phi-term}
\Big(
b_\varphi(t,\lambda)
\Gamma^{\varphi}(t,\lambda)\otimes 
\Gamma^{-\varphi}(t,\lambda) + 
b_{-\varphi}(t,\lambda)
\Gamma^{-\varphi}(t,\lambda)\otimes 
\Gamma^{\varphi}(t,\lambda)-
L(t,\lambda)\Big)\, 
\mathcal{A}_t\otimes \mathcal{A}_t
\eeq
is holomorphic at $\lambda=u_i$.

Let us recall that by definition 
\ben
b_\varphi(t,\lambda)=
\lim_{\lambda'\to \lambda } 
\frac{e^{\Omega_{\varphi,\varphi}(t,\lambda',\lambda)}}{
(\lambda'-\lambda)^2 }\, .
\een
Using Proposition \ref{prop:R-conj_vo} and recalling that our choice of the reference path of $\varphi$ is such that the connection formula holds: 
$\Omega_{\varphi,\varphi}(t,\lambda',\lambda)=
\Omega^i_{\varphi,\varphi}(t,\lambda',\lambda)$, we get 
\begin{align}
\label{KW-vo}
\Big(
b_\varphi(t,\lambda)
\Gamma^{\varphi}(t,\lambda)\otimes 
\Gamma^{-\varphi}(t,\lambda) + 
b_{-\varphi}(t,\lambda)
\Gamma^{-\varphi}(t,\lambda)\otimes 
\Gamma^{\varphi}(t,\lambda)
\Big)\, 
\widehat{R}(t)\otimes \widehat{R}(t)  & =  \\ 
\notag
\widehat{R}(t)\otimes \widehat{R}(t)\, 
\Big(
b^+_{\rm pt}(t,\lambda)
\Gamma^{+}_{\rm pt}(u_i,\lambda)\otimes 
\Gamma^{-}_{\rm pt}(u_i,\lambda) + 
b^-_{\rm pt}(t,\lambda)
\Gamma^{-}_{\rm pt}(u_i,\lambda)\otimes 
\Gamma^{+}_{\rm pt}(u_i,\lambda)
\Big)
\end{align}
where $b^{\pm}_{\rm pt}(u_i,\lambda)=\tfrac{1}{16(\lambda-u_i)^2}$ are the coefficients of the bilinear operator of the Kac--Wakimoto form of the KdV hierarchy -- see formula \eqref{kw_coef}.

Finally, it remains only to take care of the Virasoro operator $L(t,\lambda)$. Let us recall the formula for $L(t,\lambda)$ from Proposition \ref{prop:anc_cv}. Let us decompose 
\ben
\alpha_i=\alpha_i'+c_i \varphi(\nu),\quad 
c_i =\frac{1}{q+q^{-1}}\, h_\nu(\alpha_i,\varphi(-\nu))
\een
and 
\ben
\alpha^i={\alpha'}^i + c^i \varphi(-\nu),\quad 
c^i= \frac{1}{q+q^{-1}}\, h_\nu(\varphi, \alpha^i),
\een
where $\varphi(\pm \nu)$ denotes the reflection vector for the $\pm\nu$-twisted periods corresponding to the reference path of $\varphi$. 
This decomposition is chosen in such a way that 
$h_\nu(\alpha_i',\varphi(-\nu))=h_\nu(\varphi,{\alpha'}^i)=0$. Under these conditions the twisted periods $I^{(k+\nu)}_{\alpha'_i}(t,\lambda)$ and $I^{(k-\nu)}_{{\alpha'}^i}(t,\lambda)$ are holomorphic at $\lambda=u_i$ (see \cite{CM2024}, Section 2.3). 
Put 
\ben
\Phi^\nu_\alpha:= \phi^\nu_\alpha(t,\lambda)\otimes 1 - 
1\otimes \phi^{\nu}_\alpha(t,\lambda).
\een
Using the above decompositions we get
\ben
{\Phi^\nu_{\alpha_i}}_{(-1)} \Phi^{-\nu}_{\alpha^i} =
{\Phi^\nu_{\alpha'_i}}_{(-1)} \Phi^{-\nu}_{{\alpha'}^i}+
c_i{\Phi^\nu_{\varphi}}_{(-1)} \Phi^{-\nu}_{{\alpha'}^i}+
c^i{\Phi^\nu_{\alpha'_i}}_{(-1)} \Phi^{-\nu}_{\varphi}+
c_i c^i{\Phi^\nu_{\varphi}}_{(-1)} \Phi^{-\nu}_{\varphi}.
\een
The first term on the RHS is holomorphic at $\lambda=u_i$ so we can exclude it from our analysis. We claim that the second and the third terms vanish after we take the sum over $i$. Indeed, we have 
\ben
\sum_{i=1}^N c_i {\alpha'}^i = 
\frac{1}{q+q^{-1}} 
\sum_{i=1}^N
h_\nu(\alpha_i,\varphi) \alpha^i - 
\sum_{i=1}^N c_i c^i \varphi(-\nu) = 
\Big(
\frac{1}{q+q^{-1}} - 
\sum_{i=1}^N c_i c^i\Big) \, \varphi(-\nu)=0
\een
where for the last equality we used that 
\ben
\sum_{i=1}^N c_i c^i = 
\frac{1}{(q+q^{-1})^2}\, 
\sum_{i=1}^N
h_\nu(\varphi, \alpha^i)
h_\nu(\alpha_i,\varphi)= 
\frac{1}{(q+q^{-1})^2}\, h_\nu(\varphi,\varphi)=
\frac{1}{q+q^{-1}}.
\een
We get that up to terms holomorphic at $\lambda=u_i$ the Virasoro operator $L(t,\lambda)$ coincides with 
\ben
\frac{1}{2}\operatorname{Res}\frac{d\nu}{\nu(q+q^{-1})} 
\Big(
\phi^\nu_\varphi(t,\lambda)\otimes 1 - 
1\otimes \phi^{\nu}_\varphi(t,\lambda)
\Big)_{(-1)} \Big(
\phi^{-\nu}_\varphi(t,\lambda)\otimes 1 - 
1\otimes \phi^{-\nu}_\varphi(t,\lambda)
\Big)
\een
The above 1-form has a simple pole at $\nu=0$ so the residue turns into 
\ben
\frac{1}{4} 
\Big(
\phi_\varphi(t,\lambda)\otimes 1 - 
1\otimes \phi_\varphi(t,\lambda)
\Big)_{(-1)} \Big(
\phi_\varphi(t,\lambda)\otimes 1 - 
1\otimes \phi_\varphi(t,\lambda)
\Big)
\een
Since $\widehat{R}(t)$ is compatible with the Borcherds product and $\phi_\varphi(t,\lambda) \widehat{R}(t) = 
\widehat{R}(t)\phi_{\rm pt}(u_i,\lambda)$, we get that 
\ben
\frac{1}{4} 
\Big(
\phi_\varphi(t,\lambda)\otimes 1 - 
1\otimes \phi_\varphi(t,\lambda)
\Big)_{(-1)} \Big(
\phi_\varphi(t,\lambda)\otimes 1 - 
1\otimes \phi_\varphi(t,\lambda)
\Big)
\widehat{R}(t)\otimes \widehat{R}(t)
\een
coincides with 
$\widehat{R}(t)\otimes \widehat{R}(t)\, L_{\rm pt}(u_i,\lambda)$ where $L_{\rm pt}(u_i,\lambda)$ is the Virasoro term in the bilinear operator of the point. Finally, recalling \eqref{KW-vo} and that $\mathcal{D}_{\rm pt}$ is a solution to \eqref{KW-pt}, we get that \eqref{phi-term} is holomorphic at $\lambda=u_i$.\qed

\appendix
\section{Virasoro operators}\label{sec:viro}
Let us expand $L(\lambda)=\sum_{n\in \ZZ} L_n \lambda^{-n-2}$. We would like to give the explicit formulas for the operators $L_n$. 
It is convenient to make the following (standard) substitution:
\ben
\mathbf{q}'=\mathbf{x}+\mathbf{y},\quad 
\mathbf{q}'' = \mathbf{x}-\mathbf{y},
\een
where $\mathbf{x}=\{x_{k,j,p}\}$ and $\mathbf{y}=\{y_{k,j,p}\}$ are two copies of the dynamical variables $\mathbf{q}$. Since  
\ben
q'_{k,j,p}-q''_{k,j,p} = 2 y_{k,j,p},\quad 
\frac{\partial}{\partial q_{k,j,p}'} - 
\frac{\partial}{\partial q_{k,j,p}''} = 
\frac{\partial}{\partial y_{k,j,p}},
\een
the bilinear operator $\Omega_{\rm aff}(\lambda)$ becomes a differential operator in $\mathbf{y}$.
The Virasoro operators $L_n$ take the following form:
\ben
L_n=L^\perp_n + L_n^0 +
\frac{1}{2}\sum_{j=1}^3\sum_{p=1}^{a_j-1}
\frac{p}{a_j}\Big(1-\frac{p}{a_j}\Big)\, \delta_{n,0},
\een
where 
\begin{align*}
L^\perp_n = & 
\sum_{j=1}^3\sum_{p=1}^{a_j-1} \left( \frac{2}{\hbar} 
\sum_{\substack{k+l=-n-1\\k,l\geq 0}} 
\frac{1}{a_j}
\frac{y_{k,j,p}}{\Big(k-\tfrac{p^*}{a_j}\Big)!} 
\frac{y_{l,j,p^*}}{\Big(l-\tfrac{p}{a_j}\Big)!} + 
2 \sum_{\substack{k-l=-n\\k,l\geq 0}} 
\frac{\Big(l+\tfrac{p}{a_j}\Big)!}{\Big(k-\tfrac{p^*}{a_j}\Big)!}
y_{k,j,p}\frac{\partial}{\partial y_{l,j,p}} +
\right. \\
& 
\hphantom{\sum_{j=1}^3\sum_{p=1}^{a_j-1} () }
\left. +
\frac{\hbar}{2} 
\sum_{\substack{k+l=n-1\\k,l\geq 0}} 
a_j 
\Big(k+\tfrac{p}{a_j}\Big)!
\Big(l+\tfrac{p^*}{a_j}\Big)!
\frac{\partial^2}{\partial y_{k,j,p}\partial y_{l,j,p^*}}\right),
\end{align*}
and 
\begin{align*}
L_n^0 = &
\frac{4}{\hbar}
\sum_{\substack{k+l=-n-1\\k,l\geq 0}} 
(1+l(h_k-h_l))\frac{y_{k,0,1}}{k!} \frac{y_{l,0,0}}{l!} + \\
&+2
\sum_{\substack{k-l=-n\\k,l\geq 0}} \Big(
\frac{l!}{k!}\, k\, y_{k,0,0}\frac{\partial}{\partial y_{l,0,0}} + 
\frac{(l+1)!}{k!}\, 
y_{k,0,1}\frac{\partial}{\partial y_{l,0,1}} \Big).
\end{align*}
The notation in the above formulas is as follows. 
\begin{itemize}
\item 
All sums over $k$ and $l$ are sums over all pairs of integers $(k,l)$ satisfying the constraints under the summation sign. If there are no pairs $(k,l)$ satisfying the constraints, then the sum is by definition $0$. 
\item 
In the definition of $L_n^\perp$ we denoted $p^*:=a_j-p$. More precisely, there is an involution of the basis $\mathcal{B}$ defined by $(j,p)^*=(j,a_j-p)$, so our notation should be understood as $y_{k,j,p^*}:=y_{k,(j,p)^*}=y_{k,j,a_j-p}$. 
\item 
We defined $\alpha!$ for all real numbers $\alpha>-1$ in the following recursive way:
\ben
\alpha! = 
\begin{cases}
1 &  \mbox{ if } -1 < \alpha\leq 0, \\
\alpha\, (\alpha-1)! & \mbox{ if } \alpha > 0 .
\end{cases}
\een
\item 
We denoted by $h_k$ ($k\geq 0$) the {\em Harmonic} numbers defined recursively by 
\ben
h_0=0,\quad h_{k}=h_{k-1}+\frac{1}{k} \quad (k>0).
\een
\end{itemize}
The derivation of the above formulas is a straightforward computation using the definition of $L(\lambda)$ in Section \ref{sec:bilo} and the formulas for the calibrated periods in Proposition \ref{prop:cp}.

\bibliographystyle{plain}
\bibliography{elliptic}

\begin{thebibliography}{10}

\bibitem{AGV2008}
Dan Abramovich, Tom Graber, and Angelo Vistoli.
\newblock {Gromov--Witten theory of Delign--Mumford stacks}.
\newblock {\em Amer. J. Math.}, 130:1337--1398, 2008.

\bibitem{AGuV}
V.~I. Arnold, S.~M. Gusein-Zade, and A.~N. Varchenko.
\newblock {\em Singularities of Differentiable Maps, Volume II: Monodromy and
  Asymptotics of Integrals}.
\newblock Birkh{\"a}user, Boston, MA, 1985.

\bibitem{Ar1989}
Vladimir Arnold.
\newblock {\em Mathematical Methods of Classical Mechanics}, volume~60 of {\em
  Graduate Texts in Mathematics}.
\newblock Springer-Verlag, New York, 2nd edition, 1989.

\bibitem{BM2013}
Bojko Bakalov and Todor Milanov.
\newblock {W-constraints for the total descendant potential of a simple
  singularity}.
\newblock {\em Compositio Math.}, 149:840--888, 2013.

\bibitem{BLS2026}
Xavier Blot, Danilo Lewański, and Sergey Shadrin.
\newblock Beyond descendants: integrable observables for cohomological field
  theories.
\newblock arXiv:2605.22236v1[math.AG], 2026.

\bibitem{Bo1986}
Richard Borcherds.
\newblock {Vertex algebras, Kac--Moody algebras, and the Monster}.
\newblock {\em Proc. Natl. Acad. Sci. USA}, pages 3068--3071, 1986.

\bibitem{BPS2012}
Alexander Buryak, Hessel Posthuma, and Sergey Shadrin.
\newblock {A polynomial bracket for the Dubrovin-Zhang hierarchies}.
\newblock {\em Journal of Differential geometry}, 92:153--185, 2012.

\bibitem{CR2002}
Weimin Chen and Yongbin Ruan.
\newblock {Orbifold Gromov--Witten theory}.
\newblock In Alejandro Adem, Jack Morava, and Yongbin Ruan, editors, {\em
  Orbifolds in Mathematics and Physics}, volume 310 of {\em Contemp. Math.},
  pages 25--85. American Mathematical Society, Providence, RI, 2002.

\bibitem{CM2024}
John~Alexander Cruz~Morales and Todor Milanov.
\newblock Dubrovin conjecture and the second structure connection.
\newblock {\em J. Geom. Phys.}, 228:105931, 2026.

\bibitem{Du2004}
Boris Dubrovin.
\newblock On almost duality for {F}robenius manifolds.
\newblock In {\em Geometry, Topology, and Mathematical Physics}, volume 212 of
  {\em Amer. Math. Soc. Transl. Ser. 2}, pages 75--132. American Mathematical
  Society, Providence, RI, 2004.

\bibitem{DZ1999}
Boris Dubrovin and Youjin Zhang.
\newblock {Frobenius manifolds and Virasoro constraints}.
\newblock {\em Sel. math.}, 5:423–--466, 1999.

\bibitem{DZ2001}
Boris Dubrovin and Youjin Zhang.
\newblock {Normal forms of hierarchies of integrable PDEs, Frobenius manifolds,
  and Gromov--Witten invariants}.
\newblock arXiv:math/0108160v1, 2001.

\bibitem{FB2001}
Edward Frenkel and David Ben-Zvi.
\newblock {\em Vertex Algebras and Algebraic Curves}, volume~88 of {\em
  Mathematical Surveys and Monographs}.
\newblock American Mathematical Society, Providence, RI, 2001.

\bibitem{FGM2010}
Edward Frenkel, Alexander Givental, and Todor Milanov.
\newblock Soliton equations, vertex operators, and simple singularities.
\newblock {\em Functional Analysis and Other Mathematics}, 3(1):47 -- 63, 2010.

\bibitem{Ga1974}
Andrei Gabrielov.
\newblock { Dynkin diagrams of unimodal singularities}.
\newblock {\em Funkcional. Anal. i Prilozen}, 8(3):1–6, 1974.
\newblock Engl. translation in Funct. Anal. Appl. 8 (1974), 192–196.

\bibitem{Giv1988}
Alexander Givental.
\newblock Twisted {P}icard--{L}efschetz formulas.
\newblock {\em Funktsional. Anal. i Prilozhen.}, 22(1):10--18, 1988.

\bibitem{Giv2001}
Alexander Givental.
\newblock Gromov--{W}itten invariants and quantization of quadratic
  hamiltonians.
\newblock {\em Mosc. Math. J.}, 1:551--568, 2001.

\bibitem{Giv2003}
Alexander Givental.
\newblock { $A_{n-1}$ singularities and $n$KdV hierarchies}.
\newblock {\em Mosc. Math.~J.}, 3(2):475--505, 2003.

\bibitem{GM2005}
Alexander Givental and Todor Milanov.
\newblock Simple singularities and integrable hierarchies.
\newblock In Jerrold~E. Marsden and Tudor~S. Ratiu, editors, {\em The Breadth
  of Symplectic and Poisson Geometry}, volume 232 of {\em Progress in
  Mathematics}, pages 173--201. Birkh{\"a}user, Boston, MA, 2005.

\bibitem{Iri2009}
Hiroshi Iritani.
\newblock {An integral structure in quantum cohomology and mirror symmetry for
  toric orbifolds}.
\newblock {\em Adv. Math}, 222(3):1016--1079, 2009.

\bibitem{Iri2011}
Hiroshi Iritani.
\newblock {Quantum cohomology and periods}.
\newblock {\em Ann. Inst. Fourier, Grenoble}, 61(7):2909--2958, 2011.

\bibitem{IMRS}
Hiroshi Iritani, Todor Milanov, Yongbin Ruan, and Yefeng Shen.
\newblock {Gromov–Witten theory of quotients of Fermat Calabi–Yau
  varieties}.
\newblock {\em Memoirs of the American Mathematical Society}, 269(1310):104,
  2021.

\bibitem{Kac1998}
Victor Kac.
\newblock {\em Vertex Algebras for Beginners}, volume~10 of {\em University
  Lecture Series}.
\newblock American Mathematical Society, Providence, RI, 2nd edition, 1998.

\bibitem{KW1987}
Victor Kac and Minoru Wakimoto.
\newblock Exceptional hierarchies of soliton equations.
\newblock In {\em Theta functions—Bowdoin 1987}, volume 49, Part 1 of {\em
  Proceedings of Symposia in Pure Mathematics}, pages 191--237, Providence,
  Rhode Island, 1989. American Mathematical Society.

\bibitem{Ko1992}
Maxim Kontsevich.
\newblock {Intersection theory on the moduli space of curves and the matrix
  Airy function}.
\newblock {\em Commun. Math. Phys.}, 147:1--23, 1992.

\bibitem{LWZ2025}
Si-Qi Liu, Zhe Wang, and Youjin Zhang.
\newblock {Linearization of Virasoro symmetries associated with semisimple
  Frobenius manifolds}.
\newblock {\em Advances in Mathematics}, 460:110046, 2025.

\bibitem{Mi}
Todor Milanov.
\newblock {Hirota Quadratic Equations for the Extended Toda Hierarchy}.
\newblock {\em Duke Math. Journal}, 138(1):161--176, 2007.

\bibitem{Mi2014}
Todor Milanov.
\newblock Analyticity of the total ancestor potential in singularity theory.
\newblock {\em Advances in Mathematics}, 255(1):217--241, 2014.

\bibitem{Mi2019}
Todor Milanov.
\newblock The phase factors in singularity theory.
\newblock In {\em Primitive Forms and Related Topics -- Kavli IPMU 2014},
  volume~83 of {\em Advanced Studies in Pure Mathematics}, pages 295--326.
  Mathematical Society of Japan, Tokyo, 2019.

\bibitem{MiR}
Todor Milanov and Yongbin Ruan.
\newblock {Gromov-Witten theory of elliptic orbifold $P^1$ and quasi-modular
  forms}.
\newblock arXiv:1106.2321, 2011.

\bibitem{MilSa}
Todor Milanov and Kyoji Saito.
\newblock Primitive forms and vertex operators.
\newblock Book in preparation,
  https://member.ipmu.jp/todor.milanov/Research/pf-vo.pdf.

\bibitem{MiSh}
Todor Milanov and Yefeng Shen.
\newblock {The modular group for the total ancestor potential of Fermat simple
  elliptic singularities}.
\newblock {\em Comm. in Numb. Theory and Phys.}, 8(2):329--368, 2014.

\bibitem{MST2016}
Todor Milanov, Yefeng Shen, and Hsian-Hua Tseng.
\newblock {Gromov–Witten theory of Fano orbifold curves, Gamma integral
  structures and ADE-Toda hierarchies}.
\newblock {\em Geometry and Topology}, 20:2135--2218, 2016.

\bibitem{Mu}
David Mumford.
\newblock {\em Tata Lectures on Theta I}, volume~28 of {\em Progress in
  Mathematics}.
\newblock Birkh{\"a}user, Boston, MA, 1994.

\bibitem{Sa}
Kyoji Saito.
\newblock Extended affine root systems.
\newblock {\em Publ. RIMS, Kyoto Univ.}, 21(1):75--179, 1985.

\bibitem{Sato1981}
Mikio Sato.
\newblock Soliton equations as dynamical systems on infinite dimensional
  {G}rassmann manifold.
\newblock {\em S{\=u}rikaisekikenky{\=u}sho K{\=o}ky{\=u}roku}, 439:30--46,
  1981.

\bibitem{Te2012}
Constantin Teleman.
\newblock The structure of 2d semi-simple field theories.
\newblock {\em Invent. Math.}, 188(3):525--588, 2012.

\bibitem{Wi1991}
Edward Witten.
\newblock Two-dimensional gravity and intersection theory on moduli space.
\newblock {\em Surveys in Diff. Geom.}, 1:243--310, 1991.

\end{thebibliography}

\bigskip
\noindent
Kavli IPMU (WPI), UTIAS, The University of Tokyo\\
Kashiwa, Chiba, 277-8583, Japan \\
Email: \texttt{todor.milanov@ipmu.jp}

\end{document}